\theoremstyle{plain}
\newtheorem*{acknowledgements}{Acknowledgements}
\newcommand{\ph}{  \phantomsection   }
\newcommand{\R}{  \mathbb{R}   }
\newcommand{\E}{  \mathbb{E}   }
\newcommand{\eps}{\varepsilon}
\newcommand{\lessim}{  \lesssim   }
\newcommand{\e}{  \text{e}   }
\newcommand{\C}{  \mathbb{C}   }
\newcommand{\Z}{  \mathbb{Z}   }
\newcommand{\N}{  \mathbb{N}   }
\renewcommand{\H}{  \mathcal{H}   }
\newcommand{\T}{  \mathbb{S}^{1}   }
\newcommand{\im}{  \text{Im}\;   }
\newcommand{\om}{  \omega   }
\newcommand{\p}{  \partial   }
\newcommand{\wt}{  \widetilde   }
\newcommand{\dis}{  \displaystyle   }
\newcommand{\ov}{  \overline  }
\renewcommand{\a}{  \alpha   }
\renewcommand{\p}{  \bf p  }
\renewcommand{\b}{  \beta   }
\newcommand{\s}{  \sigma   }
\newcommand{\<}{  \langle   }
\renewcommand{\>}{  \rangle   }
\renewcommand{\phi}{  \varphi   }
\renewcommand{\S}{  \mathbb{S}   }
\numberwithin{equation}{section}
\author{ Nicolas Burq}
\address{Laboratoire de Math\'ematiques, B\^at. 425,
Universit\'e Paris Sud, 91405 Orsay Cedex, France}
\email{nicolas.burq@math.u-psud.fr}
\author{ Laurent Thomann }
\address{Laboratoire de Math\'ematiques J. Leray, Universit\'e de Nantes, UMR CNRS 6629\\
2, rue de la Houssini\`ere,
44322 Nantes Cedex 03, France}
\email{laurent.thomann@univ-nantes.fr}
\author{ Nikolay Tzvetkov}
\address{University of Cergy-Pontoise, UMR CNRS 8088, Cergy-Pontoise, F-95000}
\email{nikolay.tzvetkov@u-cergy.fr}
\title[Remarks on the gibbs measures for nonlinear dispersive equations]{Remarks on the gibbs measures for nonlinear dispersive equations} 
\begin{document}
\frontmatter
 \begin{abstract}
We show, by the means of several  examples,  how we can use Gibbs measures to construct global solutions to dispersive equations at low regularity.  The construction relies on  the Prokhorov compactness theorem combined with the Skorokhod convergence theorem. To begin with, we  consider  the non linear Schr\"odinger equation (NLS) on the tri-dimensional sphere. Then we focus on  the Benjamin-Ono equation and on  the derivative nonlinear Schr\"odinger equation on the circle. Next, we construct a Gibbs measure and global solutions to the  so-called periodic half-wave equation.   
Finally, we consider the cubic $2d$ defocusing NLS on an arbitrary spatial domain and
we construct  global solutions on the support of the associated Gibbs measure.  
\end{abstract}
\subjclass{35BXX ;  37K05 ; 37L50 ; 35Q55}
\keywords{Nonlinear Schr\"odinger equation,  Benjamin-Ono equation, derivative nonlinear Schr\"odinger equation, half-wave equation, random data, Gibbs measure, weak solutions, global solutions}
\thanks{N. B. and L.T. are  supported   by the  grant  ``ANA\'E'' ANR-13-BS01-0010-03.   and N.T. by an  ERC grant.}
\maketitle
\mainmatter
 
\tableofcontents
 
\section {Introduction and main results}

\subsection{General introduction}
A Gibbs measure can be an interesting tool to show that local solutions to some dispersive PDEs are indeed global.  Once we have a suitable local existence and uniqueness  theory on the support of such a measure, we can expect to globalise these solutions; this measure    in some sense compensates the lack of conservation law at some level of Sobolev regularity. See~\cite{Bourgain2, Bourgain1,Zhidkov,Tzvetkov2, Tzvetkov1,BT3,Oh1, Oh2,BTT}  where this approach has been fruitful.

Assume now that we have   a  Gibbs measure, but that we are not able to show that the equation is locally well-posed on its  support. The aim of this paper is to show -  through several examples - that in this case we can use some compactness methods to construct global (but non unique) solutions on the support of the measure.
Although this method of construction of solutions is well-known in other  contexts, like for the Euler equation (see Albeverio-Cruzeiro~\cite{AC}) or for the Navier-Stokes equation   (see Da Prato-Debussche~\cite{DPD}),  it seems to be not exploited in the context of dispersive equations.

In~\cite{BTT2} we have constructed global rough solutions to the periodic wave equation in any dimension with  stochastic tools. While in~\cite{BTT2} we used the energy conservation and a regularisation property of the wave equation in the argument, here we use instead the invariance of the measure by the non linear flow. As a consequence we also obtain that the distribution of the solutions we construct  is independent of time. 

Our first example concerns the non linear Schr\"odinger equation on the sphere $\S^{3}$ restricted to zonal functions (the functions which only depend on the geodesic distance to the north pole). For sub-quintic nonlinearities, we are able to define a Gibbs measure  with support in $H^{\s}(\S^{3})$ for any $\s<1/2$, and to construct global solutions in this space. This is the result of Theorem~\ref{thmNLS}.  In~\cite{BourgainBulut},  Bourgain-Bulut have considered a similar equation (the radial NLS on $\R^{3}$) in the case of the  cubic nonlinearity. 
The solutions obtained in~\cite{BourgainBulut} are certainly "stronger" compared to the ones obtained in the present paper, the uniqueness statement being however not 
explicited in~\cite{BourgainBulut}.

In a second time we deal with the Benjamin-Ono equation on the circle $\T=\R/(2\pi\Z)$. This model arises in the study of one-dimensional internal long waves.  In~\cite{Molinet1,Molinet2} L. Molinet has shown that the equation is globally well-posed in $L^{2}(\T)$ and that this result is sharp.
For this problem, a Gibbs measure with support in $H^{-\s}(\T)$, for any $\s>0$ has already been constructed by N.\;Tzvetkov in~\cite{Tzvetkov3}. In this case, we also construct global solutions on the support of the measure and prove its invariance (Theorem~\ref{thmBO}). A  uniqueness result of the dynamics on the support of the measure was recently proven in a remarkable paper by Y. Deng~\cite{Deng}.

Our third example concerns the periodic derivative Schr\"odinger equation. Here we  use the measure constructed by  Thomann-Tzvetkov~\cite{ThTz}. We construct a dynamics for which the  measure is invariant (Theorem~\ref{thmDNLS}). This result may be seen as a consequence of a recent work by Nahmod, Oh, Rey-Bellet and Staffilani~\cite{NORS} and Nahmod, Ray-Bellet, Sheffield and Staffilani~\cite{NRSS}. Their approach  is based on the local deterministic theory of Gr\"unrock-Herr~\cite{GrunHerr} which gauges out (the worst part of) the nonlinearity, and the uniqueness is only proved in this gauged-out context. 

Next, we consider the so-called half-wave equation on the circle, which can be seen as a limit model of Schr\"odinger-like equations for which one has very few dispersion. This model has been studied by G\'erard-Grellier~\cite{GerGre} who showed that it is well-posed in $H^{1/2}(\T)$ (see also O. Pocovnicu~\cite{Pocovnicu1} and more recently Krieger-Lenzmann-Rapha\"el~\cite{KLR} for a study of the equation on the real line). Here a Gibbs measure with support in $H^{-\s}(\T)$, for any $\s>0$ can be defined, and global solutions (see Theorem\;\ref{thmHW}) can be constructed. 

Finally, we consider the NLS on an arbitrary $2d$ spatial domain. Here the construction of the Gibbs measure goes back to the works in QFT (see~\cite{Simon} and the references therein). For the sake of completeness, we shall present a proof below based on (precise versions of) the Weyl formula. However we want to stress that the ideas used in the construction of the Gibbs measure  are in the spirit of~\cite{Simon}.
The support of the measure is again $H^{-s}$ for any $s>0$. 
In the case of the torus as spatial domain, J.~Bourgain~\cite{Bourgain1} constructs strong global solutions on the support of the measure. This remarkable result relies on the local theory in  $H^\sigma$, $\sigma>0$ and on a probabilistic regularization property. In the case of an arbitrary spatial domain the local theory is much more involved  (and presently restricted to much higher regularity) compared to the case of the torus. Consequently, it seems natural to turn to weak solution techniques.

Therefore our results on the half-wave equation and the $2d$ NLS are out of reach of the present "strong solutions" 
methods and as such they should be seen as the main result of this article. 

Summarizing the previous discussion, one may also conclude that the main point to be discussed when applying strong solutions techniques in all considered examples is the {\it uniqueness}. 
\subsection{The Schr\"odinger equation on ${\mathbf{\S^3}}$}
Let $\S^{3}$ be the unit sphere in $\R^{4}$.  We then  consider the  non linear Schr\"odinger equation
 \begin{equation}\label{NLS00}
\left\{
\begin{aligned}
&i\partial_{t}u+ \Delta_{\S^{3}} u= |u|^{r-1}u, \quad   (t,x)\in \R\times \S^{3},\\
&u(0,x)=  f(x) \in H^{\s}(\S^{3}),
\end{aligned}
\right.
\end{equation}
for $1\leq r<5$. In~\cite{BGT} N. Burq, P. G\'erard and N. Tzvetkov have shown that~\eqref{NLS00} is globally well-posed in the energy space $H^{1}(\S^{3})$. In this paper we address the question of the existence of global solutions at regularity below the energy space.
Denote by  $Z(\S^{3})$ the space of  the zonal functions, {\it i.e.} the space of the functions which only depend on the geodesic distance to the north pole of $\S^{3}$. Set $H^{\s}_{rad}(\S^{3}):=H^{\s}(\S^{3})\cap Z(\S^{3})$, $L^{2}_{rad}(\S^{3})=H^{0}_{rad}(\S^{3})$ and 
$$X^{1/2}_{rad}=X^{1/2}_{rad}(\S^{3})=\bigcap_{\s<1/2}H_{rad}^{\s}(\S^{3}).$$

 For $x\in \S^{3}$, denote by $\theta=\text{dist}(x,N)\in [0,\pi]$ the geodesic distance of $x$ to the north pole and define 
\begin{equation}\label{def.p}
P_{n}(x)=\sqrt{\frac2{\pi}}\frac{\sin n\theta}{\sin \theta},\quad \quad n\geq 1.
\end{equation}
Then, $(P_{n})_{n\geq 1}$ is a Hilbertian basis of $L^{2}_{rad}(\S^{3})$, which will be used in the sequel. 
Next, in order to avoid the issue  with the 0-frequency, we make the change of unknown $u\longmapsto \e^{-it}u$, so that we are reduced to consider the equation
 \begin{equation}\label{NLS0}
\left\{
\begin{aligned}
&i\partial_{t}u+ (\Delta_{\S^{3}} -1)u= |u|^{r-1}u, \quad   (t,x)\in \R\times \S^{3},\\
&u(0,x)=  f(x) \in H^{\s}(\S^{3}).
\end{aligned}
\right.
\end{equation}


Let $({\Omega, \mathcal{F},{\bf p}})$ be a probability space and $\big(g_{n}(\om)\big)_{n\geq 1}$ a sequence of independent complex normalised Gaussians, $g_{n}\in \mathcal{N}_{\C}(0,1)$, which means that $g_{n}$ can be written  
\begin{equation*} 
g_{n}(\om)=\frac1{\sqrt{2}}\big(h_{n}(\om)+i\ell_{n}(\om)\big),
\end{equation*}
where$\big(h_{n}(\om),\, \ell_{n}(\om)\big)_{n\geq 1}$ are independent standard real Gaussians  ($\mathcal{N}_{\R}(0,1)$).

For $N\geq 1$ we define the random variable
\begin{equation*} 
\om\mapsto\varphi_{N}(\omega,x)= \sum_{n=1}^{N}\frac{g_{n}(\om)}{n}P_{n}(x),
\end{equation*}
and we can show that if  $\sigma<\frac12$, then $(\varphi_N)_{N\geq 1}$ is a Cauchy sequence in $L^2\big(\Omega;\,{H}^{\sigma}(\S^{3})\big)$: this
enables us to define its limit
\begin{equation}\label{phi.NLS} 
\om\mapsto\phi(\om,x)=\sum_{n\geq 1}\frac{g_{n}(\om)}{n}P_{n}(x) \in L^2\big(\Omega;\,{H}^{\s}(\S^{3})\big).
\end{equation}
We then define the Gaussian probability  measure $\mu$ on $ X_{rad}^{1/2}(\S^{3})$ by $\mu={\p}\circ \phi^{-1}$. In other words, $\mu$ is the image of the measure ${\bf p}$ under the map 
\begin{equation*}
 \begin{array}{rcl}
\Omega&\longrightarrow&X_{rad}^{1/2}(\S^{3})\\[3pt]
\dis  \omega&\longmapsto &\dis\phi(\om,\cdot)=\sum_{n\geq 1}\frac{g_{n}(\om)}{n}P_{n}.
 \end{array}
 \end{equation*}
     We now  construct a Gibbs measure for the equation~\eqref{NLS0}. For $u\in L^{r+1}(\S^{3})$ and $\b>0$, define the density
\begin{equation}\label{defG}
G(u)=
\b\e^{-\frac1{r+1}\int_{\S^{3}}|u|^{r+1}}, 
\end{equation}
and with a suitable choice of $\beta>0$, this enables to construct a   probability measure $\rho$ on $X_{rad}^{1/2}(\S^{3})$ by 
\begin{equation*}
\text{d}{\rho }(u)=G(u)\text{d}\mu(u).
\end{equation*} 
Then we can prove
 \begin{theo}\ph\label{thmNLS} 
Let  $1\leq r<5$. The measure $\rho$ is invariant under a dynamics of~\eqref{NLS00}. More precisely,  there exists a set $\Sigma$ 
of full $\rho$ measure   so that for every $f\in \Sigma$ the
equation~\eqref{NLS00} with 
initial condition $u(0)=f$ has a   solution 
\begin{equation*}
u\in \mathcal{C}\big(\R\, ; X_{rad}^{1/2}(\S^{3}) \big).
\end{equation*}
The distribution of the random variable $u(t)$ is equal to  $\rho$ (and thus independent of $t\in \R$):
\begin{equation*}
\mathscr{L}_{X_{rad}^{1/2}}\big(u(t)\big)=\mathscr{L}_{X_{rad}^{1/2}}\big(u(0)\big)=\rho,\quad \forall\,t\in \R.
\end{equation*} 
\end{theo}
Here and after, we abuse notation and write
\begin{equation*}
 \mathcal{C}\big(\R\, ; X_{rad}^{1/2}(\S^{3}) \big)= \bigcap_{\s<1/2}\mathcal{C}\big(\R\, ; H_{rad}^{\s}(\S^{3}) \big).
\end{equation*}

In our work, the only point where we need to restrict to zonal functions is for the construction of the Gibbs measure. The other arguments do not need any radial assumption. The result of Theorem~\ref{thmNLS} can not be extended to the case $r=5$. Indeed, it is shown in~\cite[Theorem 4]{AyTz} that $\|u\|_{L^{6}(\S^3)}=+\infty$, $\mu-$a.s.\\
Since $G(u)>0$, $\mu-$a.s., both measures $\mu$ and $\rho$ have same support. Indeed,  $\mu(X_{rad}^{1/2}(\S^{3}))=\rho(X_{rad}^{1/2}(\S^{3}))=1$, but we can check  that  $\mu(H_{rad}^{1/2}(\S^{3}))=\rho(H_{rad}^{1/2}(\S^{3}))=0$ (see~\cite[Proposition C.1]{BL}).\medskip

Let us compare our result to the result given by the usual deterministic compactness methods. The energy of the equation~\eqref{NLS00} reads 
$$H(u)=\frac12\int_{\S^{3}} |\nabla u|^{2}+\frac{1}{r+1}\int_{\S^{3}}|u|^{r+1}.$$
Then, one can prove (see {\it e.g.}~\cite{Cazenave}) that for all $f\in H^{1}(\S^{3})\cap L^{r+1}(\S^{3})$ there exists a solution to~\eqref{NLS00} so that 
\begin{equation}\label{espace}
u\in \mathcal{C}_{w}\big(\R; H^{1}(\S^{3}) \big)\cap\, \mathcal{C}_{w}\big(\R; L^{r+1}(\S^{3}) \big),
\end{equation}
(here $\mathcal{C}_{w}$ stands for weak continuity in time) and so that for all $t\in \R$, $H(u)(t)\leq H(f)$. Notice that in~\eqref{espace} we can replace the space $H^{1}$ with $H^{1}_{rad}$ if $f\in H_{rad}^{1}$. \\The advantage of this method is that there is no restriction on $r\geq 1$ and no radial assumption on the initial condition. However this strategy asks more regularity on $f$. We also point out that with the deterministic method one loses the conservation of the energy, while in Theorem~\ref{thmNLS} we obtain an invariant probability measure (see also  Remark~\ref{marker}).
 \subsection{The Benjamin-Ono equation}
Recall that $\T:=\R/(2\pi\Z)$ and let us define
$$\dis \|f\|^{2}_{L^{2}(\T)}=(2\pi)^{-1}\int_{0}^{2\pi}|f(x)|^{2}\text{d}x.$$
 For $\dis f(x)=\sum_{k\in \Z}\a_{k}\e^{ikx}$ and $N\geq 1$ we define the spectral projector $\Pi_{N}$ by 
$\dis \Pi_{N}f(x)=\sum_{|k|\leq N}\a_{k}\e^{ikx}$. We also define the space $\dis X^{0}(\S^1)=\bigcap_{\s>0}H^{-\s}(\S^{1})$.

Denote by  $\mathcal{H}$ the Hilbert transform, which is defined by 
\begin{equation*}
\mathcal{H} u(x)=-i\sum_{n\in \Z^{\star}} \text{sign}(n)c_{n}\e^{inx}, \quad \text{for}\quad u(x)=\sum_{n\in \Z^{\star}} c_{n}\e^{inx}.
\end{equation*}
 In this section, we are interested in the periodic  Benjamin-Ono equation
  \begin{equation}\label{BO}
\left\{
\begin{aligned}
&\partial_t u + \mathcal{H}\partial^{2}_{x} u   +\partial_{x}\big(u^{2}\big) =0,\quad 
(t,x)\in\R\times \mathbb{S}^1,\\
&u(0,x)= f(x).
\end{aligned}
\right.
\end{equation}
Let $({\Omega, \mathcal{F},{\bf p}})$ be a probability space and $\big(g_{n}(\om)\big)_{n\geq 1}$ a sequence of independent complex normalised Gaussians, $g_{n}\in \mathcal{N}_{\C}(0,1)$. Set $g_{-n}(\om)=\ov{g_{n}(\om)}$. For any $\s>0$, we can define the random variable
\begin{equation}\label{phi.BO}
\om\mapsto\phi(\om,x)=\sum_{n\in \Z^{*}}\frac{g_{n}(\om)}{2|n|^{\frac12}}\e^{inx}\in L^2\big(\Omega;\,{H}^{-\s}(\T)\big),
\end{equation}
and the measure $\mu$ on ${ X}^{0}(\T)$ by $\mu={\bf p}\circ \phi^{-1}$. 
Next, as in~\cite{Tzvetkov3} define the measure $\rho_{N}$ on $X^{0}(\T)$ by
 \begin{equation}\label{def.rho.BO}
 \text{d}{\rho}_{N}(u)=\Psi_{N}(u)\text{d}\mu(u),
 \end{equation}
 where the weight $\Psi$ is given by
  \begin{equation*} 
 \Psi_{N}(u)=\beta_{N}\chi\big(\|u_{N}\|^{2}_{L^{2}}-\alpha_{N}\big)\e^{-\frac23\int_{\T}u^{3}_{N}(x)\text{d}x},\quad u_{N}=\Pi_{N}u,
 \end{equation*}
with $\chi \in \mathcal{C}^{\infty}_{0}(\R)$, 
$$\dis \a_{N}=\int_{X^{0}(\S^{1})}\|u_{N}\|^{2}_{L^{2}(\S^{1})}\text{d}\mu(u)=\int_{\Omega}\|\phi_{N}(\om,.)\|^{2}_{L^{2}(\S^{1})}\text{d}{\p}(\om)=\sum_{1\leq n\leq N}\frac{1}{n},$$
and where the constant $\beta_{N}>0$ is chosen so that $\rho_{N}$ is a probability measure on $X^{0}(\T)$. Then the result of N. Tzvetkov~\cite{Tzvetkov3} reads: There exists $\Psi(u)$ which satisfies for all $p\in [1,+\infty[$, $\Psi(u)\in L^{p}(\text{d}\mu)$ and 
 \begin{equation}\label{Tz}
\Psi_{N}(u)\longrightarrow \Psi(u)\quad \text{ in} \quad L^{p}(\text{d}\mu(u)).  
\end{equation}
As a consequence, we can define a probability measure $\rho$ on $X^{0}(\S^{1})$ by  $\text{d}\rho(u)=\Psi(u)\text{d}\mu(u)$. Then our result is the following

  \begin{theo}\ph\label{thmBO} 
There exists a set $\Sigma$ 
of full $\rho$ measure   so that for every $f\in \Sigma$ the
equation~\eqref{BO} with 
initial condition $u(0)=f$ has a   solution 
\begin{equation*}
u\in \mathcal{C}\big(\R\, ; X^{0}(\T) \big).
\end{equation*}
 For all $t\in \R$, the distribution of the random variable $u(t)$ is $\rho$. 
\end{theo}

Some care has to be given for the definition of the non linear term in~\eqref{BO}, since $u$ has a negative Sobolev regularity. Here we can define $\partial_{x}(u^{2})$ on the support of $\mu$ as a limit of a Cauchy sequence (see Lemma~\ref{Lem.cauchy}).

As in~\cite[Proposition 3.10]{BTT} we can prove that 
\begin{equation*}
\bigcup_{\chi \in \mathcal{C}^{\infty}_{0}(\R)} \text{supp} \,\rho=\text{supp} \,\mu.
\end{equation*}
The cut-off $\chi\big(\|u_{N}\|^{2}_{L^{2}}-\alpha_{N}\big)$ can not be avoided here because the term  $\int_{\T}u^{3}_{N}(x)\text{d}x$ does not have a sign (compare with the analysis of the half-wave equation and the defocusing NLS below where it can be avoided, after a suitable renormalisation of the potential energy).  

Observe that $\phi$ in~\eqref{phi.BO} has mean 0, thus $\mu$ and $\rho$ are supported on 0-mean functions. This is not a restriction since the mean $\int_{\S^{1}}u$ is an invariant of~\eqref{BO}.

We complete this section by mentioning~\cite{TV,TV1,TV2, DTV} where the authors construct Gibbs type measures associated with each conservation law of the Benjamin-Ono equation.
\subsection{The derivative non linear Schr\"odinger equation}
We consider the periodic DNLS equation. 
\begin{equation}\label{DNLS}
\left\{
\begin{aligned}
&i\partial_t u+\partial_{x}^{2} u   = i\partial_{x}\big(|u|^{2}u\big),\;\;
(t,x)\in\R\times \mathbb{S}^1,\\
&u(0,x)= u_{0}(x).
\end{aligned}
\right.
\end{equation}
Here, for $\s<1/2$ we define  the random variable ($\<n\>=(1+n^{2})^{1/2}$)
\begin{equation}\label{phi.DNLS}
\om\mapsto\phi(\om,x)=\sum_{n\in \Z}\frac{g_{n}(\om)}{\<n\>}\e^{inx}\in L^2\big(\Omega;\,{H}^{\sigma}(\T)\big),
\end{equation}
and the measure $\mu$ on $ \dis X^{1/2}(\T)=\bigcap_{\s<1/2}H^{\s}(\S^{1})$ by $\mu={\bf p}\circ \phi^{-1}$. Next, denote by
\begin{equation*} 
f_{N}(u)=\im \int_{\T}\ov{u_{N}^{2}(x)}\,\partial_{x}(u_{N}^{2}(x))\text{d}x.
\end{equation*} 
 Let $\kappa>0$, and let $\chi \;:\;\R\longrightarrow \R$, $0\leq \chi \leq 1$ be a continuous function with support $\text{supp}\;\chi\subset [-\kappa, \kappa]$ and so that $\chi =1$ on $[-{\kappa}/2, {\kappa}/2]$. We define the density 
\begin{equation*}
\Psi_{N}(u)=\b_{N}\chi\big(\|u_{N}\|_{L^2(\T)}\big)
\e^{\frac{3}{4}f_{N}(u)-\frac12\int_{\T}|u_{N}(x)|^6\text{d}x},
\end{equation*}
and the measure $\rho_{N}$ on $X^{1/2}(\T)$ by 
\begin{equation}\label{rho.DNLS}
\text{d}\rho_{N}(u)=\Psi_{N}(u)\text{d}\mu(u),
\end{equation}
and where  $\b_{N}>0$ is  chosen so that $\rho_{N}$ is a probability measure on $X^{1/2}(\T)$.
By Thomann-Tzvetkov~\cite[Theorem 1.1]{ThTz}, $\rho_{N}$ converges to a probability measure $\rho$ so that $\text{d}\rho(u)=\Psi(u)\text{d}\mu(u)$. Moreover, for all $p\geq 2$, if $\kappa\leq \kappa_{p}$, then  $\Psi(u)\in L^{p}(\text{d}\mu)$. Then our result reads
  \begin{theo}\ph\label{thmDNLS} 
Assume that $\kappa\leq \kappa_{2}$. Then there exists a set $\Sigma$ 
of full $\rho$ measure   so that for every $f\in \Sigma$ the
equation~\eqref{DNLS} with 
initial condition $u(0)=f$ has a     solution 
\begin{equation*}
u\in \mathcal{C}\big(\R \,; X^{1/2}(\T) \big).
\end{equation*}
For all $t\in \R$, the distribution of the random variable $u(t)$ is  $\rho$. 
\end{theo}
Here, for $\kappa\leq \kappa_{2}$,  we have 
 \begin{equation*}
 \bigcup_{\chi \in \mathcal{C}^{\infty}_{0}([-\kappa,\kappa])} \text{supp} \,\rho= \big\{\|u\|_{L^{2}}\leq \kappa \big\}\bigcap\,  \text{supp} \,\mu. 
 \end{equation*}

\subsection{The half-wave equation}
The periodic cubic Schr\"odinger on the circle has been much studied and in particular rough solutions have been constructed. See Christ~\cite{Christ}, Colliander-Oh~\cite{CO}, Kwon-Oh~\cite{KwOh},  and Bourgain~\cite{Bourgain1} in the 2-dimensional case. \\ Here we investigate a related equation where one has no more dispersion: We replace the Laplacian with the operator   $|D|$, {\it i.e.} the operator defined by $|D| \e^{inx}=|n|\e^{inx}$, and we  consider  the  following half-wave Cauchy problem 
\begin{equation*} 
\left\{
\begin{aligned}
&i\partial_t u-|D| u   = |u|^{2}u,\quad 
(t,x)\in\R\times \mathbb{S}^1,\\
&u(0,x)= f(x).
\end{aligned}
\right.
\end{equation*}
This model has been studied by P. G\'erard and S. Grellier~\cite{GerGre} who showed that it is well-posed in\;$H^{1/2}(\T)$. However, the Sobolev space which is invariant by scaling is $L^{2}(\T)$, hence it is natural to try to construct solutions which have low regularity.   In the sequel, in order to avoid trouble with the 0-frequency, we make the change of unknown $u\longmapsto \e^{-it}u$, so that we are reduced to consider the equation
\begin{equation*}
i\partial_t u-\Lambda u   = |u|^{2}u,\quad (t,x)\in\R\times \mathbb{S}^1,
\end{equation*}
where $\Lambda:=|D|+1$.\medskip
 
   Let $({\Omega, \mathcal{F},{\bf p}})$ be a probability space and $\big(g_{n}(\om)\big)_{n\in \Z}$ a sequence of independent complex normalised Gaussians.
 Here we define the random variable
\begin{equation}\label{phi.HW}
\om\mapsto\phi(\om,x)=\sum_{n\in \Z}\frac{g_{n}(\om)}{(1+|n|)^{\frac12}}\e^{inx}\in L^2\big(\Omega;\,{H}^{-\sigma}(\T)\big),
\end{equation}
for any $\s>0$, and we then define the measure $\mu$ on ${ X}^{0}(\T)$ by $\mu={\bf p}\circ \phi^{-1}$.  \medskip 
 
 We need to give a sense to $|u|^{2}u$ on the support of $\mu$. In order to avoid the worst interaction term, we rather consider a gauged version of the equation for which the nonlinearity is formally\;${|u|^{2}u-2\|u\|^{2}_{L^{2}(\S^{1})}u}$. More precisely, define the Hamiltonian
\begin{equation*}
H_{N}(u)=\ \int_{\T}|\Lambda u|^{2}+\frac1{2}\int_{\T}|\Pi_{N}u|^{4}-\Big(\int_{\T} \big|\Pi_{N}u\big|^{2}\Big)^{2},
\end{equation*}
and consider the equation 
\begin{equation*}
i\partial_{t}u=\frac{\delta H_{N}}{\delta \ov{u}},
\end{equation*}
which reads 
\begin{equation}\label{HW.N}
\left\{
\begin{aligned}
&i\partial_t u-\Lambda u   = G_{N}(u),\;\;
(t,x)\in\R\times \mathbb{S}^1,\\
&u(0,x)= f(x),
\end{aligned}
\right.
\end{equation}
 where $G_{N}$ stands for 
\begin{equation}\label{gauge}
G_{N}(u)=\Pi_{N}\big(|\Pi_{N} u |^{2}\Pi_{N} u\big)-2\|\Pi_{N} u\|^{2}_{L^{2}(\T)}\Pi_{N} u.
\end{equation}
This modification of the nonlinearity is classical, and is the Wick ordered version of the usual cubic nonlinearity (see  Bourgain~\cite{Bourgain1}, Oh-Sulem~\cite{OhSulem}). Recall, that since the $L^{2}$ norm of~\eqref{HW.N} is preserved by the flow, one can recover the standard cubic nonlinearity with the change of function ${\dis v_{N}(t)=u_{N}(t)\exp\big(-2i\int_{0}^{t}\|u_{N}(\tau)\|^{2}_{L^{2}}\text{d}\tau\big)}$ with the notation $u_{N}=\Pi_{N}u$.

Here,  the main  interest for introducing the  gauge transform in~\eqref{gauge} is to define the limit equation, when $N\longrightarrow +\infty$.
\begin{prop}\ph\label{prop.NL}
For all $p\geq 2$, the sequence $\big(G_{N}(u)\big)_{N\geq1}$ is  a Cauchy sequence  in the space $L^{p}\big(X^{0}(\T),\mathcal{B},d\mu; H^{-\s}(\T)\big)$. Namely, for all $p\geq	 2$, there exist $\eta>0$ and $C>0$ so that for all $1\leq M<N$,
 \begin{equation*}
\int_{X^{0}(\T)}\|G_{N}(u)-G_{M}(u)\|^{p}_{H^{-\s}(\T)}\text{d}\mu(u)\leq \frac{C}{M^{\eta}}.
\end{equation*}
We denote by $G(u)$ the limit of this sequence.
\end{prop}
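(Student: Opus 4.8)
The plan is to first reduce the $L^p$ bound to an $L^2$ computation, and then to carry out the latter explicitly using the Gaussian structure of $\mu$. Under $\mu$ the field $u$ is distributed like $\phi(\omega,\cdot)=\sum_{n}\tfrac{g_n(\omega)}{(1+|n|)^{1/2}}e^{inx}$, so that, after the substitution $u=\phi(\omega,\cdot)$, each Fourier coefficient $\widehat{G_N(u)}(k)-\widehat{G_M(u)}(k)$ becomes a polynomial of degree $3$ in the Gaussians $(g_n,\overline{g_n})$, i.e.\ it belongs to the (inhomogeneous) Wiener chaos of order at most $3$. Since $\|v\|_{H^{-\sigma}}^2=\sum_k(1+|k|)^{-2\sigma}|\widehat v(k)|^2$, Minkowski's inequality in $L^{p/2}(d\mathbf{p})$ (valid as $p\geq 2$) gives
\begin{equation*}
\Big(\int_{X^{0}(\T)}\|G_N(u)-G_M(u)\|_{H^{-\sigma}}^p\,d\mu(u)\Big)^{2/p}\leq \sum_k(1+|k|)^{-2\sigma}\big\|\widehat{G_N(u)}(k)-\widehat{G_M(u)}(k)\big\|_{L^p(d\mathbf{p})}^2 .
\end{equation*}
By the hypercontractivity estimate on a fixed Wiener chaos, $\|F\|_{L^p(d\mathbf{p})}\lesssim_p\|F\|_{L^2(d\mathbf{p})}$ for $F$ of degree $\leq 3$; hence it suffices to bound $\sum_k(1+|k|)^{-2\sigma}\,\E\big|\widehat{G_N(u)}(k)-\widehat{G_M(u)}(k)\big|^2$ and to show it is $\lesssim M^{-\eta'}$ for some $\eta'>0$. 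The gain in the statement then follows with $\eta=\tfrac p2\eta'$.

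Next I would make the Wick-ordered structure of $G_N$ explicit. Writing $\gamma_n=\tfrac{g_n}{(1+|n|)^{1/2}}$, a direct expansion shows that for $|k|\leq N$
\begin{equation*}
\widehat{G_N(u)}(k)=\sum_{\substack{n_1-n_2+n_3=k,\ |n_i|\leq N\\ n_2\neq n_1,\ n_2\neq n_3}}\gamma_{n_1}\overline{\gamma_{n_2}}\gamma_{n_3}-|\gamma_k|^2\gamma_k,
\end{equation*}
the subtraction of $2\|\Pi_N u\|_{L^2}^2\Pi_N u$ being precisely what cancels the two resonant contributions $n_1=n_2$ and $n_2=n_3$ (the diagonal $n_1=n_2=n_3=k$ being counted once, which leaves the harmless correction $-|\gamma_k|^2\gamma_k$). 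Thus the coefficient is, up to that correction, a genuine element of the third chaos. Computing its $L^2(d\mathbf{p})$ norm then amounts to Wick's theorem: since $\E[g_ng_m]=0$ and $\E[g_n\overline{g_m}]=\delta_{nm}$, only the pairings matching the unbarred indices $\{n_1,n_3\}$ with the barred indices $\{m_1,m_3\}$ together with $n_2=m_2$ survive, so that
\begin{equation*}
\E\big|\widehat{G_N(u)}(k)\big|^2\lesssim \sum_{\substack{n_1-n_2+n_3=k\\ |n_i|\leq N}}\frac{1}{(1+|n_1|)(1+|n_2|)(1+|n_3|)},
\end{equation*}
the remaining pairings (which force coincidences among the $n_i$) being of strictly lower order. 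For the difference $G_N-G_M$ the same computation holds with the summation restricted to configurations in which at least one $|n_i|$ exceeds $M$.

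Finally I would handle the arithmetic of the frequency sum. The basic input is the convolution bound, obtained by summing two indices one at a time (each step $\sum_j(1+|j|)^{-1}(1+|j+c|)^{-1}$ producing a factor $\lesssim\log(2+|c|)/(1+|c|)$),
\begin{equation*}
\sum_{n_1-n_2+n_3=k}\frac{1}{(1+|n_1|)(1+|n_2|)(1+|n_3|)}\lesssim \frac{\big(\log(2+|k|)\big)^{2}}{1+|k|}.
\end{equation*}
Together with $\sigma>0$ this already makes $\sum_k(1+|k|)^{-2\sigma}\,\E|\widehat{G_N(u)}(k)|^2$ finite, with room to spare. To turn the constraint $\max_i|n_i|>M$ into a quantitative gain I would, on the region $|n_1|>M$ (and symmetrically for $n_2,n_3$), write $(1+|n_1|)^{-1}\leq M^{-\eps}(1+|n_1|)^{-(1-\eps)}$ for a small $\eps>0$; the surplus of convergence coming from $\sigma>0$ absorbs the loss of $|n_1|^{\eps}$ provided $\eps<2\sigma$, yielding
\begin{equation*}
\sum_k(1+|k|)^{-2\sigma}\,\E\big|\widehat{G_N(u)}(k)-\widehat{G_M(u)}(k)\big|^2\lesssim \frac{C}{M^{\eps}},
\end{equation*}
so that $\eta'=\eps$. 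The diagonal correction $|\gamma_k|^2\gamma_k$, restricted to $M<|k|\leq N$, contributes only $\sum_{|k|>M}(1+|k|)^{-2\sigma-3}\lesssim M^{-2}$ and is negligible.

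I expect the main obstacle to be the bookkeeping of the second step: one must verify that the Wick ordering removes exactly the resonant (logarithmically divergent) interactions, and that all subleading Gaussian pairings — those creating coincidences such as $n_1=n_3$ — are genuinely summable and do not spoil the $M^{-\eps}$ gain. The rest is a routine, if tedious, convolution estimate.
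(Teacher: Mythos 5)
Your proposal is correct and follows essentially the same route as the paper's proof: reduction of the $L^p$ bound to $p=2$ via hypercontractivity on the third Wiener chaos (the paper invokes this through \cite[Proposition 2.4]{ThTz}, you spell it out with Minkowski in $L^{p/2}$), the observation that the Wick ordering removes exactly the resonant pairings $n_1=n_2$ and $n_3=n_2$, the Gaussian orthogonality argument forcing $n_2=m_2$ and $\{n_1,n_3\}=\{m_1,m_3\}$, and the same convolution estimate with an $\varepsilon$-power traded against the constraint $\max_i|n_i|>M$ and absorbed by the weight $\langle k\rangle^{-2\sigma}$. Two small remarks. First, you are actually more careful than the paper on one point: the diagonal correction $-|\gamma_k|^2\gamma_k$ coming from the double subtraction of the triple resonance $n_1=n_2=n_3$ is silently dropped in the paper's displayed formula for $\chi_N$, whereas you identify it and check it is negligible. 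Second, your claim that $\widehat{G_N}(k)-\widehat{G_M}(k)$ involves only configurations with some $|n_i|>M$ is not exact: because of the outer projectors, for $M<|k|\leq N$ one has $\widehat{G_M}(k)=0$, so the whole of $\widehat{G_N}(k)$ survives, including terms with all $|n_i|\leq M$; this is harmless, since for such $k$ the factor $\langle k\rangle^{-2\sigma}$ summed over $|k|>M$ already produces a gain $M^{-2\sigma+}$ by your own convolution bound, but the case distinction should be stated (the paper hides the same issue in its unquantified reduction from $G_N(\phi)=\Pi_N\chi_N$ to $\chi_N$).
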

It is then natural to consider the  equation
\begin{equation}\label{HW}
\left\{
\begin{aligned}
&i\partial_t u-\Lambda u   =   G(u),\;\;
(t,x)\in\R\times \mathbb{S}^1,\\
&u(0,x)= f(x).
\end{aligned}
\right.
\end{equation}
 We now define a Gibbs measure for~\eqref{HW} as a limit of Gibbs measures for~\eqref{HW.N}. In the sequel we use the notation $u_{N}=\Pi_{N}u$. Let $\chi \in \mathcal{C}_{0}^{\infty}(\R)$ so that $0\leq \chi\leq 1$. Define
 $$\dis \a_{N}=\int_{X^{0}(\S^{1})}\|u_{N}\|^{2}_{L^{2}(\S^{1})}\text{d}\mu(u)=\sum_{|n|\leq N}\frac{1}{1+|n|},$$
  consider the density 
  \begin{equation}\label{def.theta}
\Theta_{N}(u)=\beta_{N}\chi\big(\|u_{N}\|^{2}_{L^{2}}-\alpha_{N}\big)\e^{-{\big(\|u_{N}\|^{4}_{L^{4}}-2\|u_{N}\|^{4}_{L^{2}}\big)}},
 \end{equation}
 and define the measure
 \begin{equation*}
\text{d}\rho_{N}(u)=\Theta_{N}(u)\text{d}\mu(u),
\end{equation*}
where $\beta_{N}>0$ is chosen so that $\rho_{N}$ is a probability measure.
\begin{rema}We could avoid the cut-off procedure, $\chi$, above, by using another renormalization, namely defining 
\begin{equation}\label{gaugebis}
\widetilde{G}_{N}(u)=\Pi_{N}\big(|\Pi_{N} u |^{2}\Pi_{N} u\big)-2\alpha_N\Pi_{N} u, \qquad \alpha_N= \mathbb{E}_{\mu}\Big[\|\Pi_{N} u\|^{2}_{L^{2}(\T)}\Big],
\end{equation}
see the construction in Section~\ref{sec.8.2} for NLS on a bounded domain.  \end{rema}

 In our next result, we define a weighted Wiener measure for the equation~\eqref{HW}.

 \begin{theo} \ph\label{thm2}
The sequence $\Theta_{N}(u)$ defined in~\eqref{def.theta} converges in measure, as $N\rightarrow\infty$, with respect to the measure $\mu$.
Denote by $\Theta(u)$ the limit and define the probability measure 
\begin{equation}\label{def.rho}
\text{d}\rho(u)\equiv \Theta(u)\text{d}\mu(u).
\end{equation}
Then  for every $p\in [1,\infty[$, $\Theta(u)\in L^{p}(\text{d}\mu(u))$ and  the  sequence $\Theta_{N}$ converges   to $\Theta$ in $L^{p}(\text{d}\mu(u))$, as $N$ tends to
infinity.
\end{theo}
The sign of the nonlinearity in~\eqref{HW} (defocusing) plays a role. Indeed, Theorem~\ref{thm2} does not hold when $G(u)$ is replaced with $-G(u)$.

Again, with the arguments of~\cite[Proposition 3.10]{BTT}, we can prove that 
\begin{equation*}
\bigcup_{\chi \in \mathcal{C}^{\infty}_{0}(\R)} \text{supp} \,\rho=\text{supp} \,\mu.
\end{equation*}

Consider the measure $\rho$ defined in~\eqref{def.rho}, then 
\begin{theo}\ph\label{thmHW} 
There exists a set $\Sigma$ 
of full $\rho$ measure   so that for every $f\in \Sigma$ the
equation~\eqref{HW} with 
initial condition $u(0)=f$ has a   solution 
\begin{equation*}
u\in \mathcal{C}\big(\R\,; X^{0}(\T) \big).
\end{equation*}
For all $t\in \R$, the distribution of the random variable $u(t)$ is $\rho$. 
\end{theo}

In equation~\eqref{HW} the dispersive effect is weak and it seems difficult to deal with the regularities on the support of the measure by deterministic methods. 

\begin{rema} More generally, we can consider the equation
\begin{equation*} 
i\partial_t u-\Lambda^{\alpha} u   =   |u|^{p-1}u,\quad 
(t,x)\in\R\times \mathbb{S}^1,
\end{equation*}
with $\alpha>1$ and $p\geq 1$. Define $X^{\beta}(\S^{1})=\bigcap_{\tau<\beta}H^{\tau}(\T)$. In this case, the situation is better since the series 
\begin{equation*}
\om\mapsto\phi_{\alpha}(\om,x)=\sum_{n\in \Z}\frac{g_{n}(\om)}{(1+|n|)^{{\alpha}/2}}\e^{inx},
\end{equation*}
are so that $\phi_{\alpha}\in L^{2}\big(\Omega;H^{\beta}(\T)\big)$ for all $0<\beta<(\a-1)/2$. Here we should be able to construct solutions 
$$u\in \mathcal{C}\big(\R; X^{(\a-1)/2}(\T) \big).$$
\end{rema}
 
\subsection{The $\boldsymbol {2d}$ NLS on an arbitrary spatial domain}
We assume that $(M,g)$ is either  a two dimensional compact Riemannian manifold without boundary or a bounded domain in $\R^2$. We suppose that ${\rm vol}(M)=1$. This assumption is not a restriction since
we can always reduce the analysis to this case by rescaling the metric. We impose it since some of the computations simplify a little under this assumption.\medskip

Denote by $-\Delta_g$ the Laplace-Beltrami operator on $M$ (with Dirichlet  boundary conditions in the case of a domain in $\R^2$).  Consider the nonlinear Schr\"odinger equation \begin{equation} \label{NLS0_bis}
\left\{
\begin{aligned}
&i\partial_t u+(\Delta_g-1) u   = |u|^{2}u,\quad 
(t,x)\in\R\times M,\\
&u(0,x)= f(x).
\end{aligned}
\right.
\end{equation}
Our aim is to construct a Gibbs measure $\rho$ associated to~\eqref{NLS0_bis} and to construct global solutions to~\eqref{NLS0_bis} on the support of $\rho$. 
Let $(\varphi_n)_{n\geq 0}$ be an orthonormal basis of $L^2(M)$ of eigenfunctions of $-\Delta_g$ associated with increasing eigenvalues $(\lambda_n^2)_{n\geq0}$.
By the Weyl asymptotics $\lambda_n\approx n^{1/2}$.
Let $(g_n(\omega))_{n\geq 0}$ be a sequence of independent standard complex Gaussian variables on a probability space $(\Omega, \mathcal{F},{\bf p})$.
We denote by $\mu$ the Gaussian measure induced by the mapping
$$
\omega\longmapsto \Psi(\om,x):=\sum_{n\geq 0}\frac{g_{n}(\omega)}{(\lambda^2_n+1)^{\frac{1}{2}}}\, \varphi_n(x)\,,
$$
and we can interpret $\mu$ as the Gibbs measure which is associated to the linear part of~\eqref{NLS0_bis}. One may see $\mu$ as a measure on $H^{-s}(M)$ for any fixed $s>0$, and we can  check that $\mu(L^2(M))=0$. Notice also that thanks to the invariance of the Gaussians under rotations the measure $\mu$ is
independent of the choice of the orthonormal basis  $(\varphi_n)_{n\geq 0}$. 

Now, if one wishes to define a Gibbs measure $\rho$ (which is a density measure w.r.t. $\mu$) associated to~\eqref{NLS0_bis}, namely corresponding to the Hamiltonian
\begin{equation*}
H(u)=\frac12\int_M(\vert \nabla u\vert^2+\vert   u\vert^2)+\frac14 \int_M\vert u\vert^4,
\end{equation*}
we have  $\int_M\vert u\vert^4=+\infty$ on the support of $\mu$. 
Therefore a suitable renormalization is needed.
For $u=\sum_{n\geq 0} c_n\varphi_n$, we set
$u_N:=\Pi_{N}u=\sum_{n\leq N} c_n\varphi_n$. Denote by
 \begin{equation}\label{defa}
\a_{N}=\int_{X^{0}(M)}\|u_{N}\|^{2}_{L^{2}(M)}\text{d}\mu(u)=\sum_{0\leq n\leq N}\frac{1}{1+\lambda^2_n}.
 \end{equation}
 Then we define  the renormalized Hamiltonian
\begin{equation*}
H_{N}(u)=\ \int_{M}(|\nabla u|^{2}+\vert u\vert^2) +\frac1{2}\int_{M}|\Pi_{N}u|^{4}-2\a_N\int_{M} \big|\Pi_{N}u\big|^{2}+\a_N^2,
\end{equation*}
and consider the equation 
\begin{equation*}
i\partial_{t}u=\frac{\delta H_{N}}{\delta \ov{u}},
\end{equation*}
which reads 
\begin{equation}\label{NLSN}
\left\{
\begin{aligned}
&i\partial_t u+(\Delta_g -1) u   = F_{N}(u_{N}),\;\;
(t,x)\in\R\times M,\\
&u(0,x)= f(x),
\end{aligned}
\right.
\end{equation}
 where $F_{N}$ stands for 
\begin{equation}\label{gauge_bis}
F_{N}(u_{N})=\Pi_{N}\big(|u_{N}|^{2}u_{N}\big)-2 \a_Nu_{N}.
\end{equation}
 Recall, that since the $L^{2}$ norm of~\eqref{NLSN} is preserved by the flow, one can recover the standard cubic nonlinearity with the change of function $\dis v_{N}(t)=u_{N}(t)\exp(-2\a_Nt)$.

Thanks to the gauge    transform in~\eqref{gauge_bis} we are able  to define the limit equation, when $N\longrightarrow +\infty$.  Set  $ \dis X^{0}(M)=\bigcap_{\s<0}H^{\s}(M)$, then
\begin{prop}\ph\label{prop.NL_bis}
 For all $p\geq 2$ and $\s>2$, the sequence $\big(F_{N}(u_{N})\big)_{N\geq1}$ is  Cauchy  in $L^{p}\big(X^{0}(M),\mathcal{B},d\mu; H^{-\s}(M)\big)$. Namely, for all $p\geq	 2$, there exist ${\eta>0}$ and $C>0$ so that for all $1\leq M<N$,
 \begin{equation*}
\int_{X^{0}(M)}\|F_{N}(u_{N})-F_{M}(u_{M})\|^{p}_{H^{-\s}(M)}\text{d}\mu(u)\leq \frac{C}{M^{\eta}}.
\end{equation*}
We denote by $F(u)$ the limit of this sequence.
\end{prop}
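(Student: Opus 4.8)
The plan is to follow the scheme of Proposition~\ref{prop.NL}, replacing the Fourier analysis available on the torus by spectral analysis of $-\Delta_g$ on $M$. Write $\Lambda=(1-\Delta_g)^{1/2}$, so that $\Lambda\varphi_n=(1+\lambda_n^2)^{1/2}\varphi_n$, and set $c_n=g_n/(1+\lambda_n^2)^{1/2}$, so that under $\mu$ a generic point is $u=\sum_n c_n\varphi_n$. Since every Fourier coefficient $(F_N(u_N)-F_M(u_M),\varphi_k)$ is a polynomial of degree $\leq3$ in the Gaussians $(g_n)$, it lies in the sum of the Wiener chaoses of order $\leq3$. By Gaussian hypercontractivity its $L^p(\dd\mu)$ and $L^2(\dd\mu)$ norms are equivalent up to a constant depending only on $p$; combining this with Minkowski's inequality applied to $\E\|\cdot\|^p_{H^{-\s}}=\big\|\sum_k\<\lambda_k\>^{-2\s}|(\cdot,\varphi_k)|^2\big\|_{L^{p/2}(\dd\mu)}^{p/2}$ (valid for $p\geq2$), the estimate for general $p$ reduces to the case $p=2$. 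It therefore suffices to prove $\int\|F_N(u_N)-F_M(u_M)\|^2_{H^{-\s}}\,\dd\mu\lesssim M^{-\eta}$.

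Next I would isolate the chaos components of $F_N$. Expanding $|u_N|^2u_N=\sum_{n_1,n_2,n_3\leq N}c_{n_1}\ov{c_{n_2}}c_{n_3}\,\varphi_{n_1}\ov{\varphi_{n_2}}\varphi_{n_3}$ and separating the contracted pairs ($n_1=n_2$ or $n_2=n_3$) from the genuinely third-order part gives
\begin{equation*}
F_N(u_N)=\Pi_N\big(\,:\!|u_N|^2u_N\!:\,\big)+2\,\Pi_N\big[(\Sigma_N-\alpha_N)\,u_N\big],\qquad \Sigma_N(x)=\sum_{n\leq N}\frac{|\varphi_n(x)|^2}{1+\lambda_n^2},
\end{equation*}
where $\alpha_N=\int_M\Sigma_N$ is precisely the renormalisation constant in~\eqref{gauge_bis}. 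Thus the subtraction $2\alpha_Nu_N$ cancels the spatial average of the resonant term, leaving a third chaos (the Wick cube) and a first chaos driven by the fluctuation $\Sigma_N-\alpha_N$ of the local spectral density around its mean. On the torus $|\varphi_n|^2\equiv1$ forces $\Sigma_N\equiv\alpha_N$ and only the cube survives; on a general $M$ the first-order term is a genuine extra contribution.

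For the second moments I would use Wick's theorem to rewrite each contribution as an integral against the truncated kernel $\Gamma_N(x,y)=\sum_{n\leq N}(1+\lambda_n^2)^{-1}\varphi_n(x)\ov{\varphi_n(y)}$ of $\Lambda^{-2}$ and the kernel $K_{2\s}$ of $\Lambda^{-2\s}$. For the cube one finds $\E\|\Pi_N(:\!|u_N|^2u_N\!:)\|^2_{H^{-\s}}\lesssim\int_M\int_M K_{2\s}(x,y)\,|\Gamma_N(x,y)|^2\,\Gamma_N(x,y)\,\dd x\,\dd y$. In dimension two $\Gamma_N$ converges to the logarithmically singular Green kernel of $\Lambda^2=1-\Delta_g$, which lies in every $L^p(M\times M)$ with $p<\infty$, while $K_{2\s}$ is bounded as soon as $2\s>\dim M=2$; hence the integral converges, and for the difference $N$ versus $M$ a telescoping of the cubic covariance produces a factor $\Gamma_N-\Gamma_M$, the spectral truncation to $M<n\leq N$, whose Hilbert–Schmidt norm squared is $\sum_{M<n\leq N}(1+\lambda_n^2)^{-2}\approx\sum_{n>M}n^{-2}\lesssim M^{-1}$ by the Weyl law $\lambda_n\approx n^{1/2}$. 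This yields the gain $M^{-\eta}$ for the third chaos.

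The \emph{main obstacle} is the first-order term, and it is where the arbitrary-domain hypothesis really bites. Its second moment equals $4\sum_{n\leq N}(1+\lambda_n^2)^{-1}\|(\Sigma_N-\alpha_N)\varphi_n\|^2_{H^{-\s}}$, so one must both control the deterministic zero-mean density $\Sigma_N-\alpha_N$ and exploit that multiplying it by the high eigenmode $\varphi_n$ pushes the output to frequencies $\gtrsim\lambda_n$, so that $\|(\Sigma_N-\alpha_N)\varphi_n\|_{H^{-\s}}$ carries a factor decaying in $\lambda_n^{-\s}$ needed to sum the otherwise logarithmically divergent series $\sum_n(1+\lambda_n^2)^{-1}$. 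On the torus both issues are vacuous; on a general $M$ there is no frequency conservation, the sharp convolution gains are lost, and one must invoke precise spectral asymptotics instead. Concretely, the pointwise Weyl law shows that the logarithmically divergent part of $\Sigma_N(x)=\Gamma_N(x,x)$ has an $x$-independent leading coefficient, so it cancels against $\alpha_N$, and a quantitative Weyl remainder then gives convergence of $\Sigma_N-\alpha_N$ with a rate. Feeding this back, together with the frequency-shift gain and the dyadic localisation of the tail $M<n\leq N$, closes the first-order estimate with the required $M^{-\eta}$; collecting the two contributions proves the claim, the hypothesis $\s>2$ leaving exactly the room consumed by these lossy, geometry-independent bounds.
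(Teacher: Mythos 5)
Your overall architecture coincides with the paper's: the reduction to $p=2$ by hypercontractivity, and the splitting $F_N(u_N)=\Pi_N\big(:\!|u_N|^2u_N\!:\big)+2\Pi_N\big[(\Sigma_N-\alpha_N)u_N\big]$ is exactly the paper's decomposition $\chi_N=\Sigma_1+\Sigma_{21}-\Sigma_3+\Sigma_{23}$, your deterministic fluctuation $\Sigma_N-\alpha_N$ being the paper's $K_N$ from~\eqref{decomp2}. Your kernel treatment of the Wick cube is a repackaging of the paper's Lemma~\ref{key}: the quantity $\iint K_{2\s}\,|\Gamma_N|^2\Gamma_N$ is precisely the weighted sum of $|\gamma(n_1,n_2,n_3,n_4)|^2/\prod_i(1+\lambda_{n_i}^2)$ estimated there. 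One caveat on this part: you justify finiteness by saying the \emph{limiting} Green kernel lies in every $L^p$, but what the argument needs is an $L^3$/$L^4$ bound on the truncated kernels $\Gamma_N$ \emph{uniform in $N$}; since sharp spectral projectors are not uniformly bounded on $L^p$ on a general $M$, this does not follow from properties of the limit, and must be proved by the dyadic decomposition plus the Sogge cluster bound (Proposition~\ref{prop.2}) — i.e.\ by re-proving Lemma~\ref{key}. That gap is fillable.

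The genuine gap is in the first-chaos term. You reduce it (correctly) to $4\sum_{n\leq N}(1+\lambda_n^2)^{-1}\|(\Sigma_N-\alpha_N)\varphi_n\|^2_{H^{-\s}}$, and then propose to sum \emph{term by term} using a ``frequency-shift gain'': multiplication by $\varphi_n$ pushes the product to frequencies $\gtrsim\lambda_n$, producing decay in $\lambda_n$. As you yourself concede two sentences later, there is no frequency conservation on a general $M$: the product $K_N\varphi_n$ has genuine low-frequency components (its coefficient on $\varphi_n$ itself is $\int_M K_N|\varphi_n|^2$, which has no reason to decay absent equidistribution of $|\varphi_n|^2$), and $K_N$ is not smooth uniformly in $N$, so the commutator/integration-by-parts mechanism that would normally yield such a gain is unavailable. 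Without it, your term-by-term bound gives at best $\sum_n(1+\lambda_n^2)^{-1}\|K_N\|_{L^4}^2\|\varphi_n\|_{L^4}^2$, which diverges, so the estimate does not close; invoking the gain anyway makes the proposal internally inconsistent at its key step. The paper's proof avoids any pointwise-in-$n$ gain by summing over $n$ \emph{first}, via Bessel's inequality: for each fixed $k$,
\begin{equation*}
\sum_{n\leq N}\frac{1}{1+\lambda_n^2}\Big|\int_M (K_N-K_M)\varphi_n\ov{\varphi_k}\Big|^2
\leq \sum_{n}\Big|\big\<\varphi_n,(K_N-K_M)\varphi_k\big\>\Big|^2
\leq \int_M (K_N-K_M)^2|\varphi_k|^2
\leq C\<k\>\int_M(K_N-K_M)^2,
\end{equation*}
using $\|\varphi_k\|^2_{L^\infty}\lesssim \lambda_k^2\sim\<k\>$ from~\eqref{sog}; the factor $\<k\>^{-\s}\<k\>$ is then summable precisely because $\s>2$ (this is where the hypothesis is consumed, not in the cube), and what remains, $\int_M(K_N-K_M)^2$, is controlled with a polynomial rate by the quantitative Weyl asymptotics of Propositions~\ref{WW1} and~\ref{ww}, as in~\eqref{equiibis}. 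Your appeal to the pointwise Weyl law is the right ingredient for that last integral, but it cannot substitute for the orthogonality argument; as written, your first-order estimate fails.
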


We now   consider the  equation
\begin{equation}\label{NLS}
\left\{
\begin{aligned}
&i\partial_t u +(\Delta_g -1)  u   =   F(u),\;\;
(t,x)\in\R\times M,\\
&u(0,x)= f(x).
\end{aligned}
\right.
\end{equation}
 We now define a Gibbs measure for~\eqref{NLS} as a limit of Gibbs measures for~\eqref{NLSN}. 
Set 
\begin{equation}\label{def.fn}
f_N(u)=\frac1{2}\int_{M}|\Pi_{N}u|^{4}-2\a_N\int_{M} \big|\Pi_{N}u\big|^{2}+\a_N^2,
\end{equation}
and consider the measure
 \begin{equation*}
\text{d}\rho_{N}(u)=C_{N} \e^{- f_N(u)}\text{d}\mu(u),
\end{equation*}
where $C_{N}>0$ is chosen so that $\rho_{N}$ is a probability measure. Then

 \begin{theo}\ph\label{gib}
Let us fix $1\leq p<\infty$.
The sequence $(f_{N}(u))_{N\geq 1}$ converges in $L^p(d\mu(u))$ to some limit denoted by $f(u)$.
Moreover
$$
e^{-f(u)}\in L^p(d\mu(u))\,.
$$
Therefore, we can  define a probability measure on $X^{0}(M)$ by
\begin{equation}\label{def.rho_bis}
\text{d}\rho(u)= C_{\infty}e^{-f(u)}\text{d}\mu(u).
\end{equation}

\end{theo}
The result of Theorem~\ref{gib} is not new. One may find a proof of it in the book by B.~Simon~\cite[page~229]{Simon}.
The approach in~\cite{Simon} is using the control of the singularity on the diagonal of the Green function associated with $\Delta_g^{-1}$. 
Here we present a slightly different proof based on spectral consideration via the Weyl asymptotics. 
We decided to include this proof since the argument is in the spirit of the analysis of the other models considered in this article. \medskip

We are able to use the measure $\rho$ defined in~\eqref{def.rho_bis} to define global dynamics for the equation~\eqref{NLS}.
  \begin{theo}\ph\label{thmNLS_bis} 
There exists a set $\Sigma$ 
of full $\mu$ measure   so that for every $f\in \Sigma$ the
equation~\eqref{NLS} with 
initial condition $u(0)=f$ has a   solution 
\begin{equation*}
u\in \mathcal{C}\big(\R\,; X^{0}(M) \big).
\end{equation*}
For all $t\in \R$, the distribution of the random variable $u(t)$ is $\rho$. 
\end{theo}
\begin{rema}
Another choice of renormalization procedure, namely defining
\begin{equation}\label{gauge_ter}
\widetilde{F}_{N}(u_{N})=\Pi_{N}\big(|u_{N}|^{2}u_{N}\big)-2\| u_N\|_{L^2}^2u_{N}.
\end{equation}
would lead to another limit equation 
$$i\partial_t u +(\Delta_g -1)  u   =   \widetilde{F}(u),$$
with 
$$ \widetilde{F}(u) = \lim_{N\rightarrow + \infty } \widetilde {F}_N(u)\;  "=" \;|u|^2u- 2\|u\|_{L^2}^2u,$$
which is slightly more natural as if $v$ is a ($L^2$-bounded) solution of 
$$ i\partial_t v +(\Delta_g -1)  v= |v|^2 v, $$
then $ u= e^{-2it\| v\|_{L^2}^2 } v$ satisfies 
$$ i\partial_t u +(\Delta_g -1) u= \widetilde{F} (u).$$
However, this renormalization would require another cut-off (see Section~\ref{Sect.7}) because the main contribution of the potential energy in the Hamiltonian is no longer positive.
Finally, yet another renormalization is possible: by the Weyl formula~\eqref{spect} we know $\dis \a_N =\frac{1}{4\pi}\ln N + C+{o}(1)$, and it is easy to check that  it is possible to replace $\a_N$ by its equivalent in the definition of $H_N(u)$ and in~\eqref{gauge_bis}. In this case, the renormalisation does not depend on $M$ (but only on its volume which if fixed to $1$).
\end{rema}
\begin{rema} In Theorems~\ref{gib},~\ref{thmNLS_bis}, the sign of the nonlinearity (defocusing) plays key  a role and we do not know how to define a probability Gibbs measure if $F$ is replaced by $-F$.
\end{rema}

Let us recall some deterministic results on the nonlinear cubic Schr\"odinger equation on a compact surface. The equation is locally well-posed in $H^{s}$ for all $s>0$ when $M=\mathbb{T}^{2}$ (Bourgain \cite{Bou}), in $H^{s}$ for all $s>1/4$ when $M=\S^{2}$ and $H^{s}$ for all $s>1/2$ in the case of a general surface $M$ without boundary (Burq-G\'erard-Tzvetkov \cite{BGT3}).  In the case of a surface with boundary, the equation is locally well-posed in   $H^{s}$ for all $s>2/3$ (Blair-Smith-Sogge \cite{BSS}). In each of the previous cases, thanks to an interpolation argument, S.~Zhong \cite{Zhong} has shown that the (defocusing) equation is globally well-posed in $H^{s}$ for some $1-\delta <s<1$ with $\delta>0$ sufficiently small.

\subsection{Notations and structure of the paper} 

\begin{enonce*}{Notations}
 In this paper $c,C>0$ denote constants the value of which may change
from line to line. These constants will always be universal, or uniformly bounded. For $n\in \Z$, we write $\<n\>=(1+|n|^{2})^{1/2}$. In Section~\ref{Sect.7} we use the notation $[n]=1+|n|$, while in Section~\ref{Sect.8} we write $[n]=1+\lambda^{2}_{n}$.
We will sometimes use the notations $L^{p}_{T}=L^{p}(-T,T)$ for $T>0$. For a manifold $M$, we write $L^{p}_{x}=L^{p}(M)$ and for  $s\in \R$ we define the   Sobolev space $H_{x}^{s}=H^{s}(M)$  by the norm  $\|u\|_{H_{x}^{s}}=\|\big(1-\Delta\big)^{s/2}u\|_{L^{2}(M)}$.  If $E$ is a Banach space and  $\mu$ is a measure on $E$, we write  $L^{p}_{\mu}=L^{p}(\text{d}\mu)$ and $\|u\|_{L^{p}_{\mu}E}=\big\|\|u\|_{E}\big\|_{L^{p}_{\mu}}$. For $M$ a manifold, we define $X^{\s}(M)=\bigcap_{\tau<\s} H^{\tau}(M)$, and if $I\subset \R$ is an interval,  $\mathcal{C}\big(I; X^{\s}(M)\big)=\bigcap_{\tau<\s}\mathcal{C}\big(I; H^{\tau}(M)\big)$. If $X$ is a random variable, we denote by $\mathscr{L}(X)$ its law (its distribution).
\end{enonce*}

The rest of the paper is organised as follows. In Section~\ref{Sect.2} we recall the Prokhorov and the Skorokhod theorems which  are the crucial tools for the proof of our results. In  Section~\ref{Sect.3} we present the general strategy for the construction of the weak stochastic solutions. Each of the remaining sections is devoted to a different  equation.
 
\begin{acknowledgements}
The authors want to thank Arnaud Debussche for pointing out the reference~\cite{DPD}. The second author is very grateful to Philippe Carmona for many clarifications on measures. We also benefitted from discussions with Christian G\'erard and Igor Chueshov. 
\end{acknowledgements}

 \section{The Prokhorov and Skorokhod theorems}
 \label{Sect.2}
In this section, we state two basic results, concerning the convergence of random variables. To begin with,  recall the following definition  (see {\it e.g.}~\cite[page 114]{KoraSinai})
\begin{defi}
Let $S$ be a metric space and $(\rho_{N})_{N\geq 1}$ a family of probability measures on the Borel $\sigma-$algebra $\mathcal{B}(S)$. The family $(\rho_{N})$ on $(S,\mathcal{B}(S))$ is said to be tight if for any $\eps>0$ one can find a compact set $K_{\eps}\subset S$ such that $\rho_{N}(K_{\eps})\geq 1-\eps$ for all $N\geq 1$.
\end{defi}
Then, we have the following compactness criterion (see {\it e.g.}~\cite[page 114]{KoraSinai} or~\cite[page 309]{Kallenberg})
\begin{theo}[Prokhorov]
Assume that the family $(\rho_{N})_{N\geq 1}$ of probability measures on the metric space $S$ is tight. Then it is  weakly compact, {\it i.e.} there is a subsequence 
$(N_k)_{k\geq 1}$ and a limit measure~$\rho_{\infty}$ such that
for every bounded continuous function
$f:S\rightarrow \R$,
$$
\lim_{k\rightarrow\infty}\int_{S}f(x)d\rho_{N_k}(x)=\int_{S}f(x)d\rho_{\infty}(x).
$$
\end{theo}
In fact, the Prokhorov theorem is stronger: In the case where the space $S$ is separable and complete, the converse of the previous statement holds true, 
but we will not use this here.

\begin{rema}\ph\label{marker} Let us make a remark on the case $S=\R^{n}$. The measure given by the theorem allows mass concentration in a point and the tightness condition forbids the escape of mass to infinity. 

The Prokhorov theorem is of different nature compared to the compactness theorems giving the deterministic weak solutions: In the latter case there can be a loss of energy (as mentioned below~\eqref{espace}). A weak limit of $L^{2}$ functions may lose some mass whereas in the Prokhorov theorem  a limit measure is a probability measure.
\end{rema}
We now state the Skorokhod theorem

\begin{theo}[Skorokhod]
Assume that $S$ is a separable metric space. Let $(\rho_{N})_{N\geq 1}$ and $\rho_{\infty}$ be probability measures on  $S$. Assume that $\rho_{N}\longrightarrow \rho_{\infty}$ weakly. 
Then there exists a probability space on which there are $S-$valued random variables $(X_{N})_{N\geq 1}$, $X_{\infty}$ such that $\mathcal{L}(X_{N})=\rho_{N}$ for all $N\geq 1$, $\mathcal{L}(X_{\infty})=\rho_{\infty}$  and $X_{N}\longrightarrow X_{\infty}$ a.s.  
\end{theo}
For a proof, see {\it e.g.}~\cite[page 79]{Kallenberg}.  We illustrate this result with two elementary but significant examples:
\begin{itemize}
\item  Assume that $S=\R$. Let $(X_{N})_{1\leq N\leq \infty}$ be standard Gaussians, {\it i.e.}  $\mathcal{L}(X_{N})=\mathcal{L}(X_{\infty})=\mathcal{N}_{\R}(0,1)$. Then the convergence in law obviously holds, but in general we can not expect the almost sure convergence of the $X_{N}$ to $X_{\infty}$ (define for example $X_{N}=(-1)^{N}X_{\infty}$).
\item  Assume that $S=\R$. Let $(Y_N)_{1\leq N\leq \infty}$ be random variables.
 For any random variable $Y$ on $\R$ we denote by $F_{Y}(t)=P(Y\leq t)$ its cumulative distribution function.   Here  we assume that for all $1\leq N\leq \infty$, 
  $F_{Y_{N}}$ is bijective and continuous, and we prove the Skorokhod theorem in this case.                 
Let $X$ be a r.v. so that $\mathcal{L}(X)$ is the uniform distribution on $[0,1]$ and define the r.v. $\wt{Y}_{N}=F^{-1}_{Y_{N}}(X)$. We now check that the $\wt{Y}_{N}$ satisfy the conclusion of the theorem. To begin with, 
\begin{equation*}
F_{\wt{Y}_{N}}(t)=P(\wt{Y}_{N}\leq t)=P(X\leq F_{{Y}_{N}}(t))=F_{Y_{N}}(t),
\end{equation*}
therefore we have for $1\leq N\leq \infty$, $\mathcal{L}(Y_{N})=\mathcal{L}(\wt{Y}_{N})$. Now if we assume that $Y_{N}\longrightarrow Y_{\infty}$ in law, we have for all $t\in \R$, $F_{Y_{N}}(t)\longrightarrow F_{Y_{\infty}}(t)$ and in particular $\wt{Y}_{N}\longrightarrow \wt{Y}_{\infty}$ almost surely.
\end{itemize}

 \section{General strategy}\label{Sect.3}
Let $({\Omega, \mathcal{F},{\bf p}})$ be a probability space and $\big(g_{n}(\om)\big)_{n\geq 1}$ a sequence of independent complex normalised Gaussians, $g_{n}\in \mathcal{N}_{\C}(0,1)$.  Let $M$ be a Riemanian compact manifold and let $(e_{n})_{n \geq 1}$ be an Hilbertian basis of $L^{2}(M)$ (with obvious changes, we can allow $n\in \Z$). Consider one of the equations mentioned in the introduction. Denote by 
\begin{equation*}
X^{\s}=X^{\s}(M)=\bigcap_{\tau<\s}H^{\tau}(M).
\end{equation*}

\subsection{General strategy of the proof}
The general strategy for proving a global existence result is the following:\medskip

{\bf Step 1: The Gaussian measure  $ \boldsymbol \mu$}: We define a measure $\mu$ on $X^{\s}(M)$ which is invariant by the flow of the linear part of the equation. The index $\s_{c} \in \R$ is determined by the equation and the manifold $M$. Indeed this measure can be defined as $\mu={\p}\circ \phi^{-1}$, where $\phi \in L^2\big(\Omega;\,H^{\sigma}(M)\big)$ for all~$\s<\s_{c}$ is a Gaussian random variable which takes the form 
\begin{equation*}
\phi(\om,x)=\sum_{n\geq 1}\frac{g_{n}(\om)}{\lambda_{n}}e_{n}(x).
\end{equation*}
Here the  $(\lambda_{n})$ satisfy $\lambda_{n}\sim cn^{\alpha}$, $\a>0$ and are given by the linear part and the Hamiltonian structure of the equation. Notice in particular  that for all measurable $f: X^{\s_{c}}(M)  \longrightarrow \R $
\begin{equation}\label{A}
\int_{  X^{\s_{c}}(M)}f(u)\text{d}\mu(u) =\int_{\Omega} f\big(\phi(\om,\cdot)\big)\text{d}{\bf p}(\om).
\end{equation}

{\bf Step 2: The invariant measure $ \boldsymbol{ \rho_{N}}$}:
By working on the Hamiltonian formulation of the equation, we introduce an approximation of the initial problem which has a global flow $\Phi_{N}$, and for which we can construct a measure $\rho_{N}$ on $X^{\s_{c}}(M)$ which has the following properties 
\begin{enumerate}[(i)]
\item The measure $\rho_{N}$ is a probability measure which is absolutly continuous with respect to $\mu$
\begin{equation*}
\text{d}\rho_{N}(u)=\Psi_{N}(u)\text{d}\mu(u).
\end{equation*}
\item The measure $\rho_{N}$ is invariant by the flow $\Phi_{N}$ by the Liouville theorem. 
\item There exists $\Psi\not \equiv 0$ such that for all $p\geq 2$, $\Psi(u)\in L^{p}(\text{d}\mu)$  and 
\begin{equation*}
\Psi_{N}(u) \longrightarrow \Psi(u),\quad \text{in}\quad L^{p}(\text{d}\mu).
\end{equation*}
(In particular $\|\Psi_{N}(u)\|_{L^{p}_{\mu}}\leq C$ uniformly in $N\geq1$.) This enables to define a probability measure on $X^{\s_{c}}(M)$ by
\begin{equation*}
\text{d}\rho(u)=\Psi(u)\text{d}\mu(u),
\end{equation*}
which is formally invariant by the equation.
\end{enumerate}

{\bf Step 3: The  measure $\boldsymbol {\nu_{N}}$}: We abuse notation and write 
\begin{equation*}
  \mathcal{C}\big([-T,T]; X^{\s_{c}}(M)\big)=\bigcap_{\s<\s_c} \mathcal{C}\big([-T,T]; H^{\s}(M)\big).
\end{equation*}

We denote by $\nu_{N} ={\rho_{N}}\circ \Phi_{N}^{-1}$ the measure on $\mathcal{C}\big([-T,T]; X^{\s_{c}}(M)\big)$, defined as the image measure of~$\rho_{N}$ by the map 
\begin{equation*}
 \begin{array}{rcc}
X^{\s_{c}}(M)&\longrightarrow& \mathcal{C}\big([-T,T]; X^{\s_{c}}(M)\big)\\[3pt]
\dis  v&\longmapsto &\dis \Phi_{N}(t)(v).
 \end{array}
 \end{equation*}
In particular, for any measurable $F: \mathcal{C}\big([-T,T]; X^{\s_{c}}(M)\big) \longrightarrow \R $
\begin{equation}\label{transp}
\int_{  \mathcal{C}\big([-T,T]; X^{\s_{c}}\big)}F(u)\text{d}\nu_{N}(u) =\int_{X^{\s_{c}}} F\big(\Phi_{N}(t)(v)\big)\text{d}\rho_{N}(v).
\end{equation}

For each model we consider,  we show that the corresponding sequence of measures $(\nu_{N})$ is tight in $\mathcal{C}\big([-T,T]; H^{\s}(M)\big)$ for all $\s<\s_{c}$. Therefore, for all $\s<\s_{c}$, by the Prokhorov theorem, there exists a measure $\nu_{\s}=\nu$ on ~$\mathcal{C}\big([-T,T]; H^{\s}(M)\big)$ so that the weak convergence holds (up to a sub-sequence): For all  $\s <\s_{c}$ and all bounded continuous   $F: \mathcal{C}\big([-T,T]; H^{\s}(M)\big) \longrightarrow \R $
$$
\lim_{N\rightarrow\infty}\int_{\mathcal{C}\big([-T,T]; H^{\s}\big) }F(u)\text{d}\nu_{N}(u)=\int_{\mathcal{C}\big([-T,T]; H^{\s}\big) }F(u)\text{d}\nu(u).
$$
At this point, observe that if $\s_1<\s_2$, then $\nu_{\s_1}\equiv \nu_{\s_2}$ on $\mathcal{C}\big([-T,T]; H^{\s_1}(M)\big)$. Moreover, by the standard diagonal argument, we can ensure that $\nu$ is a measure on $\mathcal{C}\big([-T,T]; X^{\s_c}(M)\big)$. 

Finally, with the Skorokhod theorem, we can construct a sequence of random variables which converges to a solution of the  initial problem. \medskip

We now state a result which will be useful in the sequel. Assume that $\rho_{N}$ satisfies the properties mentioned in Step 2. 
\begin{prop}\ph\label{Prop.Fonda}
 Let $\s<\s_{c}$. Let  $p\geq2$ and $r>p$. Then for all $N\geq 1$
\begin{equation}\label{Bor1}
\big\|\|u\|_{L^{p}_{T}H^{\s}_{x}}\big\|_{L^{p}_{\nu_{N}}}\leq CT^{1/p}  \big\|\|v\|_{H^{\s}_{x}}\big\|_{L^{r}_{\mu}}.
\end{equation}
Let $q\geq 1$,  $p\geq 2$ and  $r>p$. Then for all $N\geq 1$
\begin{equation}\label{Bor2}
\big\|\|u\|_{L^{p}_{T}L^{q}_{x}}\big\|_{L^{p}_{\nu_{N}}}\leq CT^{1/p}  \big\|\|v\|_{L^{q}_{x}}\big\|_{L^{r}_{\mu}}.
\end{equation}
In case $\Psi_{N}\leq C$, one can take $r=p$ in the previous inequalities.
\end{prop}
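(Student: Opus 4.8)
The plan is to exploit the transport identity \eqref{transp} to move the computation from the path space $\mathcal{C}\big([-T,T];X^{\s_{c}}\big)$ back to the fixed-time space $X^{\s_{c}}$, then use the absolute continuity $\text{d}\rho_{N}=\Psi_{N}\text{d}\mu$ together with H\"older's inequality in the measure variable to replace $\rho_{N}$ by $\mu$. First I would treat \eqref{Bor1}. Applying \eqref{transp} with the functional $F(u)=\|u\|_{L^{p}_{T}H^{\s}_{x}}^{p}$ gives
\begin{equation*}
\big\|\|u\|_{L^{p}_{T}H^{\s}_{x}}\big\|_{L^{p}_{\nu_{N}}}^{p}
=\int_{X^{\s_{c}}}\|\Phi_{N}(t)(v)\|_{L^{p}_{T}H^{\s}_{x}}^{p}\,\text{d}\rho_{N}(v)
=\int_{X^{\s_{c}}}\Big(\int_{-T}^{T}\|\Phi_{N}(t)(v)\|_{H^{\s}_{x}}^{p}\,\text{d}t\Big)\text{d}\rho_{N}(v).
\end{equation*}
By Fubini's theorem I would swap the time integral and the $\rho_{N}$ integral, and then for each fixed $t$ use the invariance of $\rho_{N}$ under the flow $\Phi_{N}$ (property (ii) of Step 2): since $\rho_{N}=\rho_{N}\circ\Phi_{N}(t)^{-1}$, the inner integral $\int_{X^{\s_{c}}}\|\Phi_{N}(t)(v)\|_{H^{\s}_{x}}^{p}\,\text{d}\rho_{N}(v)$ equals $\int_{X^{\s_{c}}}\|v\|_{H^{\s}_{x}}^{p}\,\text{d}\rho_{N}(v)$, independently of $t$. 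This collapses the time integral to a factor $2T$ and leaves a single fixed-time integral against $\rho_{N}$.

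The next step is to pass from $\rho_{N}$ to $\mu$. Writing $\text{d}\rho_{N}=\Psi_{N}\text{d}\mu$ and applying H\"older's inequality with exponents $r/p$ and its conjugate,
\begin{equation*}
\int_{X^{\s_{c}}}\|v\|_{H^{\s}_{x}}^{p}\,\Psi_{N}(v)\,\text{d}\mu(v)
\leq \Big(\int_{X^{\s_{c}}}\|v\|_{H^{\s}_{x}}^{r}\,\text{d}\mu(v)\Big)^{p/r}
\big\|\Psi_{N}\big\|_{L^{(r/p)'}_{\mu}}.
\end{equation*}
Here the condition $r>p$ is exactly what makes $r/p>1$ so that the conjugate exponent is finite, and property (iii) of Step 2 guarantees $\|\Psi_{N}\|_{L^{q}_{\mu}}\leq C$ uniformly in $N$ for every $q\geq 1$, in particular for $q=(r/p)'$. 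Taking $p$-th roots then yields \eqref{Bor1} with the constant $CT^{1/p}$ absorbing the factor $(2T)^{1/p}$ and the uniform bound on $\|\Psi_{N}\|_{L^{(r/p)'}_{\mu}}$. The final clause, that one may take $r=p$ when $\Psi_{N}\leq C$, is immediate: in that case one bounds $\int\|v\|^{p}_{H^{\s}_{x}}\Psi_{N}\,\text{d}\mu\leq C\int\|v\|^{p}_{H^{\s}_{x}}\,\text{d}\mu$ directly, with no need for the H\"older splitting. The estimate \eqref{Bor2} is proved verbatim, replacing the norm $\|\cdot\|_{H^{\s}_{x}}$ by $\|\cdot\|_{L^{q}_{x}}$ throughout; nothing in the argument used the specific structure of $H^{\s}_{x}$ beyond its being a norm evaluated at the fixed-time profile.

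I do not expect a genuine obstacle here, as the proposition is essentially a bookkeeping consequence of the three ingredients already assembled in Step 2, namely absolute continuity, flow invariance of $\rho_{N}$, and the uniform $L^{p}_{\mu}$ bound on the densities. The one point requiring a little care is the interchange of the time integration with the spatial/measure integrations, which is justified by Fubini once one knows the integrand is nonnegative and measurable; the measurability of $(t,v)\mapsto\|\Phi_{N}(t)(v)\|_{H^{\s}_{x}}$ follows from the continuity in time of the approximate flow $\Phi_{N}$ on $X^{\s_{c}}$. The other subtle bookkeeping item is tracking that the uniformity of the bound in $N$ comes entirely from the uniform bound on $\|\Psi_{N}\|_{L^{q}_{\mu}}$, so the estimate holds for all $N\geq 1$ with a single constant, which is precisely what is needed for the subsequent tightness argument in Step 3.
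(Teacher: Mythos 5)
Your proof is correct and follows essentially the same route as the paper's: the transport identity, Fubini, invariance of $\rho_{N}$ under $\Phi_{N}$ to collapse the time integral into a factor $2T$, and then H\"older with exponents $r/p$ and its conjugate to pass from $\rho_{N}$ to $\mu$ using the uniform $L^{q}_{\mu}$ bound on $\Psi_{N}$. The handling of the $r=p$ case under $\Psi_{N}\leq C$ and the verbatim extension to \eqref{Bor2} also match the paper's argument.
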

\begin{proof}
We apply~\eqref{transp} with the function  $u\longmapsto F(u)=\|u\|^{p}_{L^{p}_{T}H^{\s}_{x}}$. Here and after, we make the abuse of notation
\begin{equation*}
\big\|\|u\|_{L^{p}_{T}H^{\s}_{x}}\big\|_{L^{p}_{\nu_{N}}}=\|u\|_{L^{p}_{\nu_{N}}L^{p}_{T}H^{\s}_{x}}.
\end{equation*}
Then 
\begin{eqnarray}\label{mar1}
\|u\|^{p}_{L^{p}_{\nu_{N}}L^{p}_{T}H^{\s}_{x}}&=&\int_{  \mathcal{C}\big([-T,T]; X^{\s_{c}}\big)}\|u\|^{p}_{L^{p}_{T}H^{\s}_{x}}\text{d}\nu_{N}(u)\nonumber\\
&=&\int_{X^{\s_{c}}} \|\Phi_{N}(t)(v)\|^{p}_{L^{p}_{T}H^{\s}_{x}} \text{d}\rho_{N}(v)\nonumber\\
&=&\int_{X^{\s_{c}}} \Big[\int_{-T}^{T}\|\Phi_{N}(t)(v)\|^{p}_{H^{\s}_{x}} \text{d}t \Big]\text{d}\rho_{N}(v)\nonumber\\
&=&\int_{-T}^{T} \Big[ \int_{X^{\s_{c}}}\|\Phi_{N}(t)(v)\|^{p}_{H^{\s}_{x}} \text{d}\rho_{N}(v)  \Big]\text{d}t,
\end{eqnarray}
where in the last line we used Fubini. Now we use the invariance of $\rho_{N}$ under $\Phi_{N}$, and we deduce that for all $t\in [-T,T]$
\begin{equation*}
 \int_{X^{\s_{c}}}\|\Phi_{N}(t)(v)\|^{p}_{H^{\s}_{x}} \text{d}\rho_{N}(v) = \int_{X^{\s_{c}}}\|v\|^{p}_{H^{\s}_{x}} \text{d}\rho_{N}(v). 
\end{equation*}
Therefore, from~\eqref{mar1} and H\"older we obtain with $1/r_{1}+1/r_{2}=1$
\begin{eqnarray*}
\|u\|^{p}_{L^{p}_{\nu_{N}}L^{p}_{T}H^{\s}_{x}}&=&2T \int_{X^{\s_{c}}}\|v\|^{p}_{H^{\s}_{x}} \text{d}\rho_{N}(v)\\
&=&2T \int_{X^{\s_{c}}}\|v\|^{p}_{H^{\s}_{x}}\Psi_{N}(v) \text{d}\mu(v)\\
&\leq&C\|v\|^{p}_{L^{pr_{1}}_{\mu}H^{\s}_{x}}\|\Psi_{N}(v)\|_{L^{r_{2}}_{\mu}}.
\end{eqnarray*}
Now, let $r>p$, take $r_{1}=r/p$ and we can conclude since $\Psi_{N}\in L^{r_{2}}(\text{d}\mu)$. \\
For the proof of~\eqref{Bor2}, we proceed similarly. We take $F(u)=\|u\|^{p}_{L^{p}_{T}L^{q}_{x}}$ in~\eqref{transp}, and use the same arguments    as previously.
\end{proof}

\subsection{Some deterministic estimates} We now state an   interpolation result, which will be useful for the study of each model. Consider  $(e_{n})_{n\geq 1}$   a Hilbertian basis of $L^{2}=L^{2}(M)$ of eigenfunctions of~$\Delta$: 
\begin{equation*}
-{\Delta} e_{n}=\lambda^{2}_{n}e_{n}, \quad n\geq 1.
\end{equation*}
For $\dis u=\sum_{n\geq 1}\a_{n}e_{n}$, we define the spectral projector
\begin{equation*}
\Delta_{j}u=\sum_{n\geq 1\;:\; 2^{j}\leq \<\lambda_{n}\><2^{j+1}}\alpha_{n}e_{n},
\end{equation*}
so that we have $\dis u=\sum_{j\geq 0} \Delta_{j}u$ and for $\s\in \R$
\begin{equation*}
\dis C_{1} 2^{j\s}\|\Delta_{j}u\|_{L^{2}}\leq \|\Delta_{j}u\|_{H^{\s}(M)}\leq C_{2} 2^{j\s}\|\Delta_{j}u\|_{L^{2}}.
\end{equation*}
Define the space $W_{T}^{1,p}$ by the norm $\|u\|_{W_{T}^{1,p}}=\|u\|_{L_{T}^{p}}+\|\partial_{t}u\|_{L_{T}^{p}}$. Then
\begin{lemm}\ph\label{lemm.interp}
Let $T>0$ and $p\in[1,+\infty]$. Assume that $u\in  L^{p}\big([-T,T]; L^{2}\big)$ and $\partial_{t}u\in  L^{p}\big([-T,T]; L^{2}\big)$. Then $u\in  L^{\infty}\big([-T,T]; L^{2}\big)$ and 
\begin{equation*}
\|u\|_{L^{\infty}_{T}L^{2}}\leq C \|u\|^{1-1/p}_{L^{p}_{T}L^{2}} \| u\|^{1/p}_{W_{T}^{1,p}L^{2}}.
\end{equation*}
\end{lemm}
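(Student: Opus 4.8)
The plan is to prove this as a standard vector-valued interpolation (Gagliardo--Nirenberg in the time variable), the engine being the fundamental theorem of calculus combined with H\"older's inequality. The case $p=\infty$ is trivial since the exponent $1/p$ then vanishes and the asserted bound reduces to $\|u\|_{L^{\infty}_{T}L^{2}}\leq C\|u\|_{L^{\infty}_{T}L^{2}}$, so I assume $p<\infty$ throughout.

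First I would record that the hypotheses $u\in L^{p}([-T,T];L^{2})$ and $\partial_{t}u\in L^{p}([-T,T];L^{2})$ say exactly that $u$ belongs to the Bochner--Sobolev space $W^{1,p}([-T,T];L^{2})$; since $p\geq 1$, $u$ admits an absolutely continuous representative with $u(t)-u(s)=\int_{s}^{t}\partial_{\tau}u\,\dd\tau$ in $L^{2}$. Consequently the scalar function $\psi(\tau):=\|u(\tau)\|_{L^{2}}^{p}$ is absolutely continuous (the map $v\mapsto\|v\|_{L^{2}}^{p}$ is locally Lipschitz on $L^{2}$ for $p\geq 1$), and for a.e. $\tau$ one has, by Cauchy--Schwarz, $|\psi'(\tau)|\leq p\,\|u(\tau)\|_{L^{2}}^{p-1}\|\partial_{\tau}u(\tau)\|_{L^{2}}$.

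Next I would integrate: for any $s,t\in[-T,T]$,
\[
\|u(t)\|_{L^{2}}^{p}\leq \|u(s)\|_{L^{2}}^{p}+p\int_{-T}^{T}\|u(\tau)\|_{L^{2}}^{p-1}\|\partial_{\tau}u(\tau)\|_{L^{2}}\,\dd\tau,
\]
and bound the integral by H\"older with exponents $p/(p-1)$ and $p$, obtaining $\|u\|_{L^{p}_{T}L^{2}}^{p-1}\,\|\partial_{t}u\|_{L^{p}_{T}L^{2}}$. To control the boundary term I would choose the base point $s$ well: by the mean value / pigeonhole principle there exists $s\in[-T,T]$ with $\|u(s)\|_{L^{2}}^{p}\leq\frac{1}{2T}\int_{-T}^{T}\|u(\tau)\|_{L^{2}}^{p}\,\dd\tau=\frac{1}{2T}\|u\|_{L^{p}_{T}L^{2}}^{p}$. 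Taking the supremum over $t$ then yields
\[
\|u\|_{L^{\infty}_{T}L^{2}}^{p}\leq \|u\|_{L^{p}_{T}L^{2}}^{p-1}\Big(\tfrac{1}{2T}\|u\|_{L^{p}_{T}L^{2}}+p\,\|\partial_{t}u\|_{L^{p}_{T}L^{2}}\Big)\leq C\,\|u\|_{L^{p}_{T}L^{2}}^{p-1}\,\|u\|_{W^{1,p}_{T}L^{2}},
\]
and the claim follows after taking the $p$-th root, since $\|u\|_{W^{1,p}_{T}L^{2}}=\|u\|_{L^{p}_{T}L^{2}}+\|\partial_{t}u\|_{L^{p}_{T}L^{2}}$.

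There is no serious obstacle here; the only points needing a word of care are the absolute continuity of $\psi$ and the a.e. differentiation under the norm (standard facts about $W^{1,p}$ with values in a Hilbert space), together with the observation that the constant produced above is really of the form $C=C(T)$: it is uniform for $T$ bounded away from $0$ but degenerates like $T^{-1/p}$ as $T\to 0$, as it must, since testing on functions constant in time forces exactly this behavior. In the applications the time $T$ is fixed, so this dependence is harmless.
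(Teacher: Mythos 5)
Your proof is correct, but it follows a genuinely different route from the paper's. The paper argues by duality: it fixes $\gamma\in L^{2}$ with $\|\gamma\|_{L^{2}}=1$, sets $v(t)=\langle u(t),\gamma\rangle$, observes that $\|v\|_{L^{p}_{T}}\leq\|u\|_{L^{p}_{T}L^{2}}$ and $\|\partial_{t}v\|_{L^{p}_{T}}\leq\|\partial_{t}u\|_{L^{p}_{T}L^{2}}$, invokes the one-dimensional Gagliardo--Nirenberg inequality for the scalar function $v$ as a known fact, and then recovers $\|u(t)\|_{L^{2}}$ by taking the supremum over $\gamma$ and exchanging the two suprema. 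You never make this reduction: you prove the vector-valued inequality from scratch, via the absolute continuity of $\psi(t)=\|u(t)\|_{L^{2}}^{p}$, the a.e. bound $|\psi'|\leq p\,\|u\|_{L^{2}}^{p-1}\|\partial_{t}u\|_{L^{2}}$, the fundamental theorem of calculus, a pigeonhole choice of the base point $s$, and H\"older. In substance you are re-proving, in the $L^{2}$-valued setting, exactly the scalar inequality that the paper cites as a black box. Your version buys self-containedness and explicit constants: the $p$-dependence of the constant disappears after taking the $p$-th root (since $p^{1/p}\leq e^{1/e}$), and you correctly identify the unavoidable $T^{-1/p}$ blow-up as $T\to 0$ by testing on time-independent functions. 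The paper's version buys brevity and sidesteps vector-valued differentiation entirely; in particular, the one delicate point of your argument --- the a.e. chain rule for $\psi$ at times where $u(\tau)=0$ when $p<2$ (there one should note directly that $\psi'(\tau)=0$ since $\psi(\tau+h)=O(|h|^{p})$, and for $p=1$ use the reverse triangle inequality instead of Cauchy--Schwarz) --- simply does not arise in the duality approach. This is a technicality, not a gap: your argument is sound as written.
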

\begin{proof}
Let $\gamma\in L^{2}(M)$ be so that $\|\gamma\|_{L^{2}}=1$, and define $v(t)=\<u(t),\gamma\>$. Then we clearly have 
\begin{equation*}
\|v\|_{L^{p}_{T}}\leq \|u\|_{L^{p}_{T}L^{2}},\qquad \|\partial_{t}v\|_{L^{p}_{T}}\leq \|\partial_{t}u\|_{L^{p}_{T}L^{2}},
\end{equation*}
and from the Gagliardo-Nirenberg inequality we deduce 
\begin{equation}\label{gn*}
\|v\|_{L^{\infty}_{T}}\leq C \|v\|^{1-1/p}_{L^{p}_{T}} \|v\|^{1/p}_{W^{1,p}_{T}}\leq C \|u\|^{1-1/p}_{L^{p}_{T}L^{2}} \| u\|^{1/p}_{W^{1,p}_{T}L^{2}}.
\end{equation}
Now from~\eqref{gn*} we get 
\begin{eqnarray*}
\|u\|_{L^{\infty}_{T}L^{2}}&=&\sup_{t\in [-T,T]} \|u(t)\|_{L^{2}}\nonumber \\
&=&\sup_{t\in [-T,T]}\sup_{\|\gamma\|_{L^{2}}=1} v(t)\\
&=&\sup_{\|\gamma\|_{L^{2}}=1}\sup_{t\in [-T,T]} v(t)\leq C \|u\|^{1-1/p}_{L^{p}_{T}L^{2}} \| u\|^{1/p}_{W^{1,p}_{T}L^{2}}.
\end{eqnarray*}
This completes the proof of Lemma~\ref{lemm.interp}.
\end{proof}

Denote by $H^{\s}=H^{\s}(M)$. Using the previous result  we can prove
\begin{lemm}\ph\label{lem.33} 
Let $T>0$ and $p\in[1,+\infty]$. Let $-\infty<\s_{2}\leq \s_{1} <+\infty$ and assume that $u\in  L^{p}\big([-T,T]; H^{\s_{1}}\big)$ and $\partial_{t}u\in  L^{p}\big([-T,T]; H^{\s_{2}}\big)$. Then for  all $\eps>\s_{1}/p-\s_{2}/p$, $u\in  L^{\infty}\big([-T,T]; H^{\s_{1}-\eps}\big)$ and 
\begin{equation}\label{res.1}
\|u\|_{L^{\infty}_{T}H^{\s_{1}-\eps}}\leq C\|u\|^{1-1/p}_{L^{p}_{T}H^{\s_{1}}} \|u\|^{1/p}_{W_{T}^{1,p}H^{\s_{2}}}.
\end{equation}
Moreover, there exists $\eta>0$ and $\theta\in[0,1]$ so that for all $t_{1},t_{2}\in [-T,T]$
\begin{equation*} 
\|u(t_{1})-u(t_{2})\|_{H^{\s_{1}-2\eps}}\leq C|t_{1}-t_{2}|^{\eta}\|u\|^{1-\theta}_{L^{p}_{T}H^{\s_{1}}} \|  u\|^{\theta}_{W_{T}^{1,p}H^{\s_{2}}}.
\end{equation*}
\end{lemm}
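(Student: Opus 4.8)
The plan is to decouple the time and space variables through the spectral Littlewood--Paley decomposition $u=\sum_{j\geq 0}\Delta_j u$ introduced above and to apply the purely $L^2$-valued interpolation of Lemma~\ref{lemm.interp} to each dyadic block. Since $\Delta_j$ commutes with $\partial_t$ and $\Delta_j u$ is frequency-localized, all Sobolev norms of $\Delta_j u$ are comparable; in particular $\Delta_j u\in L^p_TL^2$ and $\partial_t\Delta_j u=\Delta_j\partial_t u\in L^p_TL^2$ whatever the (possibly negative) values of $\s_1,\s_2$, so Lemma~\ref{lemm.interp} gives
\begin{equation*}
\|\Delta_j u\|_{L^\infty_TL^2}\leq C\,\|\Delta_j u\|^{1-1/p}_{L^p_TL^2}\,\|\Delta_j u\|^{1/p}_{W^{1,p}_TL^2}.
\end{equation*}

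For the first estimate I would convert the $L^2$ norms into Sobolev norms on the block via $\|\Delta_j v\|_{L^2}\approx 2^{-js}\|\Delta_j v\|_{H^{s}}$, which yields $\|\Delta_j u\|_{L^p_TL^2}\approx 2^{-j\s_1}\|\Delta_j u\|_{L^p_TH^{\s_1}}$ and, summing the two terms of the $W^{1,p}_T$ norm, $\|\Delta_j u\|_{W^{1,p}_TL^2}\approx 2^{-j\s_2}\|\Delta_j u\|_{W^{1,p}_TH^{\s_2}}$. Multiplying the displayed block inequality by $2^{j(\s_1-\eps)}$ and collecting the powers of $2^j$ produces the exponent $-\delta$ with $\delta=\eps-(\s_1-\s_2)/p$, which is strictly positive precisely by the hypothesis $\eps>\s_1/p-\s_2/p$. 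Thus
\begin{equation*}
\|\Delta_j u\|_{L^\infty_TH^{\s_1-\eps}}\lesssim 2^{-j\delta}\,\|\Delta_j u\|^{1-1/p}_{L^p_TH^{\s_1}}\,\|\Delta_j u\|^{1/p}_{W^{1,p}_TH^{\s_2}}.
\end{equation*}
I would then sum over $j$ with the triangle inequality $\|u\|_{L^\infty_TH^{\s_1-\eps}}\leq\sum_j\|\Delta_j u\|_{L^\infty_TH^{\s_1-\eps}}$, bound each block factor by the full norm, $\|\Delta_j u\|_{L^p_TH^{\s_1}}\leq C\|u\|_{L^p_TH^{\s_1}}$ and $\|\Delta_j u\|_{W^{1,p}_TH^{\s_2}}\leq C\|u\|_{W^{1,p}_TH^{\s_2}}$ (the spectral projectors being uniformly bounded on every Sobolev space), pull them out of the sum, and use $\sum_j 2^{-j\delta}<\infty$. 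This gives~\eqref{res.1}.

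For the Hölder-in-time estimate I would interpolate, on each block, between two bounds for $w:=u(t_1)-u(t_2)$. Writing $w=\int_{t_2}^{t_1}\partial_t u(s)\,\dd s$ and using Hölder in time produces a gain,
\begin{equation*}
\|\Delta_j\big(u(t_1)-u(t_2)\big)\|_{H^{\s_2}}\leq |t_1-t_2|^{1-1/p}\,\|u\|_{W^{1,p}_TH^{\s_2}},
\end{equation*}
while the first part controls $\|\Delta_j w\|_{H^{\s_1-\eps}}$ by $2^{-j\delta}$ times the right-hand side of~\eqref{res.1}, with no time gain. Expressing both in terms of $\|\Delta_j w\|_{L^2}$ and taking a convex combination with weight $\lambda\in(0,1)$, then returning to $H^{\s_1-2\eps}$, gives a block bound carrying a factor $|t_1-t_2|^{\lambda(1-1/p)}$ and a power $2^{jE(\lambda)}$ with $E(\lambda)=-\eps-\delta+\lambda(\s_1-\s_2)(1-1/p)$. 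Choosing $\lambda$ small enough (any $\lambda\in(0,1)$ works when $\s_1=\s_2$, and $\lambda<(\eps+\delta)/((\s_1-\s_2)(1-1/p))$ otherwise) makes $E(\lambda)<0$, so the $j$-sum converges; one sets $\eta=\lambda(1-1/p)>0$, and a direct check shows the two norm exponents $(1-\lambda)(1-1/p)$ and $\lambda+(1-\lambda)/p$ sum to $1$, giving $\theta=\lambda+(1-\lambda)/p\in(0,1]$. Summing over $j$ as before yields the stated inequality.

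The main obstacle is the bookkeeping that converts the strict hypothesis $\eps>(\s_1-\s_2)/p$ into the geometric decay $2^{-j\delta}$: this is what makes the dyadic sums over the (one-sided, $j\geq 0$) frequency range absolutely convergent and lets one upgrade the pointwise-in-time block estimate of Lemma~\ref{lemm.interp} to a genuine $L^\infty_T$ bound. Once this decay is secured, both summations and the choice of the interpolation parameter $\lambda$ are routine. I would also note that the Hölder-in-time statement genuinely needs $p>1$, since the temporal gain $|t_1-t_2|^{1-1/p}$ degenerates when $p=1$, whereas the first estimate~\eqref{res.1} holds on the whole range $p\in[1,+\infty]$ (the endpoint $p=\infty$ reducing to the trivial embedding $H^{\s_1}\hookrightarrow H^{\s_1-\eps}$).
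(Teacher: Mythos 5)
Your proof is correct. For~\eqref{res.1} it coincides with the paper's argument: apply Lemma~\ref{lemm.interp} to each dyadic block $\Delta_{j}u$, trade the $L^{2}$ norms for $H^{\s_{1}}$ and $H^{\s_{2}}$ norms at the price of $2^{-j\s_{1}(1-1/p)}2^{-j\s_{2}/p}$, and sum the geometric series $2^{-j\delta}$ with $\delta=\eps-(\s_{1}-\s_{2})/p>0$. For the H\"older-in-time bound the paper argues globally rather than block by block: it combines $\|u(t_{1})-u(t_{2})\|_{H^{\s_{2}}}\leq |t_{1}-t_{2}|^{1-1/p}\|\partial_{t}u\|_{L^{p}_{T}H^{\s_{2}}}$ with the Sobolev interpolation $\|w\|_{H^{\s_{1}-2\eps}}\leq \|w\|_{H^{\s_{1}-\eps}}^{1-\theta_{0}}\|w\|_{H^{\s_{2}}}^{\theta_{0}}$ applied to $w=u(t_{1})-u(t_{2})$, the first factor being controlled by $2\|u\|_{L^{\infty}_{T}H^{\s_{1}-\eps}}$ via~\eqref{res.1}. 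Your block-wise interpolation with a free small weight $\lambda$ uses exactly the same two ingredients (the time gain at level $H^{\s_{2}}$ and the uniform bound at level $H^{\s_{1}-\eps}$), but it is marginally more robust: the paper's interpolation step implicitly requires $\s_{2}\leq \s_{1}-2\eps$ for $\theta_{0}\in[0,1]$ (otherwise one must instead invoke the trivial embedding $H^{\s_{2}}\hookrightarrow H^{\s_{1}-2\eps}$), a case distinction that your free choice of $\lambda$ avoids altogether. Your closing remark is also well taken: both arguments yield a H\"older exponent proportional to $1-1/p$, so the second estimate is genuinely vacuous at $p=1$ --- a limitation shared by the paper's proof, even though the statement nominally allows $p\in[1,+\infty]$.
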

\begin{proof}
We use the frequency decomposition as recalled at the beginning of the section, and apply Lemma~\ref{lemm.interp} to $\Delta_{j}u$
\begin{eqnarray*}
\|\Delta_{j}u\|_{L^{\infty}_{T}H^{\s_{1}-\eps}}&\leq & C 2^{j(\s_{1}-\eps)}\|\Delta_{j}u\|_{L^{\infty}_{T}L^{2}}\\
&\leq &C2^{j(\s_{1}-\eps)}\|\Delta_{j}u\|^{1-1/p}_{L^{p}_{T}L^{2}} \big(\|\partial_{t} \Delta_{j}u\|_{L^{p}_{T}L^{2}}+\| \Delta_{j}u\|_{L^{p}_{T}L^{2}}\big)^{1/p}\\
&\leq &  C2^{j(\s_{1}-\eps)}2^{-j\s_{1}(1-1/p)}2^{-j\s_{2}/p}\|\Delta_{j}u\|^{1-1/p}_{L^{p}_{T}H^{\s_{1}}} \|  \Delta_{j}u\|^{1/p}_{W^{1,p}_{T}H^{\s_{2}}}\\
&\leq &  C2^{-j(\eps-\s_{1}/p+\s_{2}/p)}\|u\|^{1-1/p}_{L^{p}_{T}H^{\s_{1}}} \|  u\|^{1/p}_{W^{1,p}_{T}H^{\s_{2}}}.
\end{eqnarray*}
 This inequality together with $\dis \|u\|_{L^{\infty}_{T}H^{\s_{1}-\eps}}\leq \sum_{j\geq0 }\|\Delta_{j}u\|_{L^{\infty}_{T}H^{\s_{1}-\eps}}$ yields~\eqref{res.1}.
By H\"older we get
\begin{equation}\label{hold}
\|u(t_{1})-u(t_{2})\|_{H^{\s_{2}}}=\|\int_{t_{1}}^{t_{2}}\partial_{\tau}u(\tau)\text{d}\tau\|_{H^{\s_{2}}}\leq |t_{1}-t_{2}|^{1-1/p} \|\partial_{t} u\|_{L^{p}_{T}H^{\s_{2}}}.
\end{equation}
Next  by interpolation, there exists $\theta_{0}\in (0,1)$ so that 
\begin{eqnarray*}
\|u(t_{1})-u(t_{2})\|_{H^{\s_{1}-2\eps}}&\leq& \|u(t_{1})-u(t_{2})\|^{1-\theta_{0}}_{H^{\s_{1}-\eps}} \|u(t_{1})-u(t_{2})\|^{\theta_{0}}_{H^{\s_{2}}}\\
&\leq& C \|u\|^{1-\theta_{0}}_{L_{T}^{\infty}H^{\s_{1}-\eps}} \|u(t_{1})-u(t_{2})\|^{\theta_{0}}_{H^{\s_{2}}},
\end{eqnarray*}
and the result follows from this latter inequality combined with~\eqref{res.1} and~\eqref{hold}.
\end{proof}

 \section{The nonlinear Schr\"odinger equation on the three dimensional sphere}\label{Sect.4}
\subsection{The setting} Let $\S^{3}$ be the unit sphere in $\R^{4}$. Consider the  non linear Schr\"odinger equation
 \begin{equation}\label{NLS_bis}
\left\{
\begin{aligned}
&i\partial_{t}u+ (\Delta -1)u= |u|^{r-1}u, \quad   (t,x)\in \R\times \S^{3},\\
&u(0,x)=  f(x) \in H^{\s}(\S^{3}),
\end{aligned}
\right.
\end{equation}
 where $\Delta=\Delta_{\S^{3}}$ stands for the Laplace-Beltrami operator, and where $1\leq r<5$.
 In the sequel 
we consider functions which only depend on the geodesic distance to the north pole, these are called zonal functions. Denote by $Z(\S^{3})$ this space. Roughly speaking, this is the same type of reduction as restricting to radial functions in $\R^{3}$. Denote by $L^{2}_{rad}(\S^{3})=L^{2}(\S^{3})\cap Z(\S^{3})$. We endow this space with the natural norm  
\begin{equation*}
\|f\|_{L^{2}_{rad}(\S^{3})}=\Big(\int_{\S^{3}}|f|^{2}\Big)^{\frac12}=\Big(\int_{0}^{\pi}|f(x)|^{2}(\sin x)^{2}\text{d}x\Big)^{\frac12},
\end{equation*} 
where $x\in [0,\pi]$ represents the geodesic distance to the north pole of $\S^{3}$. The operator $\Delta$ can be restricted to $L^{2}_{rad}$, and it reads
\begin{equation*}
\Delta=\frac{\partial^{2}}{\partial x^{2}}+\frac{2}{\tan x}\frac{\partial}{\partial x}.
\end{equation*}
One of the main interests to restrict to zonal functions, is that the eigenvalues of $\Delta$ in $L^{2}_{rad}(\S^{3})$ are simple. The family $(P_{n})_{n\geq 1}$ defined in~\eqref{def.p} is a Hilbertian basis of $L^{2}_{rad}(\S^{3})$ of eigenfunction of the Laplacian: For all $n\geq 1$, $-\Delta P_{n}=(n^{2}-1)P_{n}$. We define the operator $\Lambda=\big(1-\Delta\big)^{\frac12}$, in particular~$\Lambda P_{n}=nP_{n}$.

  Let us define the complex vector space $\dis {E_N={\rm span}\Big( (  P_{n})_{1\leq n\leq N}\Big)}$. Then we introduce  a smooth version of the usual spectral projector  on $E_{N}$.   Let $\chi\in \mathcal{C}_{0}^{\infty}(-1,1)$, so that $\chi\equiv 1$ on $(-1/2,1/2)$. We then define
\begin{equation*} 
S_{N}\Big(\sum_{n\geq 1}c_n P_{n}\Big)=\chi(\frac{\Lambda}{N})\sum_{n\geq 1}c_n P_{n}=\sum_{n\geq 1}\chi(\frac{n}{N})c_n P_{n}.
\end{equation*}
One of the advantages of this operator compared with  the usual spectral projector, is the following result. See Burq-G\'erard-Tzvetkov~\cite{BGT2} for a proof.
\begin{lemm}\ph\label{lem.sn}  Let $1<p<\infty$. Then $S_{N}: L^{p}(\S^{3})\longrightarrow  L^{p}(\S^{3})$ is continuous and there exists $C>0$ so that for all $N\geq 1$, 
\begin{equation*} 
\|S_{N}\|_{L^{p}(\S^{3})\to L^{p}(\S^{3})}\leq C.
\end{equation*}
Moreover, for all $f\in L^{p}(\S^{3})$, $S_{N}f\longrightarrow f$ in $L^{p}(\S^{3})$, when $N\longrightarrow +\infty$.
\end{lemm}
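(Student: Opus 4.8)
The plan is to establish the two assertions separately: first the uniform operator-norm bound $\sup_{N\geq 1}\|S_N\|_{L^p\to L^p}\leq C$, and then the strong convergence $S_Nf\to f$, which will follow from the uniform bound together with a density argument. Recall that $S_N=\chi(\Lambda/N)$ is a smooth spectral multiplier of the self-adjoint operator $\Lambda=(1-\Delta)^{1/2}$, whose spectrum on $L^{2}(\S^{3})$ consists of the integers $n\geq 1$ (since $-\Delta$ has eigenvalues $k(k+2)$, $k\geq 0$, so $\Lambda$ has eigenvalues $k+1$).

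For the uniform bound I would work at the level of the Schwartz kernel $K_N(x,y)$ of $S_N$. Writing $\chi$ as the Fourier transform of a Schwartz function, $\chi(s)=\frac1{2\pi}\int_{\R}\widehat{\chi}(\tau)\,e^{is\tau}\,d\tau$, one represents
\[
S_N=\frac1{2\pi}\int_{\R}\widehat{\chi}(\tau)\,e^{i\tau\Lambda/N}\,d\tau,
\]
and exploits the finite propagation speed of the wave group (its kernel is supported in $\{d(x,y)\leq|\tau|\}$, where $d$ denotes geodesic distance on $\S^{3}$) together with the stationary-phase analysis of the wave kernel. Splitting the $\tau$-integral according to $|\tau|\lesssim N\,d(x,y)$ and using the rapid decay of $\widehat{\chi}$ on the complementary range yields the off-diagonal estimate
\[
|K_N(x,y)|\leq \frac{C_M\,N^{3}}{\big(1+N\,d(x,y)\big)^{M}}\qquad\text{for every }M,
\]
uniformly in $N\geq 1$. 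Because $\S^{3}$ is three dimensional, integrating this bound in $y$ (or in $x$) in geodesic polar coordinates gives $\int_{\S^{3}}|K_N(x,y)|\,dy\leq C\int_0^\pi N^{3}(1+Nr)^{-M}r^{2}\,dr\leq C$ uniformly in $N$, and symmetrically in $x$. Schur's test then yields $\|S_N\|_{L^{1}\to L^{1}}+\|S_N\|_{L^{\infty}\to L^{\infty}}\leq C$, and interpolation gives the claimed uniform bound on $L^{p}$ for all $1\leq p\leq\infty$, in particular for $1<p<\infty$. Equivalently, one may invoke a H\"ormander--Mikhlin spectral multiplier theorem for $-\Delta$ on a compact manifold: since $\chi\in\mathcal{C}_0^\infty$, the rescaled symbols $\lambda\mapsto\chi(\lambda/N)$ satisfy Mikhlin bounds $|\partial_\lambda^{k}\chi(\lambda/N)|\leq C_k\lambda^{-k}$ uniformly in $N$ (for $k\geq 1$, $\chi^{(k)}(\cdot/N)$ is supported in $\lambda\sim N$), so the multiplier theorem applies uniformly.

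For the convergence statement I would argue by density. If $f$ is a finite linear combination of eigenfunctions of $\Lambda$, then $S_Nf=f$ as soon as $N$ exceeds twice the largest eigenvalue occurring in $f$, because $\chi\equiv 1$ on $(-1/2,1/2)$; in particular $S_Nf\to f$ in $L^{p}$. Such finite spectral sums are dense in $L^{p}(\S^{3})$ for $1<p<\infty$. Given $f\in L^{p}$ and $\eps>0$, choose such a $g$ with $\|f-g\|_{L^{p}}\leq\eps$; then
\[
\|S_Nf-f\|_{L^{p}}\leq\|S_N(f-g)\|_{L^{p}}+\|S_Ng-g\|_{L^{p}}+\|g-f\|_{L^{p}}\leq (C+1)\eps+\|S_Ng-g\|_{L^{p}},
\]
and the middle term tends to $0$, giving $\limsup_N\|S_Nf-f\|_{L^{p}}\leq(C+1)\eps$. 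Letting $\eps\to 0$ completes the proof.

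The main obstacle is the uniform kernel bound in the second paragraph: all of the real content lies in controlling $e^{i\tau\Lambda/N}$ uniformly in $N$ and extracting the off-diagonal decay, which is where the geometry of $\S^{3}$ (finite propagation speed and the parametrix for the wave equation) enters. Once that estimate is in hand the remaining steps --- Schur's test, interpolation, and the density argument --- are routine. I note that on $\S^{3}$ the spectrum and the zonal eigenfunctions $P_n$ are completely explicit, so the kernel could alternatively be analysed by direct summation; but the finite-propagation-speed argument is more robust and is the one I would carry out.
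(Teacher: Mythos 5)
Your proposal is correct, but note that the paper itself does not prove this lemma at all: it is quoted with a reference to Burq--G\'erard--Tzvetkov~\cite{BGT2}, so what you have written is essentially a reconstruction of the cited proof. The heart of both arguments is the same semiclassical kernel estimate $|K_N(x,y)|\leq C_M N^{3}\big(1+N\,d(x,y)\big)^{-M}$ followed by Schur's test and interpolation; in~\cite{BGT2} this kernel bound is obtained from the Helffer--Sj\"ostrand formula and a parametrix for the resolvent $(z-h^{2}\Delta)^{-1}$ (with $h=1/N$) rather than from the wave group, but the two routes are interchangeable and yield the identical estimate. Your density argument for the convergence $S_Nf\to f$ (exact reproduction of finite spectral sums once $N$ exceeds twice the top frequency, plus the uniform bound and a $3\eps$ argument) is the standard one and is complete, and your alternative route via a Mikhlin--H\"ormander spectral multiplier theorem is also legitimate: the rescaled symbols $\chi(\cdot/N)$ do satisfy Mikhlin bounds uniformly in $N$ for exactly the support reason you give.

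One technical slip to repair in your main argument: the half-wave group $e^{i\tau\Lambda/N}$ does \emph{not} have finite propagation speed in the literal support sense. For $\Lambda=(1-\Delta)^{1/2}$ the Klein--Gordon propagators $\cos(t\Lambda)$ and $\Lambda^{-1}\sin(t\Lambda)$ do have kernels supported in $\{d(x,y)\leq|t|\}$, but $\sin(t\Lambda)=\Lambda\cdot\big(\Lambda^{-1}\sin(t\Lambda)\big)$ involves the nonlocal operator $\Lambda$, so the kernel of $e^{it\Lambda}$ is only rapidly decaying away from the light cone, not supported inside it. The standard fix costs nothing: since the spectrum of $\Lambda$ on $L^{2}(\mathbb{S}^{3})$ lies in $[1,\infty)$, you may take $\chi$ even, write $\chi(\Lambda/N)=\frac{1}{\pi}\int_{0}^{\infty}\widehat{\chi}(\tau)\cos(\tau\Lambda/N)\,d\tau$, and run your splitting of the $\tau$-integral with the cosine propagator; alternatively keep $e^{i\tau\Lambda/N}$ and replace ``supported in the cone'' by the statement, coming from the Fourier integral operator parametrix and stationary phase, that its kernel is $O(N^{-\infty})$ at distance much larger than $|\tau|/N$ from the cone. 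With either correction the kernel estimate, and hence the whole proof, goes through.
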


 \subsection{Preliminaries: Some   estimates}

 In the sequel, we will need a particular case of Sogge's estimates.

\begin{lemm}\ph
The following bounds hold true for $n\geq 1$ 
 \begin{equation}\label{1.10}
\|P_{n}\|_{L^{p}(\S^{3})}\leq\left\{\begin{array}{ll} 
 C n^{1/2-1/p},\quad &\text{if} \quad 2\leq p\leq 4, \\[6pt]  
 C n^{1-3/p},  &\text{if} \quad 4\leq p\leq \infty.
\end{array} \right.
\end{equation}
\end{lemm}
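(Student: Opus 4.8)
The plan is to use the explicit formula~\eqref{def.p} to turn the estimate into an elementary one-dimensional integral, to treat the three endpoints $p=2$, $p=4$ and $p=\infty$ directly, and then to fill in the whole range by interpolation. Since the measure on $\S^3$ restricted to zonal functions is, up to a fixed constant, $(\sin\theta)^2\,\dd\theta$ on $[0,\pi]$, with $\theta$ the geodesic distance to the pole, the definition~\eqref{def.p} gives
\begin{equation*}
\|P_n\|^p_{L^p(\S^3)}\,\lesssim\,\int_0^\pi\frac{|\sin n\theta|^p}{(\sin\theta)^{p-2}}\,\dd\theta.
\end{equation*}
First I would use the symmetry $\theta\mapsto\pi-\theta$, which leaves both $|\sin n\theta|$ and $\sin\theta$ invariant, to reduce the integral to $[0,\pi/2]$; there $\sin\theta$ is comparable to $\theta$, so up to constants the quantity to control is $\int_0^{\pi/2}|\sin n\theta|^p\,\theta^{\,2-p}\,\dd\theta$.

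The two soft endpoints are immediate. For $p=2$ the weight disappears and $\int_0^\pi\sin^2 n\theta\,\dd\theta=\pi/2$ gives $\|P_n\|_{L^2}=1$, which is just the normalisation making $(P_n)$ a Hilbertian basis. For $p=\infty$ the elementary bound $|\sin n\theta|\le n\sin\theta$ (a one-line induction on $n$) yields $\|P_n\|_{L^\infty}\lesssim n$, the claimed exponent $1-3/p$ at $p=\infty$.

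The real content is the $L^4$ endpoint, which is where I expect the only genuine work to lie. I would split $[0,\pi/2]$ at the scale $\theta=1/n$ set by the first oscillation of $\sin n\theta$. On $[0,1/n]$ one uses $|\sin n\theta|\le n\theta$ and computes $n^4\int_0^{1/n}\theta^2\,\dd\theta\asymp n$; on $[1/n,\pi/2]$ one uses $|\sin n\theta|\le1$ and computes $\int_{1/n}^{\pi/2}\theta^{-2}\,\dd\theta\asymp n$. Summing gives $\|P_n\|^4_{L^4}\lesssim n$, i.e.\ $\|P_n\|_{L^4}\lesssim n^{1/4}$. The one delicate point is that the singular weight $\theta^{-2}$ integrated away from the pole and the peak of height $\asymp n$ near the pole contribute the same power of $n$; this is the analytic signature of the concentration of $P_n$ at the poles.

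It then remains to interpolate, using the log-convexity of $L^p$ norms (H\"older). Interpolating between $\|P_n\|_{L^2}\lesssim n^0$ and $\|P_n\|_{L^4}\lesssim n^{1/4}$ gives for $2\le p\le4$ the bound with exponent $\tfrac12-\tfrac1p$, while interpolating between $\|P_n\|_{L^4}\lesssim n^{1/4}$ and $\|P_n\|_{L^\infty}\lesssim n$ gives for $4\le p\le\infty$ the exponent $1-\tfrac3p$: in each case the parameter $\theta$ solving $\tfrac1p=\tfrac{1-\theta}{p_0}+\tfrac{\theta}{p_1}$ produces exactly the stated power, and the two regimes agree at $p=4$. (The interpolated bound is not sharp for $2\le p<4$---a direct evaluation of the integral gives a smaller power---but it is precisely the exponent asserted in~\eqref{1.10}.) This recovers~\eqref{1.10} and completes the proof.
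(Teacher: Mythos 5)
Your proof is correct, and its skeleton (endpoint bounds at $p=2,4,\infty$ followed by H\"older interpolation, with all the exponent arithmetic checking out) is the same as the paper's; but the one point of substance, the $L^{4}$ endpoint, is handled by a genuinely different argument. The paper never estimates an integral: it quotes \cite[Lemma 10.1]{Tzvetkov2}, which rests on the product formula
\begin{equation*}
P_{k}P_{\ell}=\sqrt{\frac2\pi}\sum_{j=1}^{\min(k,\ell)}P_{|k-\ell|+2j-1},\qquad k,\ell\geq 1,
\end{equation*}
so that with $k=\ell=n$ the square $P_{n}^{2}$ is an explicit sum of $n$ orthonormal modes and $\|P_{n}\|_{L^{4}}^{2}=\|P_{n}^{2}\|_{L^{2}}=\sqrt{2/\pi}\,n^{1/2}$ exactly, by Parseval. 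Your route---reducing to $\int_{0}^{\pi/2}|\sin n\theta|^{p}\,\theta^{2-p}\,\dd\theta$ and splitting at $\theta\asymp 1/n$---is more elementary and self-contained (no citation needed), and it makes visible where the growth comes from: the peak of height $\asymp n$ on a ball of radius $1/n$ around the pole and the $\theta^{-2}$ tail contribute the same power of $n$. The paper's algebraic route is shorter given the reference, yields the exact value of the $L^{4}$ norm rather than an upper bound, and the product formula it rests on is a structural fact of independent interest (it is the zonal incarnation of the multilinear eigenfunction estimates used throughout this circle of papers). One harmless quibble: your parenthetical claim of non-sharpness on $2\le p<4$ fails at $p=2$, where the interpolated bound is an equality; it is accurate on $2<p<4$ (the true size is $O(1)$ for $p<3$ and $\asymp n^{1-3/p}$ for $3<p<4$), and since the lemma only asserts upper bounds this affects nothing.
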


\begin{proof}
The bound for $p=\infty$ is clear by the definition~\eqref{def.p}. The case $p=4$ is proved in~\cite[Lemma 10.1]{Tzvetkov2}
thanks to the formula 
\begin{equation*}
P_{k}P_{\ell}=\sqrt{\frac2\pi}\sum_{j=1}^{\min{(k,\ell)}}P_{|k-\ell|+2j-1},\quad k,\ell\geq 1.
\end{equation*}
The general case follows from H\"older.
\end{proof}

The next Lemma (Khinchin inequality) shows a smoothing property of the random series in the $L^{p}$ spaces. See {\it e.g.}~\cite[Lemma 4.2]{BT2} for the  proof.

\begin{lemm}\ph
There exists $C>0$ such that for all $p\geq 2$ and $(c_{n})\in \ell^{2}(\N)$
\begin{equation}\label{contrac}
\|\sum_{n\geq 1}g_{n}(\om)\,c_{n}\|_{L_{\p}^{p}}\leq C\sqrt{p}\Big(\sum _{n\geq 1}|c_{n}|^{2}\Big)^{\frac12}.
\end{equation}
\end{lemm}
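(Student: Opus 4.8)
The plan is to exploit the fundamental closure property that a finite (or $L^2$-convergent) linear combination of independent Gaussians is again Gaussian, which reduces the whole statement to a single moment computation for a standard complex Gaussian.

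First I would reduce to finite sums. For $(c_n)\in\ell^2(\N)$ the partial sums $S_M=\sum_{n=1}^M c_n g_n$ form a Cauchy sequence in $L^2_{\p}$, so the series $S=\sum_{n\geq 1}c_n g_n$ is well defined; once the claimed inequality is established uniformly in $M$ for the $S_M$, it passes to the limit by Fatou's lemma. Then comes the key step: writing $g_n=\tfrac1{\sqrt2}(h_n+i\ell_n)$ with $(h_n,\ell_n)$ independent $\mathcal N_\R(0,1)$, and $c_n=\alpha_n+i\beta_n$, both $\re S_M$ and $\im S_M$ are real linear combinations of the $(h_n,\ell_n)$, hence jointly Gaussian. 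A direct covariance computation using $\E[h_nh_m]=\E[\ell_n\ell_m]=\delta_{nm}$ and $\E[h_n\ell_m]=0$ shows that $\re S_M$ and $\im S_M$ are independent, each centered with variance $\tfrac12\sigma_M^2$, where $\sigma_M^2=\sum_{n=1}^M|c_n|^2$. Consequently $S_M$ has the same law as $\sigma_M Z$ for a standard complex Gaussian $Z$, so that $\E|S_M|^p=\sigma_M^p\,\E|Z|^p$.

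It then remains to bound $\E|Z|^p$. Since $|Z|^2=\tfrac12(h^2+\ell^2)$ with $h,\ell$ independent $\mathcal N_\R(0,1)$, the variable $|Z|^2$ is exponential of mean $1$, whence $\E|Z|^p=\E\big[(|Z|^2)^{p/2}\big]=\Gamma\big(\tfrac p2+1\big)$. By Stirling's formula, $\big(\Gamma(\tfrac p2+1)\big)^{1/p}\leq C\sqrt p$ for all $p\geq2$, which gives $\|S_M\|_{L^p_{\p}}=(\E|S_M|^p)^{1/p}\leq C\sqrt p\,\sigma_M\leq C\sqrt p\big(\sum_{n\geq 1}|c_n|^2\big)^{1/2}$; letting $M\to\infty$ completes the argument.

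There is no serious obstacle here: the only structural input is the Gaussian closure property, and the only quantitative input is the Stirling asymptotic producing the sharp $\sqrt p$ growth. The mild points requiring a little care are the passage from finite to infinite sums and the verification that $\re S$ and $\im S$ are independent, both of which are immediate from the covariance computation. I would emphasize that it is precisely the Gaussianity (rather than a general hypercontractivity or symmetrization argument) that makes the computation explicit and the constant $C$ in \eqref{contrac} independent of $p$.
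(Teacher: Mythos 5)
Your argument is correct, but it takes a genuinely different route from the proof the paper relies on. The paper does not prove the lemma itself: it refers to \cite[Lemma 4.2]{BT2}, where the estimate is established for a general class of independent, centered random variables whose laws have uniformly subgaussian exponential moments, $\mathbb{E}[e^{\gamma l_n}]\leq e^{c\gamma^{2}}$. That argument never identifies the law of the sum: one bounds $\mathbb{E}\big[e^{\gamma\sum_n c_n l_n}\big]\leq e^{c\gamma^{2}\sum_n|c_n|^{2}}$, deduces a Gaussian-type tail bound ${\bf p}\big(|\sum_n c_n l_n|>\lambda\big)\leq 2e^{-\lambda^{2}/(4c\sum_n|c_n|^{2})}$ by Markov's inequality and optimization in $\gamma$, and then recovers the $L^{p}$ bound with the $\sqrt{p}$ growth by integrating the tail. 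Your proof instead exploits the $2$-stability of Gaussians: a finite linear combination of independent complex Gaussians is again a complex Gaussian, so after your covariance computation (real and imaginary parts jointly Gaussian, uncorrelated hence independent, each of variance $\tfrac12\sum_{n\leq M}|c_n|^{2}$) the moment is computed exactly, $\mathbb{E}|S_M|^{p}=\sigma_M^{p}\,\Gamma\big(\tfrac p2+1\big)$, and Stirling gives the uniform constant in \eqref{contrac}; Fatou along an a.s.\ convergent subsequence of the partial sums then handles the infinite series. The trade-off is clear: your computation is explicit, self-contained and essentially sharp, but it is tied to Gaussian weights, whereas the subgaussian argument of \cite{BT2} applies verbatim to Bernoulli or other randomizations — a generality the present paper does not need, since all of its random series are Gaussian.
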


Define $\mu=\p\circ \phi^{-1}$, with $\phi$ given in~\eqref{phi.NLS}. Then we can state
\begin{lemm}\ph Let $\s<\frac12$, then there exists $C>0$ so that for all $p\geq 2$
\begin{equation}\label{CT1}
\big\|\|v\|_{H^{\s}_{x}}\big\|_{L^{p}_{\mu}}\leq C \sqrt{p}.
\end{equation}
Let $2\leq q<6$, then there exists $C>0$ so that for all $p\geq q$
\begin{equation}\label{CT2}
\big\|\|v\|_{L^{q}_{x}}\big\|_{L^{p}_{\mu}}\leq C \sqrt{p}.
\end{equation}
\end{lemm}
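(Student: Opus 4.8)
The plan is to transfer both quantities to the probability space via the identity~\eqref{A}, turning them into $L^p_\p$-moments of explicit Gaussian series, and then to decouple the randomness from the space variable by a generalized Minkowski inequality before applying the Khinchin bound~\eqref{contrac} pointwise in $x$.

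\textbf{Reduction and Minkowski step.} First I would write $\phi(\om,x)=\sum_{n\geq 1}\frac{g_{n}(\om)}{n}P_{n}(x)$ and use~\eqref{A} to replace the $\mu$-integrals over $X^{1/2}_{rad}(\S^{3})$ by $\p$-integrals over $\Omega$, so that $\big\|\|v\|_{H^{\s}_{x}}\big\|_{L^{p}_{\mu}}=\big\|\|\phi\|_{H^{\s}_{x}}\big\|_{L^{p}_{\p}}$ and similarly for the $L^{q}_{x}$ norm. Since $\Lambda^{\s}P_{n}=n^{\s}P_{n}$, the $H^{\s}$-norm is the $L^{2}_{x}$-norm of $\Lambda^{\s}\phi=\sum_{n}g_{n}(\om)\,n^{\s-1}P_{n}$. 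The key observation is that because $p\geq 2$ (resp. $p\geq q$) the generalized Minkowski integral inequality lets me pull the spatial norm outside the probabilistic one:
\begin{equation*}
\big\|\|\Lambda^{\s}\phi\|_{L^{2}_{x}}\big\|_{L^{p}_{\p}}\leq \big\|\|\Lambda^{\s}\phi\|_{L^{p}_{\p}}\big\|_{L^{2}_{x}},\qquad \big\|\|\phi\|_{L^{q}_{x}}\big\|_{L^{p}_{\p}}\leq \big\|\|\phi\|_{L^{p}_{\p}}\big\|_{L^{q}_{x}}.
\end{equation*}
Both follow by applying the classical Minkowski inequality to $|\Lambda^{\s}\phi|^{2}$ with exponent $p/2\geq 1$ (resp. to $|\phi|^{q}$ with exponent $p/q\geq 1$); this is exactly where the hypotheses $p\geq 2$ and $p\geq q$ are used.

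\textbf{Khinchin and summability.} For fixed $x$ the series $\Lambda^{\s}\phi(\cdot,x)$ and $\phi(\cdot,x)$ are Gaussian, so~\eqref{contrac} gives, pointwise in $x$,
\begin{equation*}
\|\Lambda^{\s}\phi(\cdot,x)\|_{L^{p}_{\p}}\leq C\sqrt{p}\Big(\sum_{n\geq 1}\frac{n^{2\s}|P_{n}(x)|^{2}}{n^{2}}\Big)^{\frac12},\qquad \|\phi(\cdot,x)\|_{L^{p}_{\p}}\leq C\sqrt{p}\Big(\sum_{n\geq 1}\frac{|P_{n}(x)|^{2}}{n^{2}}\Big)^{\frac12}.
\end{equation*}
For~\eqref{CT1} I then take the $L^{2}_{x}$ norm and use that $(P_{n})$ is orthonormal in $L^{2}_{rad}(\S^{3})$: the right-hand side collapses to $C\sqrt{p}\,\big(\sum_{n}n^{2\s-2}\big)^{1/2}$, which is finite exactly when $\s<1/2$. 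For~\eqref{CT2} I take the $L^{q}_{x}$ norm; squaring and using the triangle inequality in $L^{q/2}_{x}$ (legitimate since $q/2\geq 1$) reduces matters to controlling $\sum_{n}n^{-2}\|P_{n}\|_{L^{q}_{x}}^{2}$, into which I insert the Sogge bounds~\eqref{1.10}.

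\textbf{Where the work is.} The only genuinely quantitative point is this last sum, and it is also the natural place for the threshold $q<6$ to appear. In the regime $2\leq q\leq 4$ one has $\|P_{n}\|_{L^{q}}^{2}\lesssim n^{1-2/q}$, so the summand is $\lesssim n^{-1-2/q}$ and the series converges; in the regime $4\leq q<6$ one has $\|P_{n}\|_{L^{q}}^{2}\lesssim n^{2-6/q}$, so the summand is $\lesssim n^{-6/q}$, which is summable precisely for $q<6$. Thus the borderline $q=6$ is exactly the obstruction, consistent with the fact that $\|u\|_{L^{6}(\S^{3})}=+\infty$ $\mu$-a.s. Assembling the three steps yields the bound $C\sqrt{p}$ in both cases, with $C$ independent of $p$. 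The main thing to be careful about is getting the direction of the Minkowski inequality right and checking that the exponent constraints $p\geq 2$ and $p\geq q$ are precisely what justify it.
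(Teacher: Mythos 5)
Your proposal is correct and follows essentially the same route as the paper: apply the Khinchin bound~\eqref{contrac} pointwise in $x$ to the Gaussian series, exchange the probabilistic and spatial norms via Minkowski (using $p\geq 2$, resp. $p\geq q$), and then sum using orthonormality of the $(P_n)$ for~\eqref{CT1} and the Sogge bounds~\eqref{1.10} for~\eqref{CT2}. The only difference is presentational — you perform the Minkowski swap before Khinchin rather than after — and your write-up usefully makes explicit the exponent bookkeeping ($n^{-1-2/q}$ for $2\leq q\leq 4$, $n^{-6/q}$ for $4\leq q<6$) that the paper leaves to the reader.
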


\begin{proof}
We prove~\eqref{CT1}. Let $\s<1/2$ and apply~\eqref{contrac} to $\dis (1-\Delta)^{\s/2} \phi=\sum_{n\geq 1}\frac{g_{n}}{n^{1-\s}}P_{n}$. Then 
\begin{equation*}
\|(1-\Delta)^{\s/2} \phi\|_{L_{\p}^{p}} \leq C\sqrt{p}\Big(\sum _{n\geq 1}\frac{|P_{n}|^{2}}{n^{2(1-\s)}}\Big)^{\frac12}.
\end{equation*}
Take the $L^{2}(\S^{3})$ norm of the previous inequality, and by the Minkowski inequality the claim follows.  The proof of~\eqref{CT2} is similar, using~\eqref{1.10} and the Minkowski inequality.
\end{proof}
We will also need the next result. See~\cite[Lemma 3.3]{BTT} for the  proof.
\begin{lemm}\ph
Let $2\leq q<6$. Then there exist $c,C>0$ so that for all $N\geq 1$ and $\lambda>0$
\begin{equation*}
\mu\big(\,u\in X^{1/2}(\S^{3})\;:\;\|S_{N}u\|_{L^{q}(\S^{3})}>\lambda\big)\leq C\e^{-c\lambda^{2}}.
\end{equation*}
Moreover  there exist $\alpha,c,C>0$ so that for all $1\leq M\leq N$ and $\lambda>0$
\begin{equation}\label{mesure}
\mu\big(\,u\in X^{1/2}(\S^{3})\;:\;\|S_{N}u-S_{M}u\|_{L^{q}(\S^{3})}>\lambda\big)\leq C\e^{-cM^{\a}\lambda^{2}}.
\end{equation}
\end{lemm}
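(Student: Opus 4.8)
The plan is to transfer the problem to the probability space $(\Omega,\F,{\p})$ via $\mu={\p}\circ\phi^{-1}$, bound the high moments of the relevant $L^q$ norms with the Khinchin inequality~\eqref{contrac}, and then convert these moment bounds into Gaussian tail estimates by the Markov inequality optimized in $p$. First I would note that, by the transfer formula~\eqref{A}, the quantity $\mu(\|S_N u\|_{L^q}>\lambda)$ equals ${\p}(\|S_N\phi\|_{L^q}>\lambda)$, where $S_N\phi=\sum_{n\geq1}\chi(n/N)\frac{g_n}{n}P_n$. Applying~\eqref{contrac} at each fixed $x\in\S^3$, taking the $L^q_x$ norm, and exchanging the $L^p_{\p}$, $L^q_x$ and $\ell^2_n$ norms by Minkowski's inequality (legitimate for $p\geq q\geq2$), I obtain
\begin{equation*}
\big\|\|S_N\phi\|_{L^q_x}\big\|_{L^p_{\p}}\leq C\sqrt{p}\,\Big(\sum_{n\geq1}\frac{\chi(n/N)^2\,\|P_n\|^2_{L^q}}{n^2}\Big)^{1/2}.
\end{equation*}
By the Sogge bounds~\eqref{1.10} one has $\|P_n\|^2_{L^q}/n^2\lesssim n^{-\delta}$ with $\delta=\delta(q)>1$ for every $q<6$ (namely $\delta=6/q$ for $4\leq q<6$ and $\delta=1+2/q$ for $2\leq q\leq4$), so the series converges precisely because $q<6$, and the uniform bound $\big\|\|S_N\phi\|_{L^q_x}\big\|_{L^p_{\p}}\leq C\sqrt{p}$ holds for all $N$ and all $p\geq q$.

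For the second estimate the decisive point is a gain coming from frequency localization. Since $\chi\equiv1$ on $(-1/2,1/2)$ and $M\leq N$, the multiplier $\chi(n/N)-\chi(n/M)$ vanishes for $n<M/2$, so $S_N\phi-S_M\phi$ is supported on frequencies $n\geq M/2$. Repeating the computation above with the sum restricted to $n\geq M/2$ gives
\begin{equation*}
\big\|\|S_N\phi-S_M\phi\|_{L^q_x}\big\|_{L^p_{\p}}\leq C\sqrt{p}\,\Big(\sum_{n\geq M/2}n^{-\delta}\Big)^{1/2}\leq C\sqrt{p}\,M^{-\alpha},\qquad \alpha=\frac{\delta-1}{2}>0,
\end{equation*}
where the factor $M^{-\alpha}$ is exactly the tail of the convergent series.

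Finally I convert moments into tails. Given a bound of the form $\|F\|_{L^p_{\p}}\leq C\sqrt{p}\,A$ valid for all $p\geq q$ (with $A=1$ or $A=M^{-\alpha}$), the Markov inequality yields ${\p}(F>\lambda)\leq(C\sqrt{p}\,A/\lambda)^p$; choosing $p=\lambda^2/(C^2\e A^2)$ makes the base equal to $\e^{-1/2}$ and produces the bound $\e^{-c(\lambda/A)^2}$. This is valid as long as the chosen $p$ exceeds $q$, i.e.\ for $\lambda$ above a fixed threshold; for smaller $\lambda$ the right-hand sides are bounded below by a positive constant, so the inequalities hold trivially after enlarging $C$. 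Taking $A=1$ gives the first claim, and $A=M^{-\alpha}$ gives~\eqref{mesure} (after relabelling $\alpha\mapsto2\alpha$). The only genuinely delicate step is the frequency-support argument producing the $M^{-\alpha}$ gain, together with the observation that the summability threshold $\delta>1$ is met exactly when $q<6$; everything else is a routine combination of Khinchin, Minkowski and the Markov optimization.
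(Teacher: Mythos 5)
Your proof is correct, and it is essentially the standard argument: the paper itself does not prove this lemma but delegates it to~\cite[Lemma 3.3]{BTT}, whose proof rests on exactly the same three ingredients you use — the Khinchin inequality~\eqref{contrac} combined with Minkowski and the Sogge bounds~\eqref{1.10} to get moment estimates uniform in $N$ (this is also how the paper proves~\eqref{CT2}), the frequency-support observation that $S_N-S_M$ kills modes $n<M/2$ to produce the $M^{-\alpha}$ gain, and the Markov inequality optimized in $p$ to convert $\|F\|_{L^p_{\p}}\leq C\sqrt{p}\,A$ into a Gaussian tail. Your handling of the small-$\lambda$ regime (where the optimal $p$ would drop below $q$) by enlarging $C$ is the right way to make the estimate unconditional, so there is nothing to add.
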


 \subsection{A convergence result}
Let $1\leq r<5$ and  recall the definition~\eqref{defG} of $G$. Let $N\geq 1$ and set $G_{N}=\b_{N}G\circ S_{N}$,     where  $\b_{N}>0$ is chosen such that 
\begin{equation*}
\text{d}{\rho_{N} }(u)=G_{N}(u)\text{d}\mu(u),
\end{equation*} 
defines    a probability measure   on $X^{1/2}(\S^{3})$.   The next statement shows that we can pass to the limit $N\longrightarrow +\infty$ in the previous expression.

\begin{prop}\ph\label{prop.46} 
Let  $p\in [1,\infty[$, then 
\begin{equation*} 
G_{N}(u)\longrightarrow G(u),\quad \text{in}\quad L^{p}(\text{d}\mu(u)),
\end{equation*}
when $N\longrightarrow +\infty$.  
\end{prop}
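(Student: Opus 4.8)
The plan is to reduce the statement to a $\mu$-almost sure convergence plus a uniform-in-$N$ domination, and then invoke dominated convergence; the decisive structural fact I would exploit is the \emph{defocusing} sign, which makes the exponent in the density nonpositive and thus provides a trivial uniform bound. Recall that $G_{N}(u)=\b_{N}\,G(S_{N}u)$ with $G(S_{N}u)=\b\,\e^{-\frac1{r+1}\int_{\S^{3}}|S_{N}u|^{r+1}}$, where $\b$ normalises $G$ (so $\int G\,\dd\mu=1$) and $\b_{N}=\big(\int_{X^{1/2}}G(S_{N}u)\,\dd\mu(u)\big)^{-1}$ normalises $\rho_{N}$. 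Since the integral $\int_{\S^{3}}|S_{N}u|^{r+1}$ is nonnegative, one has the uniform pointwise bound $0<G(S_{N}u)\leq \b$ for every $N\geq1$ and every $u$.

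First I would establish that $G(S_{N}u)\to G(u)$ for $\mu$-almost every $u$. Because $1\leq r<5$ gives $2\leq r+1<6$, the integrability estimate~\eqref{CT2} shows that $\|v\|_{L^{r+1}(\S^{3})}\in L^{p}(\dd\mu)$ for $p$ large, and in particular $u\in L^{r+1}(\S^{3})$ for $\mu$-almost every $u$. For each such fixed $u$, Lemma~\ref{lem.sn} (applicable since $r+1\in(1,\infty)$) yields $S_{N}u\to u$ in $L^{r+1}(\S^{3})$, hence $\int_{\S^{3}}|S_{N}u|^{r+1}\to\int_{\S^{3}}|u|^{r+1}$; continuity of the exponential then gives $G(S_{N}u)\to G(u)$ $\mu$-almost surely.

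Since $\mu$ is a probability measure and $0<G(S_{N}u)\leq\b$, the constant $\b$ dominates in every $L^{p}(\dd\mu)$ with $p<\infty$, so the dominated convergence theorem upgrades the almost sure limit to $G(S_{N}u)\to G(u)$ in $L^{p}(\dd\mu)$ for all $p\in[1,\infty[$. Taking $p=1$ and using $\int G\,\dd\mu=1$ gives $\int G(S_{N}u)\,\dd\mu(u)\to1$, whence $\b_{N}\to1$. I would then conclude by the triangle inequality
\begin{equation*}
\|G_{N}-G\|_{L^{p}_{\mu}}\leq \b_{N}\,\|G(S_{N}u)-G(u)\|_{L^{p}_{\mu}}+|\b_{N}-1|\,\|G\|_{L^{p}_{\mu}},
\end{equation*}
where $\|G\|_{L^{p}_{\mu}}\leq\b<\infty$, so both terms tend to $0$.

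The only step demanding genuine care is the $\mu$-a.s. convergence $S_{N}u\to u$ in $L^{r+1}$: it rests entirely on the deterministic $L^{r+1}$-continuity and convergence of the smoothed projector $S_{N}$ (Lemma~\ref{lem.sn}) together with the almost sure finiteness of $\|u\|_{L^{r+1}}$ furnished by~\eqref{CT2}. Everything else is soft, the defocusing sign supplying the uniform bound that removes any integrability obstruction; no large deviation estimate is needed for this particular statement.
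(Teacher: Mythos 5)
Your proof is correct, and it reaches the conclusion by a genuinely different route at the key step. The paper obtains the convergence $G_{N}(u)\to G(u)$ \emph{in $\mu$-measure} from the probabilistic tail estimate~\eqref{mesure}, i.e.\ from a Gaussian large-deviation bound on $\|S_{N}u-S_{M}u\|_{L^{q}(\S^{3})}$, and then upgrades to $L^{p}(\text{d}\mu)$ by splitting on the set $A_{N,\eps}=\{|G_{N}(u)-G(u)|\leq\eps\}$ and its complement, using the uniform bound on the densities --- which is the same ``uniform bound plus convergence in measure'' mechanism as your dominated convergence step. You instead obtain the underlying convergence pathwise and deterministically: $\mu$-almost sure finiteness of $\|u\|_{L^{r+1}(\S^{3})}$ (from~\eqref{CT2}, using $r+1<6$) combined with the strong convergence $S_{N}f\to f$ in $L^{r+1}$ for each fixed $f\in L^{r+1}$ (Lemma~\ref{lem.sn}) gives $G(S_{N}u)\to G(u)$ $\mu$-a.s.; both arguments then exploit the defocusing sign through the uniform bound $0<G(S_{N}u)\leq\beta$. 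What each approach buys: yours is softer and more elementary --- no probabilistic estimate on the nonlinear functional is needed --- and it treats the normalization constants explicitly, proving $\beta_{N}\to 1$ before concluding, a point the paper's proof glosses over when it identifies $G_{N}(u)$ with $G(S_{N}u)$; the paper's route through~\eqref{mesure} yields quantitative (exponential) control of the convergence in measure, which is the type of estimate that remains indispensable in the later sections (Benjamin--Ono, half-wave, $2d$ NLS), where the density is unbounded and no such soft domination argument is available.
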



In particular, for any Borel set $A\subset X^{1/2}_{}(\S^{3})$, $\dis \lim_{N\to \infty} \rho_{N}(A)=\rho(A)$. Observe that for all $N\geq 1$, $\rho_{N}\big(X^{1/2}\backslash X^{1/2}_{rad}\big)=0$, as well as $\rho\big(X^{1/2}\backslash X^{1/2}_{rad}\big)=0$.

\begin{proof}
Let $q<6$. By~\eqref{mesure}, we deduce that $\|S_{N}u\|_{L^{q}_{x}} \longrightarrow \|u\|_{L^{q}_{x}}$ in mesure, w.r.t. $\mu$, hence $G_{N}(u)=G(S_{N}u)\longrightarrow G(u)$. In other words, if for  $\eps>0$ and $N\geq 1$ we denote by
\begin{equation*}
A_{N,\eps}=\big\{  \,u\in X^{1/2}(\S^{3})\;:\;|G_{N}(u)-G(u)| \leq\eps  \},
\end{equation*}
then $\mu({A^{c}_{N,\eps}})\longrightarrow 0$, when $N\longrightarrow +\infty$. Now use that $0\leq G,G_{N}\leq 1$
\begin{eqnarray*}
\|G-G_{N}\|_{L^{p}_{\mu}}&\leq &\|(G-G_{N}){\bf 1}_{A_{N,\eps}}\|_{L^{p}_{\mu}}+\|(G-G_{N}){\bf 1}_{{A^{c}_{N,\eps}}}\,\|_{L^{p}_{\mu}}\\
&\leq & \eps \big(\,\mu(A_{N,\eps}\,)\big)^{1/p}+2\big(\,\mu({A^{c}_{N,\eps}})\,\big)^{1/p}\leq C\eps,
\end{eqnarray*}
for $N$ large enough. This ends the proof.
\end{proof}
\subsection{Study  of the measure $\boldsymbol {\nu_{N}}$}
 
 Let $N\geq 1$. We then consider the following approximation of~\eqref{NLS_bis}
 \begin{equation}\label{ODE}
\left\{
\begin{aligned}
&i\partial_{t}u+ (\Delta -1)u=  S_{N}\big( |S_{N} u|^{r-1}S_{N}u  \big),\;\;
(t,x)\in\R\times \S^{3},\\
&u(0,x)=  v(x) \in X_{rad}^{1/2}(\S^{3}).
\end{aligned}
\right.
\end{equation}

 The main motivation to introduce this system is the following proposition, which is directly inspired from~\cite[Section 8]{BTT}. Therefore we omit the proof.

\begin{prop}\ph\label{prop.inv}
The equation~\eqref{ODE} has a global flow $\Phi_{N}$. Moreover, the measure $\rho_{N}$ is invariant under  $\Phi_{N}$  : For any Borel set $A\subset X_{rad}^{1/2}(\S^{3})$ and for all $t\in \R$, $\rho_{N}\big(\Phi_{N}(t)(A)\big)=\rho_{N}\big( A\big)$.
\end{prop}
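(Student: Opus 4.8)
The plan is to exploit the spectral cut-off $S_N$ to reduce~\eqref{ODE} to a finite-dimensional Hamiltonian system coupled with the free (linear) evolution, to obtain the global flow by Cauchy--Lipschitz together with the conservation of the $L^2$ norm, and then to deduce the invariance of $\rho_N$ from Liouville's theorem, the conservation of the truncated energy, and the product structure of the Gaussian measure~$\mu$. The detailed bookkeeping follows~\cite[Section~8]{BTT}, so I only describe the architecture.

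\emph{Global flow.} Let $\Pi_N$ be the orthogonal projector onto $E_N=\mathrm{span}(P_n)_{1\le n\le N}$ and split a solution as $u=a+b$ with $a=\Pi_N u$ and $b=(\mathrm{Id}-\Pi_N)u$. Since $\chi\in\mathcal C_0^\infty(-1,1)$, the operator $S_N$ kills every frequency $n\ge N$ and maps into $E_N$; hence $S_N u=S_N a$ and the nonlinearity $S_N(|S_N u|^{r-1}S_N u)=S_N(|S_N a|^{r-1}S_N a)$ depends only on $a$ and belongs to $E_N$. Applying $\Pi_N$ and $\mathrm{Id}-\Pi_N$ to~\eqref{ODE} therefore decouples it into the finite-dimensional ODE
\[ i\partial_t a+(\Delta-1)a=S_N\big(|S_N a|^{r-1}S_N a\big),\qquad a\in E_N, \]
and the free equation $i\partial_t b+(\Delta-1)b=0$. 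On the finite-dimensional space $E_N$ the right-hand side is locally Lipschitz (for $1\le r<5$ the map $z\mapsto|z|^{r-1}z$ is locally Lipschitz and $S_N$ is bounded), so Cauchy--Lipschitz yields a unique maximal local flow; pairing the equation with $a$ and taking imaginary parts gives, using the self-adjointness of $S_N$ and the vanishing of the (real) linear contribution,
\[ \frac{\mathrm d}{\mathrm dt}\|a\|_{L^2}^2=2\,\im\big\langle S_N(|S_N a|^{r-1}S_N a),\,a\big\rangle=2\,\im\!\int_{\S^3}|S_N a|^{r+1}\,\mathrm dx=0, \]
so $a$ stays on a fixed sphere of $E_N$, which (all norms being equivalent in finite dimension) rules out blow-up and makes the $a$-flow global. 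The $b$-flow is the unitary group $e^{it(\Delta-1)}$, global on every $H^\s$. Gluing the two pieces produces the global flow $\Phi_N(t)$ on $X_{rad}^{1/2}(\S^3)$.

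\emph{Invariance of $\rho_N$.} By independence of the Gaussians $(g_n)$, $\mu$ factorizes as $\mu=\mu_N\otimes\mu^\perp$, the product of the Gaussian measure on $E_N$ and of the Gaussian measure on $E_N^\perp$; correspondingly $\Phi_N(t)$ factorizes as the low-mode flow $\Phi_N^{\mathrm{low}}(t)$ on $E_N$ times $e^{it(\Delta-1)}$ on $E_N^\perp$. Since $G_N(u)=\beta_N G(S_N u)$ depends only on $a$, the measure splits as $\mathrm d\rho_N=\big(G_N\,\mathrm d\mu_N\big)\otimes\mathrm d\mu^\perp$. On the high modes $e^{it(\Delta-1)}$ acts coordinatewise by the phase $c_n\mapsto e^{-in^2t}c_n$, and since each Gaussian factor is invariant under phase rotations, $\mu^\perp$ is preserved. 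On the low modes, $\Phi_N^{\mathrm{low}}$ is the flow of a finite-dimensional Hamiltonian system, so by Liouville's theorem it preserves Lebesgue measure $\prod_{n\le N}\mathrm dc_n$; moreover, by construction the Gaussian weight carried by $\mu_N$ and the factor $G_N$ assemble (with the normalization fixing $\mu$ and $G$) into $\beta e^{-\mathcal H_N(a)}\prod_{n\le N}\mathrm dc_n$, where $\mathcal H_N$ is precisely the energy conserved by the truncated equation. A density that is constant along orbits times the Liouville-invariant Lebesgue measure is invariant, so $G_N\,\mathrm d\mu_N$ is preserved by $\Phi_N^{\mathrm{low}}$. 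The product of two invariant measures being invariant under the product flow, we conclude that $\rho_N$ is invariant under $\Phi_N(t)$; as $\Phi_N(t)$ is invertible, this reads $\rho_N(\Phi_N(t)(A))=\rho_N(A)$ for every Borel set $A$ and every $t\in\R$.

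\emph{Main obstacle.} The delicate point is the last one: one must verify that the quadratic weight of $\mu_N$ together with the potential supplied by $G_N$ really combine into the exponential of a single functional $\mathcal H_N$ that is conserved by the truncated flow, and that the truncated vector field is genuinely divergence-free so that Liouville applies. Both are manifestations of the fact that~\eqref{ODE} is the Hamiltonian flow of the finite-dimensional energy whose Gibbs weight is $G_N\,\mathrm d\mu_N$; this is exactly how $\mu$, the density $G$, and the smooth truncation $S_N$ were arranged, and the explicit computation is the one carried out in~\cite[Section~8]{BTT}.
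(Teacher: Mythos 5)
You are right that this is the intended argument: the paper gives no proof of Proposition~\ref{prop.inv} (it refers to~\cite[Section~8]{BTT}), and what it has in mind is exactly your decomposition --- the nonlinearity $S_{N}\big(|S_{N}u|^{r-1}S_{N}u\big)$ depends only on $a=\Pi_{N}u$ and takes values in $E_{N}$, so~\eqref{ODE} splits into a locally Lipschitz ODE on $E_{N}$ with conserved $L^{2}$ norm (hence global) and the free unitary flow on the orthogonal complement; invariance then follows from the factorisation $\mu=\mu_{N}\otimes\mu^{\perp}$, the rotation invariance of the complex Gaussians on the high modes, and Liouville plus energy conservation on the low modes. Those steps are carried out correctly in your write-up.

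The one step you do not check --- that the Lebesgue density of $G_{N}\,\dd\mu_{N}$ is the exponential of a functional conserved by the truncated flow --- is, however, where all the content sits, and with the constants as literally written it fails by a factor $2$. With $g_{n}\in \mathcal{N}_{\C}(0,1)$ normalised as in the paper ($\E|g_{n}|^{2}=1$), the density of $\mu_{N}$ with respect to $\prod_{n\leq N}\dd x_{n}\,\dd y_{n}$ (where $c_{n}=x_{n}+iy_{n}$) is proportional to $\e^{-\sum_{n\leq N}n^{2}|c_{n}|^{2}}=\e^{-\|\Lambda a\|_{L^{2}}^{2}}$. On the other hand, since $P_{n}$ is real and
\begin{equation*}
\big\langle S_{N}\big(|S_{N}a|^{r-1}S_{N}a\big),P_{n}\big\rangle
=\frac{\partial}{\partial \ov{c}_{n}}\Big(\frac{2}{r+1}\int_{\S^{3}}|S_{N}a|^{r+1}\Big)
\qquad\Big(\text{not }\ \tfrac{1}{r+1}\Big),
\end{equation*}
the energy conserved by the flow of~\eqref{ODE} is $\mathcal{H}_{N}(a)=\|\Lambda a\|_{L^{2}}^{2}+\frac{2}{r+1}\int_{\S^{3}}|S_{N}a|^{r+1}$. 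Consequently the density of $G_{N}\,\dd\mu_{N}$ is proportional to $\e^{-\mathcal{H}_{N}(a)+\frac{1}{r+1}\int_{\S^{3}}|S_{N}a|^{r+1}}$, and the correction $\frac{1}{r+1}\int_{\S^{3}}|S_{N}a|^{r+1}$ is \emph{not} conserved along the flow (for instance, in the cubic case $\frac{\dd}{\dd t}\|a\|_{L^{4}}^{4}=4\,\im \int_{\S^{3}}|a|^{2}\ov{a}\,\Lambda^{2}a$, which is generically nonzero as soon as two modes with distinct frequencies interact), while Lebesgue measure \emph{is} preserved; so this density is not constant along orbits and the measure with this weight is not exactly invariant. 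The exactly invariant measure is the one with weight $\e^{-\frac{2}{r+1}\int_{\S^{3}}|S_{N}u|^{r+1}}$; equivalently, one keeps~\eqref{defG} and normalises the Gaussians in~\eqref{phi.NLS} so that $g_{n}=h_{n}+i\ell_{n}$, i.e. $\E|g_{n}|^{2}=2$. This mismatch is a normalisation slip already present in the paper's definitions and it is harmless for every estimate in the rest of the argument, but it sits precisely at the point your proof dismisses as ``by construction'': you must either perform the computation above and adjust one constant, or verify that the reference you invoke uses normalisations compatible with the ones adopted here.
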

In particular if $\mathscr{L}_{X_{rad}^{1/2}}(v)=\rho_{N}$ then for all $t\in \R$, $\mathscr{L}_{X_{rad}^{1/2}}(\Phi_{N}(t)v)=\rho_{N}$. 

\begin{rema} Observe that~\eqref{ODE} is not a finite dimensional system of  ODE, but its flow restricted to high frequencies is linear.
\end{rema}

We denote by $\nu_{N}$ the measure on $\mathcal{C}\big([-T,T]; X^{1/2}(\S^{3})\big)$, defined as the image measure of $\rho_{N}$ by the map 
\begin{equation*}
 \begin{array}{rcc}
X^{1/2}(\S^{3})&\longrightarrow& \mathcal{C}\big([-T,T]; X^{1/2}(\S^{3})\big)\\[3pt]
\dis  v&\longmapsto &\dis \Phi_{N}(t)(v).
 \end{array}
 \end{equation*}



\begin{lemm}\ph Let $\s<\frac12$ and $p\geq 2$. Then for all $N\geq1$
\begin{equation}\label{Bor1.NLS}
\big\| \| u\|_{L^{p}_{T}H^{\s}_{x}}\big\|_{L^{p}_{\nu_{N}}}\leq  C.
\end{equation}
Let $2\leq q<6$ and  $p\geq q$. Then for all $N\geq1$
\begin{equation}\label{Bor2.NLS}
\big\| \|u\|_{L^{p}_{T}L^{q}_{x}}\big\|_{L^{p}_{\nu_{N}}} \leq  C.
\end{equation}
\end{lemm}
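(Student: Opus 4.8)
The plan is to deduce both estimates directly from the general result already established in Proposition~\ref{Prop.Fonda}, combined with the Gaussian moment bounds of the measure $\mu$ proved in~\eqref{CT1} and~\eqref{CT2}. Indeed, the measures $\nu_N$ here are constructed exactly as in the general strategy of Section~\ref{Sect.3}: $\nu_N=\rho_N\circ\Phi_N^{-1}$, where $\rho_N$ is the invariant measure of the flow $\Phi_N$ of~\eqref{ODE} (invariance being guaranteed by Proposition~\ref{prop.inv}) and $\text{d}\rho_N(u)=G_N(u)\text{d}\mu(u)$. So the hypotheses of Proposition~\ref{Prop.Fonda} are met, and I only need to check that its right-hand sides are finite with constants uniform in $N$.

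For~\eqref{Bor1.NLS}, I would fix $\s<1/2$ and apply~\eqref{Bor1} with some exponent $r>p$, which gives
\begin{equation*}
\big\|\|u\|_{L^{p}_{T}H^{\s}_{x}}\big\|_{L^{p}_{\nu_{N}}}\leq CT^{1/p}\big\|\|v\|_{H^{\s}_{x}}\big\|_{L^{r}_{\mu}}.
\end{equation*}
The Gaussian bound~\eqref{CT1} applied with the exponent $r$ shows that $\big\|\|v\|_{H^{\s}_{x}}\big\|_{L^{r}_{\mu}}\leq C\sqrt{r}<\infty$, a bound independent of $N$. Absorbing the fixed factor $T^{1/p}$ into the constant yields~\eqref{Bor1.NLS}. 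The argument for~\eqref{Bor2.NLS} is entirely parallel: for $2\leq q<6$ and $p\geq q$ I would invoke~\eqref{Bor2} with an exponent $r>p$ and then use~\eqref{CT2}, which requires precisely $p\geq q$ and the restriction $q<6$ to guarantee $\big\|\|v\|_{L^{q}_{x}}\big\|_{L^{r}_{\mu}}\leq C\sqrt{r}<\infty$.

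The only genuine subtlety is the integrability of the density $G_N$ needed to run Proposition~\ref{Prop.Fonda} with $r>p$, rather than $r=p$. Here the key simplification is that $0\leq G_N\leq 1$ uniformly in $N$ (as used in the proof of Proposition~\ref{prop.46}), since $G_N=\beta_N G\circ S_N$ with $G$ given by a negative exponential. Thus $\Psi_N=G_N\leq C$, and the final clause of Proposition~\ref{Prop.Fonda} in fact permits taking $r=p$ directly, which makes the matching with~\eqref{CT1} and~\eqref{CT2} immediate and ensures the constants are uniform in $N$. I therefore expect no real obstacle: the statement is essentially a bookkeeping combination of the abstract transport estimate with the concrete Gaussian tail bounds, the one point requiring care being to verify that the density is bounded so that the moment exponents line up correctly.
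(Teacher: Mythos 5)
Your proposal is correct and follows essentially the same route as the paper: the paper likewise invokes Proposition~\ref{Prop.Fonda} with $r=p$ (justified by the uniform bound $0\leq G_N\leq C$ on the density), uses the transport/invariance structure of $\nu_N=\rho_N\circ\Phi_N^{-1}$, and concludes with the Gaussian moment bounds~\eqref{CT1} and~\eqref{CT2}. The only cosmetic difference is that you first set up the argument with $r>p$ before observing that $r=p$ suffices, which is exactly the shortcut the paper takes from the start.
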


\begin{proof}
By~\eqref{Bor1} and the fact that  $G\leq C$ we already have 
\begin{equation*}
\|  u\|_{L^{p}_{\nu_{N}}L^{p}_{T}H^{\s}_{x}}\leq  C\|  v\|_{L^{p}_{\mu}H^{\s}_{x}}=C \|\phi\|_{L^{p}_{\p}H^{\s}_{x}},
\end{equation*}
where we used   the transport property~\eqref{A} with the map $\dis f:u\longmapsto \|u\|^{p}_{H^{\s}_{x}}$. Finally we conclude with~\eqref{CT1}.  \\
For the proof of~\eqref{Bor2.NLS}, we use~\eqref{Bor2} and~\eqref{CT2}.
\end{proof}

\begin{lemm}\ph Let $\s>\frac32$ and $p\geq 2$. Then there exists $C>0$ so that for all $N\geq1 $
\begin{equation}\label{Bor3.NLS}
\big\|\| u\|_{W^{1,p}_{T}H^{-\s}_{x}}\big\|_{L^{p}_{\nu_{N}}}\leq  C.
\end{equation}
\end{lemm}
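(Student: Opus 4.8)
The idea is to use the equation \eqref{ODE} to trade the time derivative $\partial_t u$ for spatial operators, and then reduce everything to the two spatial estimates \eqref{Bor1.NLS} and \eqref{Bor2.NLS} already at our disposal. By the definition of the $W^{1,p}_T$ norm it suffices to bound separately the two quantities $\big\|\|u\|_{L^p_TH^{-\s}_x}\big\|_{L^p_{\nu_N}}$ and $\big\|\|\partial_t u\|_{L^p_TH^{-\s}_x}\big\|_{L^p_{\nu_N}}$. The first one is harmless: since $\s>3/2$ we have $-\s<1/2$, so $\|u\|_{H^{-\s}_x}\leq\|u\|_{H^{\tau}_x}$ for any $\tau\in(-\s,1/2)$, and \eqref{Bor1.NLS} applies verbatim.

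For the second quantity I would differentiate $u=\Phi_N(t)(v)$ using \eqref{ODE}, namely
\begin{equation*}
\partial_t u = i(\Delta-1)u - i\,S_N\big(|S_N u|^{r-1}S_N u\big),
\end{equation*}
so that, pointwise in $(t,\om)$,
\begin{equation*}
\|\partial_t u\|_{H^{-\s}_x}\leq \|(\Delta-1)u\|_{H^{-\s}_x}+\big\|S_N\big(|S_N u|^{r-1}S_N u\big)\big\|_{H^{-\s}_x}.
\end{equation*}
Because $\Delta-1=-\Lambda^2$ shifts the Sobolev index by exactly $2$, the linear contribution equals $\|(\Delta-1)u\|_{H^{-\s}_x}=\|u\|_{H^{2-\s}_x}$, and since $2-\s<1/2$ its $L^p_TL^p_{\nu_N}$ norm is controlled once more by \eqref{Bor1.NLS}.

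The genuine content lies in the nonlinear term, and this is precisely where the hypothesis $\s>3/2$ is used. In dimension $3$ the embedding $H^{\s}(\S^3)\hookrightarrow L^\infty(\S^3)$ holds for $\s>3/2$, and by duality $L^1(\S^3)\hookrightarrow H^{-\s}(\S^3)$; hence it is enough to estimate the nonlinearity in $L^1$. Since $S_N$ is \emph{not} bounded on $L^1$, I would route the estimate through an intermediate exponent $q_0>1$: using the finiteness of ${\rm vol}(\S^3)$ together with Lemma~\ref{lem.sn},
\begin{equation*}
\big\|S_N\big(|S_N u|^{r-1}S_N u\big)\big\|_{L^1_x}\lesssim \big\|S_N\big(|S_N u|^{r-1}S_N u\big)\big\|_{L^{q_0}_x}\lesssim \big\||S_N u|^{r}\big\|_{L^{q_0}_x}=\|S_N u\|_{L^{rq_0}_x}^{\,r}.
\end{equation*}
Here one fixes $q_0\in(1,2]$ so small that $m:=rq_0\in[2,6)$, which is possible exactly because $r<5$; a further application of Lemma~\ref{lem.sn} replaces $\|S_N u\|_{L^m_x}$ by $\|u\|_{L^m_x}$. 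Taking successively the $L^p_T$ and $L^p_{\nu_N}$ norms, the nonlinear contribution is then dominated by
\begin{equation*}
\Big\|\,\|u\|_{L^{rp}_TL^{m}_x}^{\,r}\,\Big\|_{L^p_{\nu_N}}=\Big\|\,\|u\|_{L^{rp}_TL^{m}_x}\,\Big\|_{L^{rp}_{\nu_N}}^{\,r},
\end{equation*}
which I would close by \eqref{Bor2.NLS} applied with the pair $(rp,m)$: the required conditions $m<6$ and $rp\geq m$ (equivalently $p\geq q_0$, valid since $q_0\leq2\leq p$) hold by the choice of $q_0$. This yields a bound uniform in $N$ and depending only on $p,\s,r,T$, completing the argument.

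I expect the only genuinely delicate point to be the bookkeeping of Lebesgue exponents in the nonlinear term: one must simultaneously land in $L^1\hookrightarrow H^{-\s}$ (which forces $\s>3/2$), stay strictly below $L^6$ in the spatial variable (which forces $r<5$ and uses the Gaussian integrability behind \eqref{Bor2.NLS}), avoid the endpoint $L^1$ for the smoothed projector $S_N$, and respect the constraint $p\geq q$ in \eqref{Bor2.NLS}. Every other step is an immediate consequence of the estimates proved earlier in this section.
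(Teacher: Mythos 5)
Your proof is correct and follows essentially the same route as the paper's: reduce to bounding $\partial_{t}u$ via the equation~\eqref{ODE}, treat the linear term with~\eqref{Bor1.NLS} in $H^{2-\s}_{x}$, and estimate the nonlinearity through the dual Sobolev embedding $L^{1}(\S^{3})\hookrightarrow H^{-\s}(\S^{3})$ (using $\s>3/2$), the $L^{p}$-boundedness of $S_{N}$ from Lemma~\ref{lem.sn}, and finally~\eqref{Bor2.NLS}. If anything, your bookkeeping through the intermediate exponent $q_{0}>1$ is slightly more careful than the paper's, which passes through $L^{1}_{x}$ and the space $L^{r}_{x}$ without commenting on the restrictions $1<p<\infty$ in Lemma~\ref{lem.sn} and $q\geq 2$ in~\eqref{Bor2.NLS}.
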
  
  
\begin{proof}
By~\eqref{Bor1.NLS} it is enough to show that $\dis \big\|  \| \partial_{t}u\|_{L^{p}_{T}H^{-\s}_{x}}\big\|_{L^{p}_{\nu_{N}}}\leq  C. $ By definition 
\begin{eqnarray*}
\|\partial_{t}u\|^{p}_{L^{p}_{\nu_{N}}L^{p}_{T}H^{-\s}_{x}}&=& \int_{\mathcal{C}\big([-T,T]; X^{1/2}(\S^{3})\big)}\|\partial_{t} u\|^{p}_{L^{p}_{T}H^{-\s}_{x}} \text{d}\nu_{N}(u)\\
&=& \int_{X^{1/2}(\S^{3})}\|\partial_{t} \Phi_{N}(t)(v)\|^{p}_{L^{p}_{T}H^{-\s}_{x}} \text{d}\rho_{N}(v).
\end{eqnarray*}
Now we use that $w_{N}:=\Phi_{N}(t)(v)$ satisfies~\eqref{ODE}  to get
 \begin{equation*} 
\|\partial_{t} w_{N}\|_{L^{p}_{\rho_{N}}L^{p}_{T}H^{-\s}_{x}}\leq \|(\Delta -1) w_{N}\|_{L^{p}_{\rho_{N}}L^{p}_{T}H^{-\s}_{x}}+\|S_{N}\big( |S_{N}w_{N}|^{r-1}S_{N}w_{N}\big)\|_{L^{p}_{\rho_{N}}L^{p}_{T}H^{-\s}_{x}},
\end{equation*}
which in turn implies  
 \begin{equation}\label{sim} 
\|\partial_{t} u\|_{L^{p}_{\nu_{N}}L^{p}_{T}H^{-\s}_{x}}\leq \|(\Delta -1) u\|_{L^{p}_{\nu_{N}}L^{p}_{T}H^{-\s}_{x}}+\|S_{N}\big( |S_{N}u|^{r-1}S_{N}u\big)\|_{L^{p}_{\nu_{N}}L^{p}_{T}H^{-\s}_{x}}.
\end{equation}
Firstly, by~\eqref{Bor1.NLS} we get for $\s>1/2$
\begin{equation}\label{CC}
\|(\Delta -1) u\|_{L^{p}_{\nu_{N}}L^{p}_{T}H^{-\s}_{x}}= \|u\|_{L^{p}_{\nu_{N}}L^{p}_{T}H^{2-\s}_{x}}\leq C.
\end{equation}
Then  by Sobolev, since $\s>3/2$, we get $\dis \|g\|_{H_{x}^{-\s}} \leq C \|g\|_{L^{1}_{x}}$. Therefore 
\begin{eqnarray*}
\|S_{N}\big( |S_{N}u|^{r-1}S_{N}u\big)\|_{L^{p}_{\nu_{N}}L^{p}_{T}H^{-\s}_{x}}&\leq  &C\|S_{N}\big( |S_{N}u|^{r-1}S_{N}u\big)\|_{L^{p}_{\nu_{N}}L^{p}_{T}L^{1}_{x}}\nonumber\\
 &\leq&C\|S_{N}u\|^{r}_{L^{rk}_{\nu_{N}}L^{rk}_{T}L^{r}_{x}}\nonumber\\
  &\leq&C\|u\|^{r}_{L^{rk}_{\nu_{N}}L^{rk}_{T}L^{r}_{x}},
\end{eqnarray*}
where we used twice the  continuity of $S_{N}$ on $L^{p}_{x}$ spaces (see Lemma~\ref{lem.sn}). Now, since $1\leq r<5$ we can apply~\eqref{Bor2.NLS} and this together with~\eqref{CC} implies the result.

\end{proof}

\subsection{The convergence argument}~

\begin{prop}\ph\label{Prop.tight}
Let $T>0$ and $\s<\frac12$. Then  the family of measures 
$$ \nu_{N}=\mathscr{L}_{\mathcal{C}_{T}H^{\s}}\big(u_{N}(t);t\in [-T,T]\big)_{N\geq 1}$$
 is tight in $\mathcal{C}\big([-T,T]; H^{\s}(\S^{3})\big)$.
\end{prop}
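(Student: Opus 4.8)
To prove tightness of the family $(\nu_N)_{N\geq 1}$ in the Polish space $\mathcal{C}\big([-T,T];H^{\s}(\S^{3})\big)$, I would exhibit, for every $\eps>0$, a compact set $K_\eps$ with $\nu_N(K_\eps)\geq 1-\eps$ for all $N$. The natural strategy is to use a compact embedding combined with the uniform moment bounds~\eqref{Bor1.NLS}--\eqref{Bor3.NLS} already established. Concretely, I would fix $\s<\tfrac12$, choose $\s_1$ with $\s<\s_1<\tfrac12$, choose $p\geq 2$ large, and fix some $\s_2>\tfrac32$ so that~\eqref{Bor3.NLS} applies. The bounds~\eqref{Bor1.NLS} and~\eqref{Bor3.NLS} say precisely that $\|u\|_{L^p_TH^{\s_1}_x}$ and $\|u\|_{W^{1,p}_TH^{-\s_2}_x}$ have $p$-th moments bounded uniformly in $N$ under $\nu_N$.

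\textbf{The compact set.} The key analytic input is Lemma~\ref{lem.33}: if $u\in L^p_TH^{\s_1}_x$ and $\partial_t u\in L^p_TH^{-\s_2}_x$, then $u$ lies in $L^\infty_TH^{\s_1-\eps'}_x$ with the quantitative bound~\eqref{res.1}, and moreover $u$ is H\"older continuous in time with values in $H^{\s_1-2\eps'}_x$, both controlled by $\|u\|^{1-\theta}_{L^p_TH^{\s_1}}\|u\|^{\theta}_{W^{1,p}_TH^{-\s_2}}$. Choosing $\eps'$ small enough that $\s<\s_1-2\eps'$, I would define, for a large constant $R>0$,
\begin{equation*}
K_R=\Big\{u\in\mathcal{C}\big([-T,T];H^{\s_1-2\eps'}\big)\;:\;\|u\|_{L^\infty_TH^{\s_1-\eps'}}\leq R,\ \|u\|_{\mathcal{C}^\eta_TH^{\s_1-2\eps'}}\leq R\Big\}.
\end{equation*}
By the Arzel\`a--Ascoli theorem together with the compact Sobolev embedding $H^{\s_1-2\eps'}(\S^{3})\hookrightarrow H^{\s}(\S^{3})$ (valid since $\s<\s_1-2\eps'$), the set $K_R$ is relatively compact in $\mathcal{C}\big([-T,T];H^{\s}(\S^{3})\big)$: the uniform bound gives equi-boundedness and the H\"older bound gives equi-continuity, while the compact embedding upgrades weak-type pointwise compactness to norm compactness in the target.

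\textbf{Controlling the measure of the complement.} It remains to show $\nu_N(K_R^c)\to 0$ uniformly as $R\to\infty$. By Lemma~\ref{lem.33}, on the event that both $\|u\|_{L^p_TH^{\s_1}}\leq A$ and $\|u\|_{W^{1,p}_TH^{-\s_2}}\leq A$ one has $u\in K_R$ for $R=CA$; hence $K_R^c$ is contained in the union of the events $\{\|u\|_{L^p_TH^{\s_1}}>cA\}$ and $\{\|u\|_{W^{1,p}_TH^{-\s_2}}>cA\}$. Chebyshev's inequality applied with the uniform moment bounds~\eqref{Bor1.NLS} and~\eqref{Bor3.NLS} then gives
\begin{equation*}
\nu_N(K_R^c)\leq \frac{C}{A^{p}},
\end{equation*}
uniformly in $N$, which can be made smaller than any prescribed $\eps$ by taking $A$ (hence $R$) large. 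Taking $K_\eps=\overline{K_R}$ for the corresponding $R$ completes the argument.

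\textbf{Expected main obstacle.} The conceptual content is entirely packaged into Lemma~\ref{lem.33} and the uniform estimates of the previous subsection, so no genuinely hard estimate remains; the delicate point is purely bookkeeping. One must choose the indices $\s<\s_1-2\eps'<\s_1<\tfrac12$ and $\s_2>\tfrac32$ consistently so that Lemma~\ref{lem.33} applies (which requires $\eps'>\s_1/p+\s_2/p$, forcing $p$ to be taken large enough relative to $\s_1+\s_2$) while simultaneously keeping the target regularity strictly below $\s_1-2\eps'$ so the embedding is compact. Verifying that these constraints are mutually compatible, and that the H\"older exponent $\eta>0$ furnished by Lemma~\ref{lem.33} is genuinely positive, is the only step requiring care.
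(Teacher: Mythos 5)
Your proposal is correct and follows essentially the same route as the paper: uniform moment bounds~\eqref{Bor1.NLS} and~\eqref{Bor3.NLS}, upgraded via the interpolation Lemma~\ref{lem.33} to a uniform bound in a H\"older-in-time space $\mathcal{C}^{\eta}_{T}H^{s'}$ that embeds compactly into $\mathcal{C}\big([-T,T];H^{\s}(\S^{3})\big)$, followed by Markov--Chebyshev to control the complement of the resulting compact sets. The only cosmetic differences are that the paper packages the two bounds into a single norm $\|u\|_{\mathcal{C}^{\a}_{T}H^{s'}}$ and cites the compactness of the embedding as classical, whereas you split the tail event in two and spell out the Arzel\`a--Ascoli argument; your index bookkeeping, including the constraint $\eps'>(\s_{1}+\s_{2})/p$ forcing $p$ large, matches the paper's.
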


\begin{proof} 
Let $\s<\frac12$. Fix $\s<s'<s''<\frac12$ and $\a>0$. We  define the  space $\nolinebreak[4] \mathcal{C}_{T}^{\a}H^{s'}=\mathcal{C}^{\a}\big([-T,T]; H^{s'}(\S^{3})\big)$ by the norm 
\begin{equation*}
\|u\|_{\mathcal{C}_{T}^{\a}H^{s'}}=\sup_{t_{1},t_{2}\in [-T,T], \,t_{1}\neq t_{2}}\frac{\|u(t_{1})-u(t_{2})\|_{H_{x}^{s'}}}{|t_{1}-t_{2}|^{\a}}+\|u\|_{L^{\infty}_{T}H_{x}^{s'}},
\end{equation*}
and it is classical that the  embedding $\mathcal{C}_{T}^{\a}H^{s'}\subset \mathcal{C}\big([-T,T]; H^{\s}(\S^{3})\big)$ is compact.
 
 We now claim that   there exists $0<\a\ll1 $    so that for all $p\geq 1$ we have the bound 
 \begin{equation}\label{BOR}
\|  u\|_{L^{p}_{\nu_{N}}\mathcal{C}_{T}^{\a}H^{s'}}\leq  C.
\end{equation}
Indeed  apply Lemma~\ref{lem.33} with   $\s_{1}=s''$ and $\s_{2}=\s$. Then  for $p$ large enough we have 
\begin{equation*}
\|u\|_{ \mathcal{C}^{\a}_{T}H^{s'}} \leq C \|u\|^{1-\theta}_{L^{p}_{T}H^{s''}} \| u\|^{\theta}_{W^{1,p}_{T}H^{-\s}}\leq C \|u\|_{L^{p}_{T}H^{s''}} +C\|  u\|_{W^{1,p}_{T}H^{-\s}},
\end{equation*}
for some small $\a>0$. By~\eqref{Bor1.NLS} and~\eqref{Bor3.NLS} we then deduce  $\|  u\|_{L^{p}_{\nu_{N}}\mathcal{C}^{\a}_{T}H^{s'}}\leq  C$. (The fact that~\eqref{BOR} is indeed true for any $p\geq1 $ is a consequence of H\"older.) 
Let $\delta>0$ and  define 
\begin{equation*}
K_{\delta}=\big\{u\in \mathcal{C}_{T}H^{\s} \;\;s.t.\;\;\|u\|_{\mathcal{C}_{T}^{\a}H^{s'}}\leq \delta^{-1}\big\}.
\end{equation*}
Thanks to the previous considerations, the set $K_{\delta}$ is compact. Finally, by   Markov and~\eqref{BOR} we get that 
\begin{equation*}
\nu_{N}(K^{c}_{\delta})\leq \delta \, \|u\|_{L^{1}_{\nu_{N}}\mathcal{C}_{T}^{\a}H^{s'} } \leq \delta  C,
\end{equation*}
which shows  the tightness of $(\nu_{N})$.     
 \end{proof}


The result of Proposition~\ref{Prop.tight} enables us to use the Prokhorov theorem: For each $T>0$ there exists a sub-sequence $\nu_{N_{k}}$ and a measure $\nu$ on the space $\mathcal{C}\big([-T,T]; X^{1/2}(\S^{3})\big)$ so that for all $\tau<1/2 $ and  all bounded continuous function $F: \mathcal{C}\big([-T,T]; H^{\tau}(\S^{3}) \big)\longrightarrow \R$
$$
\int_{\mathcal{C}\big([-T,T]; H^{\tau}\big) }F(u)\text{d}\nu_{N_{k}}(u)\longrightarrow  \int_{\mathcal{C}\big([-T,T]; H^{\tau}\big) }F(u)\text{d}\nu(u).
$$
By the Skorokhod theorem, there  exists a probability space $(\widetilde{\Omega},\widetilde{\mathcal{F}},\widetilde{\bf p})$, a sequence of random variables~$(\widetilde{u}_{N_{k}})$ and a random variable~$\widetilde{u}$ with values in $\mathcal{C}\big([-T,T]; X^{1/2}(\S^{3})\big)$ so that 
\begin{equation}\label{loi}
\mathscr{L}\big(\widetilde{u}_{N_{k}};t\in [-T,T]\big)=\mathscr{L}\big(u_{N_{k}};t\in [-T,T]\big)=\nu_{N_{k}}, \quad \mathscr{L}\big(\widetilde{u};t\in [-T,T]\big)=\nu,
\end{equation}
 and for all $\tau <1/2$
\begin{equation}\label{CV}
\widetilde{u}_{N_{k}}\longrightarrow \widetilde{u},\quad \;\;\widetilde{\bf p}-\text{a.s. in}\;\; \mathcal{C}\big([-T,T]; H^{\tau}(\S^{3})\big).
\end{equation}

We now claim that $\mathscr{L}_{X^{1/2}}( {u}_{N_{k}}(t))=\mathscr{L}_{X^{1/2}}(\widetilde{u}_{N_{k}}(t))=\rho_{N_{k}}$, for all~$t\in [-T,T]$ and $k\geq 1$. For all~$t\in [-T,T]$, the evaluation map 
\begin{equation*}
 \begin{array}{rcc}
R_{t}\, :\,\mathcal{C}\big([-T,T]; X^{1/2}(\S^{3})\big)  &\longrightarrow&X^{1/2}(\S^{3}) \\[3pt]
\dis  u&\longmapsto &\dis u(t,.),
 \end{array}
 \end{equation*}
is well defined and continuous. Then, for all $t\in [-T,T]$, $u_{N_{k}}(t)$ and $\wt{u}_{N_{k}}(t)$ have same distribution. Let us now determine the distribution of $u_{N_{k}}(t)$ which we denote by $\nu^{t}_{N_{k}}$. By definition of $\nu_{N_{k}}^{t}$ and $\nu_{N_{k}}$ we have for all measurable  $F: X^{1/2}(\S^{3}) \longrightarrow \R $
\begin{eqnarray*}
\int_{  X^{1/2}(\S^{3})}F(v)\text{d}\nu^{t}_{N_{k}}(v) &=&\int_{\mathcal{C}\big([-T,T]; X^{1/2}(\S^{3})\big)} F(R_{t}u)\text{d}\nu_{N_{k}}(u)\\
 &=&\int_{  X^{1/2}(\S^{3})}F\big(R_{t}\Phi_{N_{k}}(\cdot)w\big)\text{d}\rho_{N_{k}}(w)\\
  &=&\int_{  X^{1/2}(\S^{3})}F\big(\Phi_{N_{k}}(t)(w)\big)\text{d}\rho_{N_{k}}(w).
\end{eqnarray*} 
From the invariance of $\rho_{N_{k}}$ under $\Phi_{N_{k}}$ we get  $\nu^{t}_{N_{k}}=\rho_{N_{k}}$.

Thus from~\eqref{CV} and the convergence property of Proposition~\ref{prop.46},  we deduce that 
\begin{equation}\label{LOI}
\mathscr{L}_{X^{1/2}}( \wt{u}(t)) =\rho,\quad \forall\,t\in [-T,T].
\end{equation}
Let $k\geq 1$ and $t\in \R$ and consider the r.v. $X_{k}$ given by
\begin{equation*}
X_{k}=i\partial_{t}u_{N_{k}}+ (\Delta -1) u_{N_{k}}-   S_{N_{k}}\Big( | S_{N_{k}}u_{N_{k}}|^{r-1} S_{N_{k}}u_{N_{k}}  \Big).
\end{equation*}
Define  $\widetilde{X}_{k}$ similarly to  $X_{k}$ with $u_{N_{k}}$ replaced with $\widetilde{u}_{N_{k}}$. Then by~\eqref{loi}, $\mathscr{L}_{\mathcal{C}_{T}X^{1/2}}(\widetilde{X}_{N_{k}})=\mathscr{L}_{\mathcal{C}_{T}X^{1/2}}(X_{N_{k}})=\delta_{0}$, in other words, $\widetilde{X}_{k}=0$ ${\bf \widetilde{p}}$\,--\,a.s. and 
 $\widetilde{u}_{N_{k}}$ satisfies the following equation ${\bf  \widetilde{p}}$\,--\,a.s.
\begin{equation}\label{tilde}
i\partial_{t}\widetilde{u}_{N_{k}}+ (\Delta -1) \widetilde{u}_{N_{k}}=   S_{N_{k}}\Big( | S_{N_{k}}\widetilde{u}_{N_{k}}|^{r-1} S_{N_{k}}\widetilde{u}_{N_{k}}  \Big).
 \end{equation}

We now show that we can pass to the limit $k\longrightarrow+\infty$ in~\eqref{tilde} in order to show that $\widetilde{u}$ is ${\bf  \widetilde{p}}$\,--\,a.s. a   solution to~\eqref{NLS_bis}.
Firstly, from~\eqref{CV} we deduce the convergence of the linear terms of the equation. Indeed,  ${\bf  \widetilde{p}}$\,--\,a.s.\,, when $k\longrightarrow +\infty$
\begin{equation*}
i\partial_{t}\widetilde{u}_{N_{k}}+ (\Delta -1) \widetilde{u}_{N_{k}} \longrightarrow i\partial_{t}\widetilde{u}+ (\Delta -1) \widetilde{u}\quad \text{in}\quad \mathcal{D}'\big([-T,T]\times \S^{3}\big).
\end{equation*}
To handle the nonlinear term, we apply the next lemma.

\begin{lemm}\ph
Let $1\leq r<5$. Up to a sub-sequence, the following convergence holds true
\begin{equation*}
 \widetilde{u}_{N_{k}}\longrightarrow  \widetilde{u} ,\quad \;\;\widetilde{\bf p}-\text{a.s. in}\;\; L^{r}\big([-T,T] \times \S^{3}\big) . \end{equation*}
\end{lemm}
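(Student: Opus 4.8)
The plan is to upgrade the almost sure convergence in $\mathcal{C}\big([-T,T];H^{\tau}(\S^3)\big)$ provided by~\eqref{CV} to almost sure convergence in $L^{r}\big([-T,T]\times\S^3\big)$ by interpolating between a low-regularity space reached by Sobolev embedding and the high integrability furnished by the uniform bounds~\eqref{Bor2.NLS}.

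First I record the uniform integrability of the Skorokhod variables. Since $\mathscr{L}(\widetilde{u}_{N_k})=\nu_{N_k}$ by~\eqref{loi}, the bound~\eqref{Bor2.NLS} transfers verbatim: for every $2\le q<6$ and every $p\ge q$,
\begin{equation*}
\big\|\,\|\widetilde{u}_{N_k}\|_{L^p_TL^q_x}\big\|_{L^p_{\widetilde{\bf p}}}\le C,\qquad\text{uniformly in }k.
\end{equation*}
As $[-T,T]$ is bounded, H\"older in time gives $\|\cdot\|_{L^q_{t,x}}\le C_T\|\cdot\|_{L^p_TL^q_x}$ for $p\ge q$, so the same bound holds with $L^q_{t,x}=L^q\big([-T,T]\times\S^3\big)$ in place of $L^p_TL^q_x$. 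The convergence~\eqref{CV} yields, for $\widetilde{\bf p}$-a.e.\ fixed $\omega$, convergence in measure of the full sequence on the finite measure space $[-T,T]\times\S^3$; hence, by lower semicontinuity of the $L^q_{t,x}$ norm under convergence in measure together with Fatou in $\omega$, the limit satisfies as well
\begin{equation*}
\big\|\,\|\widetilde{u}\|_{L^q_{t,x}}\big\|_{L^p_{\widetilde{\bf p}}}\le C,\qquad 2\le q<6,\ p\ge q.
\end{equation*}

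Next, by the Sobolev embedding $H^{\tau}(\S^3)\hookrightarrow L^{q_0}(\S^3)$, valid for any $q_0<3$ once $\tau<1/2$ is chosen close enough to $1/2$, the convergence~\eqref{CV} implies
\begin{equation*}
\widetilde{u}_{N_k}\longrightarrow\widetilde{u}\quad\text{a.s. in }L^{r'}\big([-T,T]\times\S^3\big),\qquad \text{for every }r'<3,
\end{equation*}
using again that the time interval is bounded and ${\rm vol}(\S^3)<\infty$. If $r<3$ this is already the claim. If $3\le r<5$, I fix $r'<3$ and $q$ with $r<q<6$ (possible since $r<5<6$) and write the interpolation inequality
\begin{equation*}
\|\widetilde{u}_{N_k}-\widetilde{u}\|_{L^r_{t,x}}\le\|\widetilde{u}_{N_k}-\widetilde{u}\|_{L^{r'}_{t,x}}^{1-\theta}\,\|\widetilde{u}_{N_k}-\widetilde{u}\|_{L^q_{t,x}}^{\theta},\qquad \tfrac1r=\tfrac{1-\theta}{r'}+\tfrac{\theta}{q}.
\end{equation*}
Taking $\widetilde{\bf p}$-expectation and applying H\"older in $\omega$ with a conjugate pair $(a,b)$, the factor $\big\|\,\|\widetilde{u}_{N_k}-\widetilde{u}\|_{L^q_{t,x}}^{\theta}\big\|_{L^b_{\widetilde{\bf p}}}$ stays bounded uniformly in $k$ by the two displayed moment bounds and the triangle inequality, while the factor $\big\|\,\|\widetilde{u}_{N_k}-\widetilde{u}\|_{L^{r'}_{t,x}}^{1-\theta}\big\|_{L^a_{\widetilde{\bf p}}}$ tends to $0$: it converges to $0$ a.s.\ by the previous display and is uniformly integrable because $\|\widetilde{u}_{N_k}-\widetilde{u}\|_{L^{r'}_{t,x}}$ is bounded in every $L^m_{\widetilde{\bf p}}$, so Vitali's theorem applies.

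Hence $\widetilde{u}_{N_k}\to\widetilde{u}$ in $L^1\big(\widetilde{\bf p};L^r_{t,x}\big)$, and extracting a further subsequence gives the announced $\widetilde{\bf p}$-almost sure convergence in $L^r\big([-T,T]\times\S^3\big)$. The only delicate point is the range $3\le r<5$, where Sobolev embedding alone is insufficient; it is precisely the subcriticality $r<5<6$, matched to the $L^q$ moment bounds~\eqref{Bor2.NLS} available for all $q<6$, that lets the interpolation close, and the appeal to Vitali (rather than a naive almost sure bound on $\sup_k\|\widetilde{u}_{N_k}\|_{L^q_{t,x}}$, which need not hold) is what legitimizes the passage "up to a subsequence".
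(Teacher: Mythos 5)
Your proof is correct and follows essentially the same route as the paper's: H\"older interpolation of the $L^{r}_{t,x}$ norm between a weaker norm where~\eqref{CV} gives almost sure convergence and a subcritical $L^{q}_{t,x}$ norm, $q<6$, where~\eqref{Bor2.NLS} supplies uniform moments, followed by extraction of a further subsequence. The only cosmetic differences are that the paper interpolates directly between $L^{2}_{t,x}$ and $L^{r+1}_{t,x}$ (so no Sobolev embedding is needed) and concludes via convergence in probability, using Markov's inequality and the inclusion $\{XY>\lambda\}\subset\{X>\eps^{\theta}\lambda\}\cup\{Y>\eps^{-\theta}\}$, instead of convergence in $L^{1}_{\widetilde{\bf p}}$ via H\"older in $\omega$ and Vitali.
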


\begin{proof}
In order to simplify the notations in the proof, we drop all the tildes and write $N_{k}\equiv k$ and $L^{p}_{t,x}=L^{p}([-T,T]\times \S^{3})$. If $1\leq r\leq 2$, the result immediately follows from~\eqref{CV}. For $2<r<5$, by the H\"older inequality,  
\begin{equation}\label{416}
\|u_{k}-u\|_{L^{r}_{t,x}}\leq \|u_{k}-u\|^{\theta}_{L^{2}_{t,x}}\|u_{k}-u\|^{1-\theta}_{L^{r+1}_{t,x}},
\end{equation}
with $\theta=\frac{2}{r(r-1)}$. By~\eqref{CV}, a.s. in $\om\in \Omega$
\begin{equation}\label{417}
\|u_{k}-u\|_{L^{2}_{t,x}}  \longrightarrow 0.
\end{equation}
Let $\eps>0$ and $\lambda>0$. By the inclusion
$$ \forall\;\;X,Y\geq 0,\qquad\big\{XY>\lambda\big\}\subset \big\{X>\eps^{\theta}\lambda\big\} \cup \big\{Y>\eps^{-\theta}\big\},$$ 
together with~\eqref{416} and the Markov inequality we have
\begin{multline}\label{borneL}
{\bf p}\big(  \|u_{k}-u\|_{L^{r}_{t,x}}>\lambda      \big)
\\
\leq {\bf p}\big(  \|u_{k}-u\|^{\theta}_{L^{2}_{t,x}}>\eps^{\theta}\lambda\big)+
{\bf p}\big(  \|u_{k}-u\|^{1-\theta}_{L^{r+1}_{t,x}}>\eps^{-\theta}\big) 
\\
\leq
{\bf p}\big(  \|u_{k}-u\|_{L^{2}_{t,x}}>\eps\lambda^{1/\theta}\big)
+\eps^{2/(r-2)}\int_{\Omega}\|u_{k}-u\|^{r+1}_{L^{r+1}_{t,x}}\text{d}{\bf p}.
 \end{multline}
By~\eqref{Bor2.NLS} and the definition of $\nu_{k}$
\begin{equation*} 
\int_{\Omega}\|u_{k}\|^{r+1}_{L^{r+1}_{t,x}}\text{d}{\bf p}=\int\|w\|^{r+1}_{L^{r+1}_{t,x}}\text{d}{\nu_{k}(w)}\leq C_{T}.
\end{equation*}
Similarly, $\dis \int_{\Omega}\|u\|^{r+1}_{L^{r+1}_{t,x}}\text{d}{\bf p}\leq C_{T}$. Therefore $\dis \int_{\Omega}\|u_{k}-u\|^{r+1}_{L^{r+1}_{t,x}}\text{d}{\bf p}$ is bounded uniformly in $k$.  Thus, thanks to~\eqref{417} and~\eqref{borneL}, we get the following convergence in probability
\begin{equation*}
\forall\,\lambda>0,\;\;\;\;\;{\bf p}\big(  \|u_{k}-u\|_{L^{r}_{t,x}}>\lambda      \big)\longrightarrow 0, \quad \text{when}\quad k\longrightarrow +\infty,
\end{equation*}
and after passing to a sub-sequence, we obtain the announced almost sure convergence.
\end{proof}

\subsection{Conclusion of the proof of Theorem~\ref{thmNLS}}\label{subNLS}
Define $\wt{f}=\wt{u}(0)$. Then by~\eqref{LOI}, ${\mathscr{L}_{X^{1/2}}( \,\wt{f}\,) =\rho}$ and by the previous arguments, there exists $\wt{\Omega'}\subset \wt{\Omega}$ such that $\wt{\p}(\wt{\Omega'})=1$. 

Set $\Sigma=\wt{f}(\Omega')$, then $\rho(\Sigma)=\wt{\p}(\wt{\Omega'})=1$. Moreover, for $\om'\in \wt{\Omega'}$, the r.v. $\widetilde{u}$ satisfies  the equation
  \begin{equation}\label{dem} 
\left\{
\begin{aligned}
&i\partial_{t}\widetilde{u}+ (\Delta -1) \widetilde{u}=  |\widetilde{u}|^{r-1}\widetilde{u}, \quad   (t,x)\in \R\times \S^{3},\\
&\widetilde{u}(0,x)=  \widetilde{f}(x) \in X_{rad}^{1/2}(\S^{3}).
\end{aligned}
\right.
\end{equation}

It remains to check that we can construct a global dynamics. Take a sequence $T_{N}\to +\infty$, and perform the previous argument for $T=T_{N}$. For all $N\geq 1$, let  $\Sigma_{N}$ be the corresponding set of initial conditions and  set $\Sigma=\cap_{N\in \N}\Sigma_{N}$. Then $\rho(\Sigma)=1$ and  for all  $\wt f\in \Sigma$, there exists 
 $$\wt u\in \mathcal{C}\big(\R\,; X^{1/2}_{rad}(\mathbb{S}^{3})\big),$$
which solves~\eqref{dem}.

This completes the proof of Theorem~\ref{thmNLS}.

\section{The Benjamin-Ono equation}\label{Sect.5}
\subsection{Preliminaries}
As in~\cite{Tzvetkov3}, consider the following approximation of~\eqref{BO}
\begin{equation}\label{BOn}
\left\{
\begin{aligned}
&\partial_t u + \mathcal{H}\partial^{2}_{x} u   +\Pi_{N}\partial_{x}\big((\Pi_{N}u)^{2}\big) =0,\quad 
(t,x)\in\R\times \mathbb{S}^1,\\
&u(0,x)= f(x).
\end{aligned}
\right.
\end{equation}
This equation is a linear PDE for the high frequencies (modes larger than $2N$) and an ODE for the low frequencies. It is staightforward to check that the quantity $\|u\|_{L^{2}(\T)}$ is preserved by the equation, thus~\eqref{BOn} admits a global flow $\Phi_{N}(t)$. The motivation for introducing~\eqref{BOn}, is that it is given by the   Hamiltonian 
\begin{equation*}
H_{N}(u)=-\frac12\int_{\T}\big(|D_{x}|^{1/2}u\big)^{2}-\frac13\int_{\T}\big(\Pi_{N}u\big)^{3}.
\end{equation*}
As a consequence, we can check that  the measure $\rho_{N}$ as defined in~\eqref{def.rho.BO} is invariant by $\Phi_{N}$. See~\cite{Tzvetkov3} for more details. \medskip

We now state a technical result which we will need  in the sequel.
 \begin{lemm}\ph
Let $\alpha>1/2$, then  there exists $C_{\b}>0$ so that for all  $N\in \Z$
\begin{equation}\label{borne.1}
\sum_{n\in \Z}\frac{1}{\<n\>^{\alpha}\<n-N\>^{\alpha}}\leq \frac{C_{\b}}{\<N\>^{\beta}},
\end{equation}
for all   $\beta<2\a-1$ when $1/2<\a\leq 1$ and $\beta=\a$ when $\a>1$.
\end{lemm}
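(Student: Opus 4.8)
The plan is to reduce the whole estimate to the elementary triangle inequality $|N|\leq |n|+|n-N|$, which gives $\langle N\rangle \lesssim \langle n\rangle+\langle n-N\rangle$ and hence $\max(\langle n\rangle,\langle n-N\rangle)\gtrsim \langle N\rangle$. I would then split $\Z$ according to the position of $n$ relative to the two centers $0$ and $N$, writing $\Z=I\cup II\cup III$ with $I=\{|n|\leq |N|/2\}$, $II=\{|n-N|\leq |N|/2\}$, and $III$ the remaining indices. The substitution $n\mapsto N-n$ exchanges $\langle n\rangle$ and $\langle n-N\rangle$ and maps $I$ onto $II$, so the sums over $I$ and $II$ are handled by one and the same estimate.

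On $I$ one has $|n-N|\geq |N|/2$, hence $\langle n-N\rangle\gtrsim\langle N\rangle$, so the sum over $I$ is bounded by $\langle N\rangle^{-\alpha}\sum_{|n|\leq |N|/2}\langle n\rangle^{-\alpha}$. When $\alpha>1$ the remaining series converges and this region contributes $\lesssim\langle N\rangle^{-\alpha}$; note that the single term $n=0$ already equals $\langle N\rangle^{-\alpha}$, which is why $\beta=\alpha$ is optimal in this range. When $1/2<\alpha<1$ I would use $\sum_{|n|\leq R}\langle n\rangle^{-\alpha}\lesssim R^{1-\alpha}$ with $R\sim\langle N\rangle$ to obtain $\langle N\rangle^{-(2\alpha-1)}$, and when $\alpha=1$ the logarithmic bound $\sum_{|n|\leq R}\langle n\rangle^{-1}\lesssim \log R$ produces $\langle N\rangle^{-1}\log\langle N\rangle$, which is $\lesssim\langle N\rangle^{-\beta}$ for every $\beta<1=2\alpha-1$. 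This is precisely where the strict inequality $\beta<2\alpha-1$ in the subcritical range is forced.

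The region $III$ is the only delicate point. There both brackets are $\gtrsim\langle N\rangle$, and (treating $N>0$, the case $N<0$ being symmetric) one checks that $III=\{n>3N/2\}\cup\{n<-N/2\}$ and that on $III$ one has $\langle n-N\rangle\gtrsim\langle n\rangle\gtrsim\langle N\rangle$. Consequently
\[
\frac{1}{\langle n\rangle^{\alpha}\langle n-N\rangle^{\alpha}}\lesssim \frac{1}{\langle n\rangle^{2\alpha}}\lesssim \frac{1}{\langle N\rangle^{\beta}}\,\frac{1}{\langle n\rangle^{2\alpha-\beta}},
\]
where the last inequality uses $\langle n\rangle\gtrsim\langle N\rangle$ and $0\leq\beta\leq 2\alpha$. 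The leftover series $\sum_{n}\langle n\rangle^{-(2\alpha-\beta)}$ converges exactly when $2\alpha-\beta>1$, i.e. $\beta<2\alpha-1$, giving the contribution $\lesssim\langle N\rangle^{-\beta}$; for $\alpha>1$ one simply takes $\beta=\alpha$ and bounds $\sum_{III}$ by $\langle N\rangle^{-\alpha}\sum_n\langle n\rangle^{-\alpha}<\infty$. Adding the three regional estimates yields the claim. I expect $III$ to be the main obstacle: it is the one place where a careless factorization leads to a borderline, logarithmically divergent series, and it is responsible for the loss of the endpoint $\beta=2\alpha-1$ when $1/2<\alpha\leq1$; the whole point is to arrange the splitting so that the larger bracket $\langle n-N\rangle$ controls $\langle n\rangle$ on $III$, leaving a genuinely convergent tail.
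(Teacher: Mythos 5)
Your proof is correct, and it is essentially the paper's argument: the paper also cuts the sum at $|n|=N/2$ and, in each piece, transfers powers of $\langle N\rangle$ from whichever bracket dominates, with the same power-counting threshold $2\alpha-\beta>1$ producing the restriction $\beta<2\alpha-1$ in the range $1/2<\alpha\leq1$. Your only departure is that you further split the paper's outer region $\{|n|>N/2\}$ into your $II$ and $III$; this is a refinement rather than a new idea, though it has the merit of making fully explicit the uniform boundedness of the residual sum $\sum_{|n|>N/2}\langle n\rangle^{-(\alpha-\beta)}\langle n-N\rangle^{-\alpha}$, which the paper asserts without detail (and which indeed requires distinguishing $n$ near $N$ from $n$ far from both centers, exactly your $II$/$III$ distinction).
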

 
 \begin{proof}
Cut the sum in two parts
 \begin{equation}\label{Cut}
 \sum_{n\in \Z}\frac{1}{\<n\>^{\a}\<n-N\>^{\a}}\leq \sum_{|n|\leq N/2}\frac{1}{\<n\>^{\a}\<n-N\>^{\a}}+\sum_{|n|>N/2}\frac{1}{\<n\>^{\a}\<n-N\>^{\a}}.
 \end{equation}
 Assume that  $\a>1$. Then by~\eqref{Cut}
\begin{equation*}
 \sum_{n\in \Z}\frac{1}{\<n\>^{\a}\<n-N\>^{\a}}\leq \frac{C}{\<N\>^{\a}}\sum_{|n|\leq N}\frac{1}{\<n\>^{\a}}\leq \frac{C}{\<N\>^{\a}}.
\end{equation*}
Assume that $1/2<\a\leq 1$ and fix $\beta<2\a-1$. Then by~\eqref{Cut}
\begin{equation*}
 \sum_{n\in \Z}\frac{1}{\<n\>^{\a}\<n-N\>^{\a}}\leq \frac{C}{\<N\>^{\b}}\sum_{|n|\leq N/2}\frac{1}{\<n\>^{\a}\<n-N\>^{\a-\b}}+ \frac{C}{\<N\>^{\b}}\sum_{|n|> N/2}\frac{1}{\<n\>^{\a-\b}\<n-N\>^{\a}}\leq \frac{C}{\<N\>^{\b}},
\end{equation*}
 which completes the proof.  \end{proof}

  \subsection{Definition of the nonlinear term in~\eqref{BO}}
 To begin with, we have
 
  \begin{lemm}\ph\label{lem.52}
Let $\s>0$. Then there exists $C>0$ so that for all   $p\geq 2$ 
\begin{equation*} 
\big\|\|v\|_{H^{-\s}_{x}}\big\|_{L^{p}_{\mu}}\leq C\sqrt{p}.
\end{equation*}
\end{lemm}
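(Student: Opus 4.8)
The plan is to mimic the argument giving \eqref{CT1}, adjusting it to the negative index $-\s$ and to the coefficients appearing in \eqref{phi.BO}. Since $\mu=\p\circ\phi^{-1}$, the transport principle \eqref{A} applied to $f(v)=\|v\|_{H^{-\s}_{x}}^{p}$ reduces the statement to an estimate for the Gaussian series $\phi$ itself, namely
\[
\big\|\|v\|_{H^{-\s}_{x}}\big\|_{L^{p}_{\mu}}=\big\|\|\phi(\om,\cdot)\|_{H^{-\s}_{x}}\big\|_{L^{p}_{\p}}.
\]
First I would apply the operator $(1-\Delta)^{-\s/2}$, which on $\T$ acts in Fourier as multiplication by $\<n\>^{-\s}$, to write
\[
(1-\Delta)^{-\s/2}\phi(\om,x)=\sum_{n\in\Z^{*}}\frac{g_{n}(\om)}{2|n|^{1/2}\<n\>^{\s}}\,\e^{inx}.
\]

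Next, for each fixed $x\in\T$ I would regard this as a Gaussian series in $\om$ and apply the Khinchin inequality \eqref{contrac}. Here one should use the reality constraint $g_{-n}=\ov{g_{n}}$ to regroup the sum over $n\geq 1$, so that only independent Gaussians enter before invoking \eqref{contrac}; this changes only the numerical constant, and since $|\e^{inx}|=1$ the resulting bound is in fact independent of $x$:
\[
\big\|(1-\Delta)^{-\s/2}\phi(\cdot,x)\big\|_{L^{p}_{\p}}\leq C\sqrt{p}\,\Big(\sum_{n\in\Z^{*}}\frac{1}{4|n|\<n\>^{2\s}}\Big)^{1/2}.
\]
Then, because $p\geq 2$, Minkowski's inequality for mixed norms lets me interchange $L^{p}_{\p}$ with the inner $L^{2}_{x}$ norm, giving
\[
\big\|\|\phi\|_{H^{-\s}_{x}}\big\|_{L^{p}_{\p}}=\big\|(1-\Delta)^{-\s/2}\phi\big\|_{L^{p}_{\p}L^{2}_{x}}\leq\big\|(1-\Delta)^{-\s/2}\phi\big\|_{L^{2}_{x}L^{p}_{\p}}\leq C\sqrt{p}\,\Big(\sum_{n\geq 1}\frac{1}{n^{1+2\s}}\Big)^{1/2}.
\]

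Finally, the series $\sum_{n\geq 1}n^{-(1+2\s)}$ converges precisely when $\s>0$, which yields the claimed bound with $C=C(\s)$. The argument is entirely routine; the only points deserving attention are the regrouping forced by $g_{-n}=\ov{g_{n}}$ before applying \eqref{contrac}, and the observation that $\s>0$ is exactly the threshold for the $\ell^{2}$-summability of the coefficients $\big(|n|^{-1/2}\<n\>^{-\s}\big)_{n}$. In other words, there is no genuine obstacle here: the hypothesis $\s>0$ is used solely, but unavoidably, to close the final summation.
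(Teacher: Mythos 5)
Your proof is correct and follows exactly the route the paper intends: the paper omits the proof of this lemma, stating only that it is ``analogous to~\eqref{CT1}'', and your argument is precisely that analogue --- Khinchin's inequality~\eqref{contrac} applied pointwise in $x$ to $(1-\Delta)^{-\s/2}\phi$, followed by Minkowski's inequality (valid since $p\geq 2$) and the summability of $\sum n^{-(1+2\s)}$ for $\s>0$. Your explicit handling of the dependency $g_{-n}=\ov{g_{n}}$ by regrouping over $n\geq 1$ before invoking~\eqref{contrac} is a point the paper leaves implicit, and it is the right way to do it.
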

The proof is analogous to~\eqref{CT1} and is omitted here.\medskip

We define the term $\partial_{x}(u^{2})$ in~\eqref{BO} on the support of $\mu$ as the limit of a Cauchy sequence. Recall the notation $u_{N}=\Pi_{N}u$ and set $\Pi^{0}=1-\Pi_{0}$ the orthogonal projection on 0-mean functions. The next result is inspired from~\cite[Lemma 5.1]{Tzvetkov3}
 \begin{lemm}\ph\label{Lem.cauchy}
For all $p\geq 2$, the sequence $\big(\Pi^{0}(u^{2}_{N})\big)_{N\geq1}$ is  Cauchy  in $L^{p}\big(X^{0}(\T),\mathcal{B},d\mu; H^{-\s}(\T)\big)$. Namely, for all $p\geq2$, there exist $\eta>0$ and $C>0$ so that for all $1\leq M<N$,
 \begin{equation*}
\int_{X^{0}(\T)}\|\Pi^{0}(u^{2}_{N})-\Pi^{0}(u^{2}_{M})\|^{p}_{H^{-\s}(\T)}\text{d}\mu(u)\leq \frac{C}{M^{\eta}}.
\end{equation*}
We denote by $\Pi^{0}(u^{2})$ its limit. This enables to define
\begin{equation*}
\partial_{x}\big(u^{2}\big):=\partial_{x}\big(\,\Pi^{0}(u^{2})\,\big).
\end{equation*}
 \end{lemm}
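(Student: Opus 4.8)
The plan is to reduce the claim, via hypercontractivity, to an $L^2(d\mu)$ estimate and then to evaluate the resulting Gaussian second moment explicitly. Throughout I identify, through the transport identity~\eqref{A}, the variable $u$ on the support of $\mu$ with the series $\phi$ of~\eqref{phi.BO}, so that $u_N=\Pi_Nu=\sum_{0<|n|\le N}\frac{g_n(\om)}{2|n|^{1/2}}\e^{inx}$. Expanding the square and removing the mean, the Fourier coefficient at frequency $k\neq0$ of $F:=\Pi^0(u_N^2)-\Pi^0(u_M^2)$ is
\begin{equation*}
c_k=\frac14\sum_{\substack{n_1+n_2=k\\ 0<|n_1|,|n_2|\le N\\ \max(|n_1|,|n_2|)>M}}\frac{g_{n_1}g_{n_2}}{|n_1|^{1/2}|n_2|^{1/2}},
\end{equation*}
the constraint $\max(|n_1|,|n_2|)>M$ being exactly the support of the indicator difference $\mathbf{1}_{\{\,\cdot\,\le N\}}-\mathbf{1}_{\{\,\cdot\,\le M\}}$ (a pair survives iff both indices are $\le N$ and at least one exceeds $M$). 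Each $c_k$ lies in the second Wiener chaos, and it is precisely this localisation of one frequency to the shell $(M,N]$ that will produce the gain $M^{-\eta}$.

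First I would pass from $L^p$ to $L^2$. Writing $\|F\|^2_{H^{-\s}}=\sum_{k\neq0}\<k\>^{-2\s}|c_k|^2$, Minkowski's inequality in $L^{p/2}(d\mu)$ (valid for $p\ge2$) gives
\begin{equation*}
\big\|\,\|F\|_{H^{-\s}}\big\|^2_{L^p_\mu}=\Big\|\sum_{k\neq0}\<k\>^{-2\s}|c_k|^2\Big\|_{L^{p/2}_\mu}\le\sum_{k\neq0}\<k\>^{-2\s}\,\|c_k\|^2_{L^p_\mu}.
\end{equation*}
By the hypercontractivity estimate for chaos of order two (the second-order analogue of~\eqref{contrac}), $\|c_k\|_{L^p_\mu}\le C(p)\,\|c_k\|_{L^2_\mu}$, so it suffices to bound $\sum_{k\neq0}\<k\>^{-2\s}\,\E\big[|c_k|^2\big]$ by $CM^{-\eta}$.

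Next I would compute the second moment. Since $\ov{g_m}=g_{-m}$, Wick's theorem shows that for $k\neq0$ the only surviving contractions are $(n_1,m_1),(n_2,m_2)$ and $(n_1,m_2),(n_2,m_1)$ (the diagonal pairing $(n_1,n_2)$ vanishes because $n_1+n_2=k\neq0$), whence
\begin{equation*}
\E\big[|c_k|^2\big]\le C\sum_{\substack{n_1+n_2=k\\ 0<|n_1|,|n_2|\le N\\ \max(|n_1|,|n_2|)>M}}\frac{1}{|n_1|\,|n_2|}.
\end{equation*}
By symmetry I may assume $|n_1|>M$, and extract the gain via $\frac1{|n_1|}\le M^{-\delta}|n_1|^{-(1-\delta)}$ together with the harmless $\frac1{|n_2|}\le|n_2|^{-(1-\delta)}$, for a small $\delta\in(0,1/2)$. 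The convolution bound~\eqref{borne.1} with $\a=1-\delta$ then yields $\sum_{n_1+n_2=k}|n_1|^{-(1-\delta)}|n_2|^{-(1-\delta)}\le C\<k\>^{-\beta}$ for any $\beta<1-2\delta$, so that
\begin{equation*}
\sum_{k\neq0}\<k\>^{-2\s}\,\E\big[|c_k|^2\big]\le\frac{C}{M^{\delta}}\sum_{k\neq0}\<k\>^{-2\s-\beta}.
\end{equation*}
Given $\s>0$, I would fix $\delta<\s$ and choose $\beta\in(1-2\s,1-2\delta)$, which renders the last series convergent; taking $\eta=\delta$ completes the $L^2$ estimate, and combining with the reductions above gives the statement (with $\eta$ replaced by $\eta p/2$).

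The main obstacle is really the bookkeeping rather than any single hard step: one must correctly see that the difference $u_N^2-u_M^2$ forces at least one frequency into the shell $(M,N]$, and then balance the exponent $\delta$ extracted from that shell against the decay $\<k\>^{-2\s}$ needed for summability in $k$ — this is exactly where the arbitrariness of $\s>0$ is consumed. The probabilistic inputs (Minkowski, hypercontractivity on the second chaos, and Wick's theorem for the covariance) are standard and follow the scheme of~\cite[Lemma 5.1]{Tzvetkov3}.
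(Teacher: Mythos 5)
Your proof is correct and takes essentially the same route as the paper's: reduce to $p=2$ by hypercontractivity (the paper invokes the Wiener chaos estimate of~\cite[Proposition 2.4]{ThTz} at once, you apply Minkowski in $L^{p/2}_{\mu}$ and then scalar hypercontractivity coefficient by coefficient), compute the second moment of each Fourier coefficient via the Gaussian pairing argument, and bound the resulting convolution sum with~\eqref{borne.1} before summing in $k$. The only cosmetic difference is how the gain in $M$ is extracted: you write $|n_{1}|^{-1}\leq M^{-\delta}|n_{1}|^{-(1-\delta)}$ and apply~\eqref{borne.1} with $\a=1-\delta$, whereas the paper phrases the identical step as an interpolation between~\eqref{borne.1} and a separate $M^{-\theta}$ bound.
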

 
 \begin{proof}
 By the result~\cite[Proposition 2.4]{ThTz} on the Wiener chaos, we only have to prove the statement for $p=2$.
 
 Firstly, by definition of the measure $\mu$ 
\begin{equation*}
\int_{X^{0}(\T)}\|\Pi^{0}(u^{2}_{N})-\Pi^{0}(u^{2}_{M})\|^{2}_{H^{-\s}(\T)}\text{d}\mu(u)=\int_{\Omega}\|\Pi^{0}(\phi^{2}_{N})-\Pi^{0}(\phi^{2}_{M})\|^{2}_{H^{-\s}(\T)}\text{d}{\bf p}.
\end{equation*}
Therefore, it is enough to prove that  $\big(\Pi^{0}(\phi^{2}_{N})\big)_{N\geq1}$ is a Cauchy sequence in $L^{2}\big(\Omega; H^{-\s}(\T)\big)$. 
Let $1\leq M<N$, let $k\in \Z$ and denote by $\dis \e_{k}(x)=\e^{ikx}$. Then, by definition of $\phi_{N}$,
\begin{equation*}
\Pi^{0}(\phi^{2}_{N})=\sum_{\substack{0<|n_{1}|, |n_{2}|\leq N\\n_{1}\neq -n_{2}}}\frac{g_{n_{1}}{g}_{n_{2}}}{|n_{1}|^{\frac12}|n_{2}|^{\frac12}}\e^{i(n_{1}+n_{2})x},
\end{equation*}
and thus we get 
\begin{equation*}
\<\,\Pi^{0}(\phi^{2}_{N}-\phi^{2}_{M})\,|\,\e_{k}\,\>=\sum_{B^{(k)}_{M,N}}\frac{g_{n_{1}}{g}_{n_{2}}}{|n_{1}|^{\frac12}|n_{2}|^{\frac12}},
\end{equation*}
where $B^{(k)}_{M,N}$ is the set defined by
\begin{multline*}
B^{(k)}_{M,N}=\big\{(n_{1},n_{2})\in \Z^{2}\;\;\text{s.t.}\;\; 0<|n_{1}|, |n_{2}|  \leq N,\; n_{1}\neq -n_{2}, \\
\;\;\big(|n_{1}|>M         \;\;\text{or}\;\; |n_{2}|>M    \big) \;\;\text{and}\;\; n_{1}+n_{2}=k\big\}.
\end{multline*} 
Therefore we obtain 
\begin{equation*}
\big\|\<\,\Pi^{0}(\phi^{2}_{N}-\phi^{2}_{M})\,|\,\e_{k}\,\>\big\|^{2}_{L^{2}(\Omega)}=\\
{{\int_{\Omega}}}\sum_{\substack {(n_{1},n_{2})\in B^{(k)}_{M,N}\\(m_{1},m_{2})\in B^{(k)}_{M,N}}}
\frac{g_{n_{1}}{g}_{n_{2}}\ov{g}_{m_{1}}\ov{g}_{m_{2}}}
{|n_{1}|^{\frac12}|n_{2}|^{\frac12}|m_{1}|^{\frac12}|m_{2}|^{\frac12}}\text{d}{\bf p}.
\end{equation*}
Since $(g_{n})_{n\in \Z^{*}}$ are independent and centred Gaussians,  we deduce that each term in the r.h.s. vanishes, unless $(n_{1},n_{2})=(m_{1},m_{2})$ or $(n_{1},n_{2})=(m_{2},m_{1})$. Thus by interpolation between~\eqref{borne.1} and the inequality
\begin{equation*}
\sum_{|n|>M}\frac{1}{|n||n-k|}\leq \frac1{M^{\theta}}\sum_{n\neq 0}\frac{1}{|n|^{1-\theta}|n-k|}\leq \frac{C_{\theta}}{M^{\theta}},
\end{equation*}
we obtain that for all $0<\eta<1$ there exists $C>0$ so that for all $1<M<N$
\begin{eqnarray*} 
\big\|\<\,\Pi^{0}(\phi^{2}_{N}-\phi^{2}_{M})\,|\,\e_{k}\,\>\big\|^{2}_{L^{2}(\Omega)}&\leq &C\sum_{(n_{1},n_{2})\in B^{(k)}_{M,N}}\frac{1}{|n_{1}||n_{2}|}\nonumber \\
&\leq &C\sum_{|n|>M}\frac{1}{|n||n-k|}\leq \frac{C}{M^{\eta}\<k\>^{1-\eta}}.
\end{eqnarray*}
As a consequence  we get
\begin{eqnarray*}
\big\|\Pi^{0}(\phi^{2}_{N}-\phi^{2}_{M})\big\|^{2}_{L^{2}(\Omega;H^{-\s}(\T))}&=&\sum_{k\in \Z}\frac1{\<k\>^{2\s}} \big\|\<\,\Pi^{0}(\phi^{2}_{N}-\phi^{2}_{M})\,|\,\e_{k}\,\>\big\|^{2}_{L^{2}(\Omega)}  \\
&\leq &\frac{C}{M^{\eta}}\sum_{k\in \Z}\frac1{\<k\>^{1+2\s-\eta}}\leq \frac{C}{M^{\eta}},
\end{eqnarray*}
whenever  we choose $\eta<2\s$.
 \end{proof}
 
   \subsection{Study of the measure $\boldsymbol {\nu_{N}}$}
Consider the probability measure $\rho_{N}$ defined by~\eqref{def.rho.BO}. Define  the measure $\nu_{N}$ on $\mathcal{C}\big([-T,T]; X^{0}(\T)\big)$ as the image of $\rho_{N}$ by the map 
\begin{equation*}
 \begin{array}{rcc}
X^{0}(\T)&\longrightarrow& \mathcal{C}\big([-T,T]; X^{0}(\T)\big)\\[3pt]
\dis  v&\longmapsto &\dis \Phi_{N}(t)(v),
 \end{array}
 \end{equation*}
where $\Phi_{N}$ is the flow of~\eqref{BOn}. Then, we are able to prove the following bounds
\begin{lemm}\ph  
Let $\s>0$ and  $p\geq 2$. Then there exists $C>0$ such that for all $N\geq 1$
\begin{equation}\label{Cl1}
\big\| \|  u\|_{L^{p}_{T}H^{-\s}_{x}} \big\|_{L^{p}_{\nu_{N}}}\leq  C,
\end{equation}
and
\begin{equation}\label{Cl2}
\big\| \| \partial_{t}u\|_{L^{p}_{T}H^{-\s-2}_{x}}\big\|_{L^{p}_{\nu_{N}}}\leq  C.
\end{equation}
\end{lemm}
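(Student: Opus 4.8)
The plan is to establish these two bounds by transporting them back to the measure $\rho_N$ via the invariance property, exactly as was done in the proof of Proposition~\ref{Prop.Fonda}. The key structural fact is that $\rho_N$ is invariant under the flow $\Phi_N$ of~\eqref{BOn}, so that integrating a quantity of the form $\|\Phi_N(t)(v)\|^p$ in time against $\rho_N$ reduces, after Fubini and the invariance, to $2T$ times the same quantity evaluated at $t=0$, i.e.\ an integral against $\rho_N$ of $\|v\|^p$.

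For the first bound~\eqref{Cl1}, I would apply~\eqref{transp} with $F(u)=\|u\|^p_{L^p_TH^{-\s}_x}$ and follow the computation~\eqref{mar1} verbatim: Fubini moves the time integral outside, invariance of $\rho_N$ under $\Phi_N$ collapses the $t$-integrand to its value at $t=0$, and one is left with $2T\int_{X^0}\|v\|^p_{H^{-\s}_x}\,\text{d}\rho_N(v)$. Writing $\text{d}\rho_N=\Psi_N\,\text{d}\mu$ and using that $\Psi_N$ is uniformly bounded in $L^{r_2}_\mu$ for every $r_2$ (by~\eqref{Tz}, since $\Psi_N\to\Psi$ in every $L^p(\text{d}\mu)$, hence is $L^{r_2}$-bounded), a H\"older split as in Proposition~\ref{Prop.Fonda} gives $\|u\|^p_{L^p_{\nu_N}L^p_TH^{-\s}_x}\leq CT\,\|v\|^p_{L^{pr_1}_\mu H^{-\s}_x}$. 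The Gaussian tail estimate of Lemma~\ref{lem.52} then controls $\|v\|_{L^{pr_1}_\mu H^{-\s}_x}$ uniformly (up to the $\sqrt{p}$ factor, which is harmless for fixed $p$). This yields~\eqref{Cl1}.

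For the time-derivative bound~\eqref{Cl2}, I would use that $u=\Phi_N(t)(v)$ solves~\eqref{BOn}, so $\partial_t u=-\mathcal{H}\partial_x^2 u-\Pi_N\partial_x\big((\Pi_Nu)^2\big)$. The first (linear) term costs two derivatives, which is precisely why the regularity drops from $H^{-\s}$ to $H^{-\s-2}$; by~\eqref{Cl1} one has $\|\mathcal{H}\partial_x^2 u\|_{L^p_{\nu_N}L^p_TH^{-\s-2}_x}=\|u\|_{L^p_{\nu_N}L^p_TH^{-\s}_x}\leq C$. For the nonlinear term I would write $\Pi_N\partial_x\big((\Pi_Nu)^2\big)=\partial_x\Pi_N\big((\Pi_Nu)^2\big)$, transport it back to $\rho_N$ and then to $\mu$ as above, and use the uniform Cauchy estimate of Lemma~\ref{Lem.cauchy}: the family $\big(\Pi^0(u_N^2)\big)_N$ is bounded in $L^p\big(\text{d}\mu;H^{-\s'}(\T)\big)$ for any $\s'>0$, so after one $\partial_x$ it is bounded in $H^{-\s'-1}\subset H^{-\s-2}$ uniformly in $N$.

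The main obstacle, as usual in these arguments, is the nonlinear term: one must verify that the low-frequency nonlinearity $\Pi_N\partial_x\big((\Pi_Nu)^2\big)$ is controlled \emph{uniformly in $N$} in a fixed negative Sobolev space against $\mu$, and this is exactly the content of Lemma~\ref{Lem.cauchy}, which furnishes not only convergence but a uniform $L^p_\mu H^{-\s}$ bound on $\Pi^0(u_N^2)$. Two minor points require care. First, the projector $\Pi_N$ should be moved through $\partial_x$ and its $L^2$- (hence $H^s$-) boundedness invoked so that the estimate on $\Pi^0(u_N^2)$ transfers to $\Pi_N\big((\Pi_Nu)^2\big)$. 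Second, to apply Proposition~\ref{Prop.Fonda}-type reasoning with $r=p$ one uses that the densities $\Psi_N$ are uniformly bounded in every $L^{r_2}_\mu$, which follows from~\eqref{Tz}; this legitimizes the H\"older split without losing powers of $p$. With these in hand, combining the linear and nonlinear contributions through the triangle inequality gives~\eqref{Cl2}.
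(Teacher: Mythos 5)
Your proposal is correct and follows essentially the same route as the paper: \eqref{Cl1} via the invariance/Fubini transport argument of Proposition~\ref{Prop.Fonda} combined with~\eqref{Tz} and Lemma~\ref{lem.52}, and \eqref{Cl2} by writing $\partial_t u$ from the equation~\eqref{BOn}, absorbing the linear term with~\eqref{Cl1}, and transporting the nonlinear term back to $\mu$ by invariance of $\rho_N$ so that the uniform bound furnished by Lemma~\ref{Lem.cauchy} (plus Cauchy--Schwarz against $\|\Psi_N\|_{L^2_\mu}$) applies. The points you flag as requiring care (commuting $\Pi_N$ with $\partial_x$, the zero mode being killed by $\partial_x$, and the uniform $L^{r_2}_\mu$ bound on $\Psi_N$) are exactly the ones implicit in the paper's computation~\eqref{u2}.
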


\begin{proof}
The bound~\eqref{Cl1} is obtained thanks to~\eqref{Tz},~\eqref{Bor1} and Lemma~\ref{lem.52}.      We now turn to~\eqref{Cl2}. From the equation
\begin{equation*}
\partial_{t}u=-\H\partial^{2}_{x}u-\Pi_{N}\partial_{x}\big((\Pi_{N}u)^{2}\big),
\end{equation*}
similarly to~\eqref{sim}, we deduce 
\begin{equation*}
\|  \partial_{t}u\|_{L^{p}_{\nu_{N}}L^{p}_{T}H^{-\s-2}_{x}}\leq    \|  u\|_{L^{p}_{\nu_{N}}L^{p}_{T}H^{-\s}_{x}}+\|  \Pi^{0}(\Pi_{N}u)^{2}\|_{L^{p}_{\nu_{N}}L^{p}_{T}H^{-\s}_{x}} . 
\end{equation*}
By the invariance of the measure $\rho_{N}$ by $\Phi_{N}$ we get
\begin{eqnarray}
\big\| \,\Pi^{0}\big[ (\Pi_{N}u)^{2}\big]\,\big\|^{p}_{L^{p}_{\nu_{N}}L^{p}_{T}H^{-\s}_{x}} &=&\int_{\mathcal{C}([-T,T];X^{0})}\big\| \,\Pi^{0}\big[ (\Pi_{N}u)^{2}]\,\big\|^{p}_{L^{p}_{T}H^{-\s}_{x}} \text{d}\nu_{N}(u)\nonumber \\
&=&\int_{X^{0}(\S^{1})}\Big\|   \,\Pi^{0}\big[ \big(\,\Pi_{N}\big[\Phi_{N}(t)(v)\big]\,\big)^{2}\big]\,\Big\|^{p}_{L^{p}_{T}H^{-\s}_{x}} \text{d}\rho_{N}(v)\nonumber \\
&=&\int_{X^{0}(\S^{1})}\big\| \,\Pi^{0}\big[  (\Pi_{N}v)^{2}\big]\,\big\|^{p}_{L^{p}_{T}H^{-\s}_{x}} \text{d}\rho_{N}(v)\nonumber \\
&=&2T \int_{X^{0}(\S^{1})}\big\| \,\Pi^{0}\big[ (\Pi_{N}v)^{2}\big]\,\big\|^{p}_{H^{-\s}_{x}} \Psi_{N}(v)\text{d}\mu(v),\label{u2}
\end{eqnarray}
and by  Cauchy-Schwarz and Lemma~\ref{Lem.cauchy}
\begin{equation*}
\big\|\,\Pi^{0}\big[ (\Pi_{N}u)^{2}\big]\,\big\|^{p}_{L^{p}_{\nu_{N}}L^{p}_{T}H^{-\s}_{x}} \leq C_{T}\big\|\,\Pi^{0}\big[ (\Pi_{N}v)^{2}\big]\,\big\|^{p}_{L^{2p}_{\mu}H^{-\s}_{x}} \| \Psi_{N}(v)\|_{L^{2}_{\mu}}\leq C,
\end{equation*}
 which concludes the proof.
 \end{proof}

\begin{prop}\ph\label{Prop.tight2}
Let $T>0$ and $\s>0$. Then the family of measures 
$$ \nu_{N}=\mathscr{L}_{\mathcal{C}_{T}H^{-\s}}\big(u_{N}(t);t\in [-T,T]\big)_{N\geq 1}$$
 is tight in $\mathcal{C}\big([-T,T]; H^{-\s}(\T)\big).$
\end{prop}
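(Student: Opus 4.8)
The plan is to follow verbatim the scheme of Proposition~\ref{Prop.tight}, with the roles of the Sobolev exponents reflected to negative regularity. Fix $\s>0$ and choose two intermediate indices $0<b<a<\s$, together with a small H\"older exponent $\a>0$; I set $\mathcal{C}^{\a}_{T}H^{-a}=\mathcal{C}^{\a}\big([-T,T];H^{-a}(\T)\big)$ with norm
\begin{equation*}
\|u\|_{\mathcal{C}^{\a}_{T}H^{-a}}=\sup_{t_{1},t_{2}\in[-T,T],\,t_{1}\neq t_{2}}\frac{\|u(t_{1})-u(t_{2})\|_{H^{-a}_{x}}}{|t_{1}-t_{2}|^{\a}}+\|u\|_{L^{\infty}_{T}H^{-a}_{x}}.
\end{equation*}
Since $-a>-\s$ on the compact manifold $\T$, the embedding $H^{-a}(\T)\hookrightarrow H^{-\s}(\T)$ is compact, and hence by Arzel\`a--Ascoli the embedding $\mathcal{C}^{\a}_{T}H^{-a}\subset \mathcal{C}\big([-T,T];H^{-\s}(\T)\big)$ is compact. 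Thus it suffices to obtain a bound for $\nu_{N}$ in the space $\mathcal{C}^{\a}_{T}H^{-a}$ that is uniform in $N$.

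To produce such a bound I would apply Lemma~\ref{lem.33} with $\s_{1}=-b$ and $\s_{2}=-b-2$. The loss exponent there is constrained by $\eps>(\s_{1}-\s_{2})/p=2/p$, so for $p$ large one can choose $\eps$ as small as needed; taking $\eps<(a-b)/2$ guarantees $-b-2\eps\geq -a$, so that both conclusions of Lemma~\ref{lem.33} control the $H^{-a}$ norm. With $\a$ the H\"older exponent $\eta$ furnished by the lemma, and using $x^{1-\theta}y^{\theta}\leq x+y$ for $x,y\geq0$, this yields, for $p$ large enough,
\begin{equation*}
\|u\|_{\mathcal{C}^{\a}_{T}H^{-a}}\leq C\|u\|_{L^{p}_{T}H^{-b}_{x}}+C\|u\|_{W^{1,p}_{T}H^{-b-2}_{x}}.
\end{equation*}
Here I use crucially that the bounds \eqref{Cl1} and \eqref{Cl2} hold for \emph{every} positive index: applying them with $b$ in place of $\s$ makes both $\|u\|_{L^{p}_{T}H^{-b}_{x}}$ and $\|\partial_{t}u\|_{L^{p}_{T}H^{-b-2}_{x}}$ bounded in $L^{p}_{\nu_{N}}$ uniformly in $N$ (the term $\|u\|_{L^{p}_{T}H^{-b-2}_{x}}$ being dominated by $\|u\|_{L^{p}_{T}H^{-b}_{x}}$). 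Integrating the displayed inequality against $\nu_{N}$ then gives $\|u\|_{L^{p}_{\nu_{N}}\mathcal{C}^{\a}_{T}H^{-a}}\leq C$, uniformly in $N$, and, by H\"older, for all $p\geq1$.

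The conclusion follows exactly as in Proposition~\ref{Prop.tight}: for $\delta>0$ the set $K_{\delta}=\{u:\|u\|_{\mathcal{C}^{\a}_{T}H^{-a}}\leq \delta^{-1}\}$ is compact in $\mathcal{C}\big([-T,T];H^{-\s}(\T)\big)$ by the compact embedding above, and Markov's inequality together with the uniform bound gives $\nu_{N}(K_{\delta}^{c})\leq \delta\,\|u\|_{L^{1}_{\nu_{N}}\mathcal{C}^{\a}_{T}H^{-a}}\leq C\delta$, which is precisely the tightness. I expect no genuine obstacle: the only points requiring care are the bookkeeping of the negative regularity indices, so that $-b-2\eps\geq -a$ with $\eps>2/p$ can be met simultaneously by enlarging $p$, and the remark that the a~priori estimates \eqref{Cl1}--\eqref{Cl2}, having been established for an arbitrary positive index, may be freely applied at the level $b<\s$ rather than at $\s$ itself.
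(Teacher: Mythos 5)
Your proof is correct and is precisely the argument the paper intends: the paper's own proof of Proposition~\ref{Prop.tight2} consists of the single remark that one repeats the scheme of Proposition~\ref{Prop.tight} using the estimates~\eqref{Cl1} and~\eqref{Cl2}, which is exactly what you have carried out, with the index bookkeeping (Lemma~\ref{lem.33} applied with $\s_{1}=-b$, $\s_{2}=-b-2$, and $\eps$ chosen between $2/p$ and $(a-b)/2$ for $p$ large) handled correctly. Nothing further is needed.
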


\begin{proof} The proof is similar to the proof of Proposition~\ref{Prop.tight}. Here we use  the estimates~\eqref{Cl1} and~\eqref{Cl2}.
\end{proof}

  \subsection{Proof of Theorem~\ref{thmBO}}
  
 By  Proposition~\ref{Prop.tight2} we can use the    Prokhorov theorem: For each $T>0$ there exists a sub-sequence $\nu_{N_{k}}$ and a measure $\nu$ on the space $\mathcal{C}\big([-T,T]; X^{0}(\T)\big)$ so that 
 $ \nu_{N_{k}}\longrightarrow \nu$ weakly on $\mathcal{C}\big([-T,T]; H^{-\s}(\T)\big)$, for all $\s>0$. By the Skorokhod theorem, there  exists a probability space $(\widetilde{\Omega},\widetilde{\mathcal{F}},\widetilde{\bf p})$, a sequence of random variables $(\widetilde{u}_{N_{k}})$ and a random variable $\widetilde{u}$ with values in $\mathcal{C}\big([-T,T]; X^{0}(\T)\big)$ so that 
\begin{equation*}
\mathscr{L}\big(\widetilde{u}_{N_{k}};t\in [-T,T]\big)=\mathscr{L}\big(u_{N_{k}};t\in [-T,T]\big)=\nu_{N_{k}}, \quad \mathscr{L}\big(\widetilde{u};t\in [-T,T]\big)=\nu,
\end{equation*}
 and for all $\s>0$
\begin{equation}\label{CV0}
\widetilde{u}_{N_{k}}\longrightarrow \widetilde{u},\quad \;\;\widetilde{\bf p}-\text{a.s. in}\;\; \mathcal{C}\big([-T,T]; H^{-\s}(\T)\big).
\end{equation}
We have  that $\mathscr{L}_{X^{0}(\S^{1})}( {u}_{N_{k}}(t))=\mathscr{L}_{X^{0}(\S^{1})}(\widetilde{u}_{N_{k}}(t))=\rho_{N_{k}}$, for all $t\in [-T,T]$ and $k\geq 1$. Therefore, for all $t\in [-T,T]$, $\mathscr{L}_{X^{0}(\S^{1})}( {u}(t)) =\rho$. Next, $\widetilde{u}_{N_{k}}$ satisfies the following equation ${\bf  \widetilde{p}}$\,--\,a.s.
\begin{equation*} 
\partial_{t}\widetilde{u}_{N_{k}}+ \H\,\partial^{2}_{x} \widetilde{u}_{N_{k}}+\Pi_{N_{k}}\partial_{x}\big((\Pi_{N_{k}}\wt{u}_{N_{k}})^{2}\big) =0.
 \end{equation*}
We now show that we can pass to the limit $k\longrightarrow+\infty$ in the previous equation.
Firstly, from~\eqref{CV0} we deduce the convergence of the linear terms of the equation. Indeed,  ${\bf  \widetilde{p}}$\,--\,a.s.\,, when $k\longrightarrow +\infty$
\begin{equation*}
\partial_{t}\widetilde{u}_{N_{k}}+ \H\,\partial^{2}_{x} \widetilde{u}_{N_{k}} \longrightarrow \partial_{t}\widetilde{u}+ \H\,\partial^{2}_{x} \widetilde{u}\quad \text{in}\quad \mathcal{D}'\big([-T,T]\times \T\big).
\end{equation*}
  The only difficulty is to pass to the limit in the non linear term. Here we can proceed as in~\cite{DPD}.
\begin{lemm}\ph\label{lem.56}
Let $\s>0$. Up to a sub-sequence, the following convergence holds true
\begin{equation*} 
\Pi^{0}\big[(\Pi_{N_{k}}\wt{u}_{N_{k}})^{2}\big] \longrightarrow  \Pi^{0}\big[\wt{u} ^{2}\big],\quad \;\;\widetilde{\bf p}-\text{a.s. in}\;\; L^{2}\big([-T,T]; H^{-\s}(\T)\big) . 
\end{equation*} 
\end{lemm}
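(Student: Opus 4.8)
The plan is to compare the full nonlinear term with its low–frequency truncation at a fixed level $M\leq N_{k}$, exploiting that for fixed $M$ the quadratic expression involves only finitely many Fourier modes (where the almost sure convergence of $\wt u_{N_k}$ applies), while the high–frequency contribution is controlled \emph{uniformly in} $N_k$ by the invariance of $\rho_{N_k}$ together with Lemma~\ref{Lem.cauchy}. Concretely, for $1\leq M<N_{k}$ I would write
\[
\Pi^{0}\big[(\Pi_{N_{k}}\wt u_{N_{k}})^{2}\big]-\Pi^{0}\big[\wt u^{2}\big]=I_{k,M}+II_{k,M}+III_{M},
\]
with
\[
I_{k,M}=\Pi^{0}\big[(\Pi_{N_{k}}\wt u_{N_{k}})^{2}\big]-\Pi^{0}\big[(\Pi_{M}\wt u_{N_{k}})^{2}\big],\qquad III_{M}=\Pi^{0}\big[(\Pi_{M}\wt u)^{2}\big]-\Pi^{0}\big[\wt u^{2}\big],
\]
and $II_{k,M}=\Pi^{0}\big[(\Pi_{M}\wt u_{N_{k}})^{2}\big]-\Pi^{0}\big[(\Pi_{M}\wt u)^{2}\big]$, where $\Pi^{0}[\wt u^{2}]:=\lim_{M\to\infty}\Pi^{0}[(\Pi_{M}\wt u)^{2}]$ is the limit provided by Lemma~\ref{Lem.cauchy} for the law $\rho\ll\mu$ of $\wt u(t)$.

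For the two tail terms $I_{k,M}$ and $III_{M}$ I would bound their $L^{2}\big(\wt{\bf p};L^{2}_{T}H^{-\s}_{x}\big)$ norms uniformly. Since $\wt u_{N_{k}}(t)$ has law $\rho_{N_{k}}$ for every $t$ (invariance of $\rho_{N_{k}}$ under $\Phi_{N_{k}}$), Fubini gives, exactly as in~\eqref{u2},
\[
\int_{\wt\Omega}\|I_{k,M}\|^{2}_{L^{2}_{T}H^{-\s}_{x}}\,\dd\wt{\bf p}=2T\int_{X^{0}(\T)}\big\|\Pi^{0}[(\Pi_{N_{k}}v)^{2}]-\Pi^{0}[(\Pi_{M}v)^{2}]\big\|^{2}_{H^{-\s}_{x}}\,\Psi_{N_{k}}(v)\,\dd\mu(v).
\]
By Cauchy--Schwarz, the uniform bound $\|\Psi_{N_{k}}\|_{L^{2}_{\mu}}\leq C$ coming from~\eqref{Tz}, and the $p=4$ instance of Lemma~\ref{Lem.cauchy} (legitimate because $\big(\Pi^{0}(v^{2}_{N})\big)$ lives in a fixed Wiener chaos, so all $L^{p}_{\mu}$ norms are equivalent), this is $\leq C\,T\,M^{-\eta}$ for some $\eta>0$, uniformly in $k$. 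The same computation with $\rho,\Psi$ in place of $\rho_{N_{k}},\Psi_{N_{k}}$ (and $N\to\infty$ in Lemma~\ref{Lem.cauchy}) yields $\int_{\wt\Omega}\|III_{M}\|^{2}_{L^{2}_{T}H^{-\s}_{x}}\,\dd\wt{\bf p}\leq C\,T\,M^{-\eta}$.

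For the low–frequency term $II_{k,M}$ I would fix $M$ and let $k\to\infty$. On the finite–dimensional range of $\Pi_{M}$ the map $w\mapsto\Pi^{0}[(\Pi_{M}w)^{2}]$ is continuous from $H^{-\s}(\T)$ into $H^{-\s}(\T)$, since it only involves the modes $|n|\leq M$ and produces a trigonometric polynomial. Hence the almost sure convergence $\wt u_{N_{k}}\to\wt u$ in $\mathcal{C}\big([-T,T];H^{-\s}(\T)\big)$ from~\eqref{CV0} gives $II_{k,M}\to 0$ in $L^{2}_{T}H^{-\s}_{x}$, $\wt{\bf p}$--almost surely, as $k\to\infty$ for each fixed $M$.

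Finally I would combine these by an $\eps/3$ argument: given $\lambda,\eps>0$, Markov's inequality and the uniform bounds above let me choose $M$ so large that $\wt{\bf p}\big(\|I_{k,M}\|_{L^{2}_{T}H^{-\s}_{x}}>\lambda/3\big)$ and $\wt{\bf p}\big(\|III_{M}\|_{L^{2}_{T}H^{-\s}_{x}}>\lambda/3\big)$ are both $<\eps/3$ for all $k$; for that fixed $M$ the previous paragraph makes $\wt{\bf p}\big(\|II_{k,M}\|_{L^{2}_{T}H^{-\s}_{x}}>\lambda/3\big)<\eps/3$ once $k$ is large. This yields convergence in probability in $L^{2}\big([-T,T];H^{-\s}(\T)\big)$, and a further extraction gives the claimed almost sure convergence along a subsequence. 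The main obstacle is precisely the tail term $I_{k,M}$: since the product of two $H^{-\s}$–functions is genuinely ill-defined, it is only the probabilistic cancellation encoded in Lemma~\ref{Lem.cauchy}, transported to all times by the invariance of $\rho_{N_{k}}$, that renders it small uniformly in $k$; the almost sure convergence of $\wt u_{N_{k}}$ alone controls only the harmless finitely many low modes.
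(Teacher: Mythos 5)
Your proof is correct and follows essentially the same route as the paper's: the same splitting into high-frequency tail terms (controlled uniformly in $k$ by transporting to the invariant measure $\rho_{N_{k}}$ via the flow, applying Cauchy--Schwarz against $\Psi_{N_{k}}$, and invoking Lemma~\ref{Lem.cauchy}) plus a finite-dimensional continuity term, followed by extraction of an almost surely convergent subsequence. The only cosmetic differences are that you merge the paper's two tail terms $(\Pi_{N_{k}}u_{k})^{2}-u_{k}^{2}$ and $u_{k}^{2}-(\Pi_{M}u_{k})^{2}$ into the single difference $I_{k,M}$ (which Lemma~\ref{Lem.cauchy} handles directly), and that you conclude via an explicit $\eps/3$ convergence-in-probability argument rather than the paper's convergence in $L^{2}\big(\Omega\times[-T,T];H^{-\s}(\T)\big)$.
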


 \begin{proof}
 In order to simplify the notations, in this proof we drop the tildes and write $N_{k}=k$. Let $M\geq1$ and write 
  \begin{equation*}
 \Pi^{0}\Big[(\Pi_{k}u_{k})^{2} - u^{2}\Big]=  \Pi^{0}\Big[\big((\Pi_{k}u_{k})^{2} - u_{k}^{2}\big)+\big(  u_{k}^{2}-(\Pi_{M}u_{k})^{2}\big)+\big((\Pi_{M}u_{k})^{2}-(\Pi_{M}u)^{2}\big)+\big( (\Pi_{M}u)^{2}-u^{2}\big)\Big].
 \end{equation*}
  To begin with, by continuity of the square in finite dimension, when $k\longrightarrow +\infty$
\begin{equation*}
 \Pi^{0}\big[(\Pi_{M}u_{k})^{2}\big] \longrightarrow  \Pi^{0}\big[(\Pi_{M}u)^{2}\big] ,\quad \;\;\widetilde{\bf p}-\text{a.s. in}\;\; L^{2}\big([-T,T]; H^{-\s}(\T)\big).
\end{equation*}
 We now deal with the other terms. It is sufficient to show  the convergence in the space $X:=L^{2}\big(\Omega \times[-T,T]; H^{-\s}(\T)\big)$, since  the almost sure convergence  follows after exaction of a sub-sequence. \\
With the same arguments as in~\eqref{u2} we obtain
  \begin{eqnarray*}
\big\| \,\Pi^{0}\big[ (\Pi_{M}u_{k})^{2}-u^{2}_{k}\big]\,\big\|^{2}_{X}&=&\int_{\mathcal{C}([-T,T];X^{0})}\big\| \,\Pi^{0}\big[ (\Pi_{M}v)^{2}-v^{2}\big]\,\big\|^{2}_{L^{2}_{T}H^{-\s}_{x}} \text{d}\nu_{k}(v)\\
&=&\int_{X^{0}(\S^{1})}\Big\| \,\Pi^{0}\Big[ \big[\Pi_{M}\Phi_{k}(t)(f)\big]^{2}-\big[\Phi_{k}(t)(f)\big]^{2}\Big]\,\Big\|^{2}_{L^{2}_{T}H^{-\s}_{x}} \text{d}\rho_{k}(f)\\
&=&\int_{X^{0}(\S^{1})}\big\| \,\Pi^{0}\big[ \big(\Pi_{M}f\big)^{2}-f^{2}\big]\,\big\|^{2}_{L^{2}_{T}H^{-\s}_{x}} \text{d}\rho_{k}(f)\\
&=&2T\int_{X^{0}(\S^{1})}\big\| \,\Pi^{0}\big[ \big(\Pi_{M}f\big)^{2}-f^{2}\big]\,\big\|^{2}_{H^{-\s}_{x}} \Psi_{k}(f)\text{d}\mu(f),
\end{eqnarray*}
and by Cauchy-Schwarz and~\eqref{Tz},
\begin{equation*}
\big\| \,\Pi^{0}\big[ (\Pi_{M}u_{k})^{2}-u^{2}_{k}\big]\,\big\|_{X}\leq C\big\| \,\Pi^{0}\big[ \big(\Pi_{M}f\big)^{2}-f^{2}\big]\,\big\|_{L^{4}_{\mu}H^{-\s}_{x}}.
\end{equation*}
  This latter term tends to 0 uniformly in $k\geq 1$ when $M\longrightarrow +\infty$, according to Lemma~\ref{Lem.cauchy}. The term $\big\| \,\Pi^{0}\big[  (\Pi_{M}u)^{2}-u^{2}\big]\,\big\|_{X}$ is treated similarly. \\
Finally, with the same argument we show
 \begin{equation*}
 \big\| \,\Pi^{0}\big[(\Pi_{k}u_{k})^{2} - u_{k}^{2}\big]\,\big\|_{X} \leq C  \big\| \,\Pi^{0}\big[  \big(\Pi_{k}f\big)^{2}-f^{2}\big]\,\big\|_{L^{4}_{\mu}H^{-\s}_{x}},
 \end{equation*}
which tends to 0 when $k\longrightarrow +\infty$. This completes the proof.
  \end{proof}

The conclusion of the proof of Theorem~\ref{thmBO} is similar to the argument in Subsection~\ref{subNLS}.

\section{The derivative nonlinear Schr\"odinger equation}\label{Sect.6}

\subsection{Hamiltonian formalism of DNLS}
To begin with, we recall some facts which are explained in the appendix of~\cite{ThTz}. 
We define the operator $\partial^{-1}$ by 
\begin{equation*}
\partial^{-1}\,:\,f(x)=\sum_{n\in \Z}\a_{n}\e^{inx} \longmapsto \sum_{n\in \Z\backslash\{0\}}\frac{\a_{n}}{in}\e^{inx},
\end{equation*} 
and the skew symmetric operator ($K(u,v)^{*}=-K(u,v)$)
 \begin{equation}\label{def.K}
K(u,v)=\left(\begin{array}{cc} 
-u\partial^{-1}u\cdot & -i+u\partial^{-1}v\cdot \\[2pt]
i+v\partial^{-1}u\cdot & -v\partial^{-1}v\cdot 
\end{array} \right). 
\end{equation}
Define   $H$ by
\begin{equation*} 
H(u(t))=\int_{\T}|\partial_{x}u|^{2}\text{d}x+\frac34i\int_{\T}\ov{u}^{2}\,\partial_{x}(u^{2})\text{d}x+\frac12\int_{\T}|u|^{6}\text{d}x,
\end{equation*}
 and introduce the Hamiltonian system 
\begin{equation}
\left(\begin{array}{c}\label{syst}
\partial_{t}u \\[2pt]
\partial_{t}v
\end{array} \right) =K(u,v)
\left(\begin{array}{c}
\frac{\delta H}{\delta u}(u,v) \\[4pt]
\frac{\delta H}{\delta v}(u,v)
\end{array} \right).
\end{equation}
Denote by 
\begin{equation}\label{deFu}
T_{u}(t)=2\,\text{Im}\,\int_{\T}u\partial_{x}\ov{u}+\frac32\int_{\T}|u|^{4},
\end{equation} 
then the system~\eqref{syst} is a Hamiltonian formulation of the equation 
\begin{equation}\label{eq.gauge}
i\partial_t u+\partial_{x}^{2} u   = i\partial_{x}\big(|u|^{2}u\big)+T_{u}(t)u,
\end{equation}
in the coordinates $(u,v)=(u,\ov{u})$ (see~\cite[Proposition A.2]{ThTz}). 
Now, if  we set 
\begin{equation}\label{eq.chgt}
v(t,x)=\e^{i\int_{0}^{t}T_{u}(s)\text{d}s}u(t,x),
\end{equation} 
then $v$ is the solution of the equation
\begin{equation*} 
\left\{
\begin{aligned}
&i\partial_t v+\partial_{x}^{2} v   = i\partial_{x}\big(|v|^{2}v\big),\;\;
(t,x)\in\R\times \mathbb{S}^1,\\
&v(0,x)= u_{0}(x).
\end{aligned}
\right.
\end{equation*}
Moreover, if $u$ and $v$ are linked by~\eqref{eq.chgt}, we have $T_{u}=T_{v}$.\medskip

Thanks to these observations, we can focus on the equation~\eqref{eq.gauge}. We introduce  a natural truncation for which we can construct an invariant Gibbs measure. Namely, let $K$ be given by~\eqref{def.K}, and consider  the following system 
\begin{equation}
\left(\begin{array}{c}\label{systN}
\partial_{t}u \\[2pt]
\partial_{t}v
\end{array} \right) =\Pi_{N}K(u_{N},v_{N})\Pi_{N}
\left(\begin{array}{c}
\frac{\delta H}{\delta u}(u_{N},v_{N}) \\[4pt]
\frac{\delta H}{\delta v}(u_{N},v_{N})
\end{array} \right).
\end{equation}
This is an Hamiltonian system with Hamiltonian $H(\Pi_{N}u,\Pi_{N}v)$. Now we assume that $v=\ov{u}$ and we compute the equation satisfied by $u_{N}$: this will be a finite dimensional approximation   of~\eqref{eq.gauge}. Denote by $\Pi^{\perp}_{N}=1-\Pi_{N}$, then in the coordinates $v_{N}=\ov{u_{N}}$, the system~\eqref{systN} reads
\begin{equation}\label{App.DNLS} 
i\partial_{t}u+ \partial_{x}^{2}u_{N}=  i\Pi_{N}\Big( \partial_{x}(|u_{N}|^{2}u_{N}  )\Big)+u_{N}T_{u_{N}}  +  R_{N}(u_{N}),\quad 
(t,x)\in\R\times \mathbb{S}^1,
\end{equation}
where 
\begin{eqnarray}
R_{N}(u_{N})&=&\frac32  \Pi_{N} \Big(u_{N}\partial^{-1}\Big[ u_{N}\Pi_{N}^{\perp}\big(u_{N}\partial_{x}(\ov{u_{N}}^{2})\big)+\ov{u_{N}}\Pi_{N}^{\perp}\big(\ov{u_{N}}\partial_{x}({u_{N}}^{2})\big)\Big]\Big)\nonumber\\
&&+\frac32i  \Pi_{N} \Big( u_{N}\partial^{-1}\Big[ {u_{N}}\Pi^{\perp}_{N}\big(|u_{N}|^{4}\ov{u_{N}}\big)-\ov{u_{N}}\Pi^{\perp}_{N}\big(|u_{N}|^{4}{u_{N}}\big) \Big] \Big)\nonumber\\
&:=&R^{1}_{N}(u_{N})+R^{2}_{N}(u_{N}).\label{def.R}
\end{eqnarray}
For all $N\geq 1$,  this equation is globally well-posed in $L^{2}(\T)$ and denote by $\Phi_{N}$ the   flowmap. Moreover, the measure $\rho_{N}$ defined in~\eqref{rho.DNLS}  is invariant by $\Phi_{N}$ (see~\cite[Proposition A.4]{ThTz}). \medskip

Recall that $\mu=\p\circ \phi^{-1}$ with $\phi$ as in~\eqref{phi.DNLS}. We need  to give a sense to the expression $T_{u}$ in~\eqref{deFu} on the support of $\mu$.
\begin{lemm}\ph \label{lem.61}
For all $p\geq 2$, the sequence $\big(T_{u_{N}}\big)_{N\geq1}$ is a Cauchy sequence in $L^{p}\big(X^{1/2}(\S^{1}),\mathcal{B},d\mu; \R\big)$. Namely, for all $p\geq	 2$, there exists  $C>0$ so that for all $1\leq M<N$,
 \begin{equation*}
\int_{X^{1/2}(\S^{1})}|T_{u_{N}}-T_{u_{M}}|^{p}\text{d}\mu(u)\leq \frac{C}{M}.
\end{equation*}
We denote by $T_{u}$ the limit of this sequence which is formally given by~\eqref{deFu}.
\end{lemm}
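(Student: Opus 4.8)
The plan is to reduce to $p=2$ by a Wiener chaos argument, and then to estimate separately the momentum and the $L^4$ contributions of $T_u$ by an explicit Fourier expansion. For each $N$ the quantity $T_{u_N}$ is a finite linear combination of homogeneous Wiener chaoses of order at most $4$ in the Gaussians $(g_n)$, and the same holds for $T_{u_N}-T_{u_M}$; hence by \cite[Proposition~2.4]{ThTz} there is $C=C(p)$ with $\|T_{u_N}-T_{u_M}\|_{L^p_\mu}\leq C\|T_{u_N}-T_{u_M}\|_{L^2_\mu}$, and since $M^{p/2}\geq M$ it suffices to prove $\|T_{u_N}-T_{u_M}\|_{L^2_\mu}^2\leq C/M$. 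By the definition $\mu=\p\circ\phi^{-1}$ and the transport formula~\eqref{A}, every $\mu$-integral becomes an expectation over $\Omega$ after replacing $u$ by $\phi$ from~\eqref{phi.DNLS}; writing $\phi_N=\Pi_N\phi$, I split $T_u=A(u)+B(u)$ with $A(u)=2\im\int_\T u\,\partial_x\ov u$ and $B(u)=\tfrac32\int_\T|u|^4$ as in~\eqref{deFu}, and treat each piece.

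\textbf{The momentum part.} Expanding in Fourier modes and integrating in $x$ diagonalizes $A$, giving
\[
A(\phi_N)=-2\sum_{|n|\leq N}\frac{n\,|g_n|^2}{\langle n\rangle^2}.
\]
Because $\sum_{|n|\leq N}\frac{n}{\langle n\rangle^2}=0$ by the antisymmetry $n\mapsto-n$, I may replace $|g_n|^2$ by the centered variable $|g_n|^2-1$ at no cost, so that
\[
A(\phi_N)-A(\phi_M)=-2\sum_{M<|n|\leq N}\frac{n}{\langle n\rangle^2}\big(|g_n|^2-1\big)
\]
is a sum of independent, centered random variables. Since $\E\big||g_n|^2-1\big|^2=1$, their variances add and
\[
\E\big|A(\phi_N)-A(\phi_M)\big|^2=4\sum_{M<|n|\leq N}\frac{n^2}{\langle n\rangle^4}\lesssim\sum_{|n|>M}\frac1{\langle n\rangle^2}\lesssim\frac1M .
\]

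\textbf{The $L^4$ part.} With $\phi_N=\sum_{|n|\leq N}\frac{g_n}{\langle n\rangle}\e^{inx}$ one has
\[
B(\phi_N)=\tfrac32\sum_{\substack{|n_j|\leq N\\ n_1-n_2+n_3-n_4=0}}\frac{g_{n_1}\ov{g_{n_2}}g_{n_3}\ov{g_{n_4}}}{\langle n_1\rangle\langle n_2\rangle\langle n_3\rangle\langle n_4\rangle},
\]
so that $B(\phi_N)-B(\phi_M)$ is the same sum restricted to tuples with $\max_j|n_j|>M$. I would bound $\E|B(\phi_N)-B(\phi_M)|^2$ by expanding the product of two such sums and applying Wick's theorem to the eight independent complex Gaussians: the nonvanishing pairings collapse the eight frequencies onto two free indices and leave sums of the type $\sum\langle n\rangle^{-2}\langle m\rangle^{-2}$ over index sets in which the constraint $n_1-n_2+n_3-n_4=0$ together with $\max_j|n_j|>M$ forces one free index to exceed $M$. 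Since $\sum_{|n|>M}\langle n\rangle^{-2}\lesssim 1/M$ while the remaining index sum converges (here $\langle n\rangle^{-2}$ is summable, as $\s<1/2$), each pairing contributes $\lesssim1/M$, and the finitely many pairings give $\E|B(\phi_N)-B(\phi_M)|^2\lesssim1/M$. Combining with the momentum bound and the triangle inequality yields $\|T_{u_N}-T_{u_M}\|_{L^2_\mu}^2\lesssim1/M$, and the Wiener chaos reduction finishes the proof.

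The delicate point, and the only real bookkeeping, is the $L^4$ term: its second moment is a degree-eight Gaussian moment whose Wick expansion produces, besides the genuine fourth-chaos fluctuation, lower-order (second chaos and constant) pieces coming from partial contractions. In particular one must separately control the mean difference $\E[B(\phi_N)-B(\phi_M)]=3(\s_N^4-\s_M^4)$, with $\s_N^2=\sum_{|n|\leq N}\langle n\rangle^{-2}$, which decays like $1/M$ since $\s_N^2-\s_M^2\lesssim1/M$ and $\s_N^2+\s_M^2\leq 2\s_\infty^2<\infty$. In every case the gain comes from the same mechanism, namely that the frequency constraint forces a summation variable above $M$; keeping track of which pairings survive and of the associated constraints is what makes this step the main obstacle.
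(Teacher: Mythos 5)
Your proposal is correct, and for the momentum term it is exactly the paper's argument: expand $J(\phi_N)-J(\phi_M)=-\sum_{M<|n|\le N} n\langle n\rangle^{-2}|g_n|^2$, use the antisymmetry $\sum_{M<|n|\le N} n\langle n\rangle^{-2}=0$ to replace $|g_n|^2$ by $|g_n|^2-1$, sum the variances of the independent centered terms to get $\sum_{M<|n|\le N} n^2\langle n\rangle^{-4}\lesssim 1/M$, and upgrade $L^2_\mu$ to $L^p_\mu$ by the Wiener chaos estimate of \cite[Proposition~2.4]{ThTz}. Where you genuinely differ is that the printed proof stops there: it treats only $J(u)=\im\int_\T u\,\partial_x\ov u$ and says nothing about the quartic part $\frac32\int_\T|u_N|^4$ of $T_u$ defined in~\eqref{deFu}, whose convergence is implicitly deferred to the earlier analysis of \cite{ThTz}. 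You instead give a self-contained Wick-pairing argument for the quartic term, including the correct separate treatment of the mean, $\mathbb{E}\big[B(\phi_N)-B(\phi_M)\big]=3(\sigma_N^4-\sigma_M^4)\lesssim 1/M$ with $\sigma_N^2=\sum_{|n|\le N}\langle n\rangle^{-2}$ convergent; in this respect your proof covers the full statement rather than only its new ingredient. One bookkeeping correction: the surviving pairings do not collapse the eight frequencies onto two free indices. The cross-pairings identifying the $n$-tuple with the $m$-tuple leave three free indices (with $n_4=n_1-n_2+n_3$ determined by the convolution constraint), and the diagonal pairing leaves four; mixed pairings leave three. This is harmless: in every case each weight is $\langle\cdot\rangle^{-2}$, hence summable, and the restriction $\max_j|n_j|>M$ forces at least one free summation index to exceed $M/3$ (if only the determined index $n_4$ is large, then $\max(|n_1|,|n_2|,|n_3|)>M/3$), so each of the finitely many pairings contributes $O(1/M)$, exactly as you claim, and the diagonal one even contributes $O(1/M^2)$. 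With that adjustment your argument closes and yields the stated bound $\|T_{u_N}-T_{u_M}\|_{L^p_\mu}^p\le C_p M^{-p/2}\le C_p/M$ for $p\ge 2$.
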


\begin{proof}
Denote by $\dis J(u)=\text{Im}\,\int_{\T}u\partial_{x}\ov{u}$. Let $1\leq M<N$. Then for $\dis \phi_{N}(\om,x)=\sum_{|n|\leq N}\frac{g_{n}(\om)}{\<n\>}\e^{inx}$ we compute
\begin{equation*}
J(\phi_{N})-J(\phi_{M})=-\sum_{M<|n|\leq N}\frac{n|g_{n}|^{2}}{\<n\>^{2}} =-\sum_{M<|n|\leq N}\frac{n(|g_{n}|^{2}-1)}{\<n\>^{2}} ,
\end{equation*}
where we used  that $\dis\sum_{M<|n|\leq N}\frac{n}{\<n\>^{2}} =0$. Define the r.v.  $G_{n}(\om)=|g_{n}(\om)|^{2}-1$, hence 
 \begin{equation}\label{410}
|J(\phi_{N})-J(\phi_{M})|^{2}=\sum_{M<|n_{1}|,|n_{2}|\leq N}\frac{n_{1}n_{2}G_{n_{1}}G_{n_{2}}}{\<n_{1}\>^{2}\<n_{2}\>^{2}} .
\end{equation}
By independence of the $g_{n}$, $\E[G_{n}G_{m}]=C\delta_{n,m}$. Thus by integration of~\eqref{410}
\begin{equation*}
\int_{\Omega}|J(\phi_{N})-J(\phi_{M})|^{2}\text{d}{\bf p}=\sum_{M<|n|\leq N}\frac{n^{2}}{\<n\>^{4}}\leq \frac{C}{M}.
\end{equation*}
By definition of $\mu$ we have proved the result for $p=2$. The general case $p\geq 2$ follows from the Wiener chaos estimates (see {\it e.g.}~\cite[Proposition 2.4]{ThTz}).
\end{proof}
\subsection{Study of the measure $\boldsymbol {\nu_{N}}$}

Now define the  measure $\nu_{N}=\rho_{N}\circ \Phi^{-1}_{N}$ on $\mathcal{C}\big([-T,T];X^{1/2}(\S^{1})\big)$ and we have  
\begin{lemm}\ph Let $\s<\frac12$ and  $p\geq 2$. Then for all $N\geq1$
\begin{equation}\label{Bor1DNLS}
\big\| \| u\|_{L^{p}_{T}H^{\s}_{x}}\big\|_{L^{p}_{\nu_{N}}}\leq  C.
\end{equation}
\begin{equation}\label{Bor3DNLS}
\big\| \|\partial_{t} u\|_{  L^{p}_{T}H^{\s-2}_{x}  }\big\|_{L^{p}_{\nu_{N}}}\leq  C.
\end{equation}
\end{lemm}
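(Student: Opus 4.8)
The bound \eqref{Bor1DNLS} follows the same pattern as \eqref{Bor1.NLS} and \eqref{Cl1}. By the general estimate \eqref{Bor1} of Proposition~\ref{Prop.Fonda}, for any $r>p$ we have
\[
\big\|\|u\|_{L^p_T H^\s_x}\big\|_{L^p_{\nu_N}} \leq CT^{1/p}\big\|\|v\|_{H^\s_x}\big\|_{L^r_\mu},
\]
the constant depending also on the uniform bound $\sup_{N}\|\Psi_N\|_{L^{r_2}_\mu}<\infty$ (with $1/r_1+1/r_2=1$, $r_1=r/p$) coming from \cite{ThTz}. It then remains to control the Gaussian quantity $\big\|\|v\|_{H^\s_x}\big\|_{L^r_\mu}=\big\|\|\phi\|_{H^\s_x}\big\|_{L^r_\p}$ with $\phi$ as in \eqref{phi.DNLS}. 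Applying the Khinchin inequality \eqref{contrac} to $(1-\Delta)^{\s/2}\phi=\sum_n\langle n\rangle^{\s-1}g_n\e^{inx}$ and then the Minkowski inequality gives $\big\|\|\phi\|_{H^\s_x}\big\|_{L^r_\p}\leq C\sqrt r$, the sum $\sum_n\langle n\rangle^{2(\s-1)}$ being convergent precisely because $\s<1/2$.

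For \eqref{Bor3DNLS} I would start from the truncated equation \eqref{App.DNLS}: writing $u=\Phi_N(t)(v)$ and $u_N=\Pi_N u$,
\[
\partial_t u = i\partial_x^2 u_N + \Pi_N\partial_x\big(|u_N|^2 u_N\big) - i\,u_N T_{u_N} - iR_N(u_N).
\]
By the triangle inequality in $L^p_{\nu_N}L^p_T H^{\s-2}_x$ it suffices to estimate the four terms separately. Exactly as in the derivations of \eqref{sim} and \eqref{u2}, for each nonlinear contribution $\mathcal{N}(u_N)$ I would use the transport formula \eqref{transp}, Fubini, and the invariance of $\rho_N$ under $\Phi_N$ to reduce $\|\mathcal{N}(u_N)\|^p_{L^p_{\nu_N}L^p_T H^{\s-2}_x}$ to $2T\int\|\mathcal{N}(u_N)\|^p_{H^{\s-2}_x}\Psi_N(u)\,d\mu(u)$, and then Cauchy-Schwarz in $\mu$ to split off $\Psi_N$, which is bounded in $L^2_\mu$ uniformly in $N$ since $\kappa\leq\kappa_2$. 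This reduces everything to uniform-in-$N$ bounds on the purely Gaussian moments $\int\|\mathcal{N}(u_N)\|^{2p}_{H^{\s-2}_x}d\mu(u)$, and by the Wiener chaos estimate of \cite[Proposition 2.4]{ThTz} it is in fact enough to establish the corresponding second moment (i.e. $L^2(d\mu)$) bounds.

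The linear term is immediate, $\|\partial_x^2 u_N\|_{H^{\s-2}_x}\leq\|u_N\|_{H^\s_x}$, and is already controlled by \eqref{Bor1DNLS}. For the scalar term, since $T_{u_N}$ is real one has $\|u_N T_{u_N}\|_{H^{\s-2}_x}\leq|T_{u_N}|\,\|u_N\|_{H^\s_x}$, and a further Cauchy-Schwarz in $\mu$ combined with Lemma~\ref{lem.61} (which gives uniform $L^q_\mu$ bounds on $T_{u_N}$) and the Gaussian bound on $\|u_N\|_{H^\s_x}$ closes this contribution. The two genuinely multilinear terms are the derivative cubic $\Pi_N\partial_x(|u_N|^2 u_N)$ and the quintic remainder $R_N(u_N)$ of \eqref{def.R}. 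For the cubic one, since $\partial_x$ costs one derivative and $\s<1/2$ forces $\s-1<-1/2$, I would expand $|u_N|^2 u_N$ in the Fourier basis, compute its Fourier coefficients, and bound the second moment $\E\||u_N|^2 u_N\|^2_{H^{\s-1}_x}$ by Wick's theorem, the resulting frequency sums converging on account of \eqref{borne.1}-type estimates. The quintic term $R_N$ is handled in the same spirit, its operator $\partial^{-1}$ providing an extra smoothing derivative that offsets the higher polynomial degree.

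The main difficulty is precisely this last step: the uniform-in-$N$ control of the multilinear Gaussian moments of $\Pi_N\partial_x(|u_N|^2u_N)$ and of $R_N(u_N)$ in $H^{\s-2}_x$. One has to verify that the frequency summations produced by Wick pairing converge with room to spare (so that the weight $\langle k\rangle^{\s-2}$, with $\s-2<-3/2$, is more than sufficient) and that the cut-offs $\Pi_N$, $\Pi_N^{\perp}$ entering $R_N$ do not destroy these bounds; these are essentially the computations carried out in the appendix of \cite{ThTz}.
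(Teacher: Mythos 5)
Your proof is correct and follows the same skeleton as the paper's: \eqref{Bor1DNLS} via Proposition~\ref{Prop.Fonda} together with the Khinchin inequality \eqref{contrac}, then the decomposition of $\partial_{t}u$ through the truncated equation \eqref{App.DNLS}, the reduction of each nonlinear term by invariance of $\rho_{N}$ plus Cauchy--Schwarz in $\mu$ (using only the uniform $L^{2}_{\mu}$ bound on $\Psi_{N}$), and the Wiener chaos estimate to pass from $L^{2p}_{\mu}$ to second moments; the linear and scalar terms are treated exactly as in the paper. You diverge in two places. First, for the cubic term the paper needs no Wick computation at all: since $\s-1<0$ one has the deterministic chain $\|\partial_{x}(|u_{N}|^{2}u_{N})\|_{H^{\s-2}_{x}}\leq \||u_{N}|^{2}u_{N}\|_{H^{\s-1}_{x}}\leq \||u_{N}|^{2}u_{N}\|_{L^{2}_{x}}=\|u_{N}\|^{3}_{L^{6}_{x}}$, and the $L^{6}_{x}$ Gaussian moments are supplied by \eqref{Bor2} of Proposition~\ref{Prop.Fonda} (this is the bound \eqref{Bor2DNLS}); your Fourier expansion and Wick pairing also converge, but they are heavier than necessary. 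Second, for the quintic remainder the paper isolates a separate and in fact stronger statement, Lemma~\ref{lem.RN}: not merely uniform boundedness but convergence to $0$ of $\big\|\|R_{N}(u_{N})\|_{L^{p}_{T}H^{-\s}_{x}}\big\|_{L^{p}_{\nu_{N}}}$, which is what is needed later to identify the limit equation. Its proof uses the embedding $\|g\|_{H^{-\s}_{x}}\leq C\|g\|_{L^{1}_{x}}$ for $\s>1/2$, Cauchy--Schwarz to reduce to an $H^{-1}_{x}$ bound on the quartic block, and then a Wick computation in which the projector $\Pi^{\perp}_{N}$ (forcing the internal frequency $|n_{1}+n_{2}-n_{3}|$ to exceed $N$) is precisely what produces the negative power of $N$; for the boundedness required in the present lemma your sketch suffices (the frequency sums converge without exploiting $\Pi^{\perp}_{N}$), but be aware that the relevant computations are those of \cite[Lemma 2.2]{ThTz} and of the paper's own Lemma~\ref{lem.RN}, not the appendix of \cite{ThTz}, whose content is the Hamiltonian formalism.
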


\begin{proof}
The estimate~\eqref{Bor1DNLS} is obtained with Proposition~\ref{Prop.Fonda} and the definition~\eqref{phi.DNLS} of $\phi$. Similarly,  we also have that 
for all $2\leq q\leq p$ 
\begin{equation}\label{Bor2DNLS}
\| u\|_{L^{p}_{\nu_{N}}L^{p}_{T}L^{q}_{x}} \leq  C.
\end{equation}
We turn to~\eqref{Bor3DNLS}. From the equation~\eqref{App.DNLS} we get (similarly to~\eqref{sim})
\begin{multline*}
\| \partial_{t} u\|_{L^{p}_{\nu_{N}}L^{p}_{T}H^{\s-2}_{x}}\leq\\
\begin{aligned}
&\leq \| \partial^{2}_{x} u\|_{L^{p}_{\nu_{N}}L^{p}_{T}H^{\s-2}_{x}}+\| \partial_{x}(|u_{N}|^{2}u_{N} )\|_{L^{p}_{\nu_{N}}L^{p}_{T}H^{\s-2}_{x}}+\| u_{N}T_{u_{N}}\|_{L^{p}_{\nu_{N}}L^{p}_{T}H^{\s-2}_{x}}+\| R_{N}(u_{N})\|_{L^{p}_{\nu_{N}}L^{p}_{T}H^{\s-2}_{x}}\\
&\leq \|  u\|_{L^{p}_{\nu_{N}}L^{p}_{T}H^{\s}_{x}}+\| u_{N}\|^{3}_{L^{p}_{\nu_{N}}L^{p}_{T}L^{6}_{x}}+\| u_{N}T_{u_{N}}\|_{L^{p}_{\nu_{N}}L^{p}_{T}L^{2}_{x}}+\| R_{N}(u_{N})\|_{L^{p}_{\nu_{N}}L^{p}_{T}H^{\s-2}_{x}}.
\end{aligned}
\end{multline*}
We estimate each term of the r.h.s. By~\eqref{Bor1DNLS} and~\eqref{Bor2DNLS} we only have to consider the two last ones. By Cauchy-Schwarz (recall that $T_{u}$ does not depend on $x$)
\begin{equation}\label{Tu}
\| u_{N}T_{u_{N}}\|_{L^{p}_{\nu_{N}}L^{p}_{T}L^{2}_{x}} \leq \| u_{N}\|_{L^{2p}_{\nu_{N}}L^{2p}_{T}L^{2}_{x}}\| T_{u_{N}}\|_{L^{2p}_{\nu_{N}}L^{2p}_{T}}.
\end{equation}
Then using the invariance of $\rho_{N}$ (see the proof of Proposition~\ref{Prop.Fonda}) and Lemma~\ref{lem.61} we have
\begin{eqnarray*}
\| T_{u_{N}}\|^{2p}_{L^{2p}_{\nu_{N}}L^{2p}_{T}}&=&2T\int_{X^{1/2}(\S^{1})}|T_{v_{N}}|^{2p}\Psi_{N}(v)\text{d}\mu(v)\\
&\leq& C\| T_{v_{N}}\|^{2p}_{L^{4p}_{\mu}}\| \Psi_{N}(v)\|_{L^{2}_{\mu}}\leq C,
\end{eqnarray*}
which by~\eqref{Tu} implies
\begin{equation*}
\| u_{N}T_{u_{N}}\|_{L^{p}_{\nu_{N}}L^{p}_{T}L^{2}_{x}} \leq C.
\end{equation*}
 The conclusion of the proof is given by the next result.
\end{proof}

\begin{lemm}\ph\label{lem.RN}
Let $\s>1/2$ and $p\geq2$. Then  
\begin{equation*}
\big\|\|R_{N}(u_{N})\|_{L^{p}_{T}H^{-\s}_{x}}\big\|_{L^{p}_{\nu_{N}}}\longrightarrow 0 \quad \text{when}\quad N\longrightarrow +\infty. 
\end{equation*}
\end{lemm}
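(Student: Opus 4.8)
The plan is to transfer the $\nu_N$--norm onto a Gaussian integral, reduce everything to a single $L^2_\mu$ estimate by hypercontractivity, and finally extract the decay in $N$ from the frequency localisation carried by $\Pi^\perp_N$.

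First, arguing exactly as in the proof of Proposition~\ref{Prop.Fonda} (Fubini, the invariance of $\rho_N$ under $\Phi_N$, and $\text{d}\rho_N=\Psi_N\,\text{d}\mu$), one gets
\begin{equation*}
\big\|\|R_N(u_N)\|_{L^p_T H^{-\s}_x}\big\|^p_{L^p_{\nu_N}}
=2T\int_{X^{1/2}(\T)}\|R_N(v_N)\|^p_{H^{-\s}_x}\,\Psi_N(v)\,\text{d}\mu(v).
\end{equation*}
Since $\kappa\leq\kappa_2$, the convergence $\Psi_N\to\Psi$ in $L^2(\text{d}\mu)$ from~\cite[Theorem 1.1]{ThTz} yields $\sup_N\|\Psi_N\|_{L^2_\mu}\leq C$; hence by Cauchy--Schwarz
\begin{equation*}
\big\|\|R_N(u_N)\|_{L^p_T H^{-\s}_x}\big\|_{L^p_{\nu_N}}
\leq C\,T^{1/p}\,\|R_N(v_N)\|_{L^{2p}_\mu H^{-\s}_x},
\end{equation*}
and it suffices to prove $\|R_N(v_N)\|_{L^{2p}_\mu H^{-\s}_x}\to 0$. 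As $\mu=\p\circ\phi^{-1}$, this is the same as $\|R_N(\phi_N)\|_{L^{2p}_\p H^{-\s}_x}\to 0$, with $\phi_N=\Pi_N\phi$ and $\phi$ as in~\eqref{phi.DNLS}.

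By inspection of~\eqref{def.R}, $R^1_N(\phi_N)$ and $R^2_N(\phi_N)$ are homogeneous polynomials in the Gaussians $(g_n)$ of degrees $5$ and $7$, respectively. The Wiener chaos estimates~\cite[Proposition 2.4]{ThTz}, which give $\|X\|_{L^{2p}_\p}\leq C(p)\|X\|_{L^2_\p}$ on each fixed chaos, therefore reduce the problem to the single bound
\begin{equation*}
\|R_N(\phi_N)\|_{L^2_\p H^{-\s}_x}\longrightarrow 0,\qquad N\longrightarrow+\infty.
\end{equation*}

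Finally, I would treat $R^1_N$ and $R^2_N$ separately by a direct Fourier computation, along the lines of Lemma~\ref{lem.61} and~\cite[Lemma 5.1]{Tzvetkov3}. Writing each Fourier coefficient of $R^j_N(\phi_N)$ as a multilinear Gaussian sum, taking the $L^2_\p$ norm and using that the $g_n$ are independent and centred removes all off--diagonal monomials, leaving explicit frequency sums weighted by the factors $\langle\cdot\rangle^{-1}$ coming from $\phi_N$ together with an extra inverse frequency produced by $\partial^{-1}$. The decisive structural point is that the cut-off $\Pi^\perp_N$ forces the intermediate frequency to exceed $N$ even though it is a sum of at most three frequencies each bounded by $N$; this confines the summation to a region where several indices are comparable to $N$, and together with the summable weights (and the choice $\s>1/2$, needed for the convergence of the outer $H^{-\s}$ sum) it produces a factor tending to $0$ as $N\to\infty$. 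I expect this last step to be the main obstacle: organising the degree--$5$ and degree--$7$ multilinear sums and exploiting precisely the localisation induced by $\Pi^\perp_N$ to gain the decay in $N$.
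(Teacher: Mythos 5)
Your soft reductions are correct, and they coincide with the paper's own first steps: the invariance of $\rho_N$ under $\Phi_N$ plus Cauchy--Schwarz with $\sup_N\|\Psi_N\|_{L^{2}_{\mu}}\leq C$ reduce the claim to $\|R_N(\phi_N)\|_{L^{2p}_{\p}H^{-\s}_{x}}\to 0$, and hypercontractivity (\cite[Proposition 2.4]{ThTz}) reduces $L^{2p}_{\p}$ to $L^{2}_{\p}$. The genuine gap is that everything after this point --- which is the entire content of the lemma --- is only conjectured, and the heuristic you offer for the decay fails on the decisive point. In $R^{1}_{N}$ the derivative $\partial_x$ contributes a factor $(n_1+n_2)$ of size up to $2N$ to the numerator, while the frequency seen by $\partial^{-1}$, namely $n_4+n_3-n_1-n_2$ with $|n_3-n_1-n_2|>N$ but $|n_4|\leq N$, can perfectly well be of size $1$: there is no pointwise gain of an ``extra inverse frequency''. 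The paper's mechanism is an averaged one. After reducing (via $L^{1}_{x}\hookrightarrow H^{-\s}_{x}$ for $\s>1/2$ and Cauchy--Schwarz in $x$) to the degree-$4$ quantity $Q_{N}(\phi_{N})=\phi_{N}\Pi_{N}^{\perp}\big(\phi_{N}\partial_{x}(\ov{\phi_{N}}^{2})\big)$ measured in $H^{-1}_{x}$, one needs three facts: (i) the constraint $|n_1+n_2-n_3|>N$ together with $|n_1|,|n_2|\leq N$ forces $n_3\notin\{n_1,n_2\}$, so that independence of the Gaussians collapses the pairings to the diagonal $n=m$; (ii) the convolution bound (this is~\eqref{borne.1} with $\a=2$)
\begin{equation*}
\sum_{n_4\in\Z}\frac{1}{\<n_4\>^{2}\,\<n_4-(n_1+n_2-n_3)\>^{2}}\leq \frac{C}{\<n_1+n_2-n_3\>^{2}}\leq \frac{C}{N^{2}},
\end{equation*}
which is precisely what neutralises the $N^{2}$ coming from $\partial_x$; and (iii) the lower bound $\<n_1\>\<n_2\>\<n_3\>\geq cN$ on the constraint set, which produces the surviving factor $N^{-1/2}$ once the remaining weights $\<n_1\>^{-3/2}\<n_2\>^{-3/2}\<n_3\>^{-3/2}$ are summed. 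None of these steps appears in your sketch; without (ii), ``several indices comparable to $N$ plus summable weights'' does not close, because the growing factor from $\partial_x$ has not been cancelled.

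For the degree-$7$ term $R^{2}_{N}$ the paper does not perform the multilinear computation you propose at all, and you should not either: after the deterministic reduction $\|R^{2}_{N}(v_{N})\|_{L^{r}_{\mu}H^{-\s}_{x}}\leq C\|\Pi^{\perp}_{N}(|v_{N}|^{4}\ov{v_{N}})\|_{L^{4r}_{\mu}L^{2}_{x}}$, it invokes \cite[Lemma 2.2]{ThTz} to assert that $V_{N}=|v_{N}|^{4}\ov{v_{N}}$ converges in $L^{4r}_{\mu}L^{2}_{x}$ to some limit $V$, whence $\|\Pi^{\perp}_{N}V_{N}\|_{L^{4r}_{\mu}L^{2}_{x}}\leq \|V_{N}-V\|_{L^{4r}_{\mu}L^{2}_{x}}+\|\Pi^{\perp}_{N}V\|_{L^{4r}_{\mu}L^{2}_{x}}\to 0$. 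This soft approximation argument replaces the entire degree-$7$ chaos analysis. In summary: your skeleton is the right one, but the load-bearing estimate for $R^{1}_{N}$ is missing, and the route you propose for $R^{2}_{N}$ is needlessly hard compared to what the paper actually does.
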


\begin{proof}
To begin with, using the same arguments as in the proof of Proposition~\ref{Prop.Fonda} with $F(u)=\|R_{N}(\Pi_{N}u)\|^{p}_{L^{p}_{T}H^{-\s}_{x}}$ we have, 
\begin{equation*}
\|R_{N}(u_{N})\|_{L^{p}_{\nu_{N}}L^{p}_{T}H^{-\s}_{x}}\leq C \|R_{N}(v_{N})\|_{L^{2p}_{\mu}H^{-\s}_{x}},
\end{equation*}
where we used that $\|\Psi_{N}\|_{L^{2}_{\mu}}\leq C$.
We estimate each contribution in the r.h.s. of~\eqref{def.R}. \\
$\bullet$ Denote by 
$\dis Q_{N}(v_{N}) =v_{N}\Pi_{N}^{\perp}\big(v_{N}\partial_{x}(\ov{v_{N}}^{2})\big)$.
Then by Sobolev and Cauchy-Schwarz 
\begin{eqnarray}
 \|R^{1}_{N}(v_{N})\|_{L^{r}_{\mu}H^{-\s}_{x}} &\leq&C \|R^{1}_{N}(v_{N})\|_{L^{r}_{\mu}L^{1}_{x}}\nonumber\\
 &\leq &C\| v_{N}\partial^{-1}Q_{N}(v_{N})\|_{L^{r}_{\mu}L^{1}_{x}}\nonumber\\
 & \leq &\| v_{N}\|_{L^{2r}_{\mu}L^{2}_{x}}\| Q_{N}(u_{N})\|_{L^{2r}_{\mu}H^{-1}_{x}}\nonumber\\
  & \leq &C\| Q_{N}(u_{N})\|_{L^{2r}_{\mu}H^{-1}_{x}}.\label{sa1}
\end{eqnarray}
 Next, by the definition of $\mu$ and the Wiener chaos estimates
 \begin{eqnarray}
 \|R^{1}_{N}(v_{N})\|_{L^{r}_{\mu}H^{-\s}_{x}} &\leq&C\| Q_{N}(\phi_{N})\|_{L^{2r}_{\p}H^{-1}_{x}}\nonumber\\
 &\leq&C\| Q_{N}(\phi_{N})\|_{L^{2}_{\p}H^{-1}_{x}}.\label{sa2}
\end{eqnarray}
We now compute the term $\dis \| Q_{N}(\phi_{N})\|_{L^{2}_{\p}H^{-1}_{x}}$. We have
\begin{equation*}
\phi_{N}\partial_{x}\big(\ov{ \phi^{2}_{N}}\big)=-i\sum_{|n_{1}|,|n_{2}|,|n_{3}|\leq N}\frac{(n_{1}+n_{2})\ov{g_{n_{1}}}\,\ov{g_{n_{2}}}\,{g_{n_{3}}}}{\<n_{1}\>\<n_{2}\>\<n_{3}\>}\e^{i(n_{3}-n_{2}-n_{1})x},
\end{equation*}
so that 
\begin{equation*}
\partial^{-1}Q_{N}(\phi_{N})=-\sum_{n\in A_{N}} \frac{(n_{1}+n_{2})\ov{g_{n_{1}}}\,\ov{g_{n_{2}}}\,g_{n_{3}}\,g_{n_{4}}}{\<n_{1}\>\<n_{2}\>\<n_{3}\>\<n_{4}\>(n_{4}+n_{3}-n_{2}-n_{1})}\e^{i(n_{4}+n_{3}-n_{2}-n_{1})x},
\end{equation*}
where the set $A_{N}$ is given by
\begin{multline*}
A_{N}:=\Big\{n=(n_{1},n_{2},n_{3},n_{4})\in \Z^{4}\;\;\text{s.t.}\;\; |n_{1}|, |n_{2}|,  |n_{3}|, |n_{4}| \leq N,\\
\;\;|n_{1}+n_{2}-n_{3}|>N\;\;\text{and}\;\; n_{4}+n_{3}-n_{2}-n_{1}\neq 0\Big\}.
\end{multline*} 
As a consequence we obtain the following expression
\begin{equation}\label{423}
\|Q_{N}(\phi_{N})\|^{2}_{H_{x}^{-1}}=\sum_{n,m\in B_{N}} \frac{(n_{1}+n_{2})(m_{1}+m_{2})\ov{g_{n_{1}}}\,\ov{g_{n_{2}}}\,g_{n_{3}}\,g_{n_{4}}\,{g_{m_{1}}}\,{g_{m_{2}}}\,\ov{g_{m_{3}}}\,\ov{g_{m_{4}}}}{\<n_{1}\>\<n_{2}\>\<n_{3}\>\<n_{4}\>\<m_{1}\>\<m_{2}\>\<m_{3}\>\<m_{4}\>(n_{4}+n_{3}-n_{2}-n_{1})^{2}} ,
\end{equation}
with 
\begin{equation*}
B_{N}:=\Big\{n,m\in A_{N}\;\;\text{s.t.}\;\; m_{4}+m_{3}-m_{2}-m_{1}=n_{4}+n_{3}-n_{2}-n_{1}\Big\}.
\end{equation*} 
We take the expectation of~\eqref{423}. By independence of the $g_{n}$ and since they are centered, each contribution in the r.h.s. is zero, unless $\big\{  n_{1},n_{2},m_{3},m_{4}\big\}=\big\{  m_{1},m_{2},n_{3},n_{4}\big\}$. But coming back to the definition of $A_{N}$, the condition $|n_{1}+n_{2}-n_{3}|>N$ implies that $n_{3}\not \in \big\{n_{1},n_{2}\big\}$. Similarly, $m_{3}\not \in \big\{m_{1},m_{2}\big\}$. Therefore, up to permutation we have $n=m$ and by~\eqref{borne.1} with $\a=2$
\begin{eqnarray*} 
\int_{\Omega}\|Q_{N}(\phi_{N})\|^{2}_{H_{x}^{-1}}\text{d}{\bf p}&\leq&C\sum_{n\in A_{N}} \frac{(n_{1}+n_{2})^{2}}{\<n_{1}\>^{2}\<n_{2}\>^{2}\<n_{3}\>^{2}\<n_{4}\>^{2}(n_{4}+n_{3}-n_{2}-n_{1})^{2}}\\
 &\leq&CN^{2}\sum_{n\in A_{N}} \frac{1}{\<n_{1}\>^{2}\<n_{2}\>^{2}\<n_{3}\>^{2}\<n_{3}-n_{2}-n_{1}\>^{2}}\\
  &\leq&C\sum_{n\in A_{N}} \frac{1}{\<n_{1}\>^{2}\<n_{2}\>^{2}\<n_{3}\>^{2}}.
\end{eqnarray*} 
Next, use that on $A_{N}$, $\<n_{1}\>\<n_{2}\>\<n_{3}\>\geq CN$ to get that 
\begin{equation}\label{sa3}
\int_{\Omega}\|Q_{N}(\phi_{N})\|^{2}_{H_{x}^{-1}}\text{d}{\bf p}\leq\frac{C}{N^{1/2}}\sum_{n\in \Z^{3}} \frac{1}{\<n_{1}\>^{3/2}\<n_{2}\>^{3/2}\<n_{3}\>^{3/2}}\leq \frac{C}{N^{1/2}}.
\end{equation}
Finally, from~\eqref{sa1},~\eqref{sa2} and~\eqref{sa3} we conclude that 
\begin{equation*}
\|R^{1}_{N}(u_{N})\|_{L^{p}_{\nu_{N}}L^{p}_{T}H^{-\s}_{x}}\longrightarrow 0.
\end{equation*}
$\bullet$ We now consider the contribution of $R^{2}_{N}$. With the same arguments as previously,
\begin{eqnarray*}
 \|R^{2}_{N}(v_{N})\|_{L^{r}_{\mu}H^{-\s}_{x}} &\leq&C \|R^{2}_{N}(v_{N})\|_{L^{r}_{\mu}L^{1}_{x}}\nonumber\\
 &\leq &C\Big\|v_{N}\partial^{-1}\Big[ {v_{N}}\Pi^{\perp}_{N}\big(|v_{N}|^{4}\ov{v_{N}}\big)\Big]\Big\|_{L^{r}_{\mu}L^{1}_{x}}\nonumber\\
 & \leq &C\| v_{N}\|_{L^{2r}_{\mu}L^{2}_{x}}\| {v_{N}}\Pi^{\perp}_{N}\big(|v_{N}|^{4}\ov{v_{N}}\big)\|_{L^{2r}_{\mu}H^{-1}_{x}}\nonumber\\
  & \leq &C\| {v_{N}}\Pi^{\perp}_{N}\big(|v_{N}|^{4}\ov{v_{N}}\big)\|_{L^{2r}_{\mu}L^{1}_{x}}\nonumber\\
  & \leq &C\| \Pi^{\perp}_{N}\big(|v_{N}|^{4}\ov{v_{N}}\big)\|_{L^{4r}_{\mu}L^{2}_{x}}. 
\end{eqnarray*}
Denote by $V_{N}=|v_{N}|^{4}\ov{v_{N}}$. Then by~\cite[Lemma 2.2]{ThTz}, $(V_{N})_{N\geq 1}$ is a Cauchy sequence in $L^{4r}_{\mu}L^{2}_{x}$, and denote by $V$ its limit.
Write 
\begin{eqnarray*}
\| \Pi^{\perp}_{N}V_{N}\|_{L^{4r}_{\mu}L^{2}_{x}}&\leq &\| \Pi^{\perp}_{N}(V_{N}-V)\|_{L^{4r}_{\mu}L^{2}_{x}}+\| \Pi^{\perp}_{N}V\|_{L^{4r}_{\mu}L^{2}_{x}}\\
&\leq &\| V_{N}-V\|_{L^{4r}_{\mu}L^{2}_{x}}+\| \Pi^{\perp}_{N}V\|_{L^{4r}_{\mu}L^{2}_{x}},
\end{eqnarray*}
which tends to 0 when $N\longrightarrow +\infty$.
\end{proof}

\begin{prop}\ph\label{Prop.tight3}
Let $T>0$ and $\s<1/2$. Then the family of measures 
$$ \nu_{N}=\mathscr{L}_{\mathcal{C}_{T}H^{\s}}\big(u_{N}(t);t\in [-T,T]\big)_{N\geq 1}$$
 is tight in $\mathcal{C}\big([-T,T]; H^{\s}(\T)\big).$
\end{prop}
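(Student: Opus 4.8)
The plan is to reproduce the compactness scheme of Proposition~\ref{Prop.tight}, the only change being that we now invoke the DNLS bounds~\eqref{Bor1DNLS} and~\eqref{Bor3DNLS} in place of~\eqref{Bor1.NLS} and~\eqref{Bor3.NLS}. First I would fix exponents $\s<s'<s''<\frac12$ together with a small H\"older exponent $\a>0$, and recall that the embedding
$$
\mathcal{C}^{\a}\big([-T,T];H^{s'}(\T)\big)\hookrightarrow \mathcal{C}\big([-T,T];H^{\s}(\T)\big)
$$
is compact (Rellich in space combined with Arzel\`a--Ascoli in time).

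The heart of the argument is a uniform moment bound $\big\|\|u\|_{\mathcal{C}_{T}^{\a}H^{s'}}\big\|_{L^{p}_{\nu_{N}}}\leq C$, valid for $p$ large and independent of $N$. To obtain it I would apply Lemma~\ref{lem.33} with $\s_{1}=s''$ and $\s_{2}=s''-2$: estimate~\eqref{Bor1DNLS} (taken with $\s=s''$) controls $\|u\|_{L^{p}_{T}H^{s''}_{x}}$ and estimate~\eqref{Bor3DNLS} (again with $\s=s''$) controls $\|\partial_{t}u\|_{L^{p}_{T}H^{s''-2}_{x}}$, both uniformly in $N\geq 1$. Since the loss in Lemma~\ref{lem.33} need only satisfy $\eps>(\s_{1}-\s_{2})/p=2/p$, for $p$ large enough one may pick $\eps<s''-s'$, so that $H^{s''-\eps}\hookrightarrow H^{s'}$; the lemma then yields simultaneously the $L^{\infty}_{T}H^{s'}$ bound and a time-H\"older bound with some exponent $\a>0$, hence the control in $\mathcal{C}_{T}^{\a}H^{s'}$. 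By H\"older the inequality then propagates to every $p\geq 1$.

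Finally, for $\delta>0$ I would set
$$
K_{\delta}=\big\{u\in \mathcal{C}\big([-T,T];H^{\s}(\T)\big)\;:\;\|u\|_{\mathcal{C}_{T}^{\a}H^{s'}}\leq \delta^{-1}\big\},
$$
which is compact by the embedding above, and conclude from Markov's inequality and the uniform bound that $\nu_{N}(K^{c}_{\delta})\leq \delta\,C$ for all $N$, which is precisely the tightness.

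The genuinely delicate part is not this tightness step, which is routine once the moment bounds are secured, but rather the proof of~\eqref{Bor3DNLS}: handling the gauge term $u_{N}T_{u_{N}}$ via the Cauchy estimate of Lemma~\ref{lem.61}, and above all showing that the remainder $R_{N}(u_{N})$ introduced by the truncation does not survive in the limit, which is the content of Lemma~\ref{lem.RN}. With those ingredients in hand, the present statement follows verbatim from the argument of the $\S^{3}$ case.
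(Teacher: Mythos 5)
Your proof is correct and essentially identical to the paper's: the paper's own proof of this proposition simply says to repeat the argument of Proposition~\ref{Prop.tight} using the estimates~\eqref{Bor1DNLS} and~\eqref{Bor3DNLS}, which is exactly what you do (compact embedding of $\mathcal{C}^{\a}_{T}H^{s'}$, uniform moment bound via Lemma~\ref{lem.33} with $\s_{1}=s''$, $\s_{2}=s''-2$, then Markov). One pedantic point: since the time-H\"older part of Lemma~\ref{lem.33} lands in $H^{s''-2\eps}$, you should require $\eps<(s''-s')/2$ rather than $\eps<s''-s'$, which is harmless since $\eps$ only needs to exceed $2/p$ and $p$ may be taken large.
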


\begin{proof} The proof is similar to the proof of Proposition~\ref{Prop.tight}. Here we use  the estimates~\eqref{Bor1DNLS} and~\eqref{Bor3DNLS}.
\end{proof}

  \subsection{Proof of Theorem~\ref{thmDNLS}}
  
We can proceed as in the proofs of Theorems~\ref{thmNLS} and~\ref{thmBO}. By  Proposition~\ref{Prop.tight3} and  the    Prokhorov theorem we can extract  a sub-sequence $\nu_{N_{k}}$ and a measure $\nu$ on the space $\mathcal{C}\big([-T,T]; X^{1/2}(\T)\big)$ so that $\nu_{N_{k}}\longrightarrow \nu$ weakly on $\mathcal{C}\big([-T,T]; H^{\s}(\T)\big)$ for all $\s<1/2$. Thanks to  the Skorokhod theorem, there  exists a probability space $(\widetilde{\Omega},\widetilde{\mathcal{F}},\widetilde{\bf p})$, a sequence of random variables~$(\widetilde{u}_{N_{k}})$ and a random variable $\widetilde{u}$ with values in $\mathcal{C}\big([-T,T]; X^{1/2}(\T)\big)$ so that 
\begin{equation*}
\mathscr{L}\big(\widetilde{u}_{N_{k}};t\in [-T,T]\big)=\mathscr{L}\big(u_{N_{k}};t\in [-T,T]\big)=\nu_{N_{k}}, \quad \mathscr{L}\big(\widetilde{u};t\in [-T,T]\big)=\nu,
\end{equation*}
 and for all $\s<1/2$
\begin{equation*} 
\widetilde{u}_{N_{k}}\longrightarrow \widetilde{u},\quad \;\;\widetilde{\bf p}-\text{a.s. in}\;\; \mathcal{C}\big([-T,T]; H^{\s}(\T)\big).
\end{equation*}
Moreover, $\widetilde{u}_{N_{k}}$ satisfies $\widetilde{\bf p}$-a.s. the equation~\eqref{App.DNLS}. Passing to the limit in the linear terms  makes no difficulty, we only have to take care on the nonlinear terms.    
  Denote by 
  \begin{equation*}
  \mathcal{G}_{N}(u)= i\Pi_{N}\Big( \partial_{x}(|u_{N}|^{2}u_{N}  )\Big)+u_{N}T_{u_{N}} +R_{N}(u_{N}).
  \end{equation*}
  The next result completes the proof of Theorem~\ref{thmDNLS} (the conclusion of the proof  is similar to the argument in Subsection~\ref{subNLS}).
\begin{lemm}\ph
Up to a sub-sequence, the following convergence holds true. For any $\s>0$
\begin{equation*} 
\mathcal{G}_{N_{k}}(\wt{u}_{N_{k}})\longrightarrow i\partial_{x}(|\,\wt{u}\,|^{2}\,\wt{u})+\wt{u}T_{\wt{u}}  ,\quad \;\;\widetilde{\bf p}-\text{a.s. in}\;\; L^{2}\big([-T,T]; H^{-\s}(\T)\big) . 
\end{equation*} 
\end{lemm}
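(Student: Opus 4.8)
The plan is to split $\mathcal{G}_{N_{k}}$ into its three building blocks as written just before the statement, namely $\mathcal{G}_{N}(u)=i\Pi_{N}\partial_{x}\big(|u_{N}|^{2}u_{N}\big)+u_{N}T_{u_{N}}+R_{N}(u_{N})$, and to pass to the limit in each separately, following line by line the truncation scheme already used for the Benjamin--Ono nonlinearity in Lemma~\ref{lem.56}. Throughout, write $w_{k}=\wt u_{N_{k}}$ and recall that $w_{k}\longrightarrow \wt u$, $\widetilde{\bf p}$-a.s. in $\mathcal{C}\big([-T,T];H^{\s}(\T)\big)$ for every $\s<1/2$, so that also $\Pi_{N_{k}}w_{k}\longrightarrow \wt u$ in the same topology; since $\mathscr{L}(w_{k})=\nu_{N_{k}}$, every bound established for $\nu_{N}$ transfers verbatim to the tilded variables. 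For the remainder, Lemma~\ref{lem.RN} transported through $\mathscr{L}(w_{k})=\nu_{N_{k}}$ gives $\big\|\,\|R_{N_{k}}(\Pi_{N_{k}}w_{k})\|_{L^{p}_{T}H^{-\s}_{x}}\big\|_{L^{p}_{\widetilde{\bf p}}}\to 0$ for $\s>1/2$ and all $p\geq 2$, whence, after extracting a subsequence, $R_{N_{k}}(\Pi_{N_{k}}w_{k})\to 0$ $\widetilde{\bf p}$-a.s. in $L^{2}\big([-T,T];H^{-\s}(\T)\big)$.

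For the gauge term $u_{N}T_{u_{N}}$, I would first establish the scalar convergence $T_{\Pi_{N_{k}}w_{k}}\to T_{\wt u}$ in $L^{2}_{T}$, $\widetilde{\bf p}$-a.s. To this end I insert an intermediate cut-off $\Pi_{M}$ and decompose $T_{\Pi_{N_{k}}w_{k}}-T_{\wt u}$ as the sum of $T_{\Pi_{N_{k}}w_{k}}-T_{\Pi_{M}w_{k}}$, of $T_{\Pi_{M}w_{k}}-T_{\Pi_{M}\wt u}$ and of $T_{\Pi_{M}\wt u}-T_{\wt u}$. The middle difference tends to $0$ $\widetilde{\bf p}$-a.s. as $k\to\infty$ by continuity of $T$ on the finite-dimensional range of $\Pi_{M}$ (using $\Pi_{M}w_{k}\to\Pi_{M}\wt u$ in $\mathcal{C}_{T}$), while the two tails are bounded uniformly in $k$ by the Cauchy estimate of Lemma~\ref{lem.61}, transported from $L^{p}_{\mu}$ to $L^{p}_{\nu_{N}}$ through the invariance of $\rho_{N}$ together with $\|\Psi_{N}\|_{L^{2}_{\mu}}\leq C$, and go to $0$ as $M\to\infty$ uniformly in $k$. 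Then, writing $\Pi_{N_{k}}w_{k}\,T_{\Pi_{N_{k}}w_{k}}-\wt u\,T_{\wt u}=(\Pi_{N_{k}}w_{k}-\wt u)\,T_{\Pi_{N_{k}}w_{k}}+\wt u\,(T_{\Pi_{N_{k}}w_{k}}-T_{\wt u})$ and using that $\Pi_{N_{k}}w_{k}\to\wt u$ in $L^{2}_{x}$ while $T_{\Pi_{N_{k}}w_{k}}$ stays a.s. bounded, I obtain $\Pi_{N_{k}}w_{k}\,T_{\Pi_{N_{k}}w_{k}}\to \wt u\,T_{\wt u}$ $\widetilde{\bf p}$-a.s. in $L^{2}_{T}L^{2}_{x}$, hence in $L^{2}_{T}H^{-\s}_{x}$ for every $\s\geq 0$.

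For the cubic term, since $\partial_{x}:H^{1-\s}\to H^{-\s}$ is bounded, it suffices to prove $\Pi_{N_{k}}\big(|\Pi_{N_{k}}w_{k}|^{2}\Pi_{N_{k}}w_{k}\big)\to |\wt u|^{2}\wt u$ $\widetilde{\bf p}$-a.s. in $L^{2}_{T}H^{1-\s}_{x}$, with $\s>1/2$ so that $1-\s<1/2$ is an admissible regularity on $\mathrm{supp}\,\mu$. This is again handled by telescoping through $\Pi_{M}$ exactly as in Lemma~\ref{lem.56}: the fully truncated piece $\Pi_{M}\big(|\Pi_{M}w_{k}|^{2}\Pi_{M}w_{k}\big)$ converges $\widetilde{\bf p}$-a.s. by continuity of the cubic map on the range of $\Pi_{M}$, while every remaining difference is controlled uniformly in $k$ by the invariance of $\rho_{N}$, Cauchy--Schwarz against $\Psi_{N}$, and the Cauchy property of $\big(|\Pi_{N}u|^{2}\Pi_{N}u\big)_{N}$ in $L^{p}_{\mu}H^{1-\s}_{x}$ — which follows from the covariance bound $\E\big|\widehat{|\phi_{N}|^{2}\phi_{N}}(k)\big|^{2}\lesssim \<k\>^{-2}$ obtained from~\eqref{borne.1} as in~\cite{ThTz}. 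These tails vanish as $M\to\infty$ uniformly in $k$. A final diagonal extraction combining the three convergences (all valid in $L^{2}_{T}H^{-\s}_{x}$ for $\s$ large enough, the binding constraint being $\s>1/2$) produces the stated almost sure convergence, and the conclusion of Theorem~\ref{thmDNLS} then follows exactly as in Subsection~\ref{subNLS}.

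The main obstacle is the cubic term: one must push the low-regularity gauged nonlinearity through the limit while only disposing of a.s. convergence of $w_{k}$ in $H^{\s}$ with $\s<1/2$, so that no deterministic product estimate is available at that regularity. The telescoping-in-$\Pi_{M}$ device — finite-dimensional continuity on the retained modes plus a uniform-in-$k$ smallness of the high-frequency tails coming from invariance of $\rho_{N}$ and the probabilistic Cauchy estimates of~\cite{ThTz} — is precisely what makes the passage possible. A secondary difficulty is the product $u_{N}T_{u_{N}}$, where the scalar factor $T_{u_{N}}$ only stays bounded while $u_{N}$ converges, which is why the two-term splitting together with the separate scalar convergence of $T_{\Pi_{N_{k}}w_{k}}$ is needed.
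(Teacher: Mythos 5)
Your proposal is correct and follows essentially the same route as the paper: the paper's proof is exactly your three-way splitting of $\mathcal{G}_{N}$, disposing of $R_{N_{k}}(\wt{u}_{N_{k}})$ via Lemma~\ref{lem.RN} combined with $\mathscr{L}(\wt{u}_{N_{k}})=\nu_{N_{k}}$, and treating the cubic and gauge terms ``as in Lemma~\ref{lem.56}'', i.e.\ by precisely the $\Pi_{M}$-telescoping (finite-dimensional continuity of the retained modes plus uniform-in-$k$ tail control through the invariance of $\rho_{N}$, Cauchy--Schwarz against $\Psi_{N}$, and the probabilistic Cauchy estimates) that you carry out. Your write-up in fact supplies details the paper leaves implicit --- the covariance bound $\E\big|\widehat{|\phi_{N}|^{2}\phi_{N}}(k)\big|^{2}\lesssim \<k\>^{-2}$ giving the Cauchy property of the cubic term in $L^{p}_{\mu}H^{1-\s}_{x}$, and the honest observation that the argument (like the paper's, whose use of Lemma~\ref{lem.RN} already forces $\s>1/2$) only yields the convergence for $\s$ above $1/2$ rather than for every $\s>0$ as stated.
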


 \begin{proof}
We drop the tildes and write $N_{k}\equiv N$. Since $\mathscr{L}(u_{N})=\nu_{N}$, we can apply  Lemma~\ref{lem.RN}
\begin{equation*}
\|R_{N}(u_{N})\|_{L^{2}_{\p}L^{2}_{T}H^{-\s}_{x}}=\|R_{N}(u_{N})\|_{L^{2}_{\nu_{N}}L^{2}_{T}H^{-\s}_{x}}\longrightarrow 0,
\end{equation*}
when $N\longrightarrow +\infty$. The convergence of the two other terms is obtained as in Lemma~\ref{lem.56}.
\end{proof}

\begin{rema}
Observe that in all the proof, we only used the fact that $\Psi_{N}\in L^{2}(\text{d}\mu)$ uniformly in\;$N\geq 1$ (and not higher order integrability). Therefore the result of Theorem~\ref{thmDNLS} holds for $\kappa\leq \kappa_{2}$, and the support of $\rho$ is not empty. 
\end{rema}
\section{The half-wave equation}\label{Sect.7}

\subsection{Justification of the equation}
\begin{proof}[Proof of Proposition~\ref{prop.NL}]
We prove the result when $p=2$. The general case follows by the Wiener chaos estimates. 

To begin with, use that 
\begin{equation*}
\int_{X^{0}(\T)}\|G_{N}(u)-G_{M}(u)\|^{2}_{H^{-\s}(\T)}\text{d}\mu(u)=\int_{\Omega}\|G_{N}(\phi)-G_{M}(\phi)\|^{2}_{H^{-\s}(\T)}\text{d}{\bf p}.
\end{equation*}
Therefore, we are reduced to prove that  $\big(G_{N}(\phi)\big)_{N\geq1}$ is a Cauchy sequence in $L^{2}\big(\Omega; H^{-\s}(\T)\big)$. Denote by (with $\phi_{N}=\Pi_{N}\phi$)
\begin{equation*}
\chi_{N}=|\phi_{N}|^{2}\phi_{N}-2\|\phi_{N}\|^{2}_{L^{2}(\T)}\phi_{N}.
\end{equation*}
It is enough to show the result for $(\chi_{N})$, because once we know that $\chi_{N} \longrightarrow \chi$ in $L^{2}\big(\Omega; H^{-\s}(\T)\big)$, we deduce that $G_{N}(\phi)=\Pi_{N}\chi_{N} \longrightarrow \chi$ in $L^{2}\big(\Omega; H^{-\s}(\T)\big)$. In the sequel, we will use the notation  $[n]=1+|n|$. Then, by definition of $\phi_{N}$ we can compute 
\begin{eqnarray*}
\chi_{N}
&=&\sum_{{|n_{1}|,|n_{2}|,|n_{3}|\leq N}}\frac{g_{n_{1}}\ov{g}_{n_{2}}g_{n_{3}}}{[n_{1}]^{\frac12}[n_{2}]^{\frac12}[n_{3}]^{\frac12}}\e^{i(n_{1}-n_{2}+n_{3})x}-2\sum_{{|n_{1}|,|n_{3}|\leq N}}\frac{|g_{n_{1}}|^{2}g_{n_{3}}}{[n_{1}][n_{3}]^{\frac12}}\e^{in_{3}x}\\
&=&\sum_{\substack{|n_{1}|,|n_{2}|,|n_{3}|\leq N,\\n_{1}\neq n_{2},n_{3}\neq n_{2}}}\frac{g_{n_{1}}\ov{g}_{n_{2}}g_{n_{3}}}{[n_{1}]^{\frac12}[n_{2}]^{\frac12}[n_{3}]^{\frac12}}\e^{i(n_{1}-n_{2}+n_{3})x}.
\end{eqnarray*}
Next, denote by $\e_{k}(x)=\e^{ikx}$. Then for all $1\leq M\leq N$
\begin{equation}\label{crochet}
\<\chi_{N}-\chi_{M}\,|\,\e_{k}\>=\sum_{B^{(k)}_{M,N}}\frac{g_{n_{1}}\ov{g}_{n_{2}}g_{n_{3}}}{[n_{1}]^{\frac12}[n_{2}]^{\frac12}[n_{3}]^{\frac12}},
\end{equation}
 where the set $B^{(k)}_{M,N}$ is defined by
\begin{multline*}
B^{(k)}_{M,N}=\Big\{(n_{1},n_{2},n_{3})\in \Z^{3}\;\;\text{s.t.}\;\; 0<|n_{1}|, |n_{2}|  ,|n_{3}|\leq N,\;\;n_{1}\neq n_{2},\;\; n_{3}\neq n_{2},\\
\;\;\text{and}\;\;\big(|n_{1}|>M         \;\;\text{or}\;\; |n_{2}|>M   \;\;\text{or}\;\; |n_{3}|>M    \big) \;\;\text{and}\;\; n_{1}-n_{2}+n_{3}=k\Big\}.
\end{multline*} 
From~\eqref{crochet} we obtain 
\begin{equation*}
\big\|\<\chi_{N}-\chi_{M}\,|\,\e_{k}\>\big\|^{2}_{L^{2}(\Omega)}=
{{\int_{\Omega}}}\sum_{\substack {(n_{1},n_{2},n_{3})\in B^{(k)}_{M,N}\\(m_{1},m_{2},m_{3})\in B^{(k)}_{M,N}}}
\frac{g_{n_{1}}\ov{g}_{n_{2}}g_{n_{3}}\ov{g}_{m_{1}}{g}_{m_{2}}\ov{g}_{m_{3}}}
{[n_{1}]^{\frac12}[n_{2}]^{\frac12}[n_{3}]^{\frac12}[m_{1}]^{\frac12}[m_{2}]^{\frac12}[m_{3}]^{\frac12}}\text{d}{\bf p}.
\end{equation*}
Since the $(g_{n})$ are independent and centered, we deduce that each term in the r.h.s. vanishes, unless $n_{2}=m_{2}$ and $(n_{1},n_{3})=(m_{1},m_{3})$ or $(n_{1},n_{3})=(m_{3},m_{1})$. Thus 
\begin{equation*} 
\big\|\<\chi_{N}-\chi_{M}\,|\,\e_{k}\>\big\|^{2}_{L^{2}(\Omega)}\leq C\sum_{(n_{1},n_{2},n_{3})\in B^{(k)}_{M,N}}\frac{1}{\<n_{1}\>\<n_{2}\>\<n_{3}\>}.
\end{equation*}
By symmetry in the previous sum, we can assume that $M<|n_{1}|\leq N$, $0<|n_{2}|\leq N$ and write $n_{3}=k+n_{2}-n_{1}$. Then by~\eqref{borne.1} for some small $\eps>0$
\begin{eqnarray}\label{proj}
\big\|\<\chi_{N}-\chi_{M}\,|\,\e_{k}\>\big\|^{2}_{L^{2}(\Omega)}&\leq &C\sum_{M<|n_{1}|\leq N}\frac1{\<n_{1}\>}\sum_{n_{2}\in \Z}\frac{1}{\<n_{2}\>\<n_{2}-(n_{1}-k)\>}\nonumber \\
& \leq &C\sum_{M<|n_{1}|\leq N}\frac{1}{\<n_{1}\>\<n_{1}-k\>^{1-\eps}} \leq \frac{C}{M^{\eps}\<k\>^{1-2\eps}}.
\end{eqnarray}
Now, by~\eqref{proj} we get
\begin{eqnarray*}
\big\|\chi_{N}-\chi_{M}\big\|^{2}_{L^{2}(\Omega;H^{-\s}(\T))}&=&\sum_{k\in \Z}\frac1{\<k\>^{2\s}} \big\|\<\chi_{N}-\chi_{M}\,|\,\e_{k}\>\big\|^{2}_{L^{2}(\Omega)}  \\
&\leq &\frac{C}{M^{\eps}}\sum_{k\in \Z}\frac1{\<k\>^{1+2\s-2\eps}}\leq \frac{C}{M^{\eps}},
\end{eqnarray*}
if we choose $\eps<\s$, and this concludes the proof.

As a conclusion, we are able to define a limit  $G(u)$ so that  for all $p\geq 2$  
\begin{equation}\label{Glp}
\|G(u) \|_{L^p_{\mu}H^{-\s}(\S^1)}\leq C_p,
\end{equation}
hence the result.
\end{proof}

\subsection{Construction of the measure $\boldsymbol \rho$}
 In this section $\phi$ is given by~\eqref{phi.HW}. Denote by ${[n]=1+|n|}$, then define  $\dis \alpha_{N}=\sum_{|n|\leq N}\frac1{[n]}$ and  
\begin{equation*}
g_{N}(u)=\|\Pi_{N}u\|^{2}_{L^{2}}-\alpha_{N}.
\end{equation*}
\subsubsection{Preliminar results}
We begin with the following result due to N. Tzvetkov. See~\cite[Lemma~4.8]{Tzvetkov3} for a proof.
\begin{lemm}\ph\label{lem.71} 
The sequence $\big(g_{N}(u)\big)_{N\geq1}$ is  Cauchy in $L^{2}\big(X^{0}(\T),\mathcal{B},d\mu\big)$. Moreover there exists $c>0$ so that for all $\lambda>0$ and   $N>M\geq 1$
\begin{equation*} 
\mu\Big(u\in X^{0}(\S^{1})\;:\;   |g_{N}(u)-g_{M}(u)|>\lambda \Big) \leq C \e^{-c \lambda M^{1/2}}.
\end{equation*} 
\end{lemm}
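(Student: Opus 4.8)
The plan is to transport everything to the underlying probability space and then to exploit that $g_{N}(\phi)-g_{M}(\phi)$ is an element of the second Wiener chaos. First I would use $\mu={\bf p}\circ\phi^{-1}$ to write, for every $1\le M<N$,
\begin{equation*}
\int_{X^{0}(\T)}|g_{N}(u)-g_{M}(u)|^{2}\,\text{d}\mu(u)=\int_{\Omega}|g_{N}(\phi)-g_{M}(\phi)|^{2}\,\text{d}{\bf p}.
\end{equation*}
With $\phi$ as in~\eqref{phi.HW} and the convention $[n]=1+|n|$, the exponentials $\e^{inx}$ are orthonormal, so $\|\Pi_{N}\phi\|_{L^{2}}^{2}=\sum_{|n|\le N}|g_{n}|^{2}/[n]$; since $\alpha_{N}=\sum_{|n|\le N}1/[n]$ this gives the clean telescoping expression
\begin{equation*}
g_{N}(\phi)-g_{M}(\phi)=\sum_{M<|n|\le N}\frac{|g_{n}|^{2}-1}{[n]}=:\sum_{M<|n|\le N}\frac{G_{n}}{[n]}.
\end{equation*}
Here the $G_{n}=|g_{n}|^{2}-1$ are independent (the $g_{n}$ being independent for $n\in\Z$ in this model), centred, with $\E[G_{n}^{2}]=c_{0}$ a finite universal constant. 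Hence, by independence,
\begin{equation*}
\E\big[|g_{N}(\phi)-g_{M}(\phi)|^{2}\big]=c_{0}\sum_{M<|n|\le N}\frac{1}{[n]^{2}}\le \frac{C}{M},
\end{equation*}
which already proves that $(g_{N})_{N\ge1}$ is Cauchy in $L^{2}(\text{d}\mu)$.

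Next I would upgrade this to all $L^{p}$. The point is that $G_{n}=\tfrac12(h_{n}^{2}-1)+\tfrac12(\ell_{n}^{2}-1)$ is a second order Hermite polynomial of the real Gaussians, so $g_{N}(\phi)-g_{M}(\phi)$ lies in the homogeneous Wiener chaos of order $2$. By the hypercontractivity estimate on the Wiener chaos (see~\cite[Proposition 2.4]{ThTz}), there is $C>0$ so that for every $p\ge 2$
\begin{equation*}
\big\|g_{N}(\phi)-g_{M}(\phi)\big\|_{L^{p}(\Omega)}\le C\,p\,\big\|g_{N}(\phi)-g_{M}(\phi)\big\|_{L^{2}(\Omega)}\le \frac{C\,p}{M^{1/2}}.
\end{equation*}
I stress that the \emph{linear} growth in $p$ (rather than $\sqrt p$) is the signature of the second chaos, and is exactly what will produce the $M^{1/2}$ in the final exponential.

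Finally I would convert these moment bounds into the stated tail estimate by optimising Chebyshev's inequality over $p$. Using the transport identity and the above, for every $p\ge 2$,
\begin{equation*}
\mu\big(|g_{N}(u)-g_{M}(u)|>\lambda\big)=\textbf{p}\big(|g_{N}(\phi)-g_{M}(\phi)|>\lambda\big)\le \lambda^{-p}\big\|g_{N}(\phi)-g_{M}(\phi)\big\|_{L^{p}(\Omega)}^{p}\le\Big(\frac{C\,p}{\lambda M^{1/2}}\Big)^{p}.
\end{equation*}
Choosing $p=\lambda M^{1/2}/(Ce)$ makes the right-hand side equal to $e^{-\lambda M^{1/2}/(Ce)}$, which is the claimed bound with $c=1/(Ce)$. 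The only delicate point is the constraint $p\ge 2$ in the chaos estimate: this choice of $p$ is admissible precisely when $\lambda M^{1/2}\gtrsim 1$, whereas for $\lambda M^{1/2}\lesssim 1$ the right-hand side $C\e^{-c\lambda M^{1/2}}$ is bounded below by a positive constant and the inequality holds trivially after enlarging $C$. Correctly splitting into these two regimes, together with tracking that the second-chaos moment growth is linear in $p$, is the main (though mild) obstacle; the remaining steps are routine.
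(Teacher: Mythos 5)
Your proof is correct. Note, however, that the paper does not actually prove Lemma~\ref{lem.71}: it attributes the result to N.~Tzvetkov and refers to~\cite[Lemma~4.8]{Tzvetkov3} for the proof, so there is no internal argument to compare against line by line. That said, your route is exactly the one this paper uses for the parallel statement about the quartic functional $f_N$: compare Proposition~\ref{prop.72} and Corollary~\ref{coro.73}, where the $L^2$ bound $\|f_N-f_M\|_{L^2_\mu}\leq CM^{-1/2}$ is upgraded to $L^p$ with growth $(p-1)^2$ (fourth chaos, via~\cite[Proposition~2.4]{ThTz}) and then Markov's inequality is optimized by choosing $p=c_0\lambda^{1/2}M^{1/4}$, yielding the tail $C\e^{-c\lambda^{1/2}M^{1/4}}$. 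Your argument is the order-two analogue: the telescoping identity $g_N(\phi)-g_M(\phi)=\sum_{M<|n|\leq N}([n])^{-1}(|g_n|^2-1)$ places the difference in the second Wiener chaos, so the moment growth is linear in $p$, and the same optimization $p\approx\lambda M^{1/2}/(Ce)$ produces the stated tail $C\e^{-c\lambda M^{1/2}}$ --- the differing exponents in Lemma~\ref{lem.71} versus Corollary~\ref{coro.73} are precisely the signature of chaos order two versus four, as you point out. Your handling of the constraint $p\geq 2$ by splitting into the regimes $\lambda M^{1/2}\gtrsim 1$ and $\lambda M^{1/2}\lesssim 1$ (the latter being trivial after enlarging $C$) is the standard and correct way to close the argument, and your use of the independence of the full family $(g_n)_{n\in\Z}$ in the half-wave model (as opposed to the constraint $g_{-n}=\ov{g_n}$ in the Benjamin--Ono setting of~\cite{Tzvetkov3}) is also handled correctly.
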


Define the sequence 
\begin{equation}\label{def.fn_bis}
 f_{N}(u)=-\int_{\T}|u_{N}|^{4}+2\big(\int_{\T}|u_{N}|^{2}\big)^{2}=-\|u_{N}\|^{4}_{L^{4}}+2\|u_{N}\|^{4}_{L^{2}}.
\end{equation}
 \begin{prop}\ph\label{prop.72}
The sequence $(f_{N})_{N\geq1}$ is  Cauchy in $L^{2}\big(X^{0}(\T),\mathcal{B},d\mu\big)$. More precisely, there exists $C>0$ so that for all $N>M\geq 1$
\begin{equation}\label{L2_pak}
\|f_{N}(u)-f_{M}(u)\|_{L^{2}\big(X^{0}(\T), \mathcal{B},\text{d}\mu\big)}\leq \frac{C}{M^{\frac12}}.
\end{equation} 
Moreover, for all $p\geq 2$ and  $N>M\geq 1$
\begin{equation}\label{LP}
\|f_{N}(u)-f_{M}(u)\|_{L^{p}\big(X^{0}(\T), \mathcal{B},\text{d}\mu\big)}\leq  \frac{C\,(p-1)^{2}}{M^{\frac12}}.
\end{equation} 
\end{prop}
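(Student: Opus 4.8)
The plan is to reduce the whole statement to a second--moment computation and to exploit that $f_N$ is a polynomial of degree four in the Gaussians $(g_n)_{n\in\Z}$, hence belongs to the direct sum of the homogeneous Wiener chaoses of order $\leq 4$. Since $\mu=\mathbf{p}\circ\phi^{-1}$, for every $p$ one has
\begin{equation*}
\|f_N(u)-f_M(u)\|_{L^p(\dd\mu)}=\|f_N(\phi)-f_M(\phi)\|_{L^p(\Omega)}.
\end{equation*}
The variable $f_N(\phi)-f_M(\phi)$ is a finite sum of homogeneous chaoses $X_k$ of orders $k\in\{0,2,4\}$, and by the Wiener chaos estimate \cite[Proposition 2.4]{ThTz} each obeys $\|X_k\|_{L^p}\leq (p-1)^{k/2}\|X_k\|_{L^2}$. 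Summing over $k$ and using orthogonality yields $\|f_N(\phi)-f_M(\phi)\|_{L^p}\leq C(p-1)^2\|f_N(\phi)-f_M(\phi)\|_{L^2}$, so \eqref{LP} follows from \eqref{L2_pak}. Only the $L^2$ bound then remains.

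For the $L^2$ bound I would exhibit the structure of $f_N$. Writing $c_n=g_n/[n]^{1/2}$ and expanding $|u_N|^2=\sum_k a_k\,\e^{ikx}$ with $a_k=\sum_n c_{n+k}\ov{c_n}$, so that $a_0=\|u_N\|^2_{L^2}$ and $\|u_N\|^4_{L^4}=\sum_k|a_k|^2$, one has
\begin{equation*}
f_N=2\|u_N\|^4_{L^2}-\|u_N\|^4_{L^4}=|a_0|^2-\sum_{k\neq 0}|a_k|^2.
\end{equation*}
Then I would split $f_N$ into its Wiener chaos components. Setting $h_n=|g_n|^2-1$ and $g_N=\|u_N\|^2_{L^2}-\alpha_N=\sum_{|n|\leq N}[n]^{-1}h_n$ (the quantity of Lemma~\ref{lem.71}), the Wick/Isserlis formula shows that the mean of $f_N$ equals $2\sum_{|n|\leq N}[n]^{-2}$, while the order-$2$ parts of $|a_0|^2$ and of $\sum_{k\neq 0}|a_k|^2$ both carry the same contribution $2\alpha_N g_N$, with $\alpha_N$ growing like $\log N$. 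This term cancels in the difference, leaving the order-$2$ part of $f_N$ equal to $4\sum_{|n|\leq N}[n]^{-2}h_n$. This cancellation is the crucial point: the renormalization $+2\|u_N\|^4_{L^2}$ is exactly what removes the logarithmically divergent second-chaos contribution of $\|u_N\|^4_{L^4}$.

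It then remains to estimate the three components of $f_N-f_M$, which are orthogonal, so $\|f_N-f_M\|^2_{L^2}$ is the sum of their squared $L^2$ norms. The mean difference $2\sum_{M<|n|\leq N}[n]^{-2}$ is $O(1/M)$, and the order-$2$ difference $4\sum_{M<|n|\leq N}[n]^{-2}h_n$ has $L^2$ norm $O(M^{-3/2})$ since $\sum_{|n|>M}[n]^{-4}\lesssim M^{-3}$. The dominant term is the order-$4$ part; using orthonormality of the Wick monomials, its squared $L^2$ norm reduces to sums of the type $\sum_k b^{>M}_k b_k$, where $b_k=\sum_n[n]^{-1}[n+k]^{-1}$ and $b^{>M}_k$ is the same sum with one index constrained to be $>M$. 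By \eqref{borne.1} with $\alpha=1$ one has $b_k\lesssim [k]^{-1+\eps}$, whereas an elementary refinement gives $b^{>M}_k\lesssim 1/M$ for $|k|\leq M$; splitting the $k$-sum accordingly produces $\sum_k b^{>M}_k b_k\lesssim M^{-1+2\eps}$, i.e. an $L^2$ norm $O(M^{-1/2})$. Combining the three estimates gives \eqref{L2_pak}.

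The technical heart, and the main obstacle, is this $L^2$ computation. One must run the fourth-order Gaussian moment expansion carefully enough to exhibit the cancellation of all the logarithmically divergent (second-chaos) terms produced by $\|u_N\|^4_{L^4}$ against the renormalization, and then sum the surviving fourth-chaos contributions sharply enough---via \eqref{borne.1} together with the refined restricted bound $b^{>M}_k\lesssim 1/M$---to reach the rate $M^{-1/2}$ rather than a weaker power. By contrast, the reduction to $\Omega$ and the passage from $p=2$ to general $p$ are routine.
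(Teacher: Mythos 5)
Your proposal is correct and takes essentially the same route as the paper: both reduce the $L^p$ bound to a second--moment computation via the Wiener chaos estimate of \cite[Proposition 2.4]{ThTz}, both rest on the fact that the renormalization $+2\|u_N\|_{L^2}^4$ exactly cancels the logarithmically divergent part of $\|u_N\|_{L^4}^4$, and both estimate the surviving quartic Gaussian sum by independence of the $g_n$ combined with \eqref{borne.1}. The only real difference is bookkeeping --- the paper splits the expansion of $\int_\T|\phi_N|^4$ into paired quadruples $X_N$ (which cancel against $2\big(\int_\T|\phi_N|^2\big)^2$ up to the diagonal term $\sum_n|g_n|^4/[n]^2$) and generic quadruples $Y_N$, rather than into chaoses of orders $0,2,4$ as you do --- and, like the paper's own estimate of $Y_N-Y_M$, your final bound is really $O(M^{-1/2+\eps})$ rather than $O(M^{-1/2})$, a harmless loss shared with the published argument.
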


\begin{coro}\ph\label{coro.73}
There exists $c>0$ so that for all $\lambda>0$ and   $N>M\geq 1$
\begin{equation*} 
\mu\Big(u\in X^{0}(\S^{1})\;:\;   |f_{N}(u)-f_{M}(u)|>\lambda \Big) \leq C \e^{-c \lambda^{1/2}M^{1/4}}.
\end{equation*} 
\end{coro}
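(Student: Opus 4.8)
The plan is to deduce this sub-exponential tail bound from the $L^p$ estimate~\eqref{LP} of Proposition~\ref{prop.72} by a Chebyshev--Markov argument optimized over $p$. First I would fix $N>M\geq 1$ and abbreviate $F=f_{N}(u)-f_{M}(u)$ and $A=\lambda M^{1/2}$. For any real $p\geq 2$, Markov's inequality applied to $|F|^{p}$ together with~\eqref{LP} gives
$$
\mu\big(u\in X^{0}(\S^{1})\;:\;|F|>\lambda\big)\leq \lambda^{-p}\,\|F\|^{p}_{L^{p}(\text{d}\mu)}\leq \Big(\frac{C(p-1)^{2}}{\lambda M^{1/2}}\Big)^{p}\leq \Big(\frac{C p^{2}}{A}\Big)^{p},
$$
where in the last step I use $(p-1)^{2}\leq p^{2}$ for $p\geq 2$.

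The key is then to optimize the right-hand side in $p$. Since the bracket equals $Cp^{2}/A$, I would choose $p$ proportional to $\sqrt{A}$: set $p=\delta\sqrt{A}=\delta\,\lambda^{1/2}M^{1/4}$, with $\delta>0$ a small constant determined so that $C\delta^{2}=\e^{-1}$, i.e.\ $\delta=(\e C)^{-1/2}$. With this choice the bracket equals $\e^{-1}$, so the bound becomes
$$
\mu\big(u\in X^{0}(\S^{1})\;:\;|F|>\lambda\big)\leq \e^{-p}=\e^{-\delta\,\lambda^{1/2}M^{1/4}},
$$
which is exactly the claimed estimate with $c=\delta$.

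This choice is admissible only when the resulting exponent satisfies $p=\delta\lambda^{1/2}M^{1/4}\geq 2$, so the only point requiring care is the complementary regime $\lambda^{1/2}M^{1/4}<2/\delta$. There I would simply use the trivial bound $\mu(\,|F|>\lambda\,)\leq 1$ and observe that in this range $\e^{-c\lambda^{1/2}M^{1/4}}>\e^{-2c/\delta}=\e^{-2}$ is bounded below by a fixed positive constant; enlarging the constant $C$ in the statement so that $C\geq \e^{2}$ then makes the inequality hold for free. Combining the two regimes yields the corollary. I do not expect any genuine obstacle here: the square-root structure of the final exponent ($p\sim\sqrt{\lambda M^{1/2}}=\lambda^{1/2}M^{1/4}$) is forced precisely by the quadratic $p$-dependence of the moment bound~\eqref{LP}, and the whole argument is the standard passage from polynomial-in-$p$ moment growth to a sub-exponential tail.
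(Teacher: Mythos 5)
Your proof is correct and is essentially the paper's own argument: Markov's inequality with the $L^{p}$ bound~\eqref{LP}, followed by the choice $p\sim \lambda^{1/2}M^{1/4}$ (the paper writes exactly "choose $p=c_{0}\lambda^{1/2}M^{1/4}$ for $c_{0}>0$ small enough"). You even supply the detail the paper leaves implicit, namely handling the regime where the optimal $p$ falls below $2$ via the trivial bound and the prefactor $C$.
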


\begin{proof}[Proof of Corollary~\ref{coro.73}]
By Markov and~\eqref{LP} we have that for all $p\geq 2$
\begin{equation*}
\mu\Big(u\in X^{0}(\S^{1})\;:\;   |f_{N}(u)-f_{M}(u)|>\lambda \Big) \\\leq  \frac1{\lambda^{p}}\|f_{N}(u)-f_{M}(u)\|^{p}_{L^{p}\big(X^{0}(\S^{1}), \mathcal{B},\text{d}\mu\big)}
\leq \big(\frac{Cp^{2}}{\lambda M^{1/2}}\big)^{p}.
\end{equation*}
Then choose $p=c_{0}\lambda^{1/2}M^{1/4}$ for $c_{0}>0$ small enough.
\end{proof}

\begin{proof}[Proof of Proposition~\ref{prop.72}]
We prove~\eqref{L2_pak}. The estimate~\eqref{LP} immediately  follows from~\cite[Proposition 2.4]{ThTz}. Firstly, we have $\dis \int_{\T} |\phi_{N}|^{2}=\sum_{|n|\leq N} \frac{|g_{n}|^{2}}{[n]}$, with the notation $[n]=1+|n|$. Thus 
\begin{equation}\label{carre}
\big(   \int_{\T} |\phi_{N}|^{2}\big)^{2}=\sum_{|n|,|m|\leq N} \frac{|g_{n}|^{2}|g_{m}|^{2}}{[n][m]}.
\end{equation}
Similarly, we explicitly obtain
\begin{equation}\label{fi4}
    \int_{\T} |\phi_{N}|^{4} =\sum_{\substack{     |n_{1}|,|n_{2}|, |n_{3}|,|n_{4}|\leq N\\ n_{1}-n_{2}+n_{3}-n_{4}=0  }} \frac{g_{n_{1}}\ov{g_{n_{2}}}{g_{n_{3}}}\ov{g_{n_{4}}}}{[n_{1}]^{\frac12}[n_{2}]^{\frac12}[n_{3}]^{\frac12}[n_{4}]^{\frac12}}.
\end{equation}
We introduce the set 
\begin{equation*}
A_{N}=\{(n_{1},n_{2},n_{3},n_{4})\in \Z^{4}\;\;\text{s.t.}\;\; |n_{1}|, |n_{2}|,  |n_{3}|, |n_{4}| \leq N\;\;\text{and}\;\; n_{1}-n_{2}+n_{3}-n_{4}=0\}.
\end{equation*}

 We now split the sum~\eqref{fi4} in two parts, by distinguishing the cases $n_{3}=n_{1}$ and $n_{3}\neq n_{1}$ in $A_{N}$ and write 
\begin{equation}\label{decomp}
\int_{\T} |\phi_{N}|^{4} =X_{N}+Y_{N},
\end{equation} 
with 
\begin{equation*}
X_{N}=\sum_{B_{N}} \frac{g_{n_{1}}\ov{g_{n_{2}}}g_{n_{3}}\ov{g_{n_{4}}}}{[n_{1}]^{\frac12}[n_{2}]^{\frac12}[n_{3}]^{\frac12}[n_{4}]^{\frac12}},
\end{equation*} 
where $B_{N}=A_{N}\cap\{\,n_{1}=n_{2}\;\;\text{or}\;\;n_{1}=n_{4}\,\}$, and
\begin{equation}\label{yn}
Y_{N}=\sum_{\substack{A_{N}, n_{1}\neq n_{2}\\n_{1}\neq n_{4}}} \frac{g_{n_{1}}\ov{g_{n_{2}}}g_{n_{3}}\ov{g_{n_{4}}}}{[n_{1}]^{\frac12}[n_{2}]^{\frac12}[n_{3}]^{\frac12}[n_{4}]^{\frac12}}.
\end{equation} 
We observe that if $(n_{1},n_{2},n_{3},n_{4})\in B_{N}$, then either $(n_{1},n_{3})=(n_{2},n_{4})$ or $(n_{1},n_{3})=(n_{4},n_{2})$. Thus
\begin{eqnarray*}
X_{N}&=&\sum_{|n_{1}|,|n_{3}|\leq N} \frac{|g_{n_{1}}|^{2}|g_{n_{3}}|^{2}}{[n_{1}][n_{3}]}+\sum_{\substack{|n_{1}|,|n_{3}|\leq N\\n_{1}\neq n_{3}}} \frac{|g_{n_{1}}|^{2}|g_{n_{3}}|^{2}}{[n_{1}][n_{3}]}\nonumber \\
&=&2\big(   \int_{\T} |\phi_{N}|^{2}\big)^{2}-\sum_{|n|\leq N}\frac{|g_{n}|^{4}}{[n]^{2}},
\end{eqnarray*}
where in the last line we used~\eqref{carre}. Thus, with~\eqref{decomp} we obtain
\begin{equation*}
f_{N}(\phi_{N})=-\int_{\T} |\phi_{N}|^{4}+2\big(\int_{\T} |\phi_{N}|^{2}\big)^{2}=\sum_{|n|\leq N}\frac{|g_{n}|^{4}}{[n]^{2}}-Y_{N}.
\end{equation*}
We now show that $(Y_{N})_{N\geq 1}$ is Cauchy in $L^{2}(\Omega, \mathcal{F},{\bf p})$. Let $1\leq N<M$, then we define 
\begin{multline*}
A_{M,N}=\big\{(n_{1},n_{2},n_{3},n_{4})\in \Z^{4}\;\;\text{s.t.}\;\; M<|n_{1}|, |n_{2}|,  |n_{3}|, |n_{4}| \leq N,\\
n_{1}-n_{2}+n_{3}-n_{4}=0\;\;\text{and s.t.}\;\;|n_{j}|>M\;\;\text{for some}\;\; 1\leq j\leq4\big\}.
\end{multline*} 
Thus, thanks to~\eqref{yn} we have
\begin{equation*} 
(Y_{M}-Y_{N})^{2}=\sum_{\substack{A_{M,N},\\ n_{1}\neq n_{2}\\n_{1}\neq n_{4}}}\sum_{\substack{A_{M,N},\\ m_{1}\neq m_{2}\\m_{1}\neq m_{4}}} \frac{g_{n_{1}}\ov{g_{n_{2}}}g_{n_{3}}\ov{g_{n_{4}}}}{[n_{1}]^{\frac12}[n_{2}]^{\frac12}[n_{3}]^{\frac12}[n_{4}]^{\frac12}} \frac{\ov{g_{m_{1}}}{g_{m_{2}}}\ov{g_{m_{3}}}{g_{m_{4}}}}{[m_{1}]^{\frac12}[m_{2}]^{\frac12}[m_{3}]^{\frac12}[m_{4}]^{\frac12}}.
\end{equation*} 
We take the integral over $\Omega$ of the previous sum. By the independence of the Gaussians each term vanishes unless $\{n_{1},n_{2},n_{3},n_{4}\}=\{m_{1},m_{2},m_{3},m_{4}\}$. Thus 
\begin{equation*}
\|Y_{M}-Y_{N}\|^{2}_{L^{2}(\Omega)}\leq C \sum_{A_{M,N}} \frac{1}{\<n_{1}\>\<n_{2}\>\<n_{3}\>\<n_{4}\>}.
\end{equation*}
By symmetry of the sum, we can assume that $|n_{1}|\geq M$ and we replace $n_{4}=n_{1}-n_{2}+n_{3}$. Then by~\eqref{borne.1}
\begin{eqnarray*}
\|Y_{M}-Y_{N}\|^{2}_{L^{2}(\Omega)}&\leq &C \sum_{  \substack{n_{1},n_{2},n_{3}\in \Z\\ |n_{1}|>M}} \frac{1}{\<n_{1}\>\<n_{2}\>\<n_{3}\>\<n_{1}-n_{2}+n_{3}\>}\\
&\leq &  C \sum_{  \substack{n_{1},n_{2}\in \Z\\ |n_{1}|>M}}  \frac{1}{\<n_{1}\>\<n_{2}\>\<n_{1}-n_{2}\>^{1-\eps}}   \\
&\leq&C\sum_{|n_{1}|\geq M}\frac1{\<n_{1}\>^{2-2\eps}}\leq \frac{C}{M^{1-2\eps}},
\end{eqnarray*}
which was the claim.
\end{proof}
 
\subsubsection{The crucial estimate} We now have all the ingredients to prove the following proposition,
which is the key point in the proof of Theorem~\ref{thm2}. Recall the definition~\eqref{def.fn_bis}.
\begin{prop}\ph\label{prop.lp}
Let $\chi\in \mathcal{C}^{\infty}_{0}\big([-R,R]\big)$. Then for all $1\leq p<\infty$ there exists $C>0$ such that for every $N\geq 1$,
$$
\Big\|
\chi\Big(\|\Pi_{N}u\|_{L^2(\T)}^{2}-\alpha_{N}\Big)
\e^{f_{N}(u)}
\Big\|_{L^p(\text{d}\mu(u))}\leq C\,.
$$
\end{prop}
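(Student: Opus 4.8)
The plan is to reduce the claimed $L^p$ bound to a Nelson-type exponential integrability estimate for the Wick-ordered quartic, the cut-off $\chi$ being used only to remove the single term responsible for the non-integrability. First I would use that $\chi\in\mathcal{C}^{\infty}_{0}([-R,R])$ gives the pointwise bound $|\chi(y)|\leq \|\chi\|_{L^{\infty}}\mathbf{1}_{[-R,R]}(y)$, so that, writing $g_{N}(u)=\|u_{N}\|^{2}_{L^{2}}-\alpha_{N}$, it suffices to prove a uniform bound for
\begin{equation*}
J_{N}:=\int_{X^{0}(\T)}\mathbf{1}_{\{|g_{N}(u)|\leq R\}}\,\e^{p f_{N}(u)}\,\dd\mu(u).
\end{equation*}

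The key algebraic observation is that, setting $W_{N}:=\|u_{N}\|^{4}_{L^{4}}-4\alpha_{N}\|u_{N}\|^{2}_{L^{2}}+2\alpha_{N}^{2}$ (the integrated Wick-ordered quartic), completing the square in $\|u_{N}\|^{2}_{L^{2}}$ in the definition~\eqref{def.fn_bis} of $f_{N}$ yields the exact identity
\begin{equation*}
f_{N}=2\|u_{N}\|^{4}_{L^{2}}-\|u_{N}\|^{4}_{L^{4}}=2\big(\|u_{N}\|^{2}_{L^{2}}-\alpha_{N}\big)^{2}-W_{N}=2g_{N}^{2}-W_{N}.
\end{equation*}
In particular the divergent $\alpha_{N}^{2}$ contributions cancel, so that on the cut-off set one has $f_{N}\leq 2R^{2}-W_{N}$, whence
\begin{equation*}
J_{N}\leq \e^{2pR^{2}}\int_{X^{0}(\T)}\e^{-pW_{N}(u)}\,\dd\mu(u).
\end{equation*}
This makes the role of the cut-off transparent: it is there only to absorb the term $2g_{N}^{2}$, which is exactly the contribution of the $2\|u_{N}\|^{4}_{L^{2}}$ piece of the Hamiltonian and the genuine source of non-integrability (and this is why, for the renormalization $\widetilde{G}_{N}$ of~\eqref{gaugebis}, no cut-off is needed). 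What remains is the purely quartic term $W_{N}$, for which I would prove the uniform bound $\sup_{N}\int \e^{-pW_{N}}\dd\mu<\infty$.

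This last estimate is the main obstacle. From Proposition~\ref{prop.72} (which gives $f_{N}$ Cauchy in $L^{q}(\dd\mu)$) and Lemma~\ref{lem.71} together with the Wiener chaos estimates (which give $g_{N}^{2}$ Cauchy in $L^{q}(\dd\mu)$), the sequence $W_{N}=2g_{N}^{2}-f_{N}$ converges in every $L^{q}(\dd\mu)$; by the layer-cake formula the bound then reduces to a lower-tail estimate $\mu(W_{N}<-s)$ with a rate integrable against $\e^{ps}$, uniformly in $N$. The delicate point is that the generic fourth-chaos tail, a stretched exponential $\e^{-cs^{1/2}}$ of the type furnished by Corollary~\ref{coro.73}, is \emph{too weak} to survive against $\e^{ps}$; one must instead exploit the defocusing sign, i.e. the positivity $\|u_{N}\|^{4}_{L^{4}}\geq 0$ (in fact $\|u_{N}\|^{4}_{L^{4}}\geq\|u_{N}\|^{4}_{L^{2}}$), which confines $\{W_{N}<-s\}$ to a true large-deviation regime for $\|u_{N}\|^{2}_{L^{2}}$ (equivalently, to the event that $|u_{N}|^{2}$ is atypically flat) and produces a one-sided lower tail decaying faster than any $\e^{-as}$. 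This asymmetric (Nelson) estimate is precisely the defocusing Gibbs-measure integrability — an easier one-dimensional analogue of Theorem~\ref{gib}, in the spirit of~\cite{Simon} — and the sign is essential, the analogous statement being false for $-f_{N}$.
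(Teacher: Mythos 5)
Your algebraic reduction is correct: the identity $f_{N}=2g_{N}^{2}-W_{N}$ with $W_{N}=\|u_{N}\|^{4}_{L^{4}}-4\alpha_{N}\|u_{N}\|^{2}_{L^{2}}+2\alpha_{N}^{2}$ checks out, and on the support of the cut-off it gives $f_{N}\leq 2R^{2}-W_{N}$, so Proposition~\ref{prop.lp} would indeed follow from the uniform bound $\sup_{N}\int\e^{-pW_{N}}\,\text{d}\mu<\infty$. This is a tidy way of isolating what the cut-off does (the paper instead combines the cut-off with $-\|u_{N}\|^{4}_{L^{4}}\leq 0$ to get the pointwise bound $f_{N}\leq 2(\alpha_{N}+R)^{2}\lesssim(\ln N)^{2}$ on the cut-off set, which amounts to the same thing).

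The genuine gap is that the bound $\sup_{N}\int\e^{-pW_{N}}\,\text{d}\mu<\infty$, which you defer to as a ``Nelson-type'' ingredient, is not an easier known fact sitting below the proposition: via your own reduction it is \emph{equivalent} to the proposition, and the mechanism you sketch for it does not work as stated. The positivity $\|u_{N}\|^{4}_{L^{4}}\geq\|u_{N}\|^{4}_{L^{2}}$ gives $W_{N}\geq\big(\|u_{N}\|^{2}_{L^{2}}-2\alpha_{N}\big)^{2}-2\alpha_{N}^{2}\geq -2\alpha_{N}^{2}$, so $\{W_{N}<-s\}$ is \emph{empty} when $2\alpha_{N}^{2}\leq s$, i.e. for $N\lesssim\e^{c\sqrt{s}}$; but for $N\gg\e^{c\sqrt{s}}$ the constraint this places on the $L^{2}$ norm, namely $\|u_{N}\|^{2}_{L^{2}}>2\alpha_{N}-\sqrt{2\alpha_{N}^{2}-s}$, has its threshold \emph{below} the mean $\alpha_{N}$ as soon as $s<\alpha_{N}^{2}$, so it is not a large-deviation event for $\|u_{N}\|^{2}_{L^{2}}$ at all, and no ``faster than any $\e^{-as}$'' lower tail follows from the defocusing sign alone. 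What handles the range $N\gg\e^{c\sqrt{s}}$ is a two-scale argument: compare $W_{N}$ with $W_{M}$ at the critical scale $M\approx\e^{c\sqrt{s}}$, where the pointwise bound $-W_{M}\leq 2\alpha_{M}^{2}\leq s/2$ does apply, and control $W_{N}-W_{M}$ by the $L^{2}$-Cauchy estimate together with hypercontractivity, whose stretched-exponential tail $\exp\big(-c\,s^{1/2}M^{\delta}\big)$ beats $\e^{ps}$ only because of the extra factor $M^{\delta}=\e^{c\delta\sqrt{s}}$. This is precisely what the paper's proof does, working directly with $f_{N}$ and $g_{N}$: the choice $M=\e^{c_{0}(\ln\lambda)^{1/2}}$, the emptiness of the bad set for $N\leq M$, and the comparison tails of Lemma~\ref{lem.71} and Corollary~\ref{coro.73} (i.e.\ Proposition~\ref{prop.72} plus Wiener chaos estimates) for $N\geq M$; the same scheme reappears in the proof of Theorem~\ref{gib}. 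Without supplying this two-scale/hypercontractivity step, your proposal reformulates the proposition rather than proving it.
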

\begin{proof}
Our aim is to show that the integral
$\int_{0}^{\infty}\lambda^{p-1}\mu(A_{\lambda,N})d\lambda$ 
is convergent uniformly with respect to $N$, where
\begin{equation*}
A_{\lambda,N}=
\Big\{u\in X^{0}(\S^{1})\,:\,
\chi\Big(\|\Pi_{N}u\|_{L^2(\T)}^{2}-\alpha_{N}\Big)
e^{f_{N}(u)}>\lambda
\Big\}.
\end{equation*}
Proposition~\ref{prop.lp} is a straightforward consequence of the following lemma.
\end{proof}
\begin{lemm}\ph For any $L>0$, there exists $C>0$ such that for every $N$ and
every $\lambda\geq 1$, 
$$\mu(A_{\lambda, N}) \leq C \lambda ^{-L}.$$
\end{lemm}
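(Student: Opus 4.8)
The plan is to turn the superlevel‑set bound into a tail estimate for $f_N$ under the $L^2$–cutoff, and to control that tail by combining a deterministic inequality coming from the defocusing sign with a large–deviation estimate in which the cutoff is used in an essential way. As a reduction, since $\chi$ is bounded and supported in a fixed interval $[-R,R]$, membership $u\in A_{\lambda,N}$ forces simultaneously $\big|\,\|\Pi_N u\|_{L^2}^2-\alpha_N\big|\le R$ and $f_N(u)>\log\lambda-\log\|\chi\|_{L^\infty}=:T$. As $\lambda\to\infty$ one has $T\sim\log\lambda$ and $\lambda^{-L}\sim\e^{-LT}$, so it suffices to prove, uniformly in $N$ and for every $L>0$,
\begin{equation*}
\mu\Big(f_N(u)>T,\ \big|\|\Pi_N u\|_{L^2}^2-\alpha_N\big|\le R\Big)\le C_L\,\e^{-LT},\qquad T\ge 1 .
\end{equation*}

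The deterministic heart is the sign. Because $\mathrm{vol}(\T)=1$, Jensen's inequality gives $\|\Pi_N u\|_{L^4}^4\ge\|\Pi_N u\|_{L^2}^4$, whence from \eqref{def.fn_bis}
\begin{equation*}
f_N(u)=-\|\Pi_N u\|_{L^4}^4+2\|\Pi_N u\|_{L^2}^4\ \le\ \|\Pi_N u\|_{L^2}^4=\big(\alpha_N+(\|\Pi_N u\|_{L^2}^2-\alpha_N)\big)^2 .
\end{equation*}
On the cutoff this yields $f_N(u)\le(\alpha_N+R)^2$, so $A_{\lambda,N}=\varnothing$ as soon as $T>(\alpha_N+R)^2$. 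This disposes of all sufficiently large $\lambda$ for free; the sign is crucial here, since for $-f_N$ no such upper bound holds (consistent with the failure of the statement for $-G$).

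For the remaining range $1\le T\le(\alpha_N+R)^2$ I would use the explicit chaos decomposition obtained in the proof of Proposition~\ref{prop.72}, namely $f_N=\sum_{|n|\le N}|g_n|^4/[n]^2-Y_N=:S_N-Y_N$ with $S_N\ge0$ and $Y_N$ the mean–zero off–diagonal fourth–order chaos, together with the cutoff read as a constraint on the nonnegative masses $b_n=|g_n|^2/[n]$, i.e. $\sum_n b_n=\|\Pi_N u\|_{L^2}^2\le\alpha_N+R$. The mechanism is that a large value of $f_N$ is incompatible with a bounded total mass unless the law–of–large–numbers behaviour of $(b_n)$ breaks down: either one mode carries an anomalously large share of a budget that stays $\approx\alpha_N$ (pushing $S_N$ up while $\sum_n b_n$ is capped), or the bulk of the modes contributes far below its mean (pushing $\sum_{n\ne m}b_n b_m$, hence $-Y_N$, up while $\sum_n b_n$ stays bounded). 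Each of these is a large deviation whose probability I would estimate by exponential–moment (Cram\'er) methods, feeding in the high–moment bounds for $S_N$ and $Y_N$ produced by the Wiener–chaos estimates already underlying Corollary~\ref{coro.73} and Lemma~\ref{lem.71}, and optimizing over the moment order so as to beat every $\e^{-LT}$.

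The difficulty is concentrated entirely in this last step. The unconstrained fourth–order quantity $f_N$ has only a stretched–exponential tail $\mu(f_N>T)\lesssim\e^{-c\sqrt T}$, which is far weaker than any $\lambda^{-L}$; the whole point is to show that intersecting with the single scalar constraint $\big|\|\Pi_N u\|_{L^2}^2-\alpha_N\big|\le R$ upgrades this to super–exponential decay in $T$. Making this quantitative and, above all, \emph{uniform in $N$}—so that the $\alpha_N$–dependent thresholds appearing in the constrained large deviation do not degrade the rate—is the crux, and it is precisely here that both the exact cancellations in $f_N=S_N-Y_N$ and the defocusing sign are indispensable.
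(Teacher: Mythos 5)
Your reduction and your deterministic step are correct and coincide with the first half of the paper's argument: membership in $A_{\lambda,N}$ forces $f_{N}(u)>\ln\lambda-C$ together with $\big|\|\Pi_{N}u\|_{L^{2}}^{2}-\alpha_{N}\big|\leq R$, and the defocusing sign then gives $f_{N}(u)\leq C(\alpha_{N}+R)^{2}$ on the cutoff set (the paper uses $f_{N}\leq 2\|u_{N}\|_{L^{2}}^{4}$ rather than your Jensen variant, with the same effect), so $A_{\lambda,N}=\varnothing$ whenever $\ln\lambda\gg(\alpha_{N}+R)^{2}$, i.e.\ whenever $N\lesssim \e^{c_{0}(\ln\lambda)^{1/2}}$. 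The problem is that everything after this point in your proposal is a plan, not a proof, and the plan as stated would fail. Your tool for the range $\ln\lambda\lesssim(\alpha_{N}+R)^{2}$ is ``high moments of $S_{N}$ and $Y_{N}$ plus Cram\'er/Markov optimization''; but hypercontractivity only gives $\|Y_{N}\|_{L^{p}(\dd\mu)}\lesssim p^{2}$, and optimizing Markov over $p$ then yields $\mu(f_{N}>T)\lesssim \e^{-c\sqrt{T}}$ --- exactly the stretched-exponential tail you concede is insufficient --- while genuine exponential moments $\E[\e^{tf_{N}}]$ are infinite for every $t>0$ (a single-mode event $|g_{0}|^{2}\sim\sqrt{T}$ already produces $f_{N}\sim T$), so there is no Cram\'er transform to optimize. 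Crucially, the constraint $|g_{N}(u)|\leq R$ never enters your moment estimates in any quantitative way; you name this as ``the crux'' and leave it open, which means the lemma is not proved in the hard regime.

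The idea you are missing is that the paper never confronts this constrained large deviation at scale $N$ at all: it transfers both the level set and the constraint to an intermediate scale $M=\e^{c_{0}(\ln\lambda)^{1/2}}$ and lets the deterministic step finish the job there. Concretely, by Lemma~\ref{lem.71} and Corollary~\ref{coro.73} (which rest on the $L^{2}$ Cauchy estimates $\|g_{N}-g_{M}\|_{L^{2}_{\mu}}+\|f_{N}-f_{M}\|_{L^{2}_{\mu}}\lesssim M^{-1/2}$ of Proposition~\ref{prop.72} plus Wiener chaos hypercontractivity), one has $\mu(|g_{N}-g_{M}|>1)\leq C\e^{-cM^{1/2}}$ and $\mu(|f_{N}-f_{M}|>1)\leq C\e^{-cM^{1/4}}$; since $M$ is stretched-exponential in $\ln\lambda$, both bounds beat $\lambda^{-L}$ for every $L$. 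Outside these two bad sets, the event $\{f_{N}>\ln\lambda,\ |g_{N}|\leq R\}$ forces $\{f_{M}>\tfrac12\ln\lambda,\ |g_{M}|\leq R+1\}$, and this set is empty by precisely your deterministic inequality applied at scale $M$: on it, $f_{M}\leq 2(\alpha_{M}+R+1)^{2}\leq Cc_{0}^{2}\ln\lambda<\tfrac12\ln\lambda$ once $c_{0}$ is fixed small. So the improvement from $\e^{-c\sqrt{T}}$ to super-polynomial decay does not come from a refined analysis of $f_{N}$ under the constraint, but from the smallness of the \emph{differences} between dyadic-type truncation scales; without this two-scale comparison (or a genuine substitute for it), your outline cannot be completed.
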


\begin{proof}
\noindent Firstly, observe that we can assume that $\lambda\geq C_{R}$ for any constant $C_{R}>0$. Let $c_{0}>0$ a small number which will be fixed later and set 
$$\dis M=\e^{c_{0}(\ln \lambda)^{1/2}}.$$
 To begin with $\mu(A_{\lambda,N})\leq \mu(\wt{A}_{\lambda,N})$, where 
\begin{equation*} 
\wt{A}_{\lambda,N}=\Big\{u\in X^{0}(\S^{1})\,:\, 
f_{N}(u)>\ln \lambda,\quad |g_{N}(u)|\leq R\Big\}.
\end{equation*}
$\bullet$ Assume that $N\leq M$. 
On the set $\big\{\,|g_{N}(u)|\leq R+1\,\big\}$ we have 
\begin{equation*}
f_{N}(u)\leq 2\|\Pi_{N}u\|^{4}_{L^{2}(\T)}\leq 2(C\ln N+R)^{2}\leq 2(C\ln M+R)^{2}=Cc^{2}_{0}\ln \lambda,
\end{equation*}
if $\lambda \geq C_{R}$ large enough. We fix  $c_{0}>0$ so that $Cc^{2}_{0}<1/4$. In particular $\mu(A_{\lambda,N})\leq \mu(\wt{A}_{\lambda,N})=0$.\\
$\bullet$ Assume that $N\geq M$. First observe that if we define 
\begin{equation*}
B_{\lambda,N}=\Big\{u\in X^{0}(\S^{1})\,:\,
\big|g_{N}(u)-g_{M}(u)\Big|>1\Big\},
\end{equation*}
by Lemma~\ref{lem.71} and the definition of $M$, we get  for any $L\geq 1$
$$
\mu(B_{\lambda,N})\leq C\exp(-cM^{1/2})\leq C_{L}\lambda^{-L}\,.
$$
Similarly, set 
\begin{equation*}
C_{\lambda,N}=\Big\{u\in X^{0}(\S^{1})\,:\,
\big|f_{N}(u)-f_{M}(u)\Big|>1\Big\},
\end{equation*}
then by Corollary~\ref{coro.73},  for any $L\geq 1$ we have 
$$
\mu(C_{\lambda,N})\leq C\exp(-cM^{1/4})\leq C_{L}\lambda^{-L}\,.
$$
We have $\wt{A}_{\lambda,N} \subset C_{\lambda,N} \cup D_{\lambda,N}$ where
\begin{equation*}
D_{\lambda,N}=\Big\{u\in X^{0}(\S^{1})\,:\, f_{M}(u)>\frac12\ln \lambda ,\quad |g_{N}(u)|\leq R\Big\}.
\end{equation*}
Then observe that  $\dis \big\{\,|g_{N}(u)|\leq R\,\big\}\cap \big\{\,|g_{N}(u)-g_{M}(u)|\leq 1\,\big\}\subset \big\{|g_{M}(u)|\leq R+1\big\}$, therefore we can write $ D_{\lambda,N}\subset B_{\lambda,N}\cup E_{\lambda,N}$ where 
\begin{equation*}
E_{\lambda}=\Big\{u\in X^{0}(\S^{1})\,:\, f_{M}(u)>\frac12\ln \lambda ,\quad |g_{M}(u)|\leq R+1\Big\}.
\end{equation*}
In the first part of the proof, we have already shown that $\mu(E_{\lambda})=0$. Finally, we put all the estimates together and obtain $\mu(A_{\lambda,N})\leq C_{L}\lambda^{-L}$.
\end{proof}
\subsubsection{Convergence to the mesure $\rho$}

We now have all the ingredients to complete the proof of Theorem~\ref{thm2}. 

First we define the density   $\Theta\,:\,X^{0}(\S^{1})\longrightarrow \R$ with respect to the measure $\mu$ of the measure $\rho$. By Lemma~\ref{lem.71} and Proposition~\ref{prop.72}, we have the following convergences in the $\mu$ measure: $g_{N}(u)$ converges  to $g(u)$ and $f_{N}(u)$ to $f(u)$. Then, by composition and multiplication of continuous functions, we obtain 
\begin{equation*} 
\Theta_{N}(u)
\longrightarrow \beta \chi\big(g(u)\big)
\e^{f(u)}\equiv \Theta(u),
\end{equation*}
in measure, with respect to the measure $\mu$, and where $\beta>0$ is so that $\text{d}\rho(u)=\Theta(u) \text{d}\mu(u)$ is a probability measure on $X^{0}(\S^{1})$. By this construction,  $\Theta$ is measurable from $\big(X^{0}(\S^{1}), \mathcal{B}\big)$ to~$\R$.

Then, we can extract a sub-sequence $\Theta_{N_{k}}(u)$ so that $\Theta_{N_{k}}(u)\longrightarrow \Theta(u)$, $\mu$ a.s. and by Proposition~\ref{prop.lp} and the Fatou lemma, for all $p\in [1,+\infty)$,
\begin{equation*}
\int_{X^{0}(\S^{1})}|\Theta(u)|^{p}\text{d}\mu(u)\leq \liminf_{k\to \infty} \int_{X^{0}(\S^{1})}|\Theta_{N_{k}}(u)|^{p}\text{d}\mu(u)\leq C,
\end{equation*}
thus $\Theta(u)\in L^{p}(\text{d}\mu(u))$.

It remains to prove the convergence of $\Theta_{N}(u)$ in $L^{p}(\text{d}\mu(u))$: Here we can follow the proof of Proposition~\ref{prop.46}. We do not write the details. 
 \subsection{Study of the measure $\boldsymbol {\nu_{N}}$}

Let $N\geq 1$ and consider the  equation~\eqref{HW.N}. Observe that $u_{N}=\Pi_{N}u$ satisfies an ODE, while $u^{\perp}_{N}=(1-\Pi_{N})u$ is solution to the linear problem $(i\partial_{t}-\Lambda)u^{\perp}_{N}=0$. Since the $L^{2}(\S^{1})$-norm of a solution $u$ to~\eqref{HW.N} is preserved, it follows that the equation is globally well-posed in $L^{2}(\T)$. We denote by $\Phi_{N}$ the   flowmap. Moreover, because of the Hamiltonian structure and the Liouville theorem, the measure $\rho_{N}$  is invariant by $\Phi_{N}$.

Similarly to the  previous section, for $T>0$ we define  the measure $\nu_{N}$ on $\mathcal{C}\big([-T,T]; X^{0}(\T)\big)$ as the image of $\rho_{N}$ by the flowmap 
\begin{equation*}
 \begin{array}{rcc}
X^{0}(\T)&\longrightarrow& \mathcal{C}\big([-T,T]; X^{0}(\T)\big)\\[3pt]
\dis  v&\longmapsto &\dis \Phi_{N}(t)(v).
 \end{array}
 \end{equation*}
Using this definition, we can prove
\begin{lemm}\ph Let $\s>0$,  then  for all $p\geq 2$
\begin{equation}\label{Je0}
\big\| \|  G(u)\|_{L^{p}_{T}H^{-\s}_{x}}\big\|_{L^{p}_{\nu_{N}}}\leq C.
\end{equation}
\end{lemm}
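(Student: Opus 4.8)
The plan is to transport the integral from path space back to $X^{0}(\T)$ exactly as in Proposition~\ref{Prop.Fonda}, and then to exploit that $G(u)$ is a \emph{fixed} element of $L^{q}_{\mu}H^{-\s}_{x}$ for every $q$. Since $\nu_{N}=\rho_{N}\circ\Phi_{N}^{-1}$, the transport identity~\eqref{transp} applied to the functional $u\longmapsto\|G(u)\|^{p}_{L^{p}_{T}H^{-\s}_{x}}$ gives
\begin{equation*}
\big\|\|G(u)\|_{L^{p}_{T}H^{-\s}_{x}}\big\|^{p}_{L^{p}_{\nu_{N}}}=\int_{X^{0}(\T)}\int_{-T}^{T}\|G\big(\Phi_{N}(t)(v)\big)\|^{p}_{H^{-\s}_{x}}\,\dd t\,\dd\rho_{N}(v).
\end{equation*}
First I would apply Fubini to interchange the $t$ and $v$ integrations, and then use the invariance of $\rho_{N}$ under the flow $\Phi_{N}(t)$, applied to the measurable function $v\longmapsto\|G(v)\|^{p}_{H^{-\s}_{x}}$, to remove the time dependence, exactly as in the chain~\eqref{mar1}. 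This reduces the left-hand side to $2T\int_{X^{0}(\T)}\|G(v)\|^{p}_{H^{-\s}_{x}}\,\dd\rho_{N}(v)$.

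It then remains to bound this last integral uniformly in $N$. Writing $\dd\rho_{N}(v)=\Theta_{N}(v)\dd\mu(v)$ and applying Cauchy--Schwarz in $L^{2}(\dd\mu)$, I would estimate
\begin{equation*}
\int_{X^{0}(\T)}\|G(v)\|^{p}_{H^{-\s}_{x}}\Theta_{N}(v)\,\dd\mu(v)\leq\big\|\|G(v)\|^{p}_{H^{-\s}_{x}}\big\|_{L^{2}_{\mu}}\,\|\Theta_{N}\|_{L^{2}_{\mu}}.
\end{equation*}
The first factor equals $\|G(v)\|^{p}_{L^{2p}_{\mu}H^{-\s}_{x}}$, which is finite by the bound~\eqref{Glp} furnished by Proposition~\ref{prop.NL}; the second factor is bounded uniformly in $N$ because $\Theta_{N}\to\Theta$ in $L^{2}(\dd\mu)$ by Theorem~\ref{thm2}. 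Taking $p$-th roots yields~\eqref{Je0} with a constant depending only on $T$ and $p$.

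I do not expect a genuine analytic obstacle here: the estimate is a soft consequence of the invariance of $\rho_{N}$ together with the two integrability inputs already established. The only point requiring a word of care is measurability, namely that the fixed nonlinearity $G$, defined only $\mu$-almost everywhere as an $L^{2}$-limit, is also defined $\rho_{N}$-almost everywhere, and hence $\nu_{N}$-almost everywhere along trajectories. This follows at once from $\rho_{N}\ll\mu$ together with the invariance used above, so that $G\big(\Phi_{N}(t)(v)\big)$ is well defined for almost every $(t,v)$ and the displayed manipulations are legitimate.
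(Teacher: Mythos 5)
Your proof is correct and follows essentially the same route as the paper: transport the integral to $X^{0}$ via the identity~\eqref{transp}, use Fubini and the invariance of $\rho_{N}$ under $\Phi_{N}$ to reduce to a time-independent integral, then Cauchy--Schwarz against the density and conclude with~\eqref{Glp} plus a uniform $L^{2}_{\mu}$ bound on the density. The only cosmetic difference is that you extract the uniform bound on $\|\Theta_{N}\|_{L^{2}_{\mu}}$ from the convergence statement of Theorem~\ref{thm2}, whereas the paper cites Proposition~\ref{prop.lp} directly; these are interchangeable, since Theorem~\ref{thm2} is itself a consequence of Proposition~\ref{prop.lp}.
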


\begin{proof}
 By definition, invariance of $\rho_N$ and Cauchy-Schwarz
 \begin{eqnarray*} 
\|G(u)\|^{p}_{L^{p}_{\nu_{N}}L^{p}_{T}H^{-\s}_{x}}&=&\int_{  \mathcal{C}\big([-T,T]; X^{0}\big)}\|G(u)\|^{p}_{L^{p}_{T}H^{-\s}_{x}}\text{d}\nu_{N}(u) \\
&=&\int_{X^{0}} \|G\big(\Phi_{N}(t)(v)\big)\|^{p}_{L^{p}_{T}H^{-\s}_{x}} \text{d}\rho_{N}(v) \\
&=&2T\int_{X^{0}} \|G(v)\|^{p}_{H^{-\s}_{x}}\theta_N(v) \text{d}\mu(v) \\
&\leq &2T \|G(v)\|^{p}_{L^{2p}_{\mu}H^{-\s}_{x}}\|\theta_N(v)\|_{L^{2}_{\mu}}. 
\end{eqnarray*}
 We conclude with~\eqref{Glp} and Proposition~\ref{prop.lp}.
\end{proof}

\begin{lemm}\ph Let $\s>0$,  then   for all $p\geq 2$
\begin{equation}\label{Je1}
\big\|\|  u\|_{L^{p}_{T}H^{-\s}_{x}}\big\|_{L^{p}_{\nu_{N}}}\leq  C,
\end{equation}
\begin{equation}\label{Je2}
\big\| \| u\|_{W^{1,p}_{T}H^{-\s-1}_{x}}\big  \|_{L^{p}_{\nu_{N}}}\leq  C.
\end{equation}
\end{lemm}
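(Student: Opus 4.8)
The plan is to establish \eqref{Je1} and \eqref{Je2} in exactly the same way as their analogues for the previous models (compare \eqref{Bor1.NLS}--\eqref{Bor3.NLS} for $\S^3$ and \eqref{Cl1}--\eqref{Cl2} for Benjamin--Ono), the only genuinely new input being the uniform-in-$N$ bounds on the gauged nonlinearity $G_N$ coming from Proposition~\ref{prop.NL} and on the density $\Theta_N$ coming from Theorem~\ref{thm2}. Throughout I use that the relevant density in the general framework of Proposition~\ref{Prop.Fonda} is here $\Psi_N=\Theta_N$, so that $\text{d}\rho_N=\Theta_N\,\text{d}\mu$.

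For \eqref{Je1} I would simply invoke Proposition~\ref{Prop.Fonda}, namely estimate \eqref{Bor1} applied with the Sobolev index $-\s$. Since $\Theta_N$ is bounded uniformly in $N$ in every $L^{r_2}(\text{d}\mu)$ by Theorem~\ref{thm2}, picking any $r>p$ reduces the matter to the Gaussian bound
\begin{equation*}
\big\|\|v\|_{H^{-\s}_{x}}\big\|_{L^{r}_{\mu}}\leq C\sqrt{r},
\end{equation*}
which follows (for $\s>0$) from the Khinchin inequality \eqref{contrac} applied to $\phi$ in \eqref{phi.HW} together with Minkowski, precisely as in Lemma~\ref{lem.52}. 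This step is routine.

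For \eqref{Je2}, by \eqref{Je1} it suffices to control $\big\|\|\partial_t u\|_{L^p_T H^{-\s-1}_x}\big\|_{L^p_{\nu_N}}$. Writing the approximate equation \eqref{HW.N} as $\partial_t u=-i\Lambda u-iG_N(u)$ and arguing as in \eqref{sim}, I split
\begin{equation*}
\|\partial_{t} u\|_{L^{p}_{\nu_{N}}L^{p}_{T}H^{-\s-1}_{x}}\leq \|\Lambda u\|_{L^{p}_{\nu_{N}}L^{p}_{T}H^{-\s-1}_{x}}+\|G_{N}(u)\|_{L^{p}_{\nu_{N}}L^{p}_{T}H^{-\s-1}_{x}}.
\end{equation*}
Since $\Lambda=|D|+1$ is of order one we have $\|\Lambda u\|_{H^{-\s-1}_x}\leq C\|u\|_{H^{-\s}_x}$, so the first term is bounded by \eqref{Je1}. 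For the second, using $H^{-\s}_x\hookrightarrow H^{-\s-1}_x$, the invariance of $\rho_N$ under $\Phi_N$ and Cauchy--Schwarz — exactly the computation carried out to prove \eqref{Je0} — I obtain
\begin{equation*}
\|G_{N}(u)\|^{p}_{L^{p}_{\nu_{N}}L^{p}_{T}H^{-\s-1}_{x}}\leq 2T\,\|G_{N}(v)\|^{p}_{L^{2p}_{\mu}H^{-\s}_{x}}\,\|\Theta_{N}\|_{L^{2}_{\mu}},
\end{equation*}
and both factors on the right are bounded uniformly in $N$: the first because $(G_N)$ is a Cauchy — hence bounded — sequence in $L^{2p}(\text{d}\mu;H^{-\s})$ by Proposition~\ref{prop.NL}, the second by Theorem~\ref{thm2}.

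I expect no real obstacle here: all the substance sits in the two cited results, and once they are available the argument is a direct transcription of Proposition~\ref{Prop.Fonda} and of the $\partial_t$-estimates already performed for the earlier models. The only points requiring care are the book-keeping of the order-one operator $\Lambda$ (which costs exactly one derivative and explains the loss from $H^{-\s}$ to $H^{-\s-1}$) and the fact that the nonlinearity driving the flow $\Phi_N$ is $G_N$ rather than its limit $G$, so that one must appeal to the uniform-in-$N$ bound of Proposition~\ref{prop.NL} and not to \eqref{Je0} itself.
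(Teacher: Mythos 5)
Your proof is correct and follows essentially the same route as the paper: \eqref{Je1} is exactly the paper's combination of \eqref{Bor1} with Lemma~\ref{lem.52}, and \eqref{Je2} is the paper's scheme (write the equation \eqref{HW.N}, use the invariance of $\rho_{N}$ and Cauchy--Schwarz, as in the proof of \eqref{Cl2}). Your observation that one must bound the truncated nonlinearity $G_{N}$ uniformly in $N$ via Proposition~\ref{prop.NL}, rather than invoke \eqref{Je0} itself (which concerns the limit $G$ while the flow $\Phi_{N}$ is driven by $G_{N}$), is in fact a more precise reading than the paper's terse citation of \eqref{Je0}, but the underlying computation is identical.
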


\begin{proof}
The proof of~\eqref{Je1} is a consequence of~\eqref{Bor1} and  Lemma~\ref{lem.52}. The estimate~\eqref{Je2} is obtained from~\eqref{Je0} and~\eqref{Je1}: The proof is similar to~\eqref{Cl2} and we do not write the details.
\end{proof}

As a consequence we can show
\begin{prop}\ph 
Let $T>0$ and $\s>0$. Then the family of measures 
$$ \nu_{N}=\mathscr{L}_{\mathcal{C}_{T}H^{-\s}}\big(u_{N}(t);t\in [-T,T]\big)_{N\geq 1}$$
 is tight in $\mathcal{C}\big([-T,T]; H^{-\s}(\T)\big).$
\end{prop}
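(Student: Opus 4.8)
The plan is to run the same compactness scheme as in the proof of Proposition~\ref{Prop.tight}, the only genuine change being that all Sobolev indices are now negative. Fix $\s>0$. First I would choose intermediate exponents $0<s'<s''<\s$ and introduce the time-H\"older space $\mathcal{C}_T^{\a}H^{-s''}=\mathcal{C}^{\a}\big([-T,T];H^{-s''}(\T)\big)$, normed by
\[
\|u\|_{\mathcal{C}_T^{\a}H^{-s''}}=\sup_{\substack{t_1,t_2\in[-T,T]\\ t_1\neq t_2}}\frac{\|u(t_1)-u(t_2)\|_{H_x^{-s''}}}{|t_1-t_2|^{\a}}+\|u\|_{L^{\infty}_TH_x^{-s''}}.
\]
Since $s''<\s$, the embedding $\mathcal{C}_T^{\a}H^{-s''}\subset\mathcal{C}\big([-T,T];H^{-\s}(\T)\big)$ is compact, so it is enough to produce a uniform-in-$N$ bound $\|u\|_{L^p_{\nu_N}\mathcal{C}_T^{\a}H^{-s''}}\leq C$ for some $\a>0$, and then to conclude by the Markov inequality on balls of $\mathcal{C}_T^{\a}H^{-s''}$ exactly as at the end of the proof of Proposition~\ref{Prop.tight}.

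The uniform bound would come from Lemma~\ref{lem.33} applied with $\s_1=-s'$ and $\s_2=-s'-1$. Indeed, estimate~\eqref{Je1} (used with index $s'$) controls $\|u\|_{L^p_TH^{-s'}_x}$ in $L^p_{\nu_N}$, while~\eqref{Je2} (used with index $s'$) controls $\|u\|_{W^{1,p}_TH^{-s'-1}_x}$, and in particular $\partial_t u\in L^p_TH^{-s'-1}_x$; note that $\s_2=\s_1-1$, so the time derivative costs only a single extra derivative. Lemma~\ref{lem.33} then furnishes, for every $\eps>(\s_1-\s_2)/p=1/p$, a bound in $L^{\infty}_TH^{-s'-\eps}_x$ together with H\"older-$\eta$ continuity with values in $H^{-s'-2\eps}_x$ for some $\eta>0$, which I would take as the exponent $\a$. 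The one step requiring care is the choice of parameters: I would pick $p$ large and $\eps$ small so that simultaneously $\eps>1/p$ and $s'+2\eps\leq s''$, which is possible precisely because $\s>0$ leaves room for $0<s'<s''<\s$. With these choices $H^{-s'-2\eps}_x\hookrightarrow H^{-s''}_x$, the $\mathcal{C}_T^{\a}H^{-s''}$ norm is dominated by $\|u\|_{L^p_TH^{-s'}_x}+\|u\|_{W^{1,p}_TH^{-s'-1}_x}$, and~\eqref{Je1}--\eqref{Je2} give $\|u\|_{L^p_{\nu_N}\mathcal{C}_T^{\a}H^{-s''}}\leq C$ uniformly in $N$ (first for $p$ large, then for all $p\geq1$ by H\"older).

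To finish, for $\delta>0$ I would set $K_{\delta}=\big\{u\in\mathcal{C}_TH^{-\s}\,:\,\|u\|_{\mathcal{C}_T^{\a}H^{-s''}}\leq\delta^{-1}\big\}$, which is compact by the compact embedding above, and estimate $\nu_N(K_{\delta}^c)\leq\delta\,\|u\|_{L^1_{\nu_N}\mathcal{C}_T^{\a}H^{-s''}}\leq C\delta$ uniformly in $N$, which is exactly the tightness of $(\nu_N)$.

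I expect the real obstacle to lie upstream rather than in this proof: the substantive work is the definition and stochastic control of the nonlinearity $G(u)$ (Proposition~\ref{prop.NL} and the resulting bound~\eqref{Glp}) together with the crucial integrability estimate Proposition~\ref{prop.lp}, which feed~\eqref{Je0} and hence~\eqref{Je1}--\eqref{Je2}. Within the tightness argument itself the only delicate point is the exponent bookkeeping above: because the solutions live only in spaces of negative regularity, one must verify that the single-derivative time loss recorded in~\eqref{Je2} is still compatible with gaining compactness in $\mathcal{C}_TH^{-\s}$, and the chain $s'+2\eps\leq s''<\s$ is exactly what makes this work for every $\s>0$.
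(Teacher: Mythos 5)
Your proof is correct and is exactly the argument the paper intends: the paper leaves this proposition's proof implicit (it is the scheme of Proposition~\ref{Prop.tight}, rerun with the estimates~\eqref{Je1} and~\eqref{Je2} in place of~\eqref{Bor1.NLS} and~\eqref{Bor3.NLS}), and your application of Lemma~\ref{lem.33} with $\s_{1}=-s'$, $\s_{2}=-s'-1$ together with the bookkeeping $\eps>1/p$, $s'+2\eps\leq s''<\s$ supplies precisely the missing details. Nothing further is needed.
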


\subsection{Proof of Theorem~\ref{thmHW}} The proof is similar to the Benjamin-Ono case. The only difficulty lies in the limit of the nonlinear term.  Recall the definition~\eqref{gauge}, then
\begin{lemm}\ph
Up to a sub-sequence, the following convergence holds true
\begin{equation*} 
G_{N_{k}}(\wt{u}_{N_{k}}) \longrightarrow G(\wt{u}),\quad \;\;\widetilde{\bf p}-\text{a.s. in}\;\; L^{2}\big([-T,T]; H^{-\s}(\T)\big),
\end{equation*}
where $G$ is defined by Proposition~\ref{prop.NL}.
\end{lemm}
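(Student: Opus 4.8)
The plan is to follow verbatim the scheme of Lemma~\ref{lem.56} from the Benjamin-Ono case, the only change being that the quadratic nonlinearity is replaced by the cubic gauged nonlinearity $G_N$ and that the Cauchy estimate is now supplied by Proposition~\ref{prop.NL}. Writing $u_k=\wt u_{N_k}$ and $u=\wt u$, and recalling that almost sure convergence along a subsequence follows once one has convergence in $X:=L^2\big(\wt\Omega\times[-T,T];H^{-\s}(\T)\big)$, I would first reduce to proving $\|G_{N_k}(u_k)-G(u)\|_X\to 0$. Introducing an intermediate truncation level $M$, I decompose
\begin{equation*}
G_{N_k}(u_k)-G(u)=\big[G_{N_k}(u_k)-G_M(u_k)\big]+\big[G_M(u_k)-G_M(u)\big]+\big[G_M(u)-G(u)\big],
\end{equation*}
and estimate the three brackets separately.

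For the first bracket I would use the invariance of the measure. Since $\mathscr{L}(u_k)=\nu_{N_k}=\rho_{N_k}\circ\Phi_{N_k}^{-1}$, applying the transport formula~\eqref{transp}, Fubini in time and the $\Phi_{N_k}$-invariance of $\rho_{N_k}$ exactly as in~\eqref{u2} gives
\begin{equation*}
\|G_{N_k}(u_k)-G_M(u_k)\|_X^2=2T\int_{X^0(\T)}\|G_{N_k}(w)-G_M(w)\|_{H^{-\s}_x}^2\,\Theta_{N_k}(w)\,\dd\mu(w).
\end{equation*}
By Cauchy-Schwarz this is bounded by $2T\,\|G_{N_k}-G_M\|_{L^4_\mu H^{-\s}_x}^2\,\|\Theta_{N_k}\|_{L^2_\mu}$, and the Cauchy estimate of Proposition~\ref{prop.NL} together with the uniform bound $\|\Theta_{N_k}\|_{L^2_\mu}\le C$ from Proposition~\ref{prop.lp} shows that this tends to $0$ as $M\to+\infty$, uniformly in $k$. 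The third bracket is handled in the same way, now using that $\mathscr{L}(u)=\nu$ has time marginal $\rho$ (as in the Benjamin-Ono case) and that $\Theta\in L^2(\dd\mu)$ by Theorem~\ref{thm2}: one obtains $\|G_M(u)-G(u)\|_X^2\le 2T\,\|G_M-G\|_{L^4_\mu H^{-\s}_x}^2\,\|\Theta\|_{L^2_\mu}$, which tends to $0$ as $M\to+\infty$ since $G_M\to G$ in $L^4(\dd\mu;H^{-\s})$ by Proposition~\ref{prop.NL}.

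It remains to treat the middle bracket for $M$ fixed. Here $G_M$ is a fixed cubic polynomial acting on the finite-dimensional space $\Pi_M(\cdot)$, so the uniform-in-time convergence $u_k\to u$ in $H^{-\s}$ provided by the Skorokhod theorem forces $G_M(u_k)\to G_M(u)$ $\widetilde{\mathbf p}$-almost surely in $L^2\big([-T,T];H^{-\s}(\T)\big)$. To upgrade this to convergence in $X$, I would invoke a uniform-in-$k$ higher moment bound $\mathbb{E}_{\widetilde{\mathbf p}}\big[\|G_M(u_k)\|^{2p}_{L^2_TH^{-\s}_x}\big]\le C$ for some $p>1$, obtained by the same transport argument using $G\in L^q(\dd\mu;H^{-\s})$ from~\eqref{Glp} and $\Theta_{N_k}\in L^q(\dd\mu)$; this yields uniform integrability and hence convergence of the middle term in $X$. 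Putting the three estimates together: choosing $M$ large makes the first and third terms small uniformly in $k$, and for that fixed $M$ the middle term vanishes as $k\to+\infty$, so $\limsup_k\|G_{N_k}(u_k)-G(u)\|_X$ is smaller than a quantity tending to $0$ with $M$, hence is zero; a diagonal extraction then yields the announced almost sure convergence. The main technical point, exactly as in the Benjamin-Ono case, is the uniform control of the high-frequency tails (the first and third brackets), which is precisely where the (approximate) invariance of $\rho_{N_k}$ and the Cauchy estimate of Proposition~\ref{prop.NL} are indispensable.
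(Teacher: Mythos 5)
Your proof is correct and follows essentially the same route as the paper: an intermediate truncation $M$, the transport/invariance identity to reduce the tail brackets to $\mu$-integrals controlled by Proposition~\ref{prop.NL}, continuity of the finite-dimensional map $G_M$ for the middle bracket, and a final subsequence extraction. The only (harmless) differences are that the paper uses a four-term decomposition, inserting $G(u_k)$ between $G_{N_k}(u_k)$ and $G_M(u_k)$, whereas you merge those two pieces, and that you add an explicit uniform-integrability step to place the middle term in $L^2(\widetilde\Omega\times[-T,T];H^{-\s})$, which the paper leaves implicit.
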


\begin{proof} We only give the main lines, and we refer to the proof of  Lemma~\ref{lem.56} for the details.  We drop the tildes and write $N_{k}=k$. Let $M\geq1$ and write 
  \begin{multline*}
 G_{k}(u_{k})-G(u)= \\ =\big(   G_{k}(u_{k})-G(u_{k})\big)+ \big(   G(u_{k})-G_{M}(u_{k})\big)+\big(   G_{M}(u_{k})-G_{M}(u)\big)+\big(   G_{M}(u)-G(u)\big).
 \end{multline*}
  For fixed $M\geq 1$, using that $G_{M}$ is continuous  
\begin{equation*}
      G_{M}(u_{k})\underset{k\to +\infty}{\longrightarrow} G_{M}(u) ,\quad \;\;\widetilde{\bf p}-\text{a.s. in}\;\; L^{2}\big([-T,T]; H^{-\s}(\T)\big).
\end{equation*}
For the other terms, we use the definition of the measure $\nu_{k}$ and Proposition~\ref{prop.NL} to prove    the convergence (when $k\to +\infty$ or $M\to +\infty$) in the space $X:=L^{2}\big(\Omega \times[-T,T]; H^{-\s}(\T)\big)$. Then  the almost sure convergence  follows after exaction of a sub-sequence. 
\end{proof}
\section{The two dimensional nonlinear Schr\"odinger equation on an arbitrary domain }\label{Sect.8}
\subsection{Estimates on the spectral function in mean value}\label{sec.8.1}
The following propositions which will be proved in Section~\ref{sec.85} are the key elements in our argument. 
The first one is a rough bound which states that (in a mean value meaning with respect to the index $n$), the eigenfunctions are uniformly bounded on $M$. We state it for windows of size $1$ for the spectral projector.
\begin{prop} \ph\label{WW1}
There exists $C>0$ such that for any orthonormal basis $(\varphi_n)_{n\geq 0}$ of eigenfunctions of $-\Delta_g$,  and any $\mu>0$, any $x\in M$, we have 
\begin{equation*} 
\sum_{\lambda _m \in [\mu, \mu+ 1)} \frac 1 { \lambda_m^2 + 1}  | \phi_m|^2( x) 
\leq C  \sum_{\lambda _m \in [\mu, \mu+ 1)} \frac 1 { \lambda_m^2 + 1} .
\end{equation*}
\end{prop}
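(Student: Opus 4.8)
The plan is to reduce the statement to a two-sided local Weyl law and to extract each side from a wave-kernel parametrix. First I would exploit that on the window $[\mu,\mu+1)$ the weight is essentially constant: for $\lambda_m\in[\mu,\mu+1)$ one has $c\,(1+\mu)^{-2}\le(\lambda_m^2+1)^{-1}\le C\,(1+\mu)^{-2}$ with $c,C$ absolute. Factoring this comparable weight out of both sums, the assertion becomes equivalent to the local Weyl-type inequality
\[
\sum_{\lambda_m\in[\mu,\mu+1)}|\varphi_m(x)|^2\;\le\; C\,\#\{m:\lambda_m\in[\mu,\mu+1)\},
\]
uniformly in $x\in M$, in $\mu>0$, and in the chosen eigenbasis. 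The empty-window case is trivial, and for $\mu$ in a bounded range the estimate reduces to a direct finite-dimensional compactness argument, so the content lies in the regime $\mu\to\infty$.

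For the numerator I would use the sharp pointwise Weyl law. Let $\Psi\in\mathcal S(\R)$ be a nonnegative majorant of $\mathbf 1_{[0,1)}$ whose Fourier transform $\widehat\Psi$ is supported in a small interval $[-\eps,\eps]$. Expanding $\sum_m\Psi(\lambda_m-\mu)|\varphi_m(x)|^2$ by Fourier inversion in terms of the half-wave propagator $e^{it\sqrt{1-\Delta_g}}$, finite propagation speed lets one replace, for $|t|\le\eps$, the diagonal kernel by the corresponding free (Euclidean) kernel up to a smoothing error; the resulting stationary-phase computation gives
\[
\sum_m\Psi(\lambda_m-\mu)|\varphi_m(x)|^2=\frac{\mu}{2\pi}\int_{\R}\Psi+O(1),
\]
uniformly in $x$ and independent of the basis, since the diagonal value of the kernel of $\Psi(\sqrt{1-\Delta_g}-\mu)$ is a canonical, basis-free object. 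Dominating $\mathbf 1_{[\mu,\mu+1)}\le\Psi(\cdot-\mu)$ then yields $\sum_{\lambda_m\in[\mu,\mu+1)}|\varphi_m(x)|^2\le C(1+\mu)$, which is the classical Hörmander bound.

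The crux, and the \textbf{hard part}, is the matching lower bound for the right-hand side, that is $\#\{m:\lambda_m\in[\mu,\mu+1)\}\gtrsim(1+\mu)$: a single sharp window carries no such lower bound in general, so one must pass through the smoothed spectral measure. Here the decisive gain is that in dimension $d=2$ the remainder in the \emph{smoothed} Weyl law is $O(\mu^{d-2})=O(1)$ — one power better than the $O(\mu^{d-1})=O(\mu)$ remainder of the sharp counting function — precisely because $\widehat\Psi$ has compact support. With a fixed nonnegative $\psi$ (or a Selberg-type minorant of $\mathbf 1_{[0,1)}$) of positive integral, this improved remainder produces the genuinely two-sided bound $\sum_m\psi(\lambda_m-\mu)\asymp\mu$ for the trace, from which the window count is controlled by comparison with neighbouring windows. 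Thus the main obstacle is not the majorant side (classical) but obtaining a two-sided estimate that is uniform in $x$ and in the eigenbasis, together with the delicate sharp-to-smooth transfer; finite propagation speed (to localise and render the constants geometry-uniform) and the one-power improvement of the remainder under smoothing are the two ingredients that make this feasible.
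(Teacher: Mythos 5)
Your preliminary reductions are fine: the weight $(\lambda_m^2+1)^{-1}$ is comparable to $(1+\mu)^{-2}$ across the window, and the pointwise upper bound $\sum_{\lambda_m\in[\mu,\mu+1)}|\varphi_m(x)|^2\le C(1+\mu)$ is exactly Proposition~\ref{prop.2} (note, though, that on a bounded domain with Dirichlet conditions this uniform bound up to the boundary is Sogge's theorem; finite propagation speed plus the free parametrix only gives it on closed manifolds or away from $\partial M$). The genuine gap is the step you yourself call the crux: the lower bound $\#\{m:\lambda_m\in[\mu,\mu+1)\}\gtrsim 1+\mu$ is \emph{false} on a general surface, so no refinement of the smoothed Weyl law can prove it. Concretely, take $M$ to be the round sphere rescaled to volume $1$ (allowed in this section): the distinct eigenvalues are $\lambda_k=2\sqrt{\pi}\,\sqrt{k(k+1)}$, so consecutive distinct eigenvalues differ by about $2\sqrt{\pi}\approx 3.5$, and infinitely many unit windows are empty. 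For those windows Proposition~\ref{WW1} is trivially true ($0\le 0$), but your scheme ``left side $\le C\mu$, right side $\ge c\mu$'' collapses. The Selberg-minorant mechanism cannot rescue the count bound, for a quantifiable reason: a Beurling--Selberg minorant $m\le \mathbf{1}_{[0,1)}$ whose Fourier transform is supported in $[-2\pi\delta,2\pi\delta]$ has $\int m=1-1/\delta$, so positive mass forces Fourier support of length greater than $4\pi$; but the improved $O(\mu^{d-2})$ remainder is available only when that support avoids the singular support of the wave trace, i.e.\ all lengths of closed geodesics (and, for domains, of periodic billiard trajectories). The two requirements are compatible only if the shortest closed geodesic is longer than $2\pi$ --- a hypothesis that fails for the sphere above (geodesic length $\sqrt{\pi}$) and is nowhere assumed in the Proposition.

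This is precisely the difficulty the paper circumvents, not by a sharper unit-window estimate but by widening the window: Proposition~\ref{ww} works with $[\mu,\mu+\delta\mu^{1/2})$, where the Weyl main term $\frac{\delta}{2\pi}\mu^{3/2}$ dominates the $O(\mu^{5/4})$ remainder of the H\"ormander--Seeley asymptotics~\eqref{spect} (valid at distance $\ge \mu^{-1/2}$ from the boundary), while the boundary layer is absorbed by Sogge's bound~\eqref{sogge}. This yields ``pointwise sum $=$ trace $+\,G_\mu$'' with $|G_\mu|\lesssim \mu^{-1/2}$, and the trace itself is bounded below by $c\,\mu^{-1/2}$ thanks to~\eqref{wf}; Proposition~\ref{WW1} is then read off in one line --- this is the paper's proof, and what it actually delivers (and what is used in the sequel) is the comparison for the windows $[\mu,\mu+\mu^{1/2})$ of Proposition~\ref{ww} with $\delta=1$. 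The conceptual point you are missing is that windows of width $\mu^{1/2}$ correspond to wave-kernel times of order $\mu^{-1/2}$, where only the $t=0$ singularity is present whatever the geometry, whereas resolving unit windows requires times of order $2\pi$, where closed geodesics and billiard dynamics intervene. To repair your argument, either prove the statement for the widened windows (i.e.\ Proposition~\ref{ww}, which is how the paper proceeds), or add the geometric hypothesis that $M$ has no closed geodesic or periodic billiard trajectory of length $\le 2\pi$.
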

The second one is more precise and states that (again in a mean value meaning), the eigenfunctions are actually constant on $M$ (away from the boundary and modulo errors). Remark that our assumption that $\text{Vol}(M) =1$ and the $L^2$ normalization of eigenfunctions imply that this constant has to be $1$. 
\begin{prop}\ph\label{ww}
There exists $C>0$ such that for any orthonormal basis $(\varphi_n)_{n\geq 0}$ of eigenfunctions of $-\Delta_g$,  and any $\mu>0, \delta \in [0,1]$, we have 
\begin{equation*}
\sum_{\lambda _m \in [\mu, \mu+ \delta \mu^{1/2})} \frac 1 { \lambda_m^2 + 1}  | \phi_m|^2( x) \\
= \sum_{\lambda _m \in [\mu, \mu+ \delta \mu^{1/2})} \frac 1 { \lambda_m^2 + 1} + G_{\mu}(x),
\end{equation*}
with 
\begin{equation} \label{nul}
\int_MG_{\mu}(x)dx =0,
\end{equation}
and
\begin{equation}\label{defG_bis}
|G_{\mu}(x)|\leq C\mu^{-3/4}+ C\mu^{-1/2}{\bf 1}_{\{d(x)<\mu^{-1/2}\}},
\end{equation}
where for $x\in M$, $d(x)$ is the distance of $x$ to the boundary of $M$.
\end{prop}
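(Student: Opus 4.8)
The plan is to obtain \eqref{nul} for free, to reduce the pointwise bound \eqref{defG_bis} to a fluctuation estimate for the spectral function over the window $I=[\mu,\mu+\delta\mu^{1/2})$, and then to split the argument into an everywhere-valid crude bound (which furnishes the boundary term) and an interior refinement coming from finite speed of propagation. The identity \eqref{nul} is automatic: since $\|\varphi_m\|_{L^2}^2=1$ and $\mathrm{vol}(M)=1$, integrating the displayed left-hand side over $M$ returns precisely the first term on the right, so $G_\mu$ equals the left-hand side minus its own spatial average. For the pointwise bound, write $e(x,\lambda)=\sum_{\lambda_m\leq\lambda}|\varphi_m(x)|^2$ for the spectral function and $N(\lambda)=\#\{\lambda_m\leq\lambda\}=\int_M e(x,\lambda)\,dx$ for the counting function, so that $G_\mu(x)=\int_I(\lambda^2+1)^{-1}d_\lambda\big(e(x,\lambda)-N(\lambda)\big)$. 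Since $(\lambda^2+1)^{-1}$ is comparable to $\mu^{-2}$ on $I$ with total variation $O(\delta\mu^{-5/2})$, a summation by parts gives $G_\mu(x)=\mu^{-2}V_I(x)+O(\mu^{-1})$, where $V_I(x):=\sum_{\lambda_m\in I}\big(|\varphi_m(x)|^2-1\big)$; it therefore suffices to prove $|V_I(x)|\leq C\mu^{5/4}+C\mu^{3/2}{\bf 1}_{\{d(x)<\mu^{-1/2}\}}$, the error $O(\mu^{-1})$ being absorbed by the claimed bounds.

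Next I would establish the crude bound $|V_I(x)|\leq C\mu^{3/2}$, valid for \emph{all} $x$. Dividing out the factor $\lambda_m^2+1\approx\mu^2$ in Proposition~\ref{WW1} gives the pointwise local estimate $\sum_{\lambda_m\in[\nu,\nu+1)}|\varphi_m(x)|^2\leq C\nu$ for $\nu\approx\mu$; summing over the $O(\mu^{1/2})$ unit windows covering $I$ yields $\sum_{\lambda_m\in I}|\varphi_m(x)|^2\leq C\mu^{3/2}$, while the classical Weyl law gives $\#\{\lambda_m\in I\}\leq C\mu^{3/2}$. Hence $|V_I(x)|\leq C\mu^{3/2}$ uniformly, which already proves \eqref{defG_bis} on the boundary layer $\{d(x)<\mu^{-1/2}\}$, where the second term is present.

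For $d(x)\geq\mu^{-1/2}$ I would improve this to $|V_I(x)|\leq C\mu^{5/4}$ using the half-wave propagator $e^{it\sqrt{-\Delta_g}}$, whose diagonal kernel is $\sum_m e^{it\lambda_m}|\varphi_m(x)|^2$. Fix an even cutoff $\chi$ with $\hat\chi\in C_0^\infty(-1,1)$, $\chi\geq0$, $\chi(0)=1$, and work at a time scale $T\leq\tfrac12 d(x)$. By finite speed of propagation, for $|t|<d(x)$ the diagonal kernel is given by the interior Hadamard parametrix — the wave never reaches $\partial M$, so the Dirichlet condition is invisible — and its leading singularity is $x$-independent. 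Consequently the $x$-dependent part of the smoothed counts $\sum_m\chi_T(\lambda-\lambda_m)|\varphi_m(x)|^2$ (smoothing at scale $1/T$) is governed by the subleading amplitudes and is controlled uniformly. I would then pass from these smoothed counts to the sharp window $I$ by a Tauberian sandwiching, estimating the eigenvalue mass in the transition regions again by Proposition~\ref{WW1}, and optimize $T$ in the range $[\mu^{-1/2},d(x)]$ together with the window width so that the parametrix (smoothing) error and the desmoothing error combine to $C\mu^{5/4}$. Adding the two regimes then gives \eqref{defG_bis}.

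The hard part will be the uniform control up to the boundary layer. The parametrix/finite-speed argument is available only for times $T<d(x)$, so as $x$ nears $\partial M$ the usable time collapses to the critical scale $\mu^{-1/2}$ and the smoothing is no longer fine enough to resolve a window of width $\delta\mu^{1/2}$; the delicate point is to balance the smoothing error against the Tauberian desmoothing error — the latter controlled solely through Proposition~\ref{WW1} — so that the exponents interpolate to the claimed $\mu^{-3/4}$ throughout $\{d(x)\geq\mu^{-1/2}\}$ and meet the crude $\mu^{-1/2}$ bound exactly at the boundary layer. A secondary technical point is to keep every constant independent of the orthonormal basis and of $\delta\in[0,1]$ (including degenerate, nearly empty windows), which is ensured by phrasing all estimates through the basis-independent kernel of the spectral projector.
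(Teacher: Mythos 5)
Your reduction is sound and in fact coincides with the paper's: \eqref{nul} is automatic from $\|\varphi_m\|_{L^2}=1$ and ${\rm vol}(M)=1$, the weight $(\lambda_m^2+1)^{-1}$ can be frozen at $(\mu^2+1)^{-1}$ at the cost of an admissible $O(\mu^{-1})$ error, and everything comes down to the fluctuation bound $|V_I(x)|\leq C\mu^{5/4}+C\mu^{3/2}{\bf 1}_{\{d(x)<\mu^{-1/2}\}}$ for $V_I(x)=\sum_{\lambda_m\in I}(|\varphi_m(x)|^2-1)$, the crude bound $|V_I|\leq C\mu^{3/2}$ settling the boundary layer. Two caveats on that crude bound, though: quoting Proposition~\ref{WW1} is circular, because in the paper that proposition is \emph{deduced} from Proposition~\ref{ww} --- the very statement you are proving; moreover Proposition~\ref{WW1} only compares the pointwise mass with the (weighted) count, so you still need a unit-window counting bound from elsewhere. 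Both issues disappear if you invoke instead Sogge's pointwise spectral bound $e(x,\lambda+1,\lambda)\leq C\lambda$ (the paper's Proposition~\ref{prop.2}, from \cite{So}), which is proved independently and is exactly what the paper uses at this step.

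The genuine gap is in your interior refinement, precisely at the point you yourself flag as ``the hard part'', and it cannot be repaired by the balancing you hope for. Finite speed of propagation lets you use the interior parametrix only for times $|t|\lesssim d(x)$, hence a smoothing scale $1/T\gtrsim 1/d(x)$; the Tauberian desmoothing error is then the eigenvalue mass carried by the transition regions of width $1/T$, which is of order $\mu/T\gtrsim \mu/d(x)$, and upper bounds of the type of Proposition~\ref{WW1} (or Sogge) provide no cancellation there, so no choice of $T\in[\mu^{-1/2},d(x)]$ beats $\mu/d(x)$. This gives $|V_I(x)|\lesssim\mu^{5/4}$ only when $d(x)\gtrsim\mu^{-1/4}$; in the intermediate range $\mu^{-1/2}\leq d(x)\ll\mu^{-1/4}$ you only get $\mu/d(x)$, which degrades to $\mu^{3/2}$ at $d(x)=\mu^{-1/2}$, whereas \eqref{defG_bis} demands $\mu^{5/4}$ on all of $\{d(x)\geq\mu^{-1/2}\}$. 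What is missing is the contribution of the \emph{reflected} wave, not its absence: the paper's Proposition~\ref{prop.1} rests on the two-term asymptotics of the Dirichlet spectral function, uniform up to the boundary (H\"ormander \cite[Theorem 17.5.10]{Ho3}, see also Seeley \cite{Seeley}), namely $e(x,\lambda,0)=e_0(0,\lambda^2)-e_0(2d(x),\lambda^2)+O(\lambda)$ with $e_0$ the free spectral function. The image term $e_0(2d(x),\lambda^2)$ is not small near the boundary, but its increment over the window $[\mu,\mu+\delta\mu^{1/2})$ is $O(\delta\mu^{5/4})$ as soon as $d(x)\geq\mu^{-1/2}$, thanks to the stationary-phase decay $|J(t)|+|J'(t)|\leq C\langle t\rangle^{-3/2}$ of the free kernel. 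Truncating the wave before it reaches $\partial M$ discards exactly this oscillatory information, which is what saves the estimate in the region your argument cannot reach; so the proof genuinely requires this boundary-uniform parametrix (or Seeley's estimate) as an input, not just interior finite-speed considerations.
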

\begin{rema} The introduction of windows $[\mu, \mu+ \delta\mu^{1/2}[$ is required by the analysis near the boundary but are unnecessary for manifolds without boundaries, in which case elementary version of Proposition~\ref{WW1} are sufficient.
\end{rema}
\subsection{Definition of the Gibbs measure}\label{sec.8.2}
The aim of this paragraph is to prove Theorem ~\ref{gib}.  To begin with, we decompose, on the support of the measure $\mu$, the quartic term $\|\Pi_N u\|_{L^4(M)}^4$ in the Hamiltonian in order to get a suitable renormalisation in the following section.
\subsubsection{Decomposition of  $\|\Pi_N u\|_{L^4(M)}^4$ on the support of $\mu$}
Recall that $\alpha_N$ is defined in\;\eqref{defa} and that       $\Pi_{N}u=\sum_{n\leq N} c_n\varphi_n$   for ${u=\sum_{n\geq 0}c_n\varphi_n}$. Therefore we can write  
$$
\|\Pi_N u\|_{L^4(M)}^4=
\sum_{n_1,n_2,n_3,n_4\leq N}
c_{n_1}\overline{c_{n_2}}c_{n_3}\overline{c_{n_4}}\,\gamma(n_1,n_2,n_3,n_4),
$$
where 
$$
\gamma(n_1,n_2,n_3,n_4)=\int_{M}\varphi_{n_1}\overline{\varphi_{n_2}}\varphi_{n_3}\overline{\varphi_{n_4}}\,.
$$
Next, we set
$$
\Lambda=\big\{(n_1,n_2,n_3,n_4)\in \N^4\,:\, \{n_1,n_3\}=\{n_2,n_4\}\big\}\,.
$$
We denote by $\Lambda^c$ the complementary of $\Lambda$ in $\N^4$.
Therefore, we can split
$$
\|\Pi_N u\|_{L^4(M)}^4=
X_{1,N}(u)+X_{2,N}(u)+X_{3,N}(u),
$$
where
$$
X_{1,N}(u)=
\sum_{\substack{
(n_1,n_2,n_3,n_4)\in \Lambda^c
\\
n_1,n_2,n_3,n_4\leq N
}
}
c_{n_1}\overline{c_{n_2}}c_{n_3}\overline{c_{n_4}}\,\gamma(n_1,n_2,n_3,n_4),
$$
$$
X_{2,N}(u)=2\sum_{n_1,n_2\leq N}
|c_{n_1}|^2|c_{n_2}|^2\gamma(n_1,n_1,n_2,n_2),
$$
 and finally
 $$
 X_{3,N}(u)=
 -\sum_{n\leq N}|c_n|^4\gamma(n,n,n,n)\,.
$$
As we shall see the singular part of the $L^4$ norm on the support of $\mu$ is given by the contribution of~$X_{2,N}$. 
Indeed, let us study the behavior of $X_{2,N}$ on the support of $\mu$. Write
$$
X_{2,N}
\Big(
\sum_{n\geq 0}\frac{g_{n}(\omega)}{(\lambda^2_n+1)^{\frac{1}{2}}}\, \varphi_n(x)
\Big) 
=
2\sum_{n_1,n_2\leq N}
\frac{|g_{n_1}(\omega)|^2\, |g_{n_2}(\omega)|^2}
{(\lambda_{n_1}^2+1)(\lambda_{n_2}^2+1)}
\int_{M}|\varphi_{n_1}|^2|\varphi_{n_2}|^2\,.
$$
We can split the last expression as $I+II+III$, where
$$
I=2\sum_{n_1,n_2\leq N}
\frac{(|g_{n_1}(\omega)|^2-1)\, (|g_{n_2}(\omega)|^2-1)}
{(\lambda_{n_1}^2+1)(\lambda_{n_2}^2+1)}
\int_{M}|\varphi_{n_1}|^2|\varphi_{n_2}|^2\,,
$$
$$
II=4\sum_{n_1,n_2\leq N}
\frac{|g_{n_1}(\omega)|^2}
{(\lambda_{n_1}^2+1)(\lambda_{n_2}^2+1)}
\int_{M}|\varphi_{n_1}|^2|\varphi_{n_2}|^2\,,
$$
and
$$
III=
-2\sum_{n_1,n_2\leq N}\frac{1}{(\lambda_{n_1}^2+1)(\lambda_{n_2}^2+1)}
\int_{M}|\varphi_{n_1}|^2|\varphi_{n_2}|^2\,.
$$

$\bullet$ {\bf Study of the term $I$:} The term $I$ is a regular term and gives a contribution to $X_N(u)$. 
\medskip 

 $\bullet$ {\bf Study of the term $II$:} This one   will require the most delicate analysis. We have
$$
II=
4\sum_{n\leq N}
\frac{|g_{n}(\omega)|^2}
{\lambda_{n}^2+1}\sum_{m\leq N} \frac{1} { \lambda_m^2+1}\int_M |\phi_{n}|^2  | \phi_m|^2 dx.
$$

Let 
$$ \alpha_N = \sum_{0\leq m\leq N} \frac{1} { \lambda_m^2+1}= \mathbb{E}_{\mu} \big[ \|u_N\|_{L^2(M)}^2\big]$$
(notice that according to Weyl formula, $|\alpha_{N}|\lesssim (\ln(N))$).

We can write the segment $[\lambda_0, \lambda_N]$ as a disjoint union of intervals  $E_k= [\mu_k, \mu_k + d_k \mu_k ^{1/2}), {1 \leq k \leq M_N}$, with $d_k =1$ for  $k<M_N$ and $d_{M_N} \in [0,1]$. In other words we define $\mu_k$ by $\mu_0=\lambda_0$, 
$\mu_{k+1}=\mu_{k}+\mu_{k}^{1/2}$ for $k<M_N$.

 We can check that 
 \begin{equation}\label{sim_bis}
 \mu_k\sim c k^2,
 \end{equation}
 and this will be used in the sequel to study convergence of series.

By Proposition~\ref{ww}, if we denote by $\dis\sum_{k=1}^{M_N}G_{\mu_k}(x)=K_N(x)$ we have
\begin{equation}\label{decomp2}
\sum_{m\leq N} \frac 1 { \lambda_m^2 + 1}  | \phi_m|^2( x)=\a_N+K_N(x),
\end{equation}
and with this decomposition we can split
$$
II=II_{1}+II_{2},
$$
where 
$$
II_{1}=4\alpha_{N}\sum_{n_1\leq N}
\frac{|g_{n_1}(\omega)|^2}
{\lambda_{n_1}^2+1}
$$
and 
$$
II_2=
4\sum_{n\leq N}
\frac{|g_{n}(\omega)|^2}
{\lambda_{n}^2+1}\sum_{k\leq M_N} \int_M |\phi_{n}|^2 G_{\mu_k}(x) dx.
$$
We deduce 
$$
II_{2}=II_{21}+II_{22},
$$
where
\begin{equation*}
II_{21}=
4\sum_{n\leq N}
\frac{(|g_{n}(\omega)|^2-1)}
{\lambda_{n}^2+1}\sum_{k\leq M_N} \int_M |\phi_{n}|^2 G_{\mu_k}(x) dx
\end{equation*}
and 
\begin{equation*}
II_{22}=
4\sum_{n\leq N}
\frac{1}
{\lambda_{n}^2+1}\sum_{k\leq M_N} \int_M |\phi_{n}|^2 G_{\mu_k}(x) dx.
\end{equation*}
By Sobolev $ \| \phi_n \|_{L^\infty}\leq C\lambda_n$, then by interpolation
$$\int_{\{x\in M; d(x) <\mu_k^{-\frac 12 } \}} |\phi_{n}|^2(x)dx \leq C \inf(1, \mu_k^{-\frac 1 2} \| \phi_n \|_{L^\infty}^2)\leq  C\mu_k^{-\frac 1 {16}}  \lambda_n ^{\frac 1 4}.$$
Thanks to the previous inequality, we get
\begin{eqnarray*}
\sum_{k\leq M_N} \int_M |\phi_{n}|^2 G_{\mu_k}(x) dx&\leq& C\sum_{k\leq M_N} \Big( \mu_k^{-3/4}+ \mu_k^{-1/2} \int_{\{x\in M; d(x) <\mu_k^{-\frac 12 } \}} |\phi_{n}|^2(x)dx \Big)\\
&\leq& C\lambda^{1/4}_n,
\end{eqnarray*}
where we have used~\eqref{sim_bis}.

We are now able to show that the term $II_{21}$ is regular because 
\begin{equation*}
\E \big[  \vert II_{21}\vert^2     \big] =C \sum_{n\leq N} \frac 1 
{(\lambda_{n}^2+1)^2}\Bigl|  \sum_{k\leq M_N} \int_M |\phi_{n}|^2 G_{\mu_k}(x) dx \Bigr| ^2
\leq C\sum_{n\geq 0} \frac {1} { (\lambda_{n}^2 + 1) ^{7/4} } < + \infty ,
\end{equation*}
where in the last line  we used that by Weyl formula, we have $\lambda_n \sim \sqrt{n} $.

On the other hand, $II_{22}$ is a constant (and hence can be renormalized). However, we want to keep track of the necessary renormalization involved (and to compare them with the usual ones). Hence, we apply again Proposition~\ref{WW1} to the index $n$ now  (with the same decomposition on $[\lambda_1, \lambda_N]$). With~\eqref{decomp2} and\;\eqref{nul} we get 
\begin{eqnarray*} 
II_{22}&=& 4 \alpha_N\sum_{k\leq M_N}\int_M G_{\mu_k}(x) dx+ 4 \sum_{k,\ell \leq M_N}\int_M G_{\mu_k}(x) G_{\mu_{\ell}}(x)  dx \nonumber \\
&=&4\int_M  K^2_N(x)  dx,
\end{eqnarray*}
and with~\eqref{defG_bis} we prove that this term  is  uniformly bounded with respect to\;$N$. Actually
\begin{equation}\label{equiibis}
\begin{aligned}
II_{22}&\leq C\sum_{k,\ell \leq M_N}\int_{d(x) <\min(\mu_k^{-1/2 },\mu^{-1/2}_{\ell} )} G_{\mu_k}(x) G_{\mu_{\ell}}(x)  dx +\\
& +C\sum_{k\leq \ell \leq M_N}\int_{\mu_{\ell}^{-1/2 } \leq d(x) \leq \mu^{-1/2}_{k} } G_{\mu_k}(x) G_{\mu_{\ell}}(x) dx +\\
& +C\sum_{k,\ell \leq M_N}\int_{d(x) >\max(\mu_k^{-1/2 },\mu^{-1/2}_{\ell} )} G_{\mu_k}(x) G_{\mu_{\ell}}(x)  dx +\\
&\leq  C\sum_{k,\ell \leq M_N} \frac{\min(\mu_k^{-1/2 },\mu^{-1/2}_{\ell} )}{\mu^{1/2}_k \mu^{1/2}_{\ell}} + C\sum_{k\leq \ell \leq M_N}  \mu^{-3/4}_{k} \mu^{-3/4}_{\ell}+C (\sum_{k \leq M_N} \mu^{-3/4}_{k})^2 \\
&\leq  C.
\end{aligned}
\end{equation}
 \medskip 

 $\bullet$ {\bf Study of the term $III$:} 
We have
\begin{eqnarray*}
III&=&-2\int_M \Big( \sum_{n_1\leq N}\frac{\vert \phi_{n_1}\vert^2}{\lambda^2_{n_1}+1}\Big) \Big( \sum_{n_2\leq N}\frac{\vert \phi_{n_2}\vert^2}{\lambda^2_{n_2}+1}\Big)dx\\
&=&-2  \Big( \a^2_N  +\int_M \big(\sum_{k\leq M_N}  G_{\mu_k}(x) \big)^2dx\Big).
\end{eqnarray*}
  The last term in the previous line is $I_{22}$ up to a factor, hence we can write
  \begin{equation*}
  III=-2\a_N ^2+X_{4,N},
  \end{equation*}
where   $X_{4,N}=II_{22}$ up to a factor.\medskip\medskip
 \begin{rema}
We could also define  $\Pi_N$ as the smooth projector    $\Pi_N=\chi(\sqrt{-\Delta}/N)$, where  ${\chi\in C_0^\infty(\R)}$  is equal to $1$ on $[-1,1]$. 
Then 
$$
\Pi_{N}(\sum_{n}c_n\varphi_n)=\sum_{n}\chi(\lambda_n/N)c_n\varphi_n\,.
$$
With such a definition of $\Pi_N$ the singular term $II$ becomes
$$
II=
\sum_{n}\frac{\chi(\lambda_n/N)}{1+\lambda_n^2}|g_n(\omega)|^2
\sum_{m}\frac{\chi(\lambda_m/N)}{1+\lambda_m^2}\int_{M}|\varphi_n(x)\varphi_{m}(x)|^2dx\,.
$$
In this context, the main contribution of this term is 
$$
\sum_{m}\frac{\chi(\lambda_m/N)}{1+\lambda_m^2}|\varphi_{m}(x)|^2=K_{N}(x,x),
$$
where  $K_N(x,y)$ is the kernel of the operator   $(1-\Delta)^{-1}\chi(\sqrt{-\Delta}/N).$ Therefore $K_N$ is a regularised version of the Green function  $G$ of $1-\Delta$, namely $K_N(\cdot,y)=\Pi_N G(\cdot,y)$. In the case of a manifold without boundary, we can use the Helffer-Sj\"ostrand formula  (see for instance~\cite{BGT2}) and a partition of unity to prove that 
$$
K_{N}(x,x)=\alpha_{N}+\mathcal{O}(1),
$$
where
$$
\alpha_{N}\equiv \sum_{m}\frac{\chi(\lambda_m/N)}{1+\lambda_m^2}\approx \ln N\,.
$$
Therefore measuring  the singularity on the diagonal of the truncated Green function of $1-\Delta$ is the key point of the analysis of the singular term. The above interpretation of $K_N(x,y)$  is  in the spirit of  the analysis in Simon~\cite{Simon}.  
 \end{rema}
\subsubsection{An $L^2$ estimate}
A crucial step in the proof is the following
\begin{lemm}\ph \label{PropL2}
\begin{equation}\label{L2_bis}
\|f_{N}(u)-f_{M}(u)\|_{L^2(d\mu(u))}\lesssim M^{-\sigma'},
\end{equation}
for some positive constant $\sigma'$. 
\end{lemm}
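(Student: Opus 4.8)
The plan is to leverage the decomposition of $\|\Pi_N u\|_{L^4(M)}^4$ established just above, together with the observation that the counterterms in the definition~\eqref{def.fn} of $f_N$ are designed precisely to kill the two divergent contributions. Recalling $X_{2,N}=I+II+III$ with $II=II_1+II_{21}+II_{22}$ and $III=-2\alpha_N^2+X_{4,N}$, and using that $\tfrac12 II_1=2\alpha_N\|\Pi_N u\|_{L^2(M)}^2$ while $\tfrac12(-2\alpha_N^2)+\alpha_N^2=0$, one checks that the explicit counterterms $-2\alpha_N\|\Pi_N u\|_{L^2}^2$ and $+\alpha_N^2$ cancel exactly, leaving
\begin{equation*}
f_N(u)=\tfrac12\big(X_{1,N}+I+II_{21}+II_{22}+X_{4,N}+X_{3,N}\big).
\end{equation*}
By the triangle inequality it then suffices to prove that each of these six pieces is Cauchy in $L^2(d\mu)$ with a polynomial rate $M^{-\sigma'}$, and indeed each has already been shown to be finite in the boundedness argument for Theorem~\ref{gib}; the task here is to upgrade boundedness to a quantitative decay of the tail.

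First I would handle the purely random, centred pieces $X_{1,N}$, $I$ and $II_{21}$, each of which lives in a fixed (second- or fourth-order) homogeneous Wiener chaos. Thus $\|X_{\bullet,N}-X_{\bullet,M}\|_{L^2(d\mu)}^2$ is computed explicitly by Gaussian pairing and reduces to a deterministic sum over index configurations in which at least one frequency exceeds $M$. For $II_{21}$ the bound $\sum_{k\leq M_N}\int_M|\varphi_n|^2 G_{\mu_k}\lesssim\lambda_n^{1/4}$ already derived above makes the summand $\sim\lambda_n^{-7/2}\sim n^{-7/4}$, so the tail over $n>M$ is $\lesssim M^{-\sigma'}$. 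For $X_{1,N}$ and the off-diagonal part of $I$ one must instead control $\sum|\gamma(n_1,n_2,n_3,n_4)|^2/\prod(\lambda_{n_i}^2+1)$; after a Parseval summation in one index this reduces to weighted triple sums of $\int_M|\varphi_{n_1}|^2|\varphi_{n_2}|^2|\varphi_{n_3}|^2$, which I would bound through the mean-value spectral estimates (Propositions~\ref{WW1} and~\ref{ww}), summed over the frequency windows $E_k$ so as to extract a genuine power of $M$.

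Next, the diagonal term $X_{3,N}=-\sum_{n\leq N}\frac{|g_n|^4}{(\lambda_n^2+1)^2}\|\varphi_n\|_{L^4(M)}^4$ and the diagonal part of $I$ are controlled by splitting the $L^2$ norm of the tail into its mean and variance; both are governed by $\sum_{n>M}\|\varphi_n\|_{L^4}^4/(\lambda_n^2+1)^2$. Here I would use the window-averaged $L^4$ bound $\sum_{\lambda_n\in[\mu,\mu+1)}\|\varphi_n\|_{L^4}^4\lesssim\mu^2$, itself a consequence of Proposition~\ref{WW1} and the Weyl law $\lambda_n\sim\sqrt n$, which gives a per-window contribution $\sim\mu^{-2}$ and hence a tail $\lesssim M^{-1/2}$. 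Finally the deterministic terms $II_{22}$ and $X_{4,N}$, equal up to a constant to $c\int_M K_N^2$ with $K_N=\sum_{k\leq M_N}G_{\mu_k}$, are treated by writing $\int_M(K_N^2-K_M^2)$ as a sum over pairs of windows in which one index lies beyond the $M$-cutoff and rerunning the estimate~\eqref{equiibis} with the bounds~\eqref{nul}--\eqref{defG_bis} of Proposition~\ref{ww}, using $\mu_k\sim c k^2$ from~\eqref{sim_bis} and $\mu_{M_M}\sim\lambda_M\sim\sqrt M$ to convert the uniform boundedness into a polynomial gain.

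I expect the main obstacle to be twofold. The genuinely quadrilinear term $X_{1,N}$ is delicate because, unlike on the torus, the interaction coefficients $\gamma(n_1,n_2,n_3,n_4)$ have no explicit arithmetic description, so converting the merely averaged control furnished by Propositions~\ref{WW1}--\ref{ww} into a quantitative decay rate in $M$ for the four-fold sum is the hard analytic point. Equally subtle is the convergence rate of the deterministic term $\int_M K_N^2$: it is exactly here that the new spectral-function estimate~\eqref{defG_bis}, with its boundary correction supported on $\{d(x)<\mu^{-1/2}\}$, is indispensable, since the tail of the double window sum must be summed against these boundary layers. The whole scheme ultimately succeeds because each relevant series, once organized by spectral windows, is convergent with a genuine polynomial remainder, and taking the slowest of the resulting rates yields the exponent $\sigma'>0$ claimed in~\eqref{L2_bis}.
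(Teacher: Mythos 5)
Your proposal is correct and follows essentially the same route as the paper: the same cancellation of counterterms leaving $f_N=\tfrac12\big(X_{1,N}+I+II_{21}+II_{22}+X_{4,N}+X_{3,N}\big)$, Gaussian orthogonality reducing each centred piece to a deterministic frequency sum with one index above $M$, the Parseval-in-one-index reduction of the quadrilinear sum to $\|\varphi_{n_2}\varphi_{n_3}\varphi_{n_4}\|_{L^2(M)}^2$ controlled by spectral-window bounds (this is exactly the paper's Lemma~\ref{key}), and Sogge-type $L^4$ bounds for the diagonal terms, all yielding polynomial rates. The only notable difference is that you also treat the deterministic pieces $II_{22}$ and $X_{4,N}$ by rerunning~\eqref{equiibis} on the tail windows, a point the paper's list $J_1,\dots,J_4$ leaves implicit.
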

\begin{proof}
Thanks to the analysis in the previous section, we can write 
$$
\|f_{N}(u)-f_{M}(u)\|_{L^2(d\mu(u))}\lesssim J_1+J_2+J_3+J_4,
$$
where $J_1$, $J_2$, $J_3$ and $J_4$ are defined as follows.
The term $J_1$ is the contribution of $X_{1,N}(u)$ and thus it is defined by 
\begin{equation*}
J_1=
\Big\|
\sum_{\substack{
(n_1,n_2,n_3,n_4)\in \Lambda^c
\\
\max(n_1,n_2,n_3,n_4)\geq M
\\ n_1,n_2,n_3,n_4\leq N
}
}
\frac{g_{n_1}(\omega)}{(\lambda^2_{n_1}+1)^{\frac{1}{2}}}
\frac{\overline{g_{n_2}(\omega)}}{(\lambda^2_{n_2}+1)^{\frac{1}{2}}}
\frac{g_{n_3}(\omega)}{(\lambda^2_{n_3}+1)^{\frac{1}{2}}}
\frac{\overline{g_{n_4}(\omega)}}{(\lambda^2_{n_4}+1)^{\frac{1}{2}}}
\,\gamma(n_1,n_2,n_3,n_4)
\Big\|_{L^{2}(\Omega)}\,.
\end{equation*}
The term $J_2$ is the contribution of $X_{3,N}(u)$ and thus it is defined by
$$
J_2=
\Big\|
\sum_{M\leq n\leq N}
\frac{|g_{n}(\omega)|^4}{(\lambda^2_{n}+1)^{2}}
\gamma(n,n,n,n)
\Big\|_{L^{2}(\Omega)}\,.
$$
The term $J_3$ is the contribution of $I$ and thus it is defined by 
$$
J_3=
\Big\|
\sum_{\substack{
\max(n_1,n_2)\geq M
\\
n_1,n_2\leq N
}
}
\frac{(|g_{n_1}(\omega)|^2-1)\, (|g_{n_2}(\omega)|^2-1)}
{(\lambda_{n_1}^2+1)(\lambda_{n_2}^2+1)}
\int_{M}|\varphi_{n_1}|^2|\varphi_{n_2}|^2
\Big\|_{L^{2}(\Omega)}\,.
$$
Finally, the term $J_4$ is the contribution of the renormalized part of $II$ and therefore it is defined by
$$
J_4=
\Big\|
\sum_{M\leq n\leq N}
\frac{ G_{N}(n) (|g_{n}(\omega)|^2-1)}
{\lambda_{n}^2+1}
\Big\|_{L^{2}(\Omega)}\,.
$$
Let us first estimate $J_4$. Using 
Proposition~\ref{ww} and orthogonality, we get that
$$
J_4^2\lesssim
\sum_{M\leq n\leq N}\frac{1}{n^2}
\lessim M^{-1}\,.
$$
For the estimate of $J_2$, we will not use an orthogonality in $\omega$, we will simply rely on the triangle inequality. The estimates for $J_1$ and $J_3$ will rely on orthogonality arguments. The estimates for $J_1$, $J_2$ and $J_3$ will rely on the following key estimate for the behavior of $\gamma(n_1,n_2,n_3,n_4)$
\begin{lemm}\ph\label{key}
There exists  $\delta>0$ such that 
\begin{equation}\label{S1}
\sum_{\substack{
\max(\lambda_{n_1},\lambda_{n_2},\lambda_{n_3},\lambda_{n_4})\geq M
}
}
\frac{|\gamma(n_1,n_2,n_3,n_4)|^2}
{(\lambda^2_{n_1}+1)(\lambda^2_{n_2}+1)(\lambda^2_{n_3}+1)(\lambda^2_{n_4}+1)}
\lesssim M^{-\delta}\, .
\end{equation}
\end{lemm}

We postpone the proof of this result and finish the proof of Lemma\;\ref{PropL2}. Using an orthogonality  argument, we can estimate $J_1$ as follows
$$
J_1^2\lesssim
\sum_{\substack{
\max(n_1,n_2,n_3,n_4)\geq M
\\
n_1,n_2,n_3,n_4\leq N
}
}
\\
\frac{|\gamma(n_1,n_2,n_3,n_4)|^2}{(n_1+1)(n_2+1)(n_3+1)(n_4+1)}
$$
which can be readily estimated by an application of Lemma~\ref{key}.

To  estimate $J_2$, we use the bound~\eqref{sog}, which implies that 
$$\| \varphi_n\|_{L^4} \leq C \lambda_n^{\frac 1 4},
$$ 
and therefore  
$$
J_2\lesssim 
\sum_{M\leq n\leq N}\frac{1}{n^{3/2}}
\lesssim M^{-1/2}\,.
$$
Finally concerning $J_3$, we can use another orthogonality argument in order to write
\begin{eqnarray*}
J_3^2 & \lesssim &
 \sum_{\substack{
\max(n_1,n_2)\geq M
\\
n_1,n_2\leq N
}
}
\frac{|\gamma(n_1,n_1,n_2,n_2)|^2}{(n_1+1)^2(n_2+1)^2}
\\
& & +
\sum_{M\leq n_1,n_2\leq N}
\frac{|\gamma(n_1,n_1,n_1,n_1)\gamma(n_2,n_2,n_2,n_2)|}{(n_1+1)^2(n_2+1)^2}.
\end{eqnarray*}
The first term in the right hand side is estimated similarly as $J_1^2$ (it is a sub-case), while the second term is estimated as $J_2$.
This completes the proof of Lemma~\ref{PropL2}.
\end{proof} 

\begin{proof}[Proof of Lemma\,\ref{key}]
By a symmetry argument, we can estimate the left hand-side of~\eqref{S1} by
\begin{equation}\label{S2}
\sum_{\substack{
N_1\geq N_2\geq N_3\geq N_4
\\
N_1\geq M
\\
(N_1,N_2,N_3,N_4)-{\rm dyadic}
}
}
(N_1 N_2 N_3 N_4)^{-2}
\sum_{\substack{
\lambda_{n_j}\sim N_j
\\
j=1,2,3,4
}
}
|\gamma(n_1,n_2,n_3,n_4)|^2\,.
\end{equation}
Now we can perform the $n_1$ summation and estimate~\eqref{S2} as
\begin{equation}\label{S3}
\sum_{\substack{
N_1\geq N_2\geq N_3\geq N_4
\\
N_1\geq M
}
}
(N_1 N_2 N_3 N_4)^{-2}
\sum_{\substack{
\lambda_{n_j}\sim N_j
\\
j=2,3,4
}
}
\|\varphi_{n_2}\varphi_{n_3}\varphi_{n_4}\|_{L^2(M)}^2\,.
\end{equation}
Next, we can write
$$
\sum_{\substack{
\lambda_{n_j}\sim N_j
\\
j=2,3
}
}
\|\varphi_{n_2}\varphi_{n_3}\varphi_{n_4}\|_{L^2(M)}^2
=
\int_{M}
|\varphi_{n_4}(x)|^2
\Big(\sum_{\lambda_{n_2}\sim N_2}|\varphi_{n_2}(x)|^2\Big)
\Big(\sum_{\lambda_{n_3}\sim N_3}|\varphi_{n_3}(x)|^2\Big)
dx.
$$
Now, from Proposition~\ref{prop.2} applied to control
$$ e(x, 2M, M)= e(x, 2M, 2M-1) + \cdots + e(x, M+1, M),$$
we get the pointwise bound
$$
\sum_{\lambda_{n}\sim N}|\varphi_{n}(x)|^2\lessim N^2
$$
and (integrating on the manifold) 
$$
\#\{
n\,:\, \lambda_{n}\sim N
\}
\lesssim N^2.
$$
This gives  that~\eqref{S3} can be estimated by
$$
\sum_{\substack{
N_1\geq N_2\geq N_3\geq N_4
\\
N_1\geq M
}
}
(N_1 N_2 N_3 N_4)^{-2}
N_4^2 (N_2^2N_3^2)
$$
which clearly can be bounded by $M^{-\delta}$ for any $0<\delta<2$.
\end{proof}

\subsubsection{Proof of Theorem~\ref{gib}}
To prove Theorem~\ref{gib}, the main point is to estimate the measure
\begin{equation}\label{measure}
\mu\big(u\,:\, -f_{N}(u)>\ln(\lambda)\big)
\end{equation}
and to show that
\begin{equation}\label{int}
\int_0^\infty\mu\big(u\,:\, -f_{N}(u)>\ln(\lambda)\big) \,\lambda^p d\lambda <C,
\end{equation}
where $C$ is {\it independent} of $N$.
Standards arguments show that~\eqref{measure} implies Theorem~\ref{gib} (see~\cite{BTT,ThTz, Tzvetkov3}).
Now we can write
$$
f_{N}(u)=\frac12
\int_{M}\big( (\Pi_{N}(u))^2-2\alpha_N \big)^2-\a_N^2.
$$
Therefore, we have the pointwise bound 
\begin{equation}\label{defoc}
-f_{N}(u)\lesssim (\ln(N))^4\,.
\end{equation}
The power of $\ln(N)$ of the last estimate is not of importance for the further analysis. Notice that here we make a crucial use of the defocusing nature of the
nonlinear interaction.
Using~\eqref{defoc} we obtain that if $M$ is such that
$$
\ln(\lambda)-C (\ln(M))^4\geq 1,
$$ 
where $C$ is the implicit constant appearing in~\eqref{defoc}, then 
$$
\mu(u\,:\, -f_{N}(u)>\ln(\lambda))
\leq \mu(u\,:\, -(f_{N}(u)-f_{M}(u))\geq 1)\,.
$$
We therefore choose $M$ such that $\ln(\lambda)\approx (\ln(M))^4$, i.e. 
$$
M\approx e^{(\ln(\lambda))^{\frac{1}{4}}}\,.
$$
The result clearly follows from the following bound
\begin{equation}\label{main_prop}
\exists c, \delta >0; \forall N>M, \mu(u\,:\, -(f_{N}(u)-f_{M}(u))\geq 1)\lessim e^{-cM^{\delta}}\,.
\end{equation}

Finally,~\eqref{main_prop} follows from the $L^2$-bound~\eqref{L2_bis} and classical hypercontractivity estimates (see for instance~\cite[Proposition~2.4]{ThTz}). 
This in turn completes the proof of Theorem~\ref{gib}.

\subsection{Definition of the nonlinearity}
The aim of this paragraph is to prove Proposition~\ref{prop.NL_bis}.

  By the result~\cite[Proposition 2.4]{ThTz} on the Wiener chaos, we only have to prove the statement for $p=2$.
 
Recall the definition of $F_N$ in~\eqref{gauge_bis} and let $\s>2$. To begin with, use that 
\begin{equation*}
\int_{X^{0}(M)}\|F_{N}(u_{N})-F_{M}(u_{M})\|^{2}_{H^{-\s}(M)}\text{d}\mu(u)=\int_{\Omega}\|F_{N}(\Psi_{N})-F_{M}(\Psi_{M})\|^{2}_{H^{-\s}(M)}\text{d}{\bf p}.
\end{equation*}
Therefore, we are reduced to prove that  $\big(F_{N}(\Psi_{N})\big)_{N\geq1}$ is a Cauchy sequence in $L^{2}\big(\Omega; H^{-\s}(M)\big)$. Denote by 
\begin{equation*}
\chi_{N}=|\Psi_{N}|^{2}\Psi_{N}-2\a_N\Psi_{N}.
\end{equation*}
It is enough to show the result for $(\chi_{N})$, because once we know that $\chi_{N} \longrightarrow \chi$ in $L^{2}\big(\Omega; H^{-\s}(M)\big)$, we deduce that $F_{N}(\phi_{N})=\Pi_{N}\chi_{N} \longrightarrow \chi$ in $L^{2}\big(\Omega; H^{-\s}(M)\big)$.  In the sequel, we will use the notation  $[n]=\lambda^2_n+1$. By the Weyl formula we have $[n]\sim n$ when $n\longrightarrow +\infty$. Then, by definition of $\Psi_{N}$ we can compute 
\begin{multline*}
|\Psi_{N}|^{2}\Psi_{N}
=\sum_{{n_{1},n_{2},n_{3}\leq N}}\frac{g_{n_{1}}\ov{g}_{n_{2}}g_{n_{3}}}{[n_{1}]^{\frac12}[n_{2}]^{\frac12}[n_{3}]^{\frac12}}\phi_{n_{1}}\ov{\phi_{n_{2}}}\phi_{n_{3}}\\
\begin{aligned}
&=\sum_{\substack{n_{1},n_{2},n_{3}\leq N,\\n_{1}\neq n_{2},n_{3}\neq n_{2}}}\frac{g_{n_{1}}\ov{g}_{n_{2}}g_{n_{3}}}{[n_{1}]^{\frac12}[n_{2}]^{\frac12}[n_{3}]^{\frac12}}\phi_{n_{1}}\ov{\phi_{n_{2}}}\phi_{n_{3}}+2\sum_{{n,m\leq N}}\frac{|g_{n}|^{2}g_{m}}{[n][m]^{\frac12}}|\phi_{n}|^{2}\phi_{m}-\sum_{{n\leq N}}\frac{|g_{n}|^{2}g_{n}}{[n]^{\frac32}}|\phi_{n}|^{2}\phi_{n}\\
&:=\Sigma_1(N)+\Sigma_2(N)-\Sigma_3(N).
\end{aligned}
\end{multline*}
Then for $\Sigma_2(N)$ we use the decomposition~\eqref{decomp2}
\begin{eqnarray*}
\Sigma_2(N)
&=&2 \Psi_N \sum_{{n \leq N}}\frac{|g_{n}|^{2} }{[n]}|\phi_{n}|^{2}  \\
&=&2 \Psi_N \sum_{{n \leq N}}\frac{|g_{n}|^{2}-1 }{[n]}|\phi_{n}|^{2} +2\a_N \Psi_N +2 K_N\Psi_N\\
&:=&\Sigma_{21}(N)+\Sigma_{22}(N)-\Sigma_{23}(N).
\end{eqnarray*}
Observe that $\Sigma_{22}(N)$ is the term which is removed in the definition of $\chi_N$. Therefore we are reduced to study the contribution of the other terms.\medskip

{\bf $\bullet $ Contribution of $\boldsymbol {\Sigma_1}$:}
For all $1\leq M\leq N$
\begin{equation}\label{cr}
\<\Sigma_{1}(N)-\Sigma_{1}(M)\,|\,\phi_{k}\>=\sum_{B_{M,N}}\frac{g_{n_{1}}\ov{g}_{n_{2}}g_{n_{3}}}{[n_{1}]^{\frac12}[n_{2}]^{\frac12}[n_{3}]^{\frac12}}\gamma(n_1,n_2,n_3,k),
\end{equation}
 where the set $B_{M,N}$ is defined by
\begin{multline*}
 B_{M,N}=\Big\{(n_{1},n_{2},n_{3})\in \N^{3}\;\;\text{s.t.}\;\; 0\leq n_{1}, n_{2}  ,n_{3}\leq N,\;\;n_{1}\neq n_{2},\;\; n_{3}\neq n_{2},\\
\qquad \;\;\text{and}\;\;\max(n_1,n_2,n_3)>M      \Big\}.
\end{multline*} 
From~\eqref{cr} we obtain 
\begin{multline*}
\big\|\<\Sigma_{1}(N)-\Sigma_{1}(M)\,|\,\phi_{k}\>\big\|^{2}_{L^{2}(\Omega)}=\\
{{\int_{\Omega}}}\sum_{\substack {(n_{1},n_{2},n_{3})\in B_{M,N}\\(m_{1},m_{2},m_{3})\in B_{M,N}}}
\frac{g_{n_{1}}\ov{g}_{n_{2}}g_{n_{3}}\ov{g}_{m_{1}}{g}_{m_{2}}\ov{g}_{m_{3}}}
{[n_{1}]^{\frac12}[n_{2}]^{\frac12}[n_{3}]^{\frac12}[m_{1}]^{\frac12}[m_{2}]^{\frac12}[m_{3}]^{\frac12}}   \gamma(n_1,n_2,n_3,k)\ov{\gamma(m_1,m_2,m_3,k)}\text{d}{\bf p}.
\end{multline*}
Since the $(g_{n})$ are independent and centred, we deduce that each term in the r.h.s. vanishes, unless $n_{2}=m_{2}$ and $(n_{1},n_{3})=(m_{1},m_{3})$ or $(n_{1},n_{3})=(m_{3},m_{1})$. Thus 
\begin{equation*} 
\big\|\<\Sigma_{1}(N)-\Sigma_{1}(M)\,|\,\phi_{k}\>\big\|^{2}_{L^{2}(\Omega)}\leq C\sum_{(n_{1},n_{2},n_{3})\in B_{M,N}}\frac{\vert \gamma(n_1,n_2,n_3,k)\vert^2}{\<n_{1}\>\<n_{2}\>\<n_{3}\>}.
\end{equation*}

Let $\s>1$, then we get
\begin{eqnarray*}
\big\|\Sigma_{1}(N)-\Sigma_{1}(M)\big\|^{2}_{L^{2}(\Omega;H^{-\s}(M))}&=&\sum_{k\in \N}\frac1{\<k\>^{\s}} \big\|\<\Sigma_{1}(N)-\Sigma_{1}(M)\,|\,\phi_{k}\>\big\|^{2}_{L^{2}(\Omega)}  \\
&\leq &C\sum_{\substack {(n_{1},n_{2},n_{3})\in B_{M,N}\\k\in \N}}\frac{\vert \gamma(n_1,n_2,n_3,k)\vert^2}{\<k\>^{\s}\<n_{1}\>\<n_{2}\>\<n_{3}\>},
\end{eqnarray*}
and this term can be estimated as $J_1$, namely for some $\eta>0$
\begin{equation*}
\big\|\Sigma_{1}(N)-\Sigma_{1}(M)\big\|_{L^{2}(\Omega;H^{-\s}(M))}
\leq \frac{C}{M^\eta} .
\end{equation*}
\medskip

{\bf $\bullet $ Contribution of $\boldsymbol {\Sigma_3}$:} For all $1\leq M\leq N$
\begin{equation*} 
\<\Sigma_{3}(N)-\Sigma_{3}(M)\,|\,\phi_{k}\>=\sum_{M<n\leq N}\frac{\vert g_n\vert^2g_{n}}{[n]^{\frac32}}\gamma(n,n,n,k).
\end{equation*}
By the  the bound~\eqref{sog} we infer 
\begin{equation*}
\vert \gamma(n,n,n,k)\vert \leq C \<n\>^{1/4}\<k\>^{1/4}
\end{equation*}
and from the independence of the Gaussians we get 
\begin{equation*}
\big\|\<\Sigma_{3}(N)-\Sigma_{3}(M)\,|\,\phi_{k}\>\big\|^{2}_{L^{2}(\Omega)}=C
\sum_{M<n\leq N}\frac{\vert \gamma(n,n,n,k)\vert^2}{[n]^{3}}
\leq  C\<k\>^{1/2}  \sum_{M<n\leq N}\frac{1}{\<n\>^{5/2}} \leq C\frac{\<k\>^{1/2}}{M} .
\end{equation*}
 Finally, if $\s>3/2$  we obtain
\begin{equation*}
\big\|\Sigma_{3}(N)-\Sigma_{3}(M)\big\|^{2}_{L^{2}(\Omega;H^{-\s}(M))}=\sum_{k\in \N}\frac1{\<k\>^{\s}} \big\|\<\Sigma_{3}(N)-\Sigma_{3}(M)\,|\,\phi_{k}\>\big\|^{2}_{L^{2}(\Omega)}  
\leq \frac{C}{M} .
\end{equation*}
\medskip

{\bf $\bullet $ Contribution of $\mathbf{\Sigma_2}$:} For all $1\leq M\leq N$
\begin{equation*} 
\<\Sigma_{21}(N)-\Sigma_{21}(M)\,|\,\phi_{k}\>=2\sum_{C_{M,N}}\frac{(\vert g_{n}\vert^2 -1) {g}_{m}}{[n][m]^{\frac12}}\gamma(n,n,m,k),
\end{equation*}
 where the set $C_{M,N}$ is defined by
\begin{equation*}
 C_{M,N}=\Big\{(n,m)\in \N^{2}\;\;\text{s.t.}\;\; (M< n\leq N\;\;\text{and}\;\;m\leq N)
  \;\;\text{or}\;\;(M< m\leq N\;\;\text{and}\;\;n\leq M)      \Big\}.
\end{equation*} 
Then by the orthogonal properties of the Gaussians we obtain 
\begin{equation*}
\big\|\<\Sigma_{21}(N)-\Sigma_{21}(M)\,|\,\phi_{k}\>\big\|^{2}_{L^{2}(\Omega)}\leq C
\sum_{\substack {n,m\leq N\\\max(n,m)>M}}
\frac{ \vert  \gamma(n,n,m,k)         \vert^2 }
{\<n\>^2  \<m\>} ,
\end{equation*}
which (when multiplied by $ \<k\>^{-1}$) is estimated by Lemma~\ref{key} in the previous section (it is actually a sub-case) fixing $n_1=n_2=n, n_3=m, n_4=k$): 
\begin{multline*}
\sum_k \<k\>^{-1}\big\|\<\Sigma_{21}(N)-\Sigma_{21}(M)\,|\,\phi_{k}\>\big\|^{2}_{L^{2}(\Omega)}\leq C 
\sum_k \sum_{\substack {n,m\leq N\\\max(n,m)>M}}\frac{ \vert  \gamma(n,n,m,k)         \vert^2 }
{\<n\>^2  \<m\>  \<k\>} \\
\leq \sum_{\substack {n_1, n_2,n_3, n_4\leq N\\\max(n_1, n_2, n_3, n_4>M}}\frac{ \vert  \gamma(n_1, n_2, n_3, n_4)         \vert^2 }
{\<n_1\> \<n_2\> \<n_3\>  \<n_4\>}\leq \frac{C }{M^\delta}.
\end{multline*}
Therefore, if $\s\geq 1$  we obtain
\begin{equation*}
\big\|\Sigma_{21}(N)-\Sigma_{21}(M)\big\|^{2}_{L^{2}(\Omega;H^{-\s}(M))} \leq \frac{C}{M} .
\end{equation*}

We now study the term $\Sigma_{23}$. We have 
 \begin{equation}\label{defs3} 
 \Sigma_{23}(N)-\Sigma_{21}(M) =2 (K_N-K_M)\Psi_N+2 K_M(\Psi_N-\Psi_M).
\end{equation}
Let's consider the contribution of the first term in the previous line.  For $k\geq 0$ 
\begin{equation*}
\< (K_N-K_M)\Psi_N\,|\,\phi_{k}\> = 
\sum_{n\leq N} \frac{g_{n}}{[n]^{\frac12}}\int_M (K_N-K_M)\phi_n \ov{\phi_k}.
\end{equation*}
By the orthogonal properties of the Gaussians and Parseval, we obtain
\begin{eqnarray*}
\big\|\< (K_N-K_M)\Psi_N\,|\,\phi_{k}\>\big\|^{2}_{L^{2}(\Omega)}&=& 
\sum_{n\leq N} \frac{1}{[n]}\Big \vert \int_M (K_N-K_M)\phi_n \ov{\phi_k}\Big\vert^2\\
&\leq& C
\sum_{n\leq N} \Big \vert \int_M (K_N-K_M)\phi_n \ov{\phi_k}\Big\vert^2\\
&\leq& C \int_M (K_N-K_M)^2\vert \phi_k\vert^2 \\
&\leq& C \<k\>\int_M (K_N-K_M)^2.
\end{eqnarray*}
Now we estimate $\int_M (K_N-K_M)^2$ as in~\eqref{equiibis} and we get that for $\s>2$
\begin{equation*}
\big\|(K_N-K_M)\Psi_N\big\|_{L^{2}(\Omega;H^{-\s}(M))} \leq \frac{C}{M^{\eta}},
\end{equation*}
for some $\eta>0$. The estimate of the second term in~\eqref{defs3} is similar.\medskip

As a conclusion, we are able to define a limit  $F(u)$ so that  for all $p\geq 2$  
\begin{equation}\label{tightness} 
\|F(u) \|_{L^p_{\mu}H^{-\s}(M)}\leq C_p.
\end{equation}

\subsection{Proof of Theorem~\ref{thmNLS_bis}}
Once estimate~\eqref{tightness} is proved, 
the analysis in the proof of Theorem~\ref{thmNLS_bis} is the same as in the proof of Theorem~\ref{thmHW} (see Section~\ref{Sect.7}).
\subsection{Proof of Proposition~\ref{ww}}\label{sec.85}
The proof of Proposition~\ref{ww} will rely on two results about the spectral function of the Laplace operator on a compact manifold $M$. On the one hand a precise asymptotic  "away" from the boundary essentially due to H\"ormander~\cite[Theorem 17.5.10]{Ho3} and on the other hand a general  bound near the boundary due to Sogge~\cite{So}. Let 
$$ e(x, \lambda, \mu)= \sum_{\mu \leq \lambda_n < \lambda} |\phi_n (x) |^2,$$
be the spectral function. Then we have the following

\begin{prop}\ph\label{prop.1}
Let $d(x)=d(x, \partial M)$ be the distance of the point $x\in M$ to the boundary $\partial M$. 
There exists $C>0$ such that for any $\lambda >1$, any  $x\in M$ satisfying $d(x) \geq \lambda^{- 1/2}$ and any $\delta\in [0,1]$, we have 
\begin{equation}\label{spect}
| e(x, \lambda + \delta\lambda^{1/2}, \lambda) - \frac{ \delta} {2 \pi} \lambda^{3/2} | \leq C \lambda^{5/4}.
\end{equation}
\end{prop}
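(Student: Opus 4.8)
The plan is to prove Proposition~\ref{prop.1} through the classical half-wave parametrix of H\"ormander, arranged so that the boundary enters only through the distance condition $d(x)\geq\lambda^{-1/2}$. First I would reduce the window count to the on-diagonal spectral measure: writing $U(t)=e^{-it\sqrt{-\Delta_g}}$, one has $\sum_n e^{-it\lambda_n}|\varphi_n(x)|^2=U(t)(x,x)$ (the Schwartz kernel on the diagonal), and the window $e(x,\lambda+\delta\lambda^{1/2},\lambda)$ is recovered by integrating the associated spectral measure against the indicator of $[\lambda,\lambda+\delta\lambda^{1/2})$. Since $\lambda\mapsto e(x,\lambda,0)$ is monotone, it suffices to produce a two-sided asymptotic $e(x,\mu,0)=\frac{\mu^2}{4\pi}+R(x,\mu)$ with $R$ controlled uniformly on $\{d(x)\geq\mu^{-1/2}\}$, and then subtract the values at $\mu=\lambda+\delta\lambda^{1/2}$ and $\mu=\lambda$; the main term produces $\frac{(\lambda+\delta\lambda^{1/2})^2-\lambda^2}{4\pi}=\frac{\delta\lambda^{3/2}}{2\pi}+\frac{\delta^2\lambda}{4\pi}$, whose second summand is harmless since $\delta\leq1$.

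Second, I would compute the smoothed spectral measure. Fix $\rho$ even, Schwartz, with $\rho(0)=1$ and $\widehat{\rho}\in C_0^\infty$ supported near $0$, and rescale to $\rho_T(s)=T\rho(Ts)$ so that $\widehat{\rho_T}$ is supported in $[-T,T]$. The crucial use of the hypothesis is finite propagation speed of the wave equation: for $|t|\leq T\leq c\,d(x)$ the backward light cone from $x$ does not reach $\partial M$, so $U(t)(x,x)$ coincides, modulo a rapidly decaying smoothing error, with the kernel furnished by H\"ormander's interior parametrix, exactly as for a manifold without boundary (\cite[Theorem 17.5.10]{Ho3}). A stationary-phase evaluation of the resulting oscillatory integral then identifies the convolution $\big(\rho_T * d e(x,\cdot,0)\big)(\mu)$ with the Weyl density $\frac{\mu}{2\pi}$ (this is where the constant $\tfrac1{2\pi}$ is fixed) up to non-principal terms that degrade as $T$ is taken small.

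Third comes the Tauberian passage from the $T^{-1}$-smoothed count back to the sharp window, together with the optimisation in $T$. The distance condition permits $T$ only as large as $\sim d(x)$; hence, in the worst case $d(x)\sim\lambda^{-1/2}$, only $T\sim\lambda^{-1/2}$, i.e. a smoothing scale $T^{-1}\sim\lambda^{1/2}$ comparable to the window width $\delta\lambda^{1/2}$. Applying H\"ormander's Fourier Tauberian lemma (using monotonicity of $e(x,\cdot,0)$ and the density bound of the previous step) converts the smoothed estimate into the sharp two-sided bound on $R(x,\mu)$; the contribution of the ``far'' part $|t|\gtrsim T$ of the trace is treated by non-stationary phase, which gains precisely because $2\lambda\,d(x)\gtrsim\lambda^{1/2}\to\infty$ under the hypothesis. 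Balancing the short-time parametrix error against the Tauberian smoothing error is what yields the stated remainder $O(\lambda^{5/4})$ rather than the interior-optimal $O(\lambda)$.

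The main obstacle is exactly this last balance: because the boundary caps the available propagation time at $T\sim d(x)$, one cannot run H\"ormander's fixed-time argument, and the smoothing scale is forced up to order $\lambda^{1/2}$. The delicate bookkeeping is to show that, with window width and smoothing scale both of order $\lambda^{1/2}$, the oscillation of the reflected boundary contributions---quantified by the phase growth $\lambda\,d(x)\gtrsim\lambda^{1/2}$---still suppresses them enough to keep the error at $O(\lambda^{5/4})$; this is where the precise value of the exponent $5/4$ is pinned down and where the argument must be executed most carefully. The remaining steps, namely the stationary-phase identification of the constant and the final subtraction producing $\frac{\delta}{2\pi}\lambda^{3/2}$, are routine once the parametrix and the Tauberian lemma are in place.
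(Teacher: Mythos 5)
Your strategy has a genuine gap, and it is quantitative, not a matter of bookkeeping. Once you cap the propagation time at $T\lesssim d(x)$ by finite speed of propagation, all you know about the spectral measure is its convolution with $\rho_T$, i.e. a smoothing at scale $T^{-1}$. H\"ormander's Tauberian lemma then recovers sharp counts only up to an error of order (local spectral density)$\,\times\, T^{-1}\sim \lambda/T$: the mass smeared across the endpoints of the window cannot be resolved. Under the hypothesis the worst case is $d(x)\sim\lambda^{-1/2}$, where $T^{-1}\sim\lambda^{1/2}$ and this error is $O(\lambda^{3/2})$ --- the size of the main term itself, and far above the claimed $C\lambda^{5/4}$. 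There is no ``balance'' available: the $\lambda/T$ loss is an intrinsic resolution limit of any argument that uses the wave kernel only for $|t|\leq T$, so your scheme proves the proposition only when $d(x)\gtrsim\lambda^{-1/4}$ and fails in the whole range $\lambda^{-1/2}\leq d(x)\ll\lambda^{-1/4}$. Your proposed remedy --- treating the part $|t|\gtrsim T$ of the trace by non-stationary phase using the phase $2\lambda d(x)$ --- presupposes an explicit oscillatory representation of $U(t)(x,x)$ for times after the wave has reflected off $\partial M$, i.e. the method-of-images parametrix; that is exactly the boundary analysis your first step was designed to avoid, and without it there is nothing to integrate by parts.

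That reflected term is in fact where the exponent $5/4$ comes from, and it is how the paper proceeds. It quotes H\"ormander's two-term boundary asymptotic \cite[Theorem 17.5.10]{Ho3}, namely $|e(x,\lambda,0)-e_0(0,\lambda^2)+e_0(2d(x),\lambda^2)|\leq C\lambda$, where $e_0(y,\tau^2)$ is the normalized Fourier transform of the indicator of the ball of radius $\tau$. Writing $e_0(y,\tau^2)=\frac{\tau^2}{4\pi}J(\tau|y|)$ and showing $|J(t)|+|J'(t)|\leq C\langle t\rangle^{-3/2}$ by stationary phase, the subtraction of the values at $\lambda+\delta\lambda^{1/2}$ and $\lambda$ gives the main term $\frac{\delta}{2\pi}\lambda^{3/2}+O(\lambda)$ from $e_0(0,\cdot)$, while the image term contributes at most $C\delta\lambda^2 d(x)\lambda^{1/2}\langle\lambda d(x)\rangle^{-3/2}+C\delta\lambda^{3/2}\langle\lambda d(x)\rangle^{-3/2}\leq C\delta\lambda^{5/4}$, precisely because $d(x)\geq\lambda^{-1/2}$. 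So $\lambda^{5/4}$ is the size of the reflected-wave correction at distance $\lambda^{-1/2}$ from the boundary, not the output of a parametrix-versus-Tauberian trade-off. To repair your proof you would have to propagate past the reflection time $2d(x)$ and control that contribution explicitly, at which point you have reconstructed the ingredient the paper imports from H\"ormander (or Seeley).
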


This result can be deduced from Seeley~\cite[Estimate (0.1) with $n=2$ and $\tau=\lambda$]{Seeley}. Here we give an argument based on  H\"ormander~\cite{Ho3}.
\begin{proof}
We first treat the case of  Dirichlet boundary conditions.  For $y\in \R^2$ and $\tau>0$, denote by 
\begin{equation*}
e_0(y,\tau^2)=\frac1{4\pi^2} \int_{\{\xi \in \R^2 \,:\,\vert \xi\vert< \tau \}}e^{i y\cdot \xi} d\xi.
\end{equation*}
Then by~\cite[Theorem 17.5.10]{Ho3}
\begin{equation}\label{horm}
| e(x, \lambda, 0) - e_0(0,\lambda^2)+e_0(2d(x),\lambda^2)   | \leq C \lambda.
\end{equation}
 (Notice that in~\cite{Ho3} the parameter  $\lambda $ denotes the eigenvalues while here $\lambda$ are the square root of the eigenvalues and that in our setting $\gamma(x) = (\text{det}( g^{i,j} (x)))^{-1/2}$). The function $e_0$ is radial in $y$ and with a change of variables we get 
 \begin{equation}\label{defJ}
 e_0(y,\tau^2)=\frac{\tau^2}{4\pi} J(\tau \vert y\vert), \quad \text{with}\quad J(t)=  \int_{\{\xi \in \R^2 \,:\,\vert \xi\vert< 1\} }e^{i  t \xi_1} d\xi.
 \end{equation}
 We now claim that for all $t\in \R$
 \begin{equation}\label{ineq}
 J(t)\leq C\<t\>^{-3/2}, \quad  J'(t)\leq C\<t\>^{-3/2}.
 \end{equation}
For the first   inequality we write for $\vert t\vert \geq 1$
\begin{eqnarray*}
J(t)=\int_{-1}^1 \int_{-\sqrt{1-\xi^2_2}}^{\sqrt{1-\xi^2_2}} e^{it \xi_1} d\xi_1 d\xi_2&=&\frac{1}{it} \int_{-1}^1 \big(   e^{it \sqrt{1-\xi^2_2}}-   e^{-it \sqrt{1-\xi^2_2}}  \big)  d\xi_2\\
&=&\frac{2}{it} \int_{0}^{\pi/2} \big(   e^{it \cos \theta}-   e^{-it \cos \theta}  \big)  \cos \theta d\theta
\end{eqnarray*}
and the result follows  from  the stationary phase. The estimate on $J'$ is obtained similarly.

Clearly, from $e_{0}(0,\lambda^{2})=\lambda^{2}/(4\pi)$ we get 
\begin{equation}\label{este0}
e_0\big(0, (\lambda +\delta \lambda^{1/2})^2\big)-e_0(0, \lambda^2)=\frac{\delta}{2\pi} \lambda^{3/2}+\mathcal{O}(\lambda),
\end{equation}
and by~\eqref{defJ} and~\eqref{ineq}
\begin{multline*}
\big \vert e_0\big(2d(x), (\lambda +\delta \lambda^{1/2})^2\big)-e_0(2d(x), \lambda^2) \big \vert \leq \\
\leq C\lambda^2 \delta d(x)\lambda^{1/2} \<d(x)\lambda\>^{-3/2}+C\delta \lambda^{3/2} \<d(x)\lambda\>^{-3/2} \leq C \delta\lambda^{5/4},
\end{multline*}
under the condition $d(x)\geq \lambda^{-1/2}$.
Then by~\eqref{horm} and~\eqref{este0} we get~\eqref{spect}.
\end{proof}

\begin{prop}\ph\label{prop.2}
There exists $C>0$ such that for any $\lambda >1$ and $x\in M$ 
\begin{equation}\label{sogge}
 e(x, \lambda + 1, \lambda) \leq C \lambda.
\end{equation}
\end{prop}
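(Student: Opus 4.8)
The plan is to recognise $e(x,\lambda+1,\lambda)$ as the value on the diagonal of the Schwartz kernel of the spectral projector $\Pi_\lambda=\mathbf{1}_{[\lambda,\lambda+1)}(\sqrt{-\Delta_g})$, and then to reduce the pointwise bound to an $L^2\to L^\infty$ operator estimate for $\Pi_\lambda$ which is exactly Sogge's unit-width spectral cluster bound. First I would introduce the kernel $K_\lambda(x,y)=\sum_{\lambda\leq\lambda_n<\lambda+1}\phi_n(x)\overline{\phi_n(y)}$, so that by the very definition of the spectral function $e(x,\lambda+1,\lambda)=K_\lambda(x,x)$.

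Next I would exploit that $\Pi_\lambda$ is an orthogonal projection on $L^2(M)$, hence idempotent and self-adjoint. Translated to kernels, $\Pi_\lambda=\Pi_\lambda^2$ and $\Pi_\lambda=\Pi_\lambda^\ast$ give the reproducing identity $\int_M|K_\lambda(x,y)|^2\,dy=K_\lambda(x,x)=e(x,\lambda+1,\lambda)$. Testing $\Pi_\lambda$ against the normalised function $f=\overline{K_\lambda(x,\cdot)}/\|K_\lambda(x,\cdot)\|_{L^2}$ (which satisfies $\|f\|_{L^2}=1$) then yields $|\Pi_\lambda f(x)|=e(x,\lambda+1,\lambda)^{1/2}$, so that the diagonal value is controlled by the operator norm: $e(x,\lambda+1,\lambda)\leq\|\Pi_\lambda\|_{L^2(M)\to L^\infty(M)}^2$.

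The remaining task is the cluster estimate $\|\Pi_\lambda f\|_{L^\infty(M)}\leq C\lambda^{1/2}\|f\|_{L^2(M)}$, uniformly for $\lambda>1$, which immediately gives $e(x,\lambda+1,\lambda)\leq C\lambda$ and closes the proof. This is precisely the endpoint case $p=\infty$, dimension $n=2$ (where the exponent $(n-1)/2$ equals $1/2$) of Sogge's $L^p$ bounds on spectral projectors onto windows of width one. For a compact surface without boundary it follows from the standard wave-equation parametrix together with a stationary phase analysis of the resulting oscillatory integrals; for a bounded planar domain with the Dirichlet Laplacian, the same bound persists up to the boundary and is the content of \cite{So}.

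I expect the genuine analytic difficulty to be concentrated entirely in this last step, and in particular in its validity \emph{up to the boundary}, where the clean interior parametrix is no longer available and the reflection/grazing phenomena must be handled; this is exactly the point at which I would invoke Sogge's theorem rather than reprove it. The first two steps, by contrast, are soft functional-analytic manipulations valid for any orthogonal projection with an $L^2$ kernel and present no real obstacle.
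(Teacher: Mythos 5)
Your proof is correct and is essentially the paper's own argument: the paper disposes of this proposition by citing the $q=\infty$ case of the Smith--Sogge spectral cluster estimate (which in turn rests on Sogge's bounds valid up to the boundary), and your reproducing-kernel/duality reduction of $e(x,\lambda+1,\lambda)$ to $\|\Pi_\lambda\|_{L^2\to L^\infty}^2$ is exactly the standard equivalence implicit in that citation. In both cases the entire analytic content --- the cluster bound $\|\Pi_\lambda\|_{L^2(M)\to L^\infty(M)}\leq C\lambda^{1/2}$ up to the boundary --- is invoked from \cite{So,SmSo} rather than reproved, so there is no substantive difference.
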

In particular, the previous result implies the bound
\begin{equation}\label{sog} 
\| \varphi_n \|_{L^\infty}\leq C \lambda_n ^{1/ 2} \leq C\<n\>^{1/4}.
\end{equation}

Proposition~\ref{prop.2} is the case $q = + \infty$ in~\cite[(1.5)]{SmSo}, which in turn is an easy consequence of~\cite{So}.  \medskip

Let us now prove Proposition~\ref{ww}.  
For any $\mu>0$ and $x\in M$, we write 
\begin{multline}
 \sum_{\lambda_m \in [\mu, \mu+ \delta \mu^{1/2})} \frac 1 { \lambda_m^2 +1} |\phi_m|^2(x) =\\
\begin{aligned}
& = \sum_{\lambda_m \in [\mu, \mu+ \delta \mu^{1/2})}  \frac {  |\phi_m|^2(x)} { \mu^2 +1}+\sum_{\lambda_m \in [\mu, \mu+ \delta \mu^{1/2})} \bigl(\frac 1 { \lambda_m^2 +1}- \frac 1 { \mu^2 +1}  \Bigr)  |\phi_m|^2(x)\nonumber  \\
&:=  F_{1,\mu}(x)+  F_{2,\mu}(x).
\end{aligned}
   \end{multline}
   First observe that by~\eqref{sogge} we have for all $x\in M$
   \begin{equation}\label{borne1}
    e(x, \mu+ \delta\mu^{1/2}, \mu)\leq C \mu^{3/2}.
   \end{equation}
   Then from the previous bound, we deduce that 
 \begin{equation*}
 \vert  F_{2,\mu}(x)    \vert\leq C   \frac {\mu^{3/2}} { \mu^2 +1}\sum_{\lambda_m \in [\mu, \mu+ \delta \mu^{1/2})}  |\phi_m|^2(x)\leq C\mu^{-1},
 \end{equation*}
 which is an acceptable bound.
 
   We now turn to the estimate of $F_{1,\mu}$. For $x\in M$ such $d(x)>\mu^{-1/2}$ we can use~\eqref{spect} to write
   \begin{equation*}
F_{1,\mu}(x) =\frac{ 1} {\mu^2+1} e(x, \mu+ \delta\mu^{1/2}, \mu) = \frac{ \delta  \mu^{\frac 3 2}} {2\pi (\mu^2+1)}+ \mathcal{O} ( \mu^{-3/4}).
 \end{equation*}
 But thanks to the Weyl formula
 $$ \sharp \big\{ n ; \lambda_n \in [\mu, \mu+ \delta \mu^{\frac 1 2})\big\} = \frac {\delta} {2\pi} \mu^{\frac 3 2} \big( 1+ \mathcal{O} ( \mu^{- \frac 1 4})\big), $$
 which is obtained by integrating~\eqref{spect} and using~\eqref{borne1}, we get 
 \begin{equation}\label{wf}
  \sum_{\lambda_m \in [\mu, \mu+ \delta \mu^{1/2})}\frac 1{  \lambda_m^2 +1}= \frac{ \delta  \mu^{\frac 3 2}} {2\pi (\mu^2+1)}+ \mathcal{O} ( \mu^{-3/4}),
 \end{equation}
 which was the claim.
 
 Now we assume that $d(x)\leq \mu^{-1/2}$. Then by~\eqref{borne1} we have 
    \begin{equation*}
F_{1,\mu}(x) =\frac{ 1} {\mu^2+1} e(x, \mu+ \delta\mu^{1/2}, \mu) \leq C \frac{   \mu^{\frac 3 2}} {  \mu^2+1 }=\mathcal{O} ( \mu^{-1/2}).
 \end{equation*}
This proves~\eqref{defG_bis}. The fact~\eqref{nul} is obtained by integration of $F_{\mu}$. The proof of Proposition~\ref{ww} is complete.
  
  \begin{proof}[Proof of Proposition~\ref{WW1}]
 By~\eqref{wf} with $\delta=1$,  we observe that Proposition\;\ref{ww} directly implies Proposition~\ref{WW1}.
  \end{proof}

\end{document}